\documentclass[12pt]{amsart}
\usepackage{amsmath,amsfonts,amsthm,amssymb}
\usepackage{color}
\usepackage{hyperref}
\usepackage{tikz}

\theoremstyle{plain}
\newtheorem{thm}{Theorem}[section]
\newtheorem{lem}[thm]{Lemma}
\newtheorem{prop}[thm]{Proposition}
\newtheorem{cor}[thm]{Corollary}
\theoremstyle{definition}
\newtheorem{defn}[thm]{Definition}
\newtheorem{rmk}[thm]{Remark}

\newcounter{sectiontmp1}
\newcounter{sectiontmp2}
\newcounter{thmtmp1}
\newcounter{thmtmp2}

\renewcommand{\phi}{\varphi}
\newcommand{\comment}[1]{}
\newcommand{\conv}{\!\!\downarrow}
\newcommand{\dv}{\!\!\uparrow}

\newcommand{\concat}{
  \mathord{
    \mathchoice
    {\raisebox{1ex}{\scalebox{.7}{$\frown$}}}
    {\raisebox{1ex}{\scalebox{.7}{$\frown$}}}
    {\raisebox{.7ex}{\scalebox{.5}{$\frown$}}}
    {\raisebox{.7ex}{\scalebox{.5}{$\frown$}}}
  }
}

\newcommand{\N}{\mathbb{N}}
\newcommand{\RCA}{\mathsf{RCA}}
\newcommand{\ATR}{\mathsf{ATR}}
\newcommand{\CA}{\mathsf{CA}}
\newcommand{\AC}{\mathsf{AC}}
\newcommand{\UC}{\mathsf{UC}}
\newcommand{\C}{\mathsf{C}}
\newcommand{\CWO}{\mathsf{CWO}}
\newcommand{\WCWO}{\mathsf{WCWO}}

\renewcommand{\O}{\mathcal{O}}
\newcommand{\KB}{\mathrm{KB}}

\newcommand{\otp}{\mathrm{otp}}
\newcommand{\rk}{\mathrm{rk}}
\renewcommand{\L}{\mathcal{L}}
\newcommand{\M}{\mathcal{M}}
\newcommand{\KDT}{\mathsf{KDT}}
\newcommand{\LPO}{\mathsf{LPO}}

\newcommand{\WQO}{\mathsf{WQO}}
\newcommand{\NDS}{\mathsf{NDS}}
\newcommand{\NIAC}{\mathsf{NIAC}}
\newcommand{\LO}{\mathrm{LO}}
\newcommand{\WO}{\mathrm{WO}}
\newcommand{\range}{\mathrm{range}}
\newcommand{\dom}{\mathrm{dom}}
\newcommand{\id}{\mathrm{id}}
\newcommand{\arith}{\mathrm{arith}}

\title[Some reductions between principles around $\ATR_0$]{Some computability-theoretic reductions between principles around $\ATR_0$}
\date{\today}
\author{Jun Le Goh}
\address{Department of Mathematics, Cornell University, 310 Malott Hall, Ithaca NY, USA 14853}
\email{jg878@cornell.edu}
\thanks{This work was partially supported by National Science Foundation grants DMS-1161175 and DMS-1600635. We thank Richard A.\ Shore for many useful discussions and suggestions. We also thank Paul-Elliot Angles d'Auriac, Takayuki Kihara, Alberto Marcone, Arno Pauly, and Manlio Valenti for their comments and interest.}

\begin{document}

\begin{abstract}
We study the computational content of various theorems with reverse mathematical strength around Arithmetical Transfinite Recursion ($\ATR_0$) from the point of view of computability-theoretic reducibilities, in particular Weihrauch reducibility. Our first main result states that it is equally hard to construct an embedding between two given well-orderings, as it is to construct a Turing jump hierarchy on a given well-ordering. This answers a question of Marcone. We obtain a similar result for Fra\"iss\'e's conjecture restricted to well-orderings. We then turn our attention to K\"onig's duality theorem, which generalizes K\"onig's theorem about matchings and covers to infinite bipartite graphs. Our second main result shows that the problem of constructing a K\"onig cover of a given bipartite graph is roughly as hard as the following ``two-sided'' version of the aforementioned jump hierarchy problem: given a \emph{linear} ordering $L$, construct either a jump hierarchy on $L$ (which may be a pseudohierarchy), or an infinite $L$-descending sequence. We also obtain several results relating the above problems with choice on Baire space (choosing a path on a given ill-founded tree) and unique choice on Baire space (given a tree with a unique path, produce said path).
\end{abstract}

\maketitle

\section{Introduction}

Given any two well-orderings, there must be an embedding from one of the well-orderings into the other. How easy or difficult is it to produce such an embedding? Is this problem more difficult if we are required to produce an embedding whose range forms an initial segment?

Before attempting to answer such questions we ought to discuss how we could formalize them.  One approach is to use well-established notions of (relative) complexity of \emph{sets}. It is ``easy'' to produce an embedding between two given well-orderings, if there is an embedding which is ``simple'' relative to the given well-orderings. Depending on context, ``simple'' could mean computable, polynomial-time computable, etc. On the other hand, one could say it is ``difficult'' to produce an embedding between two given well-orderings, if \emph{any} embedding between them has to be ``complicated'' relative to the given well-orderings. Then we may define a notion of complexity on problems as follows: a problem is ``easy'' if every instance of the problem is ``easy'' in the above sense; a problem is ``difficult'' if there is an instance of the problem which is ``difficult'' in the above sense.

How, then, could we compare the \emph{relative} complexity of such problems? Following the above approach, it is natural to do so by comparing problems against a common yardstick, which is defined using notions of complexity of sets. Computability theory provides several such notions. One example is the number of Turing jumps needed to compute a set, or more generally, its position in the arithmetic hierarchy or the hyperarithmetic hierarchy. Another example is the lowness hierarchy.

This is useful for getting a rough idea of the complexity of a problem, but turns out to be unsuitable for finer calibrations. One reason is that our yardsticks may only be loosely comparable to each other (as is the case for the arithmetic and lowness hierarchies). When comparing two problems, one of them could be simpler from one point of view, but more difficult from another.

Second, even if two problems are equally simple relative to the same yardstick (say, if $X$-computable instances of both problems have $X'$-computable solutions), how do we know if they are related in any sense? Put another way, are they simple for the same ``reason''?

The above considerations suggest a complementary approach: instead of measuring the complexity of problems by measuring the complexity of their solutions, we could focus on the relationships between problems themselves. A common type of ``relationship'' which represents relative complexity is a reduction. Roughly speaking, a problem $P$ is reducible to a problem $Q$ if given an oracle for solving $Q$, we could transform it into an oracle for solving $P$. In order for this notion to be meaningful, such a transformation process has to be simple relative to the difficulty of solving $Q$. In this paper, we will focus on uniformly computable reductions, also known as Weihrauch reductions (Definition \ref{defn:Weihrauch_reducibility}).

Many theorems can be viewed as problems, and for such theorems, a proof of theorem $A$ from theorem $B$ can often be viewed as a reduction from the problem corresponding to theorem $A$ to the problem corresponding to theorem $B$. Therefore, our endeavor of studying reductions between problems is closely related to the program of reverse mathematics, which is concerned with whether a theorem is provable from other theorems (over a weak base theory).

If a proof of theorem $A$ using theorem $B$ does not obviously translate to a reduction from problem $A$ to problem $B$, there are two possible outcomes. Sometimes, we might be able to massage the proof into one that does translate into a reduction. We might also find a different proof of $A$ using $B$ that can be translated into a reduction. Otherwise, we might be able to show that there is no reduction from $A$ to $B$. In that case, this suggests that any proof of $A$ using $B$ has to be somewhat complicated.

Certain questions about the structure of proofs have natural analogs in terms of computable reducibilities. For example, one may appeal to a premise multiple times in the course of a proof. Such appeals may be done in ``parallel'' or in ``series''. One may wonder whether multiple appeals are necessary, or whether appeals in series could be made in parallel instead. These questions can be formalized in the framework of computable reducibilities, for there are ways of combining problems which correspond to applying them in parallel or in series (Definitions \ref{defn:parallel_product}, \ref{defn:compositional_product}).

Finally, the framework of computable reducibilities uncovers and makes explicit various computational connections between problems from computable analysis and theorems that have been studied in reverse mathematics. We will see how the problem of choosing any path on an ill-founded tree and the problem of choosing the path on a tree with a unique path (known as $\C_{\N^\N}$ and $\UC_{\N^\N}$ respectively, see Definition \ref{defn:LPO_and_choice_problems}) are related to theorems which do not obviously have anything to do with trees.

In this paper, we use the framework of computable reducibilities to provide a fine analysis of the computational content of various theorems, such as Fra\"iss\'e's conjecture for well-orderings, weak comparability of well-orderings, and K\"onig's duality theorem for countable bipartite graphs. In reverse mathematics, all of these theorems are known to be equivalent to the system of Arithmetical Transfinite Recursion ($\ATR_0$). Our analysis exposes finer distinctions between these theorems. We describe our main results as follows.

In the first half of this paper, we define a problem $\ATR$ which is analogous to $\ATR_0$ in reverse mathematics (Definition \ref{defn:ATR}). Then we use $\ATR$ to calibrate the computational content of various theorems about embeddings between well-orderings. In particular, we show that:
\begin{quote}
The problem of computing an embedding between two given well-orderings is as hard as $\ATR$ (Theorem \ref{thm:ATR_leq_WCWO}).
\end{quote}
This answers a question of Marcone \cite[Question 5.8]{kmp18}. This also implies that it is no harder to produce an embedding whose range forms an initial segment, than it is to produce an arbitrary embedding. Note that in this case the situation is the same from the point of view of either Weihrauch reducibility or reverse mathematics.

In the second half of this paper, we define several ``two-sided'' problems, which are natural extensions of their ``one-sided'' versions. This allows us to calibrate the computational content of K\"onig's duality theorem for countable bipartite graphs (see section \ref{section:KDT}). In particular, we define a two-sided version of $\ATR$, denoted $\ATR_2$ (Definition \ref{defn:two_sided_ATR}), and show that:

\begin{quote}
The problem of computing a K\"onig cover of a given bipartite graph is (roughly) as hard as $\ATR_2$ (Theorems \ref{thm:ATR_2_leq_c_KDT} and \ref{thm:KDT_arith_W_ATR_2}).
\end{quote}

$\ATR_2$ is much harder than $\ATR$ in terms of computational difficulty (Corollary \ref{cor:two_sided_not_below_UC}), so this example exhibits a marked difference between computable reducibilities and reverse mathematics.

The two-sided problems we study and K\"onig's duality theorem also provide examples of problems which lie strictly between $\UC_{\N^\N}$ and $\C_{\N^\N}$ in the Weihrauch degrees. Other examples exhibiting similar phenomena were studied by Kihara, Marcone, Pauly \cite{kmp18}.

\section{Background}

\subsection{Computability}

For background on hyperarithmetic theory, we refer the reader to Sacks \cite[I--III]{sacks}. We will use the following version of ``effective transfinite recursion'' on linear orderings, which easily follows from the recursion theorem.

\begin{thm}
Let $L$ be an $X$-computable linear ordering. Suppose $F: \N \to \N$ is total $X$-computable and for all $e \in \N$ and $b \in L$, if $\Phi^X_e(a)\conv$ for all $a <_L b$, then $\Phi^X_{F(e)}(b)\conv$. Then there is some $e$ such that $\Phi^X_e \simeq \Phi^X_{F(e)}$. Furthermore: 
\begin{itemize}
	\item $\{b: \Phi^X_e(b)\dv\}$ is either empty or contains an infinite $<_L$-descending sequence;
	\item Such an index $e$ can be found uniformly in $X$, an index for $F$, and an index for $L$.
\end{itemize}
\end{thm}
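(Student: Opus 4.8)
The plan is to obtain the fixed‑point index $e$ directly from Kleene's recursion theorem, and then to read off the two ``furthermore'' clauses from the hypothesis on $F$ together with the fixed‑point identity $\Phi^X_e \simeq \Phi^X_{F(e)}$.

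For the existence of $e$, I would invoke the parametrized, relativized form of the recursion theorem. From an index $i$ for the total function $F$ one computes, via the $s$-$m$-$n$ theorem, an index for the partial $X$‑computable function $(x,y)\mapsto \Phi^X_{F(x)}(y)$; the recursion theorem then yields an index $e$ with $\Phi^X_e(y)\simeq \Phi^X_{F(e)}(y)$ for all $y$, i.e. $\Phi^X_e \simeq \Phi^X_{F(e)}$. Every step is uniform, so $e$ is computed from $i$ alone; in particular it is found uniformly in $X$, an index for $F$, and an index for $L$, which gives the last bullet. (Totality of $F$ is not needed for this step, but it is needed below so that $F(e)$ makes sense.)

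For the first bullet, I would set $D = \{b \in L : \Phi^X_e(b)\dv\}$ and argue that if $D \neq \emptyset$ then $D$ has no $<_L$‑minimal element. Indeed, suppose some $b \in D$ were $<_L$‑minimal in $D$; then $\Phi^X_e(a)\conv$ for every $a <_L b$, so the hypothesis on $F$ (applied to this particular $e$ and $b$) gives $\Phi^X_{F(e)}(b)\conv$, and hence $\Phi^X_e(b)\conv$ by the identity $\Phi^X_e \simeq \Phi^X_{F(e)}$, contradicting $b \in D$. Thus every $b \in D$ admits some $a \in D$ with $a <_L b$, and by dependent choice in the metatheory I would build, starting from an arbitrary $b_0 \in D$, a sequence $b_0 >_L b_1 >_L \cdots$ with each $b_n \in D$. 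This is the desired infinite $<_L$‑descending sequence contained in $\{b : \Phi^X_e(b)\dv\}$. In particular, when $L$ is a well‑ordering this forces $D = \emptyset$, i.e. $\Phi^X_e$ is total.

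The argument is essentially routine, so there is no serious obstacle; the one conceptual point is to recognize that, modulo the fixed‑point identity, the hypothesis on $F$ says exactly that $\Phi^X_e$ can diverge at $b$ only if it already diverges somewhere below $b$, so that its set of divergence points is ``downward unbounded'' in $L$ and therefore (by dependent choice) contains an infinite descending chain. The only technical care needed is to apply the recursion theorem in its uniform, relativized form, so that $e$ depends computably on the given data exactly as asserted in the last bullet; the index for $L$ itself plays no role in producing $e$ and is listed only for convenience of later applications.
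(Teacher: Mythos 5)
Your proposal is correct and is exactly the standard argument the paper has in mind: the paper states this result without proof, noting only that it ``easily follows from the recursion theorem,'' and your combination of the relativized, uniform recursion theorem with the observation that the divergence set can have no $<_L$-minimal element is precisely that routine derivation. No gaps.
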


In many of our applications, $X$ will be a sequence of sets $\langle X_a \rangle_a$ indexed by elements of a linear ordering (sometimes $L$, but not always). We will think of $\Phi^X_e$ as a partial function $f: L \to \N$, and we will think of each $f(b)$ as an index for a computation from some $X_a$.

\subsection{Representations} \label{subsection:repns}

Let $X$ be a set of countable structures, such as (countable) linear orderings, trees, or graphs. A \emph{($\N^\N$-)representation} of $X$ allows us to transfer notions of computability from $\N^\N$ to $X$. Formally, a representation of $X$ is a surjective (possibly partial) map $\delta: \subseteq \N^\N \to X$. (More generally, $X$ can be any set of cardinality at most that of $\N^\N$.)

The pair $(X,\delta)$ is called a \emph{represented space}. If $\delta(p) = x$ then we say that $p$ is a \emph{($\delta$-)name} for $x$. Every $x \in X$ has at least one $\delta$-name. We say that $x \in X$ is \emph{computable} if it has some $\delta$-name which is computable.

If we have two representations $\delta$ and $\delta'$ of a set $X$, we say that $\delta$ is \emph{computably reducible} to $\delta'$ if there is some computable function $F: \subseteq \N^\N \to \N^\N$ such that for all $p \in \dom(\delta)$, $\delta(p) = \delta'(F(p))$. We say $\delta$ and $\delta'$ are \emph{computably equivalent} if they are computably reducible to each other. Computably equivalent representations of $X$ induce the same notion of computability on $X$.

Typically, the spaces $X$ we work with have a standard representation (or encoding), which we will not specify in detail. We will work extensively with the represented spaces of linear orderings and well-orderings, so we describe their representations as follows. If $L$ is a linear ordering or well-ordering whose domain is a subset of $\N$, we represent it as the relation $\{\langle a,b \rangle: a \leq_L b\}$. Then the following operations are computable:
\begin{itemize}
	\item checking if a given element is in the domain of the ordering;
	\item adding two given orderings (denoted by $+$);
	\item adding a given sequence of orderings (denoted by $\Sigma$);
	\item multiplying two given orderings (denoted by $\cdot$);
	\item restricting a given ordering to a given subset of its domain.
\end{itemize}
On the other hand, the following operations are not computable:
\begin{itemize}
	\item checking whether a given element is a successor or limit;
	\item finding the successor of a given element (if it exists);
	\item comparing the ordertype of two given well-orderings;
	\item checking if a given real is a name for a well-ordering.
\end{itemize}

In section \ref{section:KDT}, we will work with \emph{rooted subtrees} of $\N^{<\N}$, which are subsets $T$ of $\N^{<\N}$ for which there is a unique $r \in T$ (called the \emph{root}) such that:
\begin{itemize}
	\item no proper prefixes of $r$ lie in $T$;
	\item for every $s \in T$, $s$ extends $r$ and every prefix of $s$ which extends $r$ lies in $T$.
\end{itemize}
A rooted subtree of $\N^{<\N}$ whose root is the empty node $\langle \rangle$ is just a prefix-closed subset of $\N^{<\N}$.

If $r \in \N^{<\N}$ and $R \subseteq \N^{<\N}$, we define $r\concat R = \{r\concat s: s \in R\}$. In particular, if $T \subseteq \N^{<\N}$ is prefix-closed, then $r\concat T$ is a subtree of $\N^{<\N}$ with root $r$. Conversely, if a rooted subtree of $\N^{<\N}$ has root $r$, it is equal to $r\concat T$ for some such $T$. If $T$ is prefix-closed, we sometimes refer to a tree of the form $r\concat T$ as a \emph{copy} of $T$. (Our usage of ``copy'' is more restrictive than its usage in computable structure theory.)

If $T$ is a rooted subtree of $\N^{<\N}$, for any $t \in T$, the \emph{subtree of $T$ above $t$} is the subtree $\{s \in T: t \preceq s\}$ with root $t$.

For each $r \in \N^{<\N}$, $e \in \N$ and $X \subseteq \N$, $(r,e,X)$ is a name for the following tree $T$ with root node $r$: $r\concat\sigma \in T$ if and only if for all $k < |\sigma|$, $\Phi^X_{e,\prod_{i<k} (\sigma(i)+1)}(\sigma\restriction k)\conv = 1$. This representation is easily seen to be computably equivalent to what is perhaps the usual representation, where if $\Phi^X_e$ is total, then $(r,e,X)$ is the name for the tree defined by $\Phi^X_e$ starting with root $r$. The advantage of our representation is that $(r,e,X)$ names some tree even if $\Phi^X_e$ is partial, which will be useful when $e$ is produced by the recursion theorem.

Using the above representation, we can define a representation for sequences of subtrees of $\N^{<\N}$: view $(e,X)$ as $\langle (\langle n \rangle,e_n,X) \rangle_n$, where $e_n$ is an $X$-index for $\Phi^X_e(n,\cdot)$. Observe that every $(e,X)$ names some such sequence.

We will also work with bipartite graphs in section \ref{section:KDT}. We represent bipartite graphs as their vertex set and edge relation. Alternatively, our representation of a bipartite graph could also include a partition of its vertex set which witnesses that the graph is bipartite. Even though these two representations are not computably equivalent\footnote{In fact, there is a computable bipartite graph such that no computable partition of its vertices witnesses that the graph is bipartite. This was known to Bean \cite[remarks after Theorem 7]{bean76} (we thank Jeff Hirst for pointing this out.) See also Hirst \cite[Corollary 3.17]{hirst_thesis}.}, all of our results hold for either representation.

\subsection{Weihrauch reducibility and computable reducibility}

For a self-contained introduction to Weihrauch reducibility, we refer the reader to Brattka, Gherardi, Pauly \cite{bgp18}. In this section, we will only present the definitions that we need in this paper.

We begin by identifying problems, such as that of constructing an embedding between two given well-orderings, with (possibly partial) multivalued functions between represented spaces, denoted $P: \subseteq X \rightrightarrows Y$. A theorem of the form
\[ (\forall x \in X)(\Theta(x) \rightarrow (\exists y \in Y)\Psi(x,y)) \]
corresponds to the multivalued function $P: \subseteq X \rightrightarrows Y$ where $P(x) = \{y \in Y: \Psi(x,y)\}$. Note that logically equivalent statements can correspond to different problems.

The \emph{domain} of a problem, denoted $\dom(P)$, is the set of $x \in X$ such that $P(x)$ is nonempty. An element of the domain of $P$ is called a \emph{$P$-instance}. If $x$ is a $P$-instance, an element of $P(x)$ is called a \emph{$P$-solution to $x$}.

A \emph{realizer} of a problem $P$ is a (single-valued) function $F: \subseteq \N^\N \to \N^\N$ which takes any name for a $P$-instance to a name for any of its $P$-solutions. Intuitively, $P$ is reducible to $Q$ if one can transform any realizer for $Q$ into some realizer for $P$. If such a transformation can be done in a uniformly computable way, then $P$ is said to be Weihrauch reducible to $Q$:

\begin{defn} \label{defn:Weihrauch_reducibility}
$P$ is \emph{Weihrauch reducible} (or \emph{uniformly reducible}) to $Q$, written $P \leq_W Q$, if there are computable functions $\Phi,\Psi: \subseteq \N^\N \to \N^\N$ such that:
\begin{itemize}
	\item given a name $p$ for a $P$-instance, $\Phi(p)$ is a name for a $Q$-instance;
	\item given a name $q$ for a $Q$-solution to the $Q$-instance named by $\Phi(p)$, $\Psi(p \oplus q)$ is a name for a $P$-solution to the $P$-instance named by $p$.
\end{itemize}
In this case, we say that $\Phi$ and $\Psi$ are \emph{forward} and \emph{backward} functionals, respectively, for a Weihrauch reduction from $P$ to $Q$.

We say that $P$ is \emph{arithmetically Weihrauch reducible} to $Q$, written $P \leq_W^{\arith} Q$, if the above holds for some arithmetically defined functions $\Phi$ and $\Psi$, or equivalently, some computable functions $\Phi$ and $\Psi$ which are allowed access to some fixed finite Turing jump of their inputs.
\end{defn}

For readability, we will typically not mention names in our proofs. For example, we will write ``given a $P$-instance'' instead of ``given a name for a $P$-instance''.

It is easy to see that Weihrauch reducibility is reflexive and transitive, and hence defines a degree structure on problems. In fact, there are several other natural operations on problems that define corresponding operations on the Weihrauch degrees. In the following, we define only the operations that we use.

First we have the parallel product, which captures the power of applying problems in parallel:

\begin{defn} \label{defn:parallel_product}
The \emph{parallel product} of $P$ and $Q$, written $P \times Q$, is defined as follows: $\dom(P \times Q) = \dom(P) \times \dom(Q)$ and $(P \times Q)(x,y) = P(x) \times Q(y)$. The (infinite) \emph{parallelization} of $P$, written $\widehat{P}$, is defined as follows: $\dom(\widehat{P}) = \dom(P)^\N$ and $\widehat{P}((x_n)_n) = \{(y_n)_n: y_n \in P(x_n)\}$.
\end{defn}

It is easy to see that the parallel product and parallelization of problems induce corresponding operations on their Weihrauch degrees.

More generally, we can also apply problems in series:

\begin{defn}
The \emph{composition} $\circ$ is defined as follows: for $P: \subseteq X \rightrightarrows Y$ and $Q: \subseteq Y \rightrightarrows Z$, we define $\dom(Q \circ P) = \{x \in X: P(x) \subseteq \dom(Q)\}$ and $(Q \circ P)(x) = \{z \in Z: \exists y \in P(x)(z \in Q(y))\}$.
\end{defn}

The composition of problems, however, does not directly induce a corresponding operation on Weihrauch degrees. It is also too restrictive, in the sense that a $P$-solution is required to be literally a $Q$-instance. Nevertheless, one can use the composition to define an operation on Weihrauch degrees that more accurately captures the power of applying two problems in series:

\begin{defn}[Brattka, Gherardi, Marcone \cite{bgm12}] \label{defn:compositional_product}
The \emph{compositional product} $\ast$ is defined as follows:
\[ Q \ast P = \sup \{Q_0 \circ P_0: Q_0 \leq_W Q, P_0 \leq_W P\}, \]
where the $\sup$ is taken over the Weihrauch degrees.
\end{defn}

Brattka and Pauly \cite{bp16} showed that $Q \ast P$ always exists.

Next, we define some well-studied problems that are helpful for calibrating the problems we are interested in.

\begin{defn} \label{defn:LPO_and_choice_problems}
Define the following problems:
\begin{itemize}
	\item[$\LPO$:] given $p \in \N^\N$, output $1$ if there is some $k \in \N$ such that $p(k) = 0$, else output $0$;
	\item[$\C_\N$:] given some $f: \N \to \N$ which is not surjective, output any $x$ not in the range of $f$;
	\item[$\C_{\N^\N}$:] given an ill-founded subtree of $\N^{<\N}$, output any path on it;
	\item[$\UC_{\N^\N}$:] given an ill-founded subtree of $\N^{<\N}$ with a unique path, output said path.
\end{itemize}
\end{defn}

For more information about the above problems, we refer the reader to the survey by Brattka, Gherardi, Pauly \cite{bgp18}.

Finally, we define a non-uniform coarsening of Weihrauch reducibility known as computable reducibility.

\begin{defn}[Dzhafarov \cite{d15}]
$P$ is \emph{computably reducible} to $Q$, written $P \leq_c Q$, if given a name $p$ for a $P$-instance, one can compute a name $p'$ for a $Q$-instance such that given a name $q$ for a $Q$-solution to the $Q$-instance named by $p'$, one can use $p \oplus q$ to compute a name for a $P$-solution to the $P$-instance named by $p$.
\end{defn}

For example, even though $\LPO$ is not Weihrauch reducible to the identity function, it is computably reducible to the identity because a solution to an $\LPO$-instance is either $0$ or $1$. Observe that $\LPO$ is also arithmetically Weihrauch reducible to the identity. The same conclusions hold for $\C_\N$.

The following easy proposition will help us derive corollaries of our results which involve computable reducibility and arithmetic Weihrauch reducibility:

\begin{prop}
Suppose $R \leq_W Q \ast P$. If $Q \leq_c \id$, then $R \leq_c P$. If $Q \leq_W^{\arith} \id$, then $R \leq_W^{\arith} P$.
\end{prop}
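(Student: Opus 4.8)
The plan is to reduce both claims to the standard description of the compositional product as a \emph{two-round} reduction, and then to carry out that reduction while absorbing the single inner application of $Q$ into the Turing machinery. Concretely, I would first recall, from Brattka and Pauly's analysis \cite{bp16}, that $R \leq_W Q \ast P$ holds if and only if there are computable functionals $\Phi_0,\Phi_1,\Phi_2$ such that, for every $R$-instance $p$: $\Phi_0(p)$ is a $P$-instance; for every $P$-solution $s$ to $\Phi_0(p)$, $\Phi_1(p \oplus s)$ is a $Q$-instance; and for every $Q$-solution $t$ to $\Phi_1(p \oplus s)$, $\Phi_2(p \oplus s \oplus t)$ is an $R$-solution to $p$. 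One gets this by choosing a representative $Q_0 \circ P_0 \equiv_W Q \ast P$ with $Q_0 \leq_W Q$ and $P_0 \leq_W P$, unwinding $R \leq_W Q_0 \circ P_0$, and composing with the forward and backward functionals of the reductions $Q_0 \leq_W Q$ and $P_0 \leq_W P$; the key features are that $\Phi_1$ gets to see both the original $R$-instance $p$ and the returned $P$-solution $s$, and that $\Phi_0,\Phi_1,\Phi_2$ are all computable.

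For the first claim, I would use that $Q \leq_c \id$ says exactly that every $Q$-instance $u$ has a $Q$-solution that is Turing reducible to $u$ (an $\id$-solution to $u$ is just $u$ itself, so it contributes nothing). Now, to verify $R \leq_c P$: given an $R$-instance $p$, take $\Phi_0(p) \leq_T p$ as the associated $P$-instance; for any $P$-solution $s$ to $\Phi_0(p)$, the $Q$-instance $\Phi_1(p \oplus s)$ is computable from $p \oplus s$, so it has a $Q$-solution $t \leq_T \Phi_1(p \oplus s) \leq_T p \oplus s$, and then $\Phi_2(p \oplus s \oplus t)$ is an $R$-solution to $p$ computable from $p \oplus s$. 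This is exactly $R \leq_c P$, and the use of non-uniformity is essential: the choice of $t$ need not be uniform in $\Phi_1(p \oplus s)$.

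For the second claim, $Q \leq_W^{\arith} \id$ provides a fixed $n$ and a computable functional $\Gamma$ with $\Gamma(u^{(n)})$ a $Q$-solution to $u$ for every $Q$-instance $u$, uniformly --- one just composes the arithmetic forward and backward functionals witnessing $Q \leq_W^{\arith} \id$ and absorbs their finitely many jumps into $n$. Then I would take $\Phi_0$ as the forward functional for $R \leq_W^{\arith} P$, and as backward functional the map sending $p \oplus s$ (with $s$ a $P$-solution to $\Phi_0(p)$) to $\Phi_2\big(p \oplus s \oplus \Gamma((\Phi_1(p\oplus s))^{(n)})\big)$; since $(\Phi_1(p \oplus s))^{(n)} \leq_T (p \oplus s)^{(n)}$ uniformly, this backward map is computable with oracle access to a fixed finite jump of its input, hence arithmetic, giving $R \leq_W^{\arith} P$.

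The main thing to get right is the two-round description of $\ast$ invoked at the start: one must ensure $\Phi_1$ really does get to use both $p$ and the $P$-solution $s$ (this is precisely what separates $Q \ast P$ from a naive composition), and that gluing together $R \leq_W Q_0 \circ P_0$, $Q_0 \leq_W Q$, and $P_0 \leq_W P$ yields three honest computable functionals of the expected arguments. Once that is in hand, the remaining work is routine bookkeeping with relative computability; the only substantive difference between the two claims is that the first selects the inner $Q$-solution non-uniformly, while the second selects it uniformly at the cost of a fixed number of jumps.
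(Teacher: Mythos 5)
Your proof is correct. The paper states this proposition without proof (calling it "easy"), and your argument — unwinding $R \leq_W Q \ast P$ into the standard two-round protocol with computable $\Phi_0,\Phi_1,\Phi_2$, then absorbing the inner application of $Q$ either non-uniformly (for $\leq_c$) or into a fixed finite jump (for $\leq_W^{\arith}$) — is exactly the intended routine argument, with the one genuinely load-bearing step (that $\Phi_1$ may consult both the original instance and the returned $P$-solution) correctly identified and justified via the attained maximum in the definition of $\ast$.
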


\section{An $\ATR$-like problem} \label{section:ATR}

In this section, we formulate a problem which is analogous to $\ATR_0$ in reverse mathematics. Informally, $\ATR_0$ in reverse mathematics asserts that one can iterate the Turing jump along any countable well-ordering starting at any set \cite[pg.\ 38]{sim_book}. We make that precise as follows:

\setcounter{sectiontmp1}{\value{section}}
\setcounter{thmtmp1}{\value{thm}}

\begin{defn} \label{defn:jump_hierarchy}
Let $L$ be a linear ordering with first element $0_L$, and let $A \subseteq \N$. We say that $\langle X_a \rangle_{a \in L}$ is a \emph{jump hierarchy on $L$ which starts with $A$} if $X_0 = A$ and for all $b >_L 0_L$, $X_b = (\bigoplus_{a <_L b} X_a)'$.
\end{defn}

There are several ways to define jump hierarchies. We have chosen the above definition for our convenience. We will show that the Weihrauch degree of the resulting problem is rather robust with regards to which definition we choose. See, for example, Proposition \ref{prop:ATR_labeled}.

Note that by transfinite recursion and transfinite induction, for any well-ordering $L$ and any set $A$, there is a unique jump hierarchy on $L$ which starts with $A$.

\begin{defn} \label{defn:ATR}
Define the problem $\ATR$ as follows. Instances are pairs $(L,A)$ where $L$ is a well-ordering and $A \subseteq \N$, with unique solution being the jump hierarchy $\langle X_a \rangle_{a \in L}$ which starts with $A$.
\end{defn}

There is a significant difference between the problem $\ATR$ and the system $\ATR_0$ in reverse mathematics, as expounded in the remark after Theorem 3.2 in Kihara, Marcone, Pauly \cite{kmp18}. For example, in the setting of reverse mathematics, different models may disagree on which linear orderings are well-orderings.

The standard definition of $\ATR_0$ in reverse mathematics \cite[Definition V.2.4]{sim_book} involves iterating arbitrary arithmetical operators instead of just the Turing jump. We formulate that statement as a problem and show that it is Weihrauch equivalent to $\ATR$.

\begin{prop} \label{prop:ATR_arith_formula}
$\ATR$ is Weihrauch equivalent to the following problem. Instances are triples $(L,A,\Theta)$ where $L$ is a well-ordering, $A \subseteq \N$, and $\Theta(n,Y,A)$ is an arithmetical formula whose only free variables are $n$, $Y$ and $A$, with unique solution $\langle Y_a \rangle_{a \in L}$ such that for all $b \in L$, $Y_b = \{n: \Theta(n,\bigoplus_{a <_L b} Y_a,A)\}$.
\end{prop}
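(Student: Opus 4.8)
The plan is to establish the two Weihrauch reductions separately. Since $\ATR$ is essentially the instance of the new problem in which $\Theta$ defines the Turing jump, the forward reduction $\ATR \leq_W (\text{new problem})$ ought to be almost trivial: a fixed forward functional adjoins a fixed arithmetical formula, and the backward functional decodes. The only subtlety is that the naive formula ``$n \in A$ if $Y = \emptyset$, else $n \in Y'$'' breaks when $A = \emptyset$: then $Y_{0_L} = \emptyset$, so $\bigoplus_{a<_Lb} Y_a$ is still empty when $b$ is the successor of $0_L$, and the formula cannot tell this apart from the base case. I would circumvent this by having the new hierarchy \emph{code} the jump hierarchy in a never-empty way: arrange that $Y_a = \{2n : n \in X_a\} \cup \{1\}$, using the fixed arithmetical formula $\Theta_0(n,Y,A)$ asserting ``$n = 1$, or $n = 2m$ where ($m \in A$ in case $Y = \emptyset$) and ($m \in (\mathrm{dec}\,Y)'$ otherwise)'', with $\mathrm{dec}\,Y = \{\langle a, m\rangle : \langle a, 2m\rangle \in Y\}$. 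Since ``$Y = \emptyset$'' is $\Pi^0_1$ and $\mathrm{dec}$ is computable, $\Theta_0$ is arithmetical; one checks $\mathrm{dec}(\bigoplus_{a<_Lb} Y_a) = \bigoplus_{a<_Lb} X_a$, so the $\Theta_0$-hierarchy on $L$ from $A$ is exactly $\langle\{2n : n\in X_a\}\cup\{1\}\rangle_a$, and the backward functional recovers $\langle X_a\rangle_a$ by halving even parts.

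For the converse, $(\text{new problem}) \leq_W \ATR$: given $(L, A, \Theta)$, first compute from the name of $\Theta$, via the normal form theorem, a number $k \geq 1$ (bumped up using monotonicity of the jump) and an index for a Turing functional $\Gamma$ with $\{n : \Theta(n,Z,A)\} = \Gamma^{(Z\oplus A)^{(k)}}$ for all $Z,A$. Form the well-ordering $M$ obtained from $L$ by blowing up each point into a block of $k+1$ consecutive points, feed $(M,A)$ to $\ATR$, and let $\langle X_c\rangle_{c\in M}$ be the resulting jump hierarchy. For $a\in L$ let $B_a$ be its block and $U_a := \bigoplus_{c <_M \min B_a} X_c$ the join of the earlier blocks; a one-step induction inside a block shows, uniformly, that the $j$-th point of $B_a$ computes $U_a^{(j+1)}$ when $a >_L 0_L$, whereas the $j$-th point of $B_{0_L}$ computes $A^{(j)}$ (the clause $X_0 = A$ in Definition \ref{defn:jump_hierarchy} makes $B_{0_L}$ special). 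The backward functional then builds the $\Theta$-hierarchy $\langle Y_a\rangle_{a\in L}$ by effective transfinite recursion along $L$: at stage $a$, from $\langle Y_{a'}\rangle_{a'<_La}$ set $Z_a := (\bigoplus_{a'<_La} Y_{a'}) \oplus A$ and output $Y_a := \Gamma^{Z_a^{(k)}}$, which by the normal form equals $\{n : \Theta(n, \bigoplus_{a'<_La}Y_{a'}, A)\}$, as wanted. The crux is that $Z_a^{(k)}$ is uniformly computable from $B_a$: inductively $Y_{a'} \leq_T B_{a'} \leq_T U_a$ via indices carried along the recursion, so $Z_a \leq_T U_a$ computably, hence $Z_a^{(k)} \leq_T U_a^{(k)}$ uniformly, and $U_a^{(k)}$ is computed by a point of $B_a$; for $a = 0_L$ one has $Z_{0_L} = A$ and $Z_{0_L}^{(k)} = A^{(k)}$, again read off $B_{0_L}$. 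Note $U_a$ is in general much larger than $\bigoplus_{a'<_La}Y_{a'}$ — it also records every intermediate jump — so $Y_a$ is not an ``obvious piece'' of $B_a$; we rebuild $Z_a$ from the $Y_{a'}$ already extracted and use $B_a$ only as a reservoir of $k$ jumps.

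The main obstacle should be bookkeeping rather than a conceptual hurdle: one must apply the normal form theorem uniformly in a name for $\Theta$; check that each link in the chain $Y_{a'} \leq_T B_{a'}$, $Z_a \leq_T U_a$, the $k$-fold jump, $U_a^{(k)} \leq_T B_a$ comes with a \emph{computable} index, so that the extraction is a single computable functional realized by the effective transfinite recursion theorem stated above; and treat the first point $0_L$ separately because the clause $X_0 = A$ in Definition \ref{defn:jump_hierarchy} makes $B_{0_L}$ behave unlike the other blocks. Uniqueness of both hierarchies is immediate from transfinite recursion, as already remarked.
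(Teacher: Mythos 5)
Your proof is correct and follows essentially the same route as the paper: a fixed arithmetical formula simulating the jump gives the forward direction, and for the converse you stretch $L$ by a factor determined by the quantifier complexity of $\Theta$, apply $\ATR$ to the stretched ordering, and extract the $\Theta$-hierarchy by effective transfinite recursion, rebuilding $\left(A \oplus \bigoplus_{a' <_L a} Y_{a'}\right)^{(k)}$ from indices carried along the recursion exactly as the paper does. Your repair of the base case in the forward direction (forcing every column to be nonempty) addresses a genuine edge case that the paper's formula, taken literally, mishandles when columns can be empty; the remaining differences (blocks of size $k+1$ versus $k$, and using the instance $(L,A,\Theta)$ directly in the backward functional rather than coding $L \oplus A$ into the first column via the prepended $1$) are cosmetic.
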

\begin{proof}
$\ATR$ is Weihrauch reducible to the above problem: for the forward reduction, given $(L,A)$, consider $(L,A,\Theta)$ where $\Theta(n,Y,A)$ holds if either $Y = \emptyset$ and $n \in A$, or $n \in Y'$. The backward reduction is the identity.

Conversely, given $(L,A,\Theta)$, let $k$ be one greater than the number of quantifier alternations in $\Theta$. Apply $\ATR$ to $(1 + k \cdot L,L \oplus A)$ to obtain the jump hierarchy $\langle X_\alpha \rangle_{\alpha \in 1+k\cdot L}$.

For the backward reduction, we will use $\langle X_{(a,k-1)} \rangle_{a \in L}$-effective transfinite recursion along $L$ to define a total $\langle X_{(a,k-1)} \rangle_{a \in L}$-recursive function $f: L \to \N$ such that:
\begin{itemize}
	\item $\Phi^{X_{(b,k-1)}}_{f(b)}$ is total for all $b \in L$;
	\item if we define $Y_b = \Phi^{X_{(b,k-1)}}_{f(b)}$ for all $b \in L$, then $Y_b = \{n: \Theta(n,\bigoplus_{a <_L b} Y_a,A)\}$.
\end{itemize}
For each $b \in L$, we define $\Phi^{X_{(b,k-1)}}_{f(b)}$ as follows. First note that $X_{(b,0)}$ uniformly computes $L \oplus A$ (because of the $1$ in front of $1+k\cdot L$), and hence uniformly computes $A \oplus \bigoplus_{a <_L b} X_{(a,k-1)}$. Now $X_{(b,k-1)}$ uniformly computes $X_{(b,0)}^{(k)}$, which uniformly computes $\left(A \oplus \bigoplus_{a <_L b} X_{(a,k-1)}\right)^{(k)}$. Since $\Phi^{X_{(a,k-1)}}_{f(a)}$ is total for all $a <_L b$, that in turn uniformly computes $\left(A \oplus \bigoplus_{a <_L b} Y_a\right)^{(k)}$, where $Y_a$ is defined to be $\{n: \Phi^{X_{(a,k-1)}}_{f(a)}(n)\conv=1\}$. Finally, $\left(A \oplus \bigoplus_{a <_L b} Y_a\right)^{(k)}$ uniformly computes $\{n: \Theta(n,\bigoplus_{a <_L b} Y_a,A)\}$, which defines $\Phi^{X_{(b,k-1)}}_{f(b)}$ as desired.

By transfinite induction along $L$, $f$ is total. Hence we can compute $Y_b = \Phi^{X_{(b,k-1)}}_{f(b)}$ for all $b \in L$, and output $\langle Y_b \rangle_{b \in L}$.
\end{proof}

When we define reductions from $\ATR$ to other problems by effective transfinite recursion, we will often want to perform different actions at the first step, successor steps, and limit steps. If we want said reductions to be uniform, we want to be able to compute which step we are in. This motivates the following definition:

\begin{defn}
A \emph{labeled well-ordering} is a tuple $\L = (L,0_L,S,p)$ where $L$ is a well-ordering, $0_L$ is the first element of $L$, $S$ is the set of all successor elements in $L$, and $p: S \to L$ is the predecessor function.
\end{defn}

We show that when defining Weihrauch reductions from $\ATR$ to other problems, we may assume that the given well-ordering has labels:

\begin{prop} \label{prop:ATR_equiv_ATR_labels}
$\ATR$ is Weihrauch equivalent to the following problem. Instances are pairs $(\L,A)$ where $\L = (L,0_L,S,p)$ is a labeled well-ordering and $A \subseteq \N$, with unique solution being the jump hierarchy $\langle X_a \rangle_{a \in L}$ which starts with $A$.
\end{prop}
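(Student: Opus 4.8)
The plan is to establish Weihrauch equivalence by giving reductions in both directions. One direction is immediate: given an instance $(\L,A)$ of the labeled problem, simply forget the labels. That is, the forward functional sends $(\L,A) = ((L,0_L,S,p),A)$ to the $\ATR$-instance $(L,A)$, and the backward functional is the identity, since the jump hierarchy on $L$ starting with $A$ does not depend on the labels. Hence the labeled problem Weihrauch reduces to $\ATR$.

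The substantive direction is to reduce $\ATR$ to the labeled problem. The obvious obstacle is that, given a bare well-ordering $L$, we cannot compute which elements are successors or limits (as noted in Section \ref{subsection:repns}), so we cannot directly produce a labeled version of $L$. The standard trick is to replace $L$ by a well-ordering that carries its labels for free, and from whose jump hierarchy we can recover the jump hierarchy on $L$. I would use $L' = \omega \cdot L$ (or $(1+\omega)\cdot L$, or $L \cdot \omega$ depending on bookkeeping): in $\omega \cdot L$ every element is either the first element or has a computable predecessor or is a limit in a computably recognizable way, so $\omega \cdot L$ is computably a labeled well-ordering, uniformly in $L$. Then apply the labeled problem to $(\omega \cdot L, A)$ to get a jump hierarchy $\langle X_{(a,n)} \rangle_{(a,n) \in \omega \cdot L}$, where I index elements of $\omega \cdot L$ as pairs $(a,n)$ with $a \in L$, $n \in \omega$, ordered lexicographically. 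Writing $Y_a := X_{(a,0)}$ for the ``base'' copies, one checks by transfinite induction along $L$ that $\langle Y_a \rangle_{a \in L}$ is, up to a uniformly computable adjustment, the jump hierarchy on $L$ starting with $A$: indeed $X_{(a,n)}$ computes $\bigoplus_{c <_L a} Y_c$ together with finitely many jumps above it, and the definition of the jump hierarchy on $\omega \cdot L$ propagates these jumps correctly. The backward functional then extracts $\langle Y_a \rangle_{a \in L}$ (applying the appropriate fixed finite jumps, or equivalently reading off the correct sub-column) and outputs it.

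The one point requiring care — and the step I expect to be the main obstacle — is verifying that the $Y_a$'s obtained this way really satisfy $Y_b = (\bigoplus_{a <_L b} Y_a)'$ exactly, and not merely up to Turing equivalence. Since $\ATR$ demands the unique \emph{literal} jump hierarchy, not just something of the right degree, I would arrange the reduction so that the backward functional performs effective transfinite recursion along $L$ (as in the proof of Proposition \ref{prop:ATR_arith_formula}) to convert the $X_{(\cdot,\cdot)}$-columns into the exact jump hierarchy on $L$. Concretely, using $\langle X_{(a,0)}\rangle$-effective transfinite recursion one defines a total function $f: L \to \N$ with $\Phi^{X_{(b,0)}}_{f(b)}$ coding $Y_b := (\bigoplus_{a<_L b} Y_a)'$, using that $X_{(b,0)}$ (equivalently $X_{(b,1)}$, a single jump up) computes $(\bigoplus_{a<_L b} Y_a)'$ once the $Y_a$ for $a <_L b$ have been defined; transfinite induction along $L$ shows $f$ is total, and then one outputs $\langle Y_b\rangle_{b \in L}$. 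This uses only earlier results (the effective transfinite recursion theorem and the computability of $+,\cdot,\Sigma$ on orderings), so both functionals are genuinely computable and uniform, giving $\ATR \leq_W$ (labeled problem), which completes the equivalence.
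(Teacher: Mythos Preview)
Your approach is essentially the paper's: trivial direction by forgetting labels, and for the substantive direction replace $L$ by a ``self-labeling'' ordering and then run effective transfinite recursion to rebuild the exact jump hierarchy on $L$. That part is fine, and your anticipation that the recursion is needed to get the \emph{literal} hierarchy (not just something of the right degree) is correct.

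There is, however, a genuine slip in your choice of replacement ordering. You claim $\omega\cdot L$ carries uniformly computable labels, but a labeled well-ordering must include its first element, and the first element of $\omega\cdot L$ is $(0_L,0)$. Computing this requires knowing $0_L$, which (as noted in Section~\ref{subsection:repns}) is \emph{not} computable from $L$. Equivalently, you cannot computably distinguish the first element $(0_L,0)$ from the limit points $(a,0)$ with $a>_L 0_L$. Your suggested alternatives $(1+\omega)\cdot L$ and $L\cdot\omega$ have the same defect (the former equals $\omega\cdot L$; the latter has first element $0_L$ and its successor set already depends on the unknown successor structure of $L$). The paper avoids this by using $\omega\cdot(1+L)$: the ``$1+$'' supplies a known first element, and every $(a,0)$ with $a\in L$ becomes a genuine limit, so $S=\{(\alpha,n):n>0\}$ and $p(\alpha,n)=(\alpha,n-1)$ are both computable.

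A second, smaller point: the paper starts the hierarchy with $L\oplus A$ rather than $A$. This guarantees that each relevant column $X_{(0,b)}$ uniformly computes $L$ (and hence $0_L$ and the set $\{a:a<_L b\}$), which is exactly what is needed to carry out the recursion with oracle $X_{(0,b)}$ alone. With your starting set $A$, the column $X_{(b,0)}$ need not compute $L$, and you would have to argue more carefully (or thread $L$ through the recursion oracle) to make the backward functional work. Starting with $L\oplus A$ is the clean fix.
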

\begin{proof}
Given $(L,A)$, we can uniformly compute labels for $\omega \cdot (1+L)$. Then apply the above problem to $(\omega \cdot (1+L),L \oplus A)$ to obtain the jump hierarchy $\langle X_{(n,\alpha)} \rangle_{n \in \omega,\alpha \in 1+L}$ which starts with $L \oplus A$.

For the backward reduction, we will use $\langle X_{(0,b)} \rangle_{b \in L}$-effective transfinite recursion along $L$ to define a total $\langle X_{(0,b)} \rangle_{b \in L}$-recursive function $f: L \to \N$ such that $\Phi^{X_{(0,b)}}_{f(b)}$ is total for every $b \in L$ and $\langle \Phi^{X_{(0,b)}}_{f(b)} \rangle_{b \in L}$ is the jump hierarchy on $L$ which starts with $A$.

First note that every $X_{(0,b)}$ uniformly computes $(L \oplus A)'$, and hence $0_L$.  This means that it uniformly computes the case division in the following construction.

For the base case, $X_{(0,0_L)}$ uniformly computes $L \oplus A$ and hence $A$. As for $b >_L 0_L$, $X_{(0,b)}$ uniformly computes $L$, hence it uniformly computes $(\bigoplus_{a <_L b} X_{(0,a)})'$. Therefore it uniformly computes $(\bigoplus_{a <_L b} \Phi^{X_{(0,a)}}_{f(a)})'$.
\end{proof}

The following closure property will be useful for proving Proposition \ref{prop:WQO_WO_ATR_CWO_ATR}. This fact also follows from the combination of work of Pauly ($\UC_{\N^\N}$ is parallelizable \cite{pauly15}) and Kihara, Marcone, Pauly ($\ATR \equiv_W \UC_{\N^\N}$ \cite{kmp18}), but we provide a short direct proof.

\begin{prop} \label{prop:ATR_parallelizable}
$\ATR$ is parallelizable, i.e., $\widehat{\ATR} \equiv_W \ATR$.
\end{prop}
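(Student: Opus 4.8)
The plan is to show $\widehat{\ATR} \leq_W \ATR$ (the reverse reduction is trivial since $\ATR$ is a special case of $\widehat{\ATR}$ applied to a constant sequence, or more precisely $\ATR \leq_W \widehat{\ATR}$ by padding). Suppose we are given a sequence of $\ATR$-instances $\langle (L_n, A_n) \rangle_n$. The key idea is to combine all the $L_n$ into a single well-ordering and run one jump hierarchy on it, from which we can uniformly extract each individual jump hierarchy. A natural candidate is $L = \Sigma_n (1 + L_n)$, the sum of the orderings $1 + L_n$, which is computable from the sequence by the operations listed in Section~\ref{subsection:repns}; the leading $1$ in each summand gives us a copy of $L$ (or at least of the relevant position) from which we can read off $n$ and hence the coding data. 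We then apply $\ATR$ to $(L, \bigoplus_n A_n)$ (together with a name for the whole sequence, which we can absorb into the starting set) to obtain a jump hierarchy $\langle X_c \rangle_{c \in L}$.

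\textbf{Key steps.} First I would set up the well-ordering $L = \Sigma_n (1 + L_n)$ and fix notation for its elements as pairs $(n, a)$ with $a \in 1 + L_n$; the first element of the $n$-th block is some marker $\star_n$, and the rest is a copy of $L_n$. Second, I would apply $\ATR$ (in the labeled form of Proposition~\ref{prop:ATR_equiv_ATR_labels}, or just the plain form) to $(L, B)$ where $B$ codes $\langle (L_n, A_n) \rangle_n$ together with $\bigoplus_n A_n$, obtaining the unique jump hierarchy $\langle X_{(n,a)} \rangle$. Third — the main content — for each fixed $n$ I would use effective transfinite recursion along $L_n$, relative to the column $\langle X_{(n,a)} \rangle_{a \in L_n}$, to define a total recursive $f_n : L_n \to \N$ such that $\langle \Phi^{X_{(n,a)}}_{f_n(a)} \rangle_{a \in L_n}$ is the jump hierarchy on $L_n$ starting with $A_n$. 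The point is that $X_{(n, \star_n)}$ (the marker position) already uniformly computes $A_n$ (it is a jump of a set computing $B$), and for $a >_{L_n} 0_{L_n}$, $X_{(n,a)}$ computes $(\bigoplus_{a' <_L (n,a)} X_{(n,a')})'$, which restricted to the relevant $L_n$-predecessors and combined with the inductively-built $\Phi^{X_{(n,a')}}_{f_n(a')}$ lets us recover $(\bigoplus_{a' <_{L_n} a} Y_{a'})'$ — exactly as in the backward reduction in the proof of Proposition~\ref{prop:ATR_equiv_ATR_labels}. One subtlety: elements of $L$ below $(n,a)$ include all of blocks $0, \dots, n-1$ as well as the $L_n$-predecessors of $a$; but the earlier blocks are uniformly computable from $B$ and their hierarchy values are lower in the jump order than $X_{(n,\star_n)}$, so they contribute nothing we cannot already get, and in any case $X_{(n,a)}$ sits above all of them. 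Finally, output $\langle \langle \Phi^{X_{(n,a)}}_{f_n(a)} \rangle_{a \in L_n} \rangle_n$.

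\textbf{Main obstacle.} The routine-but-delicate part is the bookkeeping in the effective transfinite recursion showing that $X_{(n,a)}$ genuinely computes $(\bigoplus_{a' <_{L_n} a} Y_{a'})'$ \emph{uniformly in $n$ and $a$}, since $L$ interleaves the blocks and the jump at position $(n,a)$ sees a large initial segment of $L$. I expect this to go through because the extra information in earlier blocks is $B$-computable and so is already available "for free" at every stage, exactly mirroring how the $1 + L$ trick is used in Propositions~\ref{prop:ATR_arith_formula} and~\ref{prop:ATR_equiv_ATR_labels}; the marker element $\star_n$ plays the role there played by the leading $1$. Once that is in place, totality of each $f_n$ follows by transfinite induction along $L_n$, and uniformity in $n$ is immediate from the uniformity of effective transfinite recursion (the last bullet of the recursion theorem quoted in Section~\ref{section:ATR}).
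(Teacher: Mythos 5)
Your proposal is correct and matches the paper's proof in all essentials: both combine the instances into the single well-ordering $\sum_n L_n$ (your leading $1$'s are a harmless cosmetic variant), seed the hierarchy with a set coding the whole sequence $\langle (L_n,A_n)\rangle_n$, and then recover each individual jump hierarchy by effective transfinite recursion along $L_n$ using the observation that every column $X_{(n,a)}$ computes the starting data and hence the relevant restricted jump. The bookkeeping you flag as the main obstacle goes through exactly as you predict, mirroring Proposition \ref{prop:ATR_equiv_ATR_labels}.
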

\begin{proof}
It suffices to show that $\widehat{\ATR} \leq_W \ATR$. Instead of $\widehat{\ATR}$, we consider the parallelization of the version of $\ATR$ in Proposition \ref{prop:ATR_equiv_ATR_labels}. Given $\langle (\L_i,A_i) \rangle_i$, apply $\ATR$ to $(\sum_i L_i,\bigoplus_i L_i \oplus A_i)$ to obtain the jump hierarchy $\langle X_{(i,a)} \rangle_{i \in \omega,a \in L_i}$ which starts with $\bigoplus_i L_i \oplus A_i$.

For each $i$, we show how to compute the jump hierarchy $\langle X_a \rangle_{a \in L_i}$ which starts with $A_i$ using $(\L_0 \oplus \L_i \oplus \langle X_{(i,a)} \rangle_{a \in L_i})$-effective transfinite recursion along $L_i$. This is done by defining a total $(\L_0 \oplus \L_i \oplus \langle X_{(i,a)} \rangle_{a \in L_i})$-recursive function $f_i: L_i \to \N$ such that for all $a \in L_i$, $\Phi^{X_{(i,a)}}_{f(a)}$ is total and defines $X_a$. (The role of $\L_0 \oplus \L_i$ is to provide the values of $0_{L_0}$ and $0_{L_i}$ in the following computation.)

For the base case, $X_{(i,0_{L_i})}$ uniformly computes $X_{(0,0_{L_0})} = \bigoplus_i L_i \oplus A_i$, which uniformly computes $A_i$. 

For $b >_{L_i} 0_{L_i}$, $X_{(i,b)}$ uniformly computes $X_{(0,0_{L_0})}$ which uniformly computes $L_i$, so $X_{(i,b)}$ uniformly computes $(\bigoplus_{a <_{L_i} b} X_{(i,a)})'$. That in turn uniformly computes $(\bigoplus_{a <_{L_i} b} \Phi^{X_{(i,a)}}_{f(a)})' = (\bigoplus_{a <_{L_i} b} X_a)' = X_b$ as desired.
\end{proof}

Henceforth we will primarily work with the following version of $\ATR$:

\begin{prop} \label{prop:ATR_labeled}
$\ATR$ is Weihrauch equivalent to the following problem: instances are pairs $(\L,c)$ where $\L$ is a labeled well-ordering and $c \in L$, with unique solution being $Y_c$, where $\langle Y_a \rangle_{a \in L}$ is the unique hierarchy such that:
\begin{itemize}
	\item $Y_{0_L} = \L$;
	\item if $b$ is the successor of $a$, then $Y_b = Y'_a$;
	\item if $b$ is a limit, then $Y_b = \bigoplus_{a <_L b} Y_a$.
\end{itemize}
\end{prop}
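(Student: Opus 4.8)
The plan is to prove Weihrauch equivalence between $\ATR$ and this ``limit-stage-collapses'' variant, call it $\ATR^\flat$, by reductions in both directions. The key conceptual point is that the two hierarchies differ only in bookkeeping: the variant takes $\bigoplus_{a<_L b} Y_a$ at limit stages rather than its jump, but since the very next successor step applies a jump anyway, nothing is lost, and conversely one can recover a genuine jump hierarchy by working along a slightly stretched ordering whose successor structure forces enough jumps. By Proposition \ref{prop:ATR_equiv_ATR_labels} we may assume all well-orderings in sight come equipped with labels, so effective transfinite recursion can branch on first/successor/limit.

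\medskip
\noindent\textbf{Reducing $\ATR^\flat$ to $\ATR$.} Given an $\ATR^\flat$-instance $(\L,c)$, I would feed $\ATR$ the labeled well-ordering $\L' = 1 + 2\cdot L$ (each point of $L$ replaced by two points, with an extra point in front) together with the set $\L \oplus A$ for a suitable coding set $A$, obtaining the jump hierarchy $\langle X_\alpha \rangle_{\alpha \in \L'}$. The point $(a,0)$ in the copy ``is'' $a$, and $(a,1)$ is its jump-successor; the extra leading $1$ ensures every level computes $\L$ and hence $0_L$ and the successor/limit labels of $L$. I would then run $\langle X_{(\cdot,0)}\rangle$-effective transfinite recursion along $L$ to define $f\colon L\to\N$ with $\Phi^{X_{(b,0)}}_{f(b)}$ total and equal to the desired $Y_b$: at the first stage $Y_{0_L}=\L$ is computed from the coding; at a successor $b=p(a)+1$... wait---more precisely at a successor $b$ of $a$, $X_{(b,0)}$ computes $(\bigoplus_{\alpha<_{L'}(b,0)} X_\alpha)'$, which by the induction hypothesis and totality of $f$ below $b$ computes $\bigl(\bigoplus_{a'<_L a} Y_{a'} \oplus Y_a\bigr)' = Y'_a = Y_b$ (using that $X_{(a,0)}$ already computes $Y_a$, or rather that the jump absorbs the intervening level); at a limit $b$, $X_{(b,0)}$ computes $\bigoplus_{\alpha<_{L'}(b,0)}X_\alpha$ which computes $\bigoplus_{a<_L b} Y_a = Y_b$. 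Transfinite induction along $L$ gives totality of $f$, and I output $\Phi^{X_{(c,0)}}_{f(c)}$. The backward functional is thus just this ETR computation; the forward functional is the map $(\L,c)\mapsto(\L',\L\oplus A,(c,0))$.

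\medskip
\noindent\textbf{Reducing $\ATR$ to $\ATR^\flat$.} Conversely, given an $\ATR$-instance $(L,A)$ with $L$ labeled, I would ask $\ATR^\flat$ for the full hierarchy along $L$ (i.e. take $c$ to be the top---or since $L$ need not have a top, parallelize over $c\in L$ using Proposition \ref{prop:ATR_parallelizable}, or more simply run $\ATR^\flat$ on $1+L$ at the appropriate points), but with $\L$ replaced by a starting set coding $A$ and the ordering replaced by $\omega\cdot(1+L)$ exactly as in the proof of Proposition \ref{prop:ATR_equiv_ATR_labels}: inside each block of ordertype $\omega$ the $\ATR^\flat$-hierarchy takes successive jumps (since all those steps are successor steps), so after finitely many of them we have computed whatever finite jump of $\bigoplus_{a<_L b} Y_a$ we need, and at the block for $b$ we can extract $(\bigoplus_{a<_L b}X_a)'$ by an ETR identical in form to that of Proposition \ref{prop:ATR_equiv_ATR_labels}. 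The leading $\omega$-block and the $1$ in $1+L$ provide $A$ and the label information. Again the backward functional is an effective transfinite recursion and the forward functional is the explicit reindexing.

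\medskip
\noindent\textbf{Main obstacle.} The routine part is the ETR bookkeeping; the genuinely delicate point is making sure the reductions are \emph{uniform} and that every intermediate object named is total/well-defined even when the recursion theorem hands us a partial index---but this is exactly what the representation conventions of Section \ref{subsection:repns} and the ``furthermore'' clause of the effective-transfinite-recursion theorem are designed to handle, and the two previous propositions already exhibit the pattern. I would therefore spend the bulk of the write-up carefully stating the reindexing in each direction and the ETR clauses, and only briefly remark that totality follows by transfinite induction along $L$.
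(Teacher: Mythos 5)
Your first direction (reducing the new problem to $\ATR$) is essentially correct, though more laborious than necessary: since the labels in $\L$ let one branch arithmetically on first/successor/limit, the hierarchy $\langle Y_a\rangle$ is generated by a single arithmetical operator with parameter $\L$, and the paper just invokes Proposition \ref{prop:ATR_arith_formula}. The gaps are in your second direction, where you have glossed over precisely the two features that distinguish this variant from the problem of Proposition \ref{prop:ATR_equiv_ATR_labels}. First, an instance of the new problem is just $(\L,c)$: you do not get to choose the starting set, which is forced to equal $\L$ itself. So ``with $\L$ replaced by a starting set coding $A$'' is not a legal move. The set $A$ has to be smuggled into the \emph{order type} of the ordering you query; the paper does this by inserting the summand $(A,<_\N)$ into $M=\omega\cdot(1+(A,<_\N)+L+1)+1$, so that any column computing $\M$ recovers $A$ from the domain of $M$.

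Second, the new problem returns only the single column $Y_c$, and a Weihrauch reduction permits exactly one query, so you must exhibit one point $c$ whose column yields the whole hierarchy. None of your three proposals achieves this: $\omega\cdot(1+L)$ has no top element to query (and prepending $1$ to $L$ adds a point at the bottom, not the top); and ``parallelize over $c\in L$ using Proposition \ref{prop:ATR_parallelizable}'' is circular, since that proposition parallelizes $\ATR$, not the new problem --- knowing $\widehat{\ATR}\equiv_W\ATR$ does not license infinitely many queries to the problem you are reducing \emph{to}, and the parallelizability of the new problem is only a consequence of the equivalence you are trying to prove. The paper's fix is to append one further point $m_M$ after $\omega\cdot(\cdots+1)$; because the part below it has no maximum, $m_M$ is a \emph{limit}, so the single answer $Y_{m_M}=\bigoplus_{c<_M m_M}Y_c$ is literally the entire hierarchy on $M$, from which the jump hierarchy on $L$ starting with $A$ is then extracted by effective transfinite recursion along $L$ (using the columns $Y_{(2,a,1)}=Y_{(2,a,0)}'$) much as you describe. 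Without these two repairs your forward functional does not produce a valid instance of the new problem, and your backward functional has no single column to work from.
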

\begin{proof}
Using Proposition \ref{prop:ATR_arith_formula}, it is easy to see that the above problem is Weihrauch reducible to $\ATR$.

Conversely, we reduce the version of $\ATR$ in Proposition \ref{prop:ATR_equiv_ATR_labels} to the above problem. Given $(\L,A)$, define
\[ M = \omega \cdot (1+(A,<_\N)+L+1)+1. \]
Formally, the domain of $M$ is
\begin{align*}
\{(0,n): n \in \omega\} \cup \{(1,m,n): m\in A,n \in \omega\} \\
\cup \{(2,a,n): a \in L,n \in \omega\} \cup \{(3,n): n \in \omega\} \cup \{m_M\}
\end{align*}
with the ordering described above. It is easy to see that $L \oplus A$ uniformly computes $M$ and labels for it. Let $\M$ denote the tuple of $M$ and its labels.

Apply the given problem to $\M$ and $m_M \in M$ to obtain $Y_{m_M}$. Note that since $m_M$ is a limit, $Y_{m_M}$ uniformly computes $Y_{(0,0)} = \M$, and hence $\langle Y_c \rangle_{c \in M}$.

For the backward functional, we perform $(\L \oplus \langle Y_c \rangle_{c \in M})$-effective transfinite recursion along $L$ to define a total $(\L \oplus \langle Y_c \rangle_{c \in M})$-recursive function $f: L \to \N$ such that for each $a \in L$, $\Phi^{Y_{(2,a,1)}}_{f(a)}$ is total and defines the $a^{\text{th}}$ column $X_a$ of the jump hierarchy on $L$ which starts with $A$. Note that $\L$ uniformly computes the following case division.

For the base case, first use $Y_{(2,0_L,1)} = Y'_{(2,0_L,0)}$ to compute $Y_{(2,0_L,0)}$. Now $(2,0_L,0)$ is a limit, so $Y_{(2,0_L,0)}$ uniformly computes $Y_{(0,0)} = \M$, which uniformly computes $A$ as desired.

For $b >_L 0_L$, since $(2,b,0)$ is a limit, $Y_{(2,b,0)}$ uniformly computes $Y_{(0,0)} = \M$, which uniformly computes $L$. Therefore $Y_{(2,b,0)}$ uniformly computes $\bigoplus_{a <_L b} Y_{(2,a,1)}$, and hence $\bigoplus_{a <_L b} \Phi^{Y_{(2,a,1)}}_{f(a)} = \bigoplus_{a <_L b} X_a$. Therefore $Y_{(2,b,1)}$ uniformly computes $X_b = (\bigoplus_{a <_L b} X_a)'$ as desired.

This completes the definition of $f$, and hence the reduction from the version of $\ATR$ in Proposition \ref{prop:ATR_equiv_ATR_labels} to the given problem.
\end{proof}

Thus far, we have seen that the Weihrauch degree of $\ATR$ is fairly robust with respect to the type of jump hierarchy that it outputs (Propositions \ref{prop:ATR_arith_formula}, \ref{prop:ATR_equiv_ATR_labels}, \ref{prop:ATR_labeled}). However, we still require some level of uniformity in the jump hierarchy produced:

\begin{prop} \label{prop:lim_not_leq_W_nonuniform_jump_hierarchy}
The problem of producing the Turing jump of a given set is not Weihrauch reducible to the following problem: instances are pairs $(L,A)$ where $L$ is a well-ordering and $A \subseteq \N$, and solutions to $L$ are hierarchies $\langle X_a \rangle_{a \in L}$ where $X_{0_L} = A$ and for all $a <_L b$, $X'_a \leq_T X_b$. Hence $\ATR$ is not Weihrauch reducible to the latter problem either.
\end{prop}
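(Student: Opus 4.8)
The plan is to establish the stronger-looking statement $J \not\leq_W Q$, where $J$ is the problem of producing the Turing jump of a given set and $Q$ is the problem displayed in the statement; the last sentence then follows since $J \leq_W \ATR$ (feed $\ATR$ the two-element well-ordering together with the given set as the set parameter, and read off the second column of the resulting hierarchy). So assume towards a contradiction that $J \leq_W Q$ via a forward functional $\Phi$ and a backward functional $\Psi$. I would apply $\Phi$ to the empty set, obtaining a $Q$-instance $(L,A) = \Phi(\emptyset)$ with $L$ a computable well-ordering and $A$ a computable set. The idea is to exhibit a topologically dense family of $Q$-solutions to $(L,A)$, each of which forces $\Psi$ to output $\emptyset'$, and then to read off a computation of $\emptyset'$ from $\Psi$ and this family, contradicting the noncomputability of $\emptyset'$.

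For the family I would exploit that $Q$ only demands the non-uniform condition $X'_a \leq_T X_b$. Call a sequence $p = \langle B_a \rangle_{a \in L,\, a \neq 0_L}$ \emph{admissible} if $A' \leq_T B_b$ for every $b >_L 0_L$ and $B'_a \leq_T B_b$ whenever $0_L <_L a <_L b$. For admissible $p$, the sequence obtained by putting $A$ in the column at $0_L$ and $B_a$ in the column at $a >_L 0_L$ is a $Q$-solution to $(L,A)$, and it is computable from $p$ uniformly in the (computable) data $L$ and $A$. Identifying $\N^\N$ with the appropriate (countable) power of $\N^\N$, let $\mathcal{Q} \subseteq \N^\N$ be the set of admissible sequences. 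The key claim is that $\mathcal{Q}$ is dense: given any finite condition prescribing initial segments of finitely many columns, one builds a full admissible $p$ by recursion along $L$, letting $B_b$ be the prescribed initial segment (or the empty string, for columns not mentioned in the condition) followed by the characteristic function of $A' \oplus \bigoplus_{0_L <_L a <_L b} B'_a$; the admissibility inequalities hold by construction.

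Granting this, let $\Gamma$ be the computable functional with $\Gamma^p = \Psi(\emptyset \oplus (\langle A \rangle \concat p))$, where $\langle A \rangle \concat p$ denotes the solution associated to $p$. Since that solution is a genuine $Q$-solution to $\Phi(\emptyset)$ whenever $p \in \mathcal{Q}$, we get $\Gamma^p = \emptyset'$ for all $p \in \mathcal{Q}$. Now fix $n$: if some finite $\sigma$ has $\Gamma^\sigma(n) \conv = c$, then $\Gamma^q(n) = c$ for every $q \succeq \sigma$, so if $c \neq \emptyset'(n)$ then no admissible $q$ extends $\sigma$, contradicting density of $\mathcal{Q}$. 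Therefore every finite $\sigma$ on which $\Gamma(n)$ halts yields the value $\emptyset'(n)$, and at least one such $\sigma$ exists because $\Gamma$ agrees with $\emptyset'$ (hence is total) on the nonempty set $\mathcal{Q}$. Computably searching for such a $\sigma$ then computes $\emptyset'(n)$ for each $n$, so $\emptyset'$ is computable --- the desired contradiction.

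The conceptual heart of the argument, and the step I would be most careful to get right, is the density of $\mathcal{Q}$: this is precisely the place where the non-uniformity of the condition "$X'_a \leq_T X_b$" is used, letting each column $B_b$ be chosen almost arbitrarily, and it is exactly what fails for the genuine jump-hierarchy problem $\ATR$ (whose solutions are rigidly determined). Beyond that, the work is routine: one should verify the bookkeeping when $L$ is infinite (so that the infinite joins and the power $(\N^\N)^{L \setminus \{0_L\}}$ are handled correctly), note the degenerate case $|L| = 1$, and confirm the uniform computability of the solution associated to $p$.
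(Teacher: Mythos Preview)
Your proposal is correct and follows essentially the same approach as the paper: apply the forward functional to $\emptyset$ to obtain a computable instance $(L,A)$, observe that any finite condition on the columns (with the $0_L$-column compatible with $A$) can be extended to a valid $Q$-solution by appending suitable jump-hierarchy data, and conclude that the backward functional would then compute $\emptyset'$ by a simple search over finite conditions. The paper phrases the argument as a direct claim-and-extend rather than via topological density of $\mathcal{Q}$, but the content is identical.
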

\begin{proof}
Towards a contradiction, fix forward and backward Turing functionals $\Gamma$ and $\Delta$ witnessing otherwise. We will show that $\Gamma$ and $\Delta$ could fail to produce $\emptyset'$ from $\emptyset$. First, $\Gamma^\emptyset$ defines some computable $(L,A)$. We claim that there are finite $\langle \sigma_a \rangle_{a \in L}$ and $e$ such that $\sigma_{0_L} \prec A$ and $\Delta^{\emptyset \oplus \langle \sigma_a \rangle_{a \in L}}(e)\conv \neq \emptyset'(e)$.

Suppose not. Then for each $e$, we may compute $\emptyset'(e)$ by searching for $\langle \sigma_a \rangle_{a \in L}$ such that $\sigma_{0_L} \prec A$ and $\Delta^{\emptyset \oplus \langle \sigma_a \rangle_{a \in L}}(e)\conv$. Such $\langle \sigma_a \rangle_{a \in L}$ must exist because if $\langle X_a \rangle_{a \in L}$ is a hierarchy on $L$ which starts with $A$ (as defined in the proposition), then $\Delta^{\emptyset \oplus \langle X_a \rangle_{a \in L}}$ is total. This is a contradiction, thereby proving the claim.

Fix any $\langle \sigma_a \rangle_{a \in L}$ which satisfies the claim. It is clear that $\langle \sigma_a \rangle_{a \in L}$ can be extended to a solution $\langle X_a \rangle_{a \in L}$ to $(L,A)$ for the given problem (e.g., by extending using columns of the usual jump hierarchy). But $\Delta^{\emptyset \oplus \langle X_a \rangle_{a \in L}} \neq \emptyset'$, contradiction.
\end{proof}

If we are willing to allow arithmetic Weihrauch reductions, then $\ATR$ remains robust:

\begin{prop}
$\ATR$ is arithmetically Weihrauch reducible (hence arithmetically Weihrauch equivalent) to the problem in Proposition \ref{prop:lim_not_leq_W_nonuniform_jump_hierarchy}.
\end{prop}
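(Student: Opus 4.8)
The plan is to give an arithmetic Weihrauch reduction to the problem of Proposition~\ref{prop:lim_not_leq_W_nonuniform_jump_hierarchy}, call it $W$, from the version of $\ATR$ in Proposition~\ref{prop:ATR_labeled} (to which $\ATR$ is Weihrauch equivalent). Given an $\ATR$-instance $(\L,c)$ with $\L = (L,0_L,S,p)$ a labeled well-ordering and $c \in L$, the forward functional just returns the $W$-instance $(L,\L)$ --- the well-ordering $L$ together with the set coding $\L$. A $W$-solution is then a hierarchy $\langle X_a \rangle_{a \in L}$ with $X_{0_L} = \L$ and $X_a' \le_T X_b$ whenever $a <_L b$. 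Starting the hierarchy at $\L$ is what makes the argument go: since $0_L <_L b$ for every $b >_L 0_L$, each such $X_b$ computes $X_{0_L}'$, hence computes $\L$, and in particular computes $L$ together with all of its labels.

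The backward functional is allowed access to $Z := (\L \oplus \langle X_a \rangle_{a \in L})^{(k)}$ for some fixed $k$. I would have it run effective transfinite recursion along $L$, relative to $Z$, to produce a $Z$-computable function $g\colon L \to \N$ such that $\Phi^{X_a}_{g(a)}$ is total and equals $Y_a$ for every $a \in L$, where $\langle Y_a \rangle_{a \in L}$ is the hierarchy of Proposition~\ref{prop:ATR_labeled} (so $Y_{0_L} = \L$, $Y_b = Y_{p(b)}'$ at successors, $Y_b = \bigoplus_{a <_L b} Y_a$ at limits); it then outputs $Y_c = \Phi^{X_c}_{g(c)}$. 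At $0_L$ take $g(0_L)$ to be an index for the identity, as $X_{0_L} = \L = Y_{0_L}$. At a successor $b$ with $a := p(b)$: from $g(a)$ one computes an index $e_2$ with $\Phi^{X_a'}_{e_2} = (\Phi^{X_a}_{g(a)})' = Y_a' = Y_b$, using that the jump of a set computes, uniformly, the jump of anything that set computes; and $Z$ is used to search for an index $e_1$ with $\Phi^{X_b}_{e_1} = X_a'$, which exists because $X_a' \le_T X_b$. Then $\Phi^{\Phi^{X_b}_{e_1}}_{e_2} = Y_b$ is a genuine $X_b$-computation --- crucially one does not jump relative to $X_b$, but rather feeds $X_b$'s computation of $X_a'$ into $e_2$ --- and this defines $g(b)$. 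At a limit $b$: for each $a <_L b$ we have $X_a \le_T X_a' \le_T X_b$, so $Z$ searches for indices $e_1^a$ with $\Phi^{X_b}_{e_1^a} = X_a$; and since $X_b$ computes $\L$, it computes $\{a : a <_L b\}$; composing, $X_b$ uniformly computes $\bigoplus_{a <_L b} \Phi^{\Phi^{X_b}_{e_1^a}}_{g(a)} = \bigoplus_{a <_L b} Y_a = Y_b$, which defines $g(b)$.

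The heart of the matter, and the reason the plain ($\le_W$) version of this reduction fails (Proposition~\ref{prop:lim_not_leq_W_nonuniform_jump_hierarchy}), is that the $W$-condition $X_a' \le_T X_b$ asserts only the existence of a reduction, not a uniform one; the point of working arithmetically is that one can simply search for it. Each search above asks for an index $e$ with $\Phi^{X_b}_e$ total and equal to a prescribed set ($X_a$, $X_a'$, or $L \restrict b$); for a fixed such shape this is, uniformly in the parameters, an arithmetical condition on $e$ of bounded finite complexity, so a fixed finite jump of $\L \oplus \langle X_a \rangle_{a \in L}$ locates such an $e$, one of which exists by hypothesis. Since the searches depend only on the $W$-solution --- not on the values of $g$ produced so far --- the hypothesis of the effective transfinite recursion theorem is met for every index; because $L$ is a well-ordering the recursion yields a total $Z$-computable $g$, and the equalities $\Phi^{X_a}_{g(a)} = Y_a$ then follow by transfinite induction along $L$. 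Finally, since conversely $W \le_W \ATR$ (a jump hierarchy is a $W$-solution, uniformly), the two problems are arithmetically Weihrauch equivalent. I expect the only real work to lie in checking that the composite reductions in the successor and limit steps remain $X_b$-computable and in pinning down the value of $k$; both look routine.
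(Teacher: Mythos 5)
Your overall route is the same as the paper's: the paper proves this proposition by pointing to the proof of Proposition \ref{prop:ATR2_nonuniform_jump_hierarchy}, which likewise runs an effective transfinite recursion relative to a fixed finite jump of the given hierarchy and uses that jump to \emph{search} for the Turing reductions whose existence (but not uniformity) the weak hypothesis guarantees. The base and successor cases of your recursion are fine. The problem is the limit case, which you set aside as routine; it is in fact the crux, and as written it fails. At a limit $b$ you need an index $g(b)$ with $\Phi^{X_b}_{g(b)} = \bigoplus_{a <_L b} Y_a$. There are two obstacles. The lesser one is uniformity: your composite computation needs the maps $a \mapsto e_1^a$ and $a \mapsto g(a)$ on the infinite set $\{a : a <_L b\}$, and these are only $Z$-computable, not $X_b$-computable, so they cannot be packed into the single index $g(b)$. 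The fatal one is that the required reduction need not exist at all: the hypothesis on a solution is only the \emph{pairwise} condition $X_a' \leq_T X_b$, and a set can compute every member of a family without computing the join of the family. Concretely, take $L = \omega+2$, $X_n = \L^{(n)}$ for $n<\omega$, $X_\omega = P$ where $P$ is a suitable component of a Kleene--Post--Spector exact pair for the ideal of degrees arithmetic in $\L$ (so $P \geq_T \L^{(n)}$ for every $n$ but $P \not\geq_T \bigoplus_n \L^{(n)}$), and $X_{\omega+1} = P'$. This is a legitimate solution to your instance $(L,\L)$, yet $Y_\omega = \bigoplus_n Y_n \equiv_T \bigoplus_n \L^{(n)} \not\leq_T X_\omega$, so no $g(\omega)$ exists; and once the invariant $Y_a \leq_T X_a$ is lost, the later successor steps (which rely on $Y_a' \leq_T X_a' \leq_T X_b$) are lost with it.

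To be fair, the paper's own proof of Proposition \ref{prop:ATR2_nonuniform_jump_hierarchy} is just as terse at exactly this point (``we can use $X_b$ to compute $(\bigoplus_{a <_L b}\Phi^{X_a}_{f(a)})'$''), and your proposal is a faithful rendering of it; but the gap needs an idea, not bookkeeping. The proposition itself survives the counterexample, because the backward functional also has the join of the entire given sequence, and there $\bigoplus_{a \in L} X_a \geq_T \bigoplus_n \L^{(n)}$. One way to repair the argument: have the forward functional pad the ordering, say replacing $L$ by $\omega\cdot(1+L)$, so that for $a <_L b$ every finite jump $X_{(0,a)}^{(j)}$ is computable from $X_{(0,b)}$ via the intermediate columns $(j,a)$; maintain only the non-uniform invariant that the target hierarchy below each $a <_L b$ is computable from $X_{(0,b)}$; and then recover $\bigoplus_{a <_L b} Y_a$ uniformly from a fixed jump of $X_{(0,b)} \oplus \L$ by an Enderton--Putnam-style search over all $e$ such that $\Phi^{X_{(0,b)}}_e$ is a jump hierarchy on an initial segment of $L$ --- being such an $e$ is arithmetic, a witness exists by the invariant, and uniqueness of jump hierarchies on well-orderings guarantees that every witness computes the correct set. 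That uniform recovery at limits is the ingredient missing from both your write-up and the paper's.
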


For the proof, we refer to the reader to the proof of Proposition \ref{prop:ATR2_nonuniform_jump_hierarchy} later. (The only difference is that we use transfinite induction along the given well-ordering to show that we always output a jump hierarchy.)

\section{Theorems about embeddings between well-orderings}

There are several theorems about embeddings between well-orderings which lie around $\ATR_0$ in reverse mathematics. Friedman (see \cite[notes for Theorem V.6.8, pg.\ 199]{sim_book}) showed that comparability of well-orderings is equivalent to $\ATR_0$. Friedman and Hirst \cite{fh90} then showed that weak comparability of well-orderings is also equivalent to $\ATR_0$. We formulate those two theorems about embeddings as problems:

\begin{defn}
Define the following problems:
\begin{itemize}
	\item[$\CWO$:] Given a pair of well-orderings, produce an embedding from one of them onto an initial segment of the other.
	\item[$\WCWO$:] Given a pair of well-orderings, produce an embedding from one of them into the other.
\end{itemize}
\end{defn}

Marcone proved the analog of Friedman's result for Weihrauch reducibility:

\begin{thm}[see Kihara, Marcone, Pauly \cite{kmp18}] \label{thm:marcone_CWO_equiv_ATR}
$\CWO \equiv_W \UC_{\N^\N} \equiv_W \ATR$.
\end{thm}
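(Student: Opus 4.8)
The plan is to take the equivalence $\ATR \equiv_W \UC_{\N^\N}$ of \cite{kmp18} as given (it is used already in the proof of Proposition \ref{prop:ATR_parallelizable}) and then prove $\CWO \equiv_W \ATR$ via the two reductions $\CWO \leq_W \ATR$ and $\ATR \leq_W \CWO$. Working with $\ATR$ rather than directly with $\UC_{\N^\N}$ is convenient: it lets us use the jump-hierarchy reformulations from Propositions \ref{prop:ATR_arith_formula}, \ref{prop:ATR_equiv_ATR_labels} and \ref{prop:ATR_labeled}, and it avoids having to encode the (non-arithmetical) condition ``surjective onto an initial segment'' directly into an ill-founded tree.

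For $\CWO \leq_W \ATR$, given a pair of well-orderings $P,Q$ I would consider the \emph{canonical comparison map} $f \colon P \to \dom(Q) \cup \{\star\}$ defined by transfinite recursion along $P$: let $f(a)$ be the $<_Q$-least element of $\dom(Q) \setminus f[\{a' <_P a\}]$ if that set is nonempty, and $f(a) = \star$ otherwise, with the value $\star$ propagated upward once it first occurs. A routine transfinite induction then gives two cases. If $f$ never takes the value $\star$, then $f$ is an embedding of $P$ onto an initial segment of $Q$: order-preservation and downward-closedness of the range both follow from the ``take the $<_Q$-least unused element'' rule. If $f$ first takes the value $\star$ at $a_0 \in P$, then $f \restrict \{a' <_P a_0\}$ is an order isomorphism onto $Q$, so its inverse is an embedding of $Q$ onto the initial segment $\{a' <_P a_0\}$ of $P$. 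Either way $f$ computes a $\CWO$-solution for $(P,Q)$. For the reduction, the forward functional outputs the $\ATR$-instance $(\omega\cdot(1+P),\, P \oplus Q)$, the padding providing a bounded number of spare jumps at every stage exactly as in the proof of Proposition \ref{prop:ATR_arith_formula}; the backward functional uses effective transfinite recursion along $P$ on the resulting jump hierarchy to compute $f$ — at stage $a$, the set $f[\{a' <_P a\}]$, the $<_Q$-least element of its complement, and whether overflow has already occurred are all decided by finitely many of the hierarchy columns lying above $a$ — and then reads off and returns the associated embedding. Totality of the recursion is automatic since $P$ is well-ordered.

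For $\ATR \leq_W \CWO$, which is the harder direction, I would adapt Friedman's reverse-mathematics argument that $\CWO$ implies $\ATR_0$: from an $\ATR$-instance $(L,A)$, construct, uniformly and computably in $L \oplus A$, a well-ordering $W$ with $\otp(W) > \otp(L)$ such that the initial segment of $W$ of order type $\otp(L)$, regarded as a subset of $\N$, Turing-computes the jump hierarchy on $L$ that starts with $A$. Applying $\CWO$ to the pair $\{L,W\}$ must then return an embedding of $L$ onto that initial segment — an embedding in the other direction is impossible for reasons of order type — and the backward functional recovers the hierarchy from the range of this embedding. The main obstacle is precisely the construction of $W$: one must arrange that the a priori $\Pi^1_1(L\oplus A)$ predicate ``lies among the first $\otp(L)$ elements of $W$'' codes the hierarchy, and do so uniformly, with no non-uniform appeal to the hierarchy's existence (Friedman's original argument is phrased as a proof by contradiction, which does not translate directly into a reduction). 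An alternative that sidesteps this issue: once the later Theorem \ref{thm:ATR_leq_WCWO} ($\ATR \leq_W \WCWO$) is available, observe that every embedding onto an initial segment is in particular an embedding, so a $\CWO$-solution is a $\WCWO$-solution for the same instance; hence $\WCWO \leq_W \CWO$, and therefore $\ATR \leq_W \WCWO \leq_W \CWO$.
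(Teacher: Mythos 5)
The paper does not actually prove this statement: it is quoted from Kihara--Marcone--Pauly, and Marcone's own argument establishes $\CWO \equiv_W \UC_{\N^\N}$ directly (in fact up to strong Weihrauch reducibility). Your reconstruction instead assembles the theorem from the paper's internal machinery, which is exactly how the paper later obtains $\CWO \equiv_W \ATR \equiv_W \WCWO$ as Corollary \ref{cor:CWO_ATR_WCWO}: the direction $\CWO \leq_W \ATR$ is Proposition \ref{prop:WQO_WO_ATR_CWO_ATR} (same canonical comparison map, same use of effective transfinite recursion over a padded jump hierarchy), and the direction $\ATR \leq_W \CWO$ is obtained, as in your fallback, by composing Theorem \ref{thm:ATR_leq_WCWO} with the trivial $\WCWO \leq_W \CWO$. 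That fallback is not circular: the proof of Theorem \ref{thm:ATR_leq_WCWO} uses only Proposition \ref{prop:ATR_labeled}, Theorem \ref{thm:analogue_chen}, and Lemmas \ref{lem:indec_absorb}, \ref{lem:indec_embed_finite_sum}, \ref{lem:uniformly_majorreducible}, none of which depend on the present theorem. You are also right that Friedman's original argument, being a proof by contradiction, does not directly uniformize, which is why the detour through $\WCWO$ is the sensible route.

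There is one concrete gap in your $\CWO \leq_W \ATR$ direction. The instance $(\omega\cdot(1+P), P\oplus Q)$ has no column sitting above the \emph{entire} hierarchy, so although the recursion computing $f$ goes through level by level, the backward functional cannot uniformly decide the final case division, namely whether $f$ ever takes the value $\star$. That is a $\Sigma^0_1$ fact about the join of all columns, not computable from it, and the two cases yield different outputs (the graph of $f$ as an embedding of $P$ onto an initial segment of $Q$, versus the inverse graph as an embedding of $Q$ onto an initial segment of $P$), so no finite portion of the output can be committed to before the case is known; as written this gives a computable reduction but not a Weihrauch reduction. The fix is exactly what Proposition \ref{prop:WQO_WO_ATR_CWO_ATR} does: append a last element $m_N$ to the padded ordering and use the top column $X_{m_N}$ to decide which case holds and to compute the corresponding output.
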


(In fact, he proved the equivalence up to strong Weihrauch reducibility, which we will not define here.) In Theorem \ref{thm:ATR_leq_WCWO}, we prove the analog of Friedman and Hirst's result for Weihrauch reducibility, i.e., $\WCWO \equiv_W \UC_{\N^\N}$. This answers a question of Marcone \cite[Question 5.8]{kmp18}.

Another class of examples of theorems about embeddings comes from Fra\"iss\'e's conjecture (proved by Laver \cite{laver_fraisse}), which asserts that the set of countable linear orderings is well-quasi-ordered (i.e., any infinite sequence contains a weakly increasing pair) by embeddability. Shore \cite{shore93} studied the reverse mathematics of various restrictions of Fra\"iss\'e's conjecture. We formulate them as problems:

\begin{defn}
Define the following problems:
\begin{itemize}
	\item[$\WQO_{\LO}$:] Given a sequence $\langle L_i \rangle$ of linear orderings, produce $i < j$ and an embedding from $L_i$ into $L_j$.
	\item[$\WQO_{\WO}$:] Given a sequence $\langle L_i \rangle$ of well-orderings, produce $i < j$ and an embedding from $L_i$ into $L_j$.
	\item[$\NDS_{\WO}$:] Given a sequence $\langle L_i \rangle$ of well-orderings, and embeddings $\langle F_i \rangle$ from each $L_{i+1}$ into $L_i$, produce $i < j$ and an embedding from $L_i$ into $L_j$.
	\item[$\NIAC_{\WO}$:] Given a sequence $\langle L_i \rangle$ of well-orderings, produce $i$ and $j$ (we may have $i > j$) and an embedding from $L_i$ into $L_j$.
\end{itemize}
$\NDS_{\LO}$ and $\NIAC_{\LO}$ can be defined analogously, but we will not study them in this paper.
\end{defn}

$\WQO_{\LO}$ corresponds to Fra\"iss\'e's conjecture. $\WQO_{\WO}$ is the restriction of Fra\"iss\'e's conjecture to well-orderings. $\NDS_{\WO}$ asserts that there is no infinite strictly descending sequence of well-orderings. $\NIAC_{\WO}$ asserts that there is no infinite antichain of well-orderings.

The definitions immediately imply that

\begin{prop} \label{prop:multivalued_functions_embeddings_basic_relationships}
\begin{gather*}
\NDS_{\WO} \leq_W \WQO_{\WO} \leq_W \WQO_{\LO} \\
\NIAC_{\WO} \leq_W \WCWO \leq_W \CWO \\
\NIAC_{\WO} \leq_W \WQO_{\WO}
\end{gather*}
\end{prop}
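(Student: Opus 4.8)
The plan is to verify each of the five displayed reductions directly from the definitions; in every case the forward functional will either discard part of the given instance or be the identity, and the backward functional will be (essentially) the identity.

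First I would dispatch the three reductions that target $\WQO$-problems. For $\NDS_{\WO} \leq_W \WQO_{\WO}$, an instance of $\NDS_{\WO}$ is a pair $(\langle L_i \rangle, \langle F_i \rangle)$ in which $\langle L_i \rangle$ is a sequence of well-orderings; the forward functional simply projects onto $\langle L_i \rangle$, which is a legitimate $\WQO_{\WO}$-instance since, by hypothesis, an $\NDS_{\WO}$-solution — and hence a pair $i < j$ with an embedding from $L_i$ into $L_j$ — exists. The backward functional is the identity, because a $\WQO_{\WO}$-solution is literally an $\NDS_{\WO}$-solution. For $\WQO_{\WO} \leq_W \WQO_{\LO}$ the forward functional is the identity: a name for a sequence of well-orderings is, under the representations fixed in Section \ref{subsection:repns}, a name for the same sequence regarded as linear orderings, so no well-foundedness needs to be checked; and again a $\WQO_{\LO}$-solution is a $\WQO_{\WO}$-solution. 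Likewise $\NIAC_{\WO} \leq_W \WQO_{\WO}$ has identity forward and backward functionals, since every sequence of well-orderings is an $\NIAC_{\WO}$-instance and every $\WQO_{\WO}$-solution $(i < j, f)$ is an admissible $\NIAC_{\WO}$-solution (the latter merely permits, but does not require, $i > j$).

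Next I would treat the bottom line. For $\WCWO \leq_W \CWO$ the forward functional is the identity on pairs of well-orderings; the backward functional takes a $\CWO$-solution — an embedding of one of the two orderings onto an initial segment of the other — and returns the underlying embedding, which already witnesses $\WCWO$. For $\NIAC_{\WO} \leq_W \WCWO$, the forward functional sends $\langle L_i \rangle$ to the pair $(L_0, L_1)$ (a $\WCWO$-instance, since any two well-orderings are comparable), and the backward functional reads off from the $\WCWO$-solution which of $L_0, L_1$ is the domain of the produced embedding — say $L_a \to L_b$ with $\{a, b\} = \{0, 1\}$ — and outputs $(a, b)$ together with that embedding; this is a valid $\NIAC_{\WO}$-solution precisely because $\NIAC_{\WO}$ allows $a > b$.

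I do not expect any genuine obstacle here; the content is bookkeeping about representations. The only points needing a little care are checking that a name for a $\WCWO$-solution records which of the two orderings serves as the domain of the embedding (so that the backward functional of $\NIAC_{\WO} \leq_W \WCWO$ can recover $i$ and $j$), and that an embedding onto an initial segment is, as a function, already an embedding — both of which are immediate for the standard representations.
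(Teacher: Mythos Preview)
Your proposal is correct and matches the paper's approach exactly: the paper simply asserts that ``the definitions immediately imply'' these reductions and gives no proof, so you have written out precisely the straightforward verifications the author had in mind. Your attention to the representational issue in $\NIAC_{\WO} \leq_W \WCWO$ (recovering which ordering is the domain of the produced embedding) is appropriate caution for a detail the paper leaves implicit.
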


It is not hard to show that all of the problems in Proposition \ref{prop:multivalued_functions_embeddings_basic_relationships}, except for $\WQO_{\LO}$, are Weihrauch reducible to $\ATR$. (We bound the strength of $\WQO_\LO$ in Corollaries \ref{cor:C_N_N_not_below_two_sided_problems} and \ref{cor:two_sided_not_below_UC}.)

\begin{prop} \label{prop:WQO_WO_ATR_CWO_ATR}
$\CWO \leq_W \ATR$ and $\WQO_{\WO} \leq_W \ATR$.
\end{prop}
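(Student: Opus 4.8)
The plan is to prove both reductions by applying $\ATR$ in the labeled form of Proposition~\ref{prop:ATR_labeled} to obtain a jump hierarchy, and then computing the required embeddings from it by effective transfinite recursion. I would establish $\CWO \leq_W \ATR$ first and then bootstrap to $\WQO_{\WO}$, using that $\ATR$ is parallelizable (Proposition~\ref{prop:ATR_parallelizable}).

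For $\CWO$: recall that two well-orderings $A$ and $B$ are compared by the transfinite recursion $f(a) = $ the $<_B$-least element of $B$ not among $\{f(a') : a' <_A a\}$, which yields an isomorphism from $A$ onto an initial segment of $B$ unless $B$ is exhausted at some least stage $a^*$, in which case $f^{-1}$ is an isomorphism from $B$ onto the initial segment $A\restrict a^*$ of $A$. To make this recursion total, so that exhaustion need not be detected mid-recursion, I would run it with $B$ replaced by $B^* := B + A$, which is long enough that the recursion along $A$ never runs out of room; at the end one reads off which case holds by checking whether $f$ stays inside the $B$-part of $B^*$. For the forward functional, given $(A,B)$ I would build, exactly as in the proof of Proposition~\ref{prop:ATR_labeled}, a labeled well-ordering $\M$ which is a padded copy of $A$ (a copy of $\omega$ in place of each point, with $A \oplus B$ coded into an initial block and a limit point $m_M$ on top), and feed $(\M, m_M)$ to $\ATR$; the resulting set computes the whole hierarchy $\langle Y_d \rangle_{d \in M}$, which computes $A \oplus B$ and has the property that, at each point $a$ of the copy of $A$ inside $M$, the corresponding column computes finitely many Turing jumps of the join of $A \oplus B$ with the columns for points below $a$. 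The backward functional then runs effective transfinite recursion along $A$: at stage $a$, having inductively computed $f(a')$ for $a' <_A a$, the set $\{f(a') : a' <_A a\}$ is $\Sigma^0_1$ in the join of the earlier columns, its complement in $B^*$ is a nonempty $\Pi^0_1$ end-segment of $B^*$, and its minimum $f(a)$ is computable from two further jumps, which the hierarchy supplies. Finally, from the top of the hierarchy (together with one or two additional jumps, absorbed into the padding) one computes all of $f$, locates the least $a^*$ with $f(a^*) \notin B$ if there is one, and outputs $f$ (an embedding of $A$ onto an initial segment of $B$) or $f^{-1}\restrict B$ (an embedding of $B$ onto $A \restrict a^*$) accordingly.

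For $\WQO_{\WO} \leq_W \ATR$: given $\langle L_i \rangle$, I would run the $\CWO$ construction in parallel on the consecutive pairs $(L_i, L_{i+1})$ — legitimate since $\widehat{\ATR} \equiv_W \ATR$ — arranging the forward functional to append two more jumps at the top of each hierarchy so that the backward functional can additionally decide, for each $i$, which of ``$L_i \cong L_{i+1}$'', ``$L_i$ embeds onto a proper initial segment of $L_{i+1}$'', ``$L_{i+1}$ embeds onto a proper initial segment of $L_i$'' holds, together with the witnessing map. Since there is no infinite strictly $<$-descending sequence of ordinals, there is a least $i$ at which the third case fails, i.e.\ $L_i \hookrightarrow L_{i+1}$; I would search for it (the search terminates because such an $i$ exists and each case-check is a finite computation from the $i$-th hierarchy) and output $(i, i+1)$ together with the embedding of $L_i$ into $L_{i+1}$ extracted from the comparison data.

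The main obstacle is the bookkeeping in the $\CWO$ backward functional: guaranteeing that at each stage $a$ of the recursion along $A$ the jump hierarchy delivers not merely the join of the previously computed columns but enough further jumps to extract $\min_{B^*}(B^* \setminus \{f(a') : a' <_A a\})$ — the delicate case being limit $a$, where this minimum is a limit point of $B^*$ with no immediate predecessor, so that a single jump over the join does not obviously suffice. This is precisely what the $\omega$-padding in $\M$ (the device already used in Proposition~\ref{prop:ATR_labeled}) is designed to provide, and once it is in place the effective transfinite recursion theorem from Section~2 handles the remaining bookkeeping and, via its descending-sequence clause, the totality of $f$ along the well-ordering $A$.
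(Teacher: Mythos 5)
Your proof is correct and follows essentially the same route as the paper's: reduce to a labeled/padded form of $\ATR$, compute the canonical comparison map by effective transfinite recursion using a fixed finite number of jumps per step supplied by the hierarchy, read off which case holds from the top of the hierarchy, and handle $\WQO_{\WO}$ by parallelization via Proposition~\ref{prop:ATR_parallelizable}. The only differences are cosmetic: the paper lets the comparison map be partial (diverging once the second ordering is exhausted) and case-splits on totality instead of padding $B$ to $B+A$, and for $\WQO_{\WO}$ it compares all pairs $(L_i,L_j)$ with $i<j$ rather than only consecutive ones; both variants work.
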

\begin{proof}
Let $Q$ denote the following apparent strengthening of $\CWO$: a $Q$-instance is a pair of well-orderings $(L,M)$, and a $Q$-solution consists of both a $\CWO$-solution $F$ to $(L,M)$ and an indication of whether $L < M$, $L \equiv M$, or $L > M$. Clearly $\CWO \leq_W Q$. (Marcone showed that $\CWO \equiv_W \ATR$ (Theorem \ref{thm:marcone_CWO_equiv_ATR}), so actually $\CWO \equiv_W Q$.)

We start by showing that $Q \leq_W \ATR$. Given $(L,M)$, define $N$ by adding a first element $0_N$ and a last element $m_N$ to $L$. Apply the version of $\ATR$ in Proposition \ref{prop:ATR_arith_formula} to obtain a hierarchy $\langle X_a \rangle_{a \in N}$ such that:
\begin{itemize}
	\item $X_{0_N} = L \oplus M$;
	\item for all $b >_N 0_N$, $X_b = \left(\bigoplus_{a <_N b} X_a\right)'''$. 
\end{itemize}

For the backward reduction, we start by using $\langle X_a\rangle_{a \in L}$-effective transfinite recursion along $L$ to define a total $\langle X_a\rangle_{a \in L}$-recursive function $f: L \to \N$ such that $\{(a,\Phi^{X_a}_{f(a)}(0)) \in L \times M: \Phi^{X_a}_{f(a)}(0)\conv\}$ is an embedding of an initial segment of $L$ into an initial segment of $M$.

To define $f$, if we are given any $b \in L$ and $f\restriction \{a: a <_L b\}$, we need to define $f(b)$, specifically $\Phi^{X_b}_{f(b)}(0)$. Use $X_b = (\bigoplus_{a <_L b} X_a)'''$ to compute whether there is an $M$-least element above $\{\Phi^{X_a}_{f(a)}(0): a <_L b\}$ (equivalently, whether $M\backslash \{\Phi^{X_a}_{f(a)}(0): a <_L b\}$ is nonempty). If so, we output said $M$-least element; otherwise diverge. This completes the definition of $\Phi^{X_b}_{f(b)}(0)$.

Apply the recursion theorem to the definition above to obtain a partial $\langle X_a\rangle_{a \in L}$-recursive function $f: L \to \N$. Now, to complete the definition of the backward reduction we consider the following cases.

\underline{Case 1.} $f$ is total. Then we output $\{(a,\Phi^{X_a}_{f(a)}(0)): a \in L\}$, which is an embedding from $L$ onto an initial segment of $M$.

\underline{Case 2.} Otherwise, $\{\Phi^{X_a}_{f(a)}(0): a \in L, \Phi^{X_a}_{f(a)}(0)\conv\} = M$. Then we output $\{(\Phi^{X_a}_{f(a)}(0),a): a \in L, \Phi^{X_a}_{f(a)}(0)\conv\}$, which is an embedding from $M$ onto an initial segment of $L$.

Finally, note that the last column $X_{m_N}$ of $\langle X_a \rangle_{a \in N}$ can compute which case holds and compute the appropriate output for each case. If Case 1 holds but not Case 2, then $L < M$. If Case 2 holds but not Case 1, then $L > M$. If both Case 1 and 2 hold, then $L \equiv M$.

Next, we turn our attention to $\WQO_\WO$. Observe that $\WQO_{\WO} \leq_W \widehat{Q}$: given a sequence $\langle L_i \rangle$ of well-orderings, apply $Q$ to each pair $(L_i,L_j)$, $i < j$. Search for the least $(i,j)$ such that $Q$ provides an embedding from $L_i$ into $L_j$, and output accordingly.

Finally, $\widehat{Q} \leq_W \widehat{\ATR} \equiv_W \ATR$ (Proposition \ref{prop:ATR_parallelizable}), so $\WQO_{\WO} \leq_W \ATR$ as desired.
\end{proof}

In the next few sections, we work toward some reversals. Central to a reversal (say, from $\WCWO$ to $\ATR$) is the ability to encode information into well-orderings such that we can extract information from an arbitrary embedding between them. Shore \cite{shore93} showed how to do this if the well-orderings are indecomposable (and constructed appropriately).

\begin{defn}
A well-ordering $X$ is \emph{indecomposable} if it is embeddable in all of its final segments.
\end{defn}

Indecomposable well-orderings also played an essential role in Friedman and Hirst's \cite{fh90} proof that $\WCWO$ implies $\ATR_0$ in reverse mathematics.

We state two useful properties about indecomposable well-orderings. First, it is easy to show by induction that:

\begin{lem} \label{lem:indec_absorb}
If $M$ is indecomposable and $L_i$, $i<n$ each embed strictly into $M$, then $\left(\sum_{i<n} L_i\right)+M \equiv M$.
\end{lem}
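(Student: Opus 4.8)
The plan is to argue by induction on $n$. For the base case $n = 0$, the empty sum contributes nothing and the claim $M \equiv M$ is trivial. For the inductive step, I would like to reduce the sum of $n+1$ orderings to a single ordering strictly below $M$, and then handle the case $n = 1$ separately as the real content of the lemma.

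So the heart of the matter is the case $n = 1$: if $L$ embeds strictly into an indecomposable $M$, then $L + M \equiv M$. First I would note that $L + M$ embeds into $M + M$, so it suffices to show $M + M \equiv M$ — no wait, that is false in general (e.g. $M = \omega$ gives $\omega + \omega \not\equiv \omega$). Instead the point is that $L$ is \emph{strictly} below $M$, so $L$ embeds as a proper initial segment of $M$, say $L \equiv M \restrict (<_M b)$ for some $b \in M$. Then $L + M$ embeds into $M \restrict(<_M b) + M$, and since $M$ is indecomposable it embeds into its final segment $M\restrict(\geq_M b)$; concatenating the identity on $M\restrict(<_M b)$ with this embedding of $M$ into $M\restrict(\geq_M b)$ gives an embedding of $M\restrict(<_M b) + M$ — hence of $L + M$ — into $M$. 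For the reverse direction, $M$ obviously embeds into $L + M$ (as the second summand). Since embeddability between well-orderings is antisymmetric (comparable well-orderings that embed into each other are isomorphic — this is the order-type comparison, available classically), we conclude $L + M \equiv M$.

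For the inductive step with $n+1$ summands $L_0, \dots, L_n$ each strictly below $M$: by the induction hypothesis applied to $L_1, \dots, L_n$ (reindexed), $\left(\sum_{1 \leq i \leq n} L_i\right) + M \equiv M$, so $\left(\sum_{i \leq n} L_i\right) + M \equiv L_0 + \left(\left(\sum_{1\leq i\leq n} L_i\right) + M\right) \equiv L_0 + M \equiv M$ by associativity of $+$ and the $n=1$ case.

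The main obstacle I anticipate is being careful about what ``strictly'' and ``indecomposable'' buy us: we need that $L$ strictly below $M$ means $L$ is isomorphic to a \emph{proper initial segment} of $M$ (true for well-orderings), and that indecomposability of $M$ gives an embedding of $M$ into each proper final segment $M\restrict(\geq_M b)$. Both are standard facts about well-orderings, and since the lemma is stated without reference to any base theory or reducibility, a straightforward classical induction suffices; the only thing to get right is the bookkeeping of which segment is initial and which is final when gluing the embeddings together.
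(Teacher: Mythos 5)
Your proof is correct and follows exactly the route the paper indicates (the paper states the lemma with only the remark that ``it is easy to show by induction'' and omits the argument): induction on $n$, with the essential content in the case $n=1$, where strict embeddability identifies $L$ with a proper initial segment $M\restrict(<_M b)$ and indecomposability supplies an embedding of $M$ into the final segment $M\restrict(\geq_M b)$. Your self-correction about $M+M$ and your closing remarks on what ``strictly'' and ``indecomposable'' provide are exactly the right points of care; nothing further is needed.
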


Second, the following lemma will be useful for extracting information from embeddings between orderings.

\begin{lem} \label{lem:indec_embed_finite_sum}
Let $L$ be a linear ordering and let $M$ be an indecomposable well-ordering which does not embed into $L$. If $F$ embeds $M$ into a finite sum of $L$'s and $M$'s, then the range of $M$ under $F$ must be cofinal in some copy of $M$.

Therefore, if $M \cdot k$ embeds into a finite sum of $L$'s and $M$'s, then there must be at least $k$ many $M$'s in the sum.
\end{lem}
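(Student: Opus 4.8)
### Proof plan for Lemma \ref{lem:indec_embed_finite_sum}

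The plan is to prove the first assertion by induction on the number of summands in the finite sum $\sum_{i<n} N_i$, where each $N_i$ is a copy of either $L$ or $M$. Given an embedding $F\colon M \to \sum_{i<n} N_i$, I would look at the last summand $N_{n-1}$. If the range of $F$ is disjoint from $N_{n-1}$, then $F$ actually embeds $M$ into $\sum_{i<n-1} N_i$ and I finish by the induction hypothesis. Otherwise, let $a$ be the $\le_M$-least element of $M$ whose image lands in $N_{n-1}$; then $F$ restricted to $\{x \in M : x \ge_M a\}$ embeds a final segment of $M$ into $N_{n-1}$. Since $M$ is indecomposable, $M$ itself embeds into this final segment, hence into $N_{n-1}$; because $M$ does not embed into $L$, we must have $N_{n-1}$ a copy of $M$ rather than a copy of $L$. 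Now I want to conclude that $F$ actually maps $M$ \emph{cofinally} into $N_{n-1}$: suppose not, so the range of $F$ is bounded in $N_{n-1}$ by some element $c$. Then $F$ embeds $M$ into $\left(\sum_{i<n-1} N_i\right) + (N_{n-1}\restriction\{y : y <_{N_{n-1}} c\})$, which is a finite sum of copies of $L$ and proper initial segments of $M$ — but every proper initial segment of a well-ordering $M$ embeds into... hmm, not necessarily into $L$. So I need to be slightly more careful here.

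The fix: a proper initial segment of $M$ need not embed into $L$, but it does embed \emph{strictly} into $M$. So after the reduction in the previous paragraph I am in the situation where $M$ embeds into a finite sum of copies of $L$ and orderings that embed strictly into $M$, with strictly fewer summands than before; by absorbing (Lemma \ref{lem:indec_absorb} is for the other direction, but the point is just that a strict-into-$M$ piece cannot "host" a copy of $M$) and repeating the cofinality argument, I'd push the image into a genuine $M$-summand cofinally. Cleaner still: I would restate the induction so that the summands are allowed to be copies of $L$ or \emph{initial segments} of $M$ (equivalently, orderings that embed into $L$ or strictly into $M$), prove that $F$'s range meets some $M$-initial-segment summand that is in fact a full copy of $M$ and is hit cofinally, and note this is exactly what we need. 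The base case ($n=0$, or all summands copies of $L$) is vacuous since $M$ does not embed into $L$ (nor into any finite sum of copies of $L$, as those are well-orderings of the same "type" — more precisely, if $M$ embedded into $L+L+\cdots$ one could, by the least-element trick, embed $M$ into a single $L$).

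For the second assertion, I would apply the first assertion $k$ times. Suppose $M \cdot k$ embeds into $\sum_{i<n} N_i$ via $F$. The ordering $M \cdot k$ contains $k$ disjoint consecutive copies $M^{(0)} <_{M\cdot k} M^{(1)} <_{M\cdot k} \cdots <_{M\cdot k} M^{(k-1)}$ of $M$. Applying the first part to $F\restriction M^{(k-1)}$ shows the image of $M^{(k-1)}$ is cofinal in some $M$-summand, say $N_{j_{k-1}}$; applying it to $F\restriction M^{(k-2)}$ shows the image of $M^{(k-2)}$ is cofinal in some $M$-summand $N_{j_{k-2}}$, and since $M^{(k-2)} <_{M\cdot k} M^{(k-1)}$ and $F$ is an embedding, $N_{j_{k-2}}$ occurs strictly before $N_{j_{k-1}}$ in the sum (its cofinal image lies below the cofinal image of $M^{(k-1)}$), so $j_{k-2} < j_{k-1}$. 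Iterating, $j_0 < j_1 < \cdots < j_{k-1}$ are $k$ distinct indices of $M$-summands, so the sum contains at least $k$ copies of $M$.

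The main obstacle is the bookkeeping in the first part: making sure that when the image of $M$ dips into a late summand, one correctly identifies that summand as a full copy of $M$ (using indecomposability plus "$M$ does not embed into $L$") \emph{and} that the image is cofinal there rather than bounded — the cofinality is what lets the second part count summands by their order in the sum. Phrasing the induction over the slightly more general class of summands (copies of $L$ and initial segments of $M$) is what makes the inductive step go through cleanly, since restricting a copy of $M$ to an initial segment stays inside that class.
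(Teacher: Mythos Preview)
Your argument is correct: the ``strictly fewer summands'' claim is a slip (replacing $N_{n-1}$ by an initial segment does not decrease the count), but your pivot to the generalized class of summands (copies of $L$ together with initial segments of $M$) repairs it, and your treatment of the second assertion is fine. The paper, however, dispenses with induction entirely via a direct three-case analysis. Let $N_j$ be the last summand that the range of $F$ meets; then some final segment of $M$ maps into $N_j$. If $N_j$ is a copy of $L$, indecomposability forces $M$ to embed into $L$, contradicting the hypothesis. If $N_j$ is a copy of $M$ but the image of that final segment is bounded in $N_j$, indecomposability forces $M$ to embed into a proper initial segment of itself, contradicting well-foundedness of $M$. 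Hence $N_j$ is a copy of $M$ and the range is cofinal in it. This is precisely your parenthetical observation that ``a strict-into-$M$ piece cannot host a copy of $M$,'' used once to finish immediately rather than to fuel a generalized induction; in your own framework the ``bounded by $c$'' subcase already terminates in this contradiction, so the inductive apparatus is doing nothing beyond locating $N_j$.
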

\begin{proof}
There are three cases regarding the position of the range of $M$ in the sum. \underline{Case 1.} $F$ maps some final segment of $M$ into some copy of $L$. Since $M$ is indecomposable, it follows that $M$ embeds into $L$, contradiction. \underline{Case 2.} $F$ maps some final segment of $M$ into a bounded segment of some copy of $M$. Since $M$ is indecomposable, that implies that $M$ maps into a bounded segment of itself. This contradicts well-foundedness of $M$. \underline{Case 3.} The remaining case is that the range of $M$ is cofinal in some copy of $M$, as desired.
\end{proof}

We remark that for our purposes, we do not need to pay attention to the computational content of the previous two lemmas. In addition, unlike in reverse mathematics, we do not need to distinguish between ``$M$ does not embed into $L$'' and ``$L$ strictly embeds into $M$''.

\section{An analog of Chen's theorem}

In this section, given a labeled well-ordering $\L = (L,0_L,S,p)$, $\langle Y_a \rangle_{a \in L}$ denotes the unique hierarchy on $L$, as defined in Proposition \ref{prop:ATR_labeled}. (This notation persists for the next two sections, which use results from this section.)

We present the technical ingredients needed for our reductions from $\ATR$ to theorems about embeddings between well-orderings. The main result is an analog of the following theorem of Chen, which suggests a bridge from computing jump hierarchies to comparing well-orderings. We will not need Chen's theorem so we will not define the notation therein; see Shore \cite[Theorem 3.5]{shore93} for details.

\begin{thm}[Chen {\cite[Corollary 10.2]{chen78}}] \label{thm:original_chen}
Fix $x \in \O$. There is a recursive function $k(a,n)$ such that for all $a <_\O x$ and $n \in \N$,
\begin{enumerate}
	\item $k(a,n)$ is an index for a recursive well-ordering $K(a,n)$;
	\item if $n \in H_a$, then $K(a,n)+1 \leq \omega^{|x|}$;
	\item if $n \notin H_a$, then $K(a,n) \equiv \omega^{|x|}$.
\end{enumerate}
\end{thm}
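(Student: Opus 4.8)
The plan is to construct $k$ by effective transfinite recursion along $\O$ below $x$, using the recursion theorem so that one recursive index serves all $a <_\O x$ at once, and then to verify the three stated conditions by an external transfinite induction on $<_\O$ (for $\neg(a <_\O x)$ the value $k(a,n)$ is still produced, but need not have the stated properties, as usual). It is convenient to strengthen the statement and build, by mutual recursion, both $K(a,n)$ and a complementary recursive well-ordering $\bar K(a,n)$ whose ordertype is $<\omega^{|x|}$ iff $n\notin H_a$ and equals $\omega^{|x|}$ otherwise; the successor step forces this, since a computation relative to $H_b$ queries that oracle both positively and negatively. Throughout, $K(a,n)$ (and $\bar K(a,n)$) is realized as an $\subseteq$-increasing union of proper initial segments of one fixed recursive copy $W$ of $\omega^{|x|}$; this makes the first condition automatic, since an increasing union of initial segments of a well-ordering is again a well-ordering.

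The base and limit cases are routine. For $a=1$ we have $H_1=\emptyset$, so put $K(1,n):=W$ and let $\bar K(1,n)$ be a one-point ordering. For a limit notation $a=3\cdot 5^c$, use $H_a(\langle m,i\rangle)=H_{\phi_c(m)}(i)$ and simply relay: $K(a,\langle m,i\rangle):=K(\phi_c(m),i)$, and similarly for $\bar K$; since $\phi_c(m)<_\O x$ the inductive hypothesis transfers directly (including the strengthened bound discussed below).

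The substance is the successor step $a=2^b$, where $H_a=(H_b)'$, so $n\in H_a$ iff some finite oracle string $\sigma$ and stage $s$ satisfy $\Phi^\sigma_{n,s}(n)\conv$ and $\sigma=H_b\restrict|\sigma|$. For each such candidate $(\sigma,s)$, ``$\sigma$ is correct'' is a finite conjunction of statements ``$m\in H_b$'' (for the bits $\sigma(m)=1$) and ``$m\notin H_b$'' (for the bits $\sigma(m)=0$), which the inductive hypothesis already packages as $K(b,m)$ and $\bar K(b,m)$. The plan is to assemble $K(2^b,n)$ from these using ordertype operations mirroring the logical structure — an interleaving/shuffle for the finite ``and'' within a candidate, and a disjunctive combination across candidates — engineered so that the presence of a correct candidate makes the resulting ordertype stabilize below $\omega^{|x|}$, while the absence of one drives it up to exactly $\omega^{|x|}$. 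Three facts make this work: $a<_\O x$ gives $|b|+1=|a|<|x|$, so $\omega^{|b|+1}<\omega^{|x|}$ with room to spare, and one can maintain the sharper invariant that a correct candidate contributes ordertype $<\omega^{|b|+1}$; $\omega^{|x|}$ is additively indecomposable, being a power of $\omega$, so finitely many short pieces never push past it while a single ``long'' piece of ordertype $\omega^{|x|}$ from an incorrect conjunct swallows what follows; and because we proceed by effective transfinite recursion rather than a finite search, the information ``the correctness-ordering of candidate $j$ has so far stayed below $\omega^{|b|+1}$'' is available at limit stages, which is what turns the non-r.e.\ condition ``$\sigma$ is correct'' into something usable. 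The ordering $\bar K(2^b,n)$ is built dually: ``$n\notin H_a$'' is a conjunction over all halting candidates of a disjunction of negated bits, handled by the same combinators with the roles of $K(b,\cdot)$ and $\bar K(b,\cdot)$ exchanged.

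The main obstacle, I expect, is exactly this successor step, and within it the tension between two requirements. On one side, negative queries ``$m\notin H_b$'' admit no finite confirmation — for $|b|\ge 2$ this set is not even arithmetical — so their whole weight must be borne by the transfinite shape of $\bar K(b,m)$. On the other side, $K(2^b,n)$ must remain a genuine recursive well-ordering (not a pseudo-well-ordering) of ordertype exactly $\omega^{|x|}$, not more, in the divergent case. Reconciling these is what pins down the precise combinators — all carried out inside the single recursive $W$, leaning on additive indecomposability to absorb the short contributions. Once the combinators are set up and their effect on ordertypes verified, the three conditions follow by the external transfinite induction on $<_\O$, and uniformity is automatic from the recursion theorem. (For the ambient notation, see Shore \cite[Theorem 3.5]{shore93}.)
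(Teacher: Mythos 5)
First, for orientation: the paper does not prove Theorem \ref{thm:original_chen} at all --- it cites Chen and only proves an analog for labeled well-orderings (Theorems \ref{thm:g_h_trees} and \ref{thm:analogue_chen}). Your overall architecture --- effective transfinite recursion via the recursion theorem, a dual pair $K/\bar K$ to carry positive and negative oracle queries, additive indecomposability of $\omega^{|x|}$ to absorb the short contributions, verification by external transfinite induction --- matches the skeleton of that adaptation. The base and limit cases are indeed routine. But there is a genuine gap in the successor step, and it is not merely a matter of ``pinning down the combinators'': the realization you commit to (every $K(a,n)$ an r.e.\ increasing union of initial segments of one fixed copy $W$ of $\omega^{|x|}$) cannot implement the outermost combinator that $K(2^b,n)$ requires.

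Concretely, ``$n\in H_{2^b}$'' is an existential quantifier, over the r.e.\ set of halting candidates $(\sigma,s)$, of a finite conjunction of level-$b$ smallness conditions. In the initial-segment picture, ``small'' means ``a proper initial segment of $W$'', so a finite AND of smallness is a finite intersection (r.e., fine), but the outer OR of smallness is an \emph{infinite intersection} $\bigcap_j B_j$ of uniformly r.e.\ initial segments --- a $\Pi^0_2$ set. No finite stage can confirm membership in all $B_j$, so $K(2^b,n)$ cannot be presented this way as a recursive (or even r.e.-presented) well-ordering. Note the asymmetry: the dual $\bar K(2^b,n)$ (small iff \emph{all} candidates are incorrect) is a union of finite intersections, hence r.e., so one polarity of your scheme works --- but both polarities are needed at every level (even $\bar K$ at level $2^b$ consumes $K$ at level $b$), and passing to the complementary final segment of $W$ yields a co-r.e.\ object of the wrong shape to feed into the next stage. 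This is exactly the point where Chen, Shore, and the paper's analog pass to \emph{trees}: the staggered common descent tree $\min(\langle T_i\rangle)$ of Lemma \ref{lem:max_min_properties} is a computable operation realizing ``ill-founded iff all inputs are ill-founded'' with rank bound $\rk(T_i)+i$ when some input is well-founded, so together with $\max$ both polarities of the quantifier become computable; one converts to well-orderings only at the very end via the Kleene--Brouwer ordering of the fattened universal tree $T(\omega\cdot L)^\infty$ (Lemma \ref{lem:fat_tree_properties}), which is also what delivers the exact ordertype $\omega^{|x|}$ and its indecomposability in the ``big'' case. Without this device or an equivalent substitute, your successor step does not go through.
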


We adapt Chen's theorem to our setting, which involves well-orderings instead of notations. Our proof is a direct adaptation of Shore's proof of Chen's theorem. We  begin by defining some computable operations on trees.

\begin{defn}[Shore {\cite[Definition 3.9]{shore93}}, slightly modified]
For any (possibly finite) sequence of trees $\langle T_i \rangle$, we define their \emph{maximum} by joining all $T_i$'s at the root, i.e.,
\[ \max(\langle T_i \rangle) = \{\langle \rangle\} \cup \{i\concat\sigma: \sigma \in T_i\}. \]
Next, we define the \emph{minimum} of a sequence of trees to be their ``staggered common descent tree''. More precisely, for any (possibly finite) sequence of trees $\langle T_i \rangle$, a node at level $n$ of the tree $\min(\langle T_i \rangle)$ consists of, for each $i < n$ such that $T_i$ is defined, a chain in $T_i$ of length $n$. A node extends another node if for each $i$ in their common domain, the $i^{\text{th}}$ chain in the former node is an end-extension of the $i^{\text{th}}$ chain in the latter node.
\end{defn}

It is easy to see that the maximum and minimum operations play well with the ranks of trees:

\begin{lem}[Shore {\cite[Lemma 3.10]{shore93}}] \label{lem:max_min_properties}
Let $\langle T_i\rangle$ be a (possibly finite) sequence of trees.
\begin{enumerate}
	\item If $\rk(T_i) \leq \alpha$ for all $i$, then $\rk(\max(\langle T_i \rangle)) \leq \alpha$.
	\item If there is some $i$ such that $T_i$ is ill-founded, then $\max(\langle T_i \rangle)$ is ill-founded.
	\item If every $T_i$ is well-founded, then $\rk(\min(\langle T_i \rangle)) \leq \rk(T_i)+i$.
	\item If every $T_i$ is ill-founded, then $\min(\langle T_i \rangle)$ is ill-founded as well.
\end{enumerate}
\end{lem}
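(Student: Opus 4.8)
The plan is to verify each of the four statements directly from the definitions of $\max$ and $\min$ and the definition of the rank of a well-founded tree, treating them as essentially bookkeeping about chains. For items (1) and (2), concerning $\max(\langle T_i \rangle)$, I would first note that the subtree of $\max(\langle T_i \rangle)$ above the node $\langle i \rangle$ is a copy of $T_i$, and that every node other than the root lies above exactly one such $\langle i \rangle$. Hence a path through $\max(\langle T_i \rangle)$ is (after deleting its first entry) exactly a path through some $T_i$; this immediately gives (2). For (1), $\rk(\max(\langle T_i \rangle)) = \sup_i (\rk(T_i) + 1) \le \alpha$ whenever each $\rk(T_i) \le \alpha$, using that $\rk(T_i) < \alpha$ is not quite what we are given — so I would be slightly careful here and instead argue by transfinite induction on $\alpha$, or simply observe that since $\rk(T_i)\le\alpha$, the root of the copy of $T_i$ has rank $\le\alpha$ in $\max$ (its immediate successors have rank $\le\alpha$, but one level down the ranks are $<\alpha$), and the root of $\max$ sits above all of these, so its rank is $\le\alpha$ as well. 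The cleanest phrasing: every node of $\max(\langle T_i\rangle)$ at level $\ge 1$ has the same rank as the corresponding node of the appropriate $T_i$, so all such nodes have rank $\le\alpha$; then $\rk$ of the root is the supremum over its children of (rank $+1$), but since each child is itself the image of a root of some $T_i$ whose children already have rank $<\alpha$... I would simply do the one-line transfinite induction to avoid this off-by-one fuss.

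For items (3) and (4), concerning $\min(\langle T_i \rangle)$, the key observation is the structure of a node at level $n$: it is a tuple $(\sigma_0, \dots, \sigma_{n-1})$ where $\sigma_i$ is a chain of length $n$ in $T_i$ (for those $i<n$ with $T_i$ defined), and extension is coordinatewise end-extension. For (4), if every $T_i$ is ill-founded, fix a path $P_i$ through each $T_i$; then the sequence of nodes $(P_0\restriction n, \dots, P_{n-1}\restriction n)$ for $n = 0, 1, 2, \dots$ is a path through $\min(\langle T_i \rangle)$, so it is ill-founded. For (3), suppose every $T_i$ is well-founded; I want $\rk(\min(\langle T_i \rangle)) \le \rk(T_i) + i$ for each $i$, i.e. $\rk(\min) \le \inf_i(\rk(T_i)+i)$. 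Fix $i$. The idea is that a path through $\min$ restricts (by taking the $i$th coordinate and then taking a union) to a path through $T_i$ — but the length bookkeeping is the subtle point, because the $i$th coordinate only appears once we reach level $i+1$. So I would set up a rank-preserving-enough map: send a node of $\min$ at level $n > i$ to its $i$th coordinate $\sigma_i$, a chain of length $n$ in $T_i$, hence a node of $T_i$ at level $n$ (identifying chains of length $n$ in $T_i$ with the nodes of a rank-$\le\rk(T_i)$ ``chain tree'' of $T_i$). This map is order-preserving, and the first $i+1$ levels of $\min$ contribute the additive ``$+i$''. Formally I would prove by induction on $\rk(T_i)$ (ranging over all sequences simultaneously, or just prove the bound $\rk(\min(\langle T_i\rangle_{i\ge j})) \le \rk(T_j)$ for the ``shifted'' sequence and then prepend $j$ levels) that $\rk(\min(\langle T_i \rangle)) \le \rk(T_i) + i$.

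The main obstacle I anticipate is purely the index bookkeeping in (3): making precise the claim that ``a length-$n$ chain in $T_i$ is the same thing as a node at level $n$ of a tree whose rank is $\le \rk(T_i)$,'' and then tracking how the staggering (coordinate $i$ first being constrained at level $i+1$) turns a bound of $\rk(T_i)$ into $\rk(T_i)+i$. Everything else is routine: (2) and (4) are witnessed explicitly by paths, and (1) is a one-line transfinite induction on the rank. I would structure the writeup as four short numbered paragraphs mirroring the statement, doing (2) and (4) first (they are immediate), then (1), then (3) with the induction on $\rk(T_i)$ carrying the weight. Since the lemma is quoted verbatim from Shore \cite[Lemma 3.10]{shore93}, I would also remark that the proof is identical to Shore's and could be omitted, but include the sketch above for completeness.
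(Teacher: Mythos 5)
The paper gives no proof of this lemma at all---it is quoted from Shore with the remark that it is ``easy to see''---so the only thing to compare against is the intended routine argument, which your sketch correctly reproduces: (2) and (4) by exhibiting explicit paths, (1) by a one-step transfinite induction, and (3) by projecting a node of $\min(\langle T_i\rangle)$ at level $n>i$ onto its $i$th coordinate, viewed as a level-$n$ node of the chain tree of $T_i$, with the first $i+1$ levels accounting for the additive $+i$. The off-by-one worries you flag in (1) and (3) are genuine if one insists on the literal constants under the standard rank convention, but they are immaterial to how the lemma is used: its only application (Theorem \ref{thm:g_h_trees}) invokes it with bounds of the form $\omega\cdot\otp(L\restrict a)$ and immediately absorbs any finite additive discrepancy into a ``$+\,\omega$'', so your sketch is adequate as written.
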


With the maximum and minimum operations in hand, we may prove an analog of Theorem 3.11 in Shore \cite{shore93}:

\begin{thm} \label{thm:g_h_trees}
Given a labeled well-ordering $\L$, we can uniformly compute sequences of trees $\langle g(a,n) \rangle_{n \in \N,a \in L}$ and $\langle h(a,n) \rangle_{n \in \N,a \in L}$ such that:
\begin{itemize}
	\item if $n \in Y_a$, then $\rk(g(a,n)) \leq \omega \cdot \otp(L\restriction a)$ and $h(a,n)$ is ill-founded;
	\item if $n \notin Y_a$, then $\rk(h(a,n)) \leq \omega \cdot \otp(L\restriction a)$ and $g(a,n)$ is ill-founded.
\end{itemize}
\end{thm}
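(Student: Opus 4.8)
The plan is to mimic Shore's proof of his Theorem 3.11, which in turn adapts Chen's argument, carrying it out by effective transfinite recursion along the labeled well-ordering $\L$. We build $g(a,n)$ and $h(a,n)$ simultaneously by recursion on $a \in L$, using the labels to tell whether $a$ is $0_L$, a successor, or a limit. The guiding intuition is that $g(a,n)$ should be a well-founded tree of rank $\le \omega\cdot\otp(L\restriction a)$ exactly when $n\in Y_a$ (and ill-founded otherwise), while $h(a,n)$ plays the complementary role; the rank budget grows by $\omega$ at each step of $L$, which is exactly what is needed to absorb the ``staggering'' cost $+i$ incurred by the $\min$ operation in Lemma \ref{lem:max_min_properties}(3) when we take minima over infinitely many indices.

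The three cases of the recursion go as follows. At the base step $a = 0_L$, we have $Y_{0_L} = \L$, a fixed set computable from $\L$; so for $n\in\L$ we let $g(0_L,n)$ be the one-point tree $\{\langle\rangle\}$ (rank $0$) and $h(0_L,n)$ a fixed ill-founded tree (e.g.\ a single infinite path), and we swap the two when $n\notin\L$. At a successor step $b = p^{-1}(a)$ we have $Y_b = Y'_a$, so membership of $n$ in $Y_b$ is a $\Sigma^0_2(Y_a)$ question, equivalently of the form $\exists k\,\forall m\, R(n,k,m)$ with $R$ computable from $Y_a$ and hence (by the induction hypothesis, which lets us decide $Y_a$-questions via well-foundedness of the already-constructed trees $g(a,\cdot),h(a,\cdot)$) replaceable by the $g/h$ trees at level $a$. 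Following Shore, we encode the $\forall m$ as a $\min$ over $m$ of trees built from $g(a,\cdot)$ and $h(a,\cdot)$, and the $\exists k$ as a $\max$ over $k$; Lemma \ref{lem:max_min_properties} then controls the rank, the $+i$ from the $\min$ being swallowed by the extra $\omega$ in the budget $\omega\cdot\otp(L\restriction b) = \omega\cdot\otp(L\restriction a)+\omega$. The trees $h(b,n)$ are built dually from the negation $\forall k\,\exists m\,\neg R$. At a limit step $b$, $Y_b = \bigoplus_{a<_L b} Y_a$, so a question about $Y_b$ is really a question about a single column $Y_a$ for some $a<_L b$ that we can compute from the pair code; we simply relay: $g(b,\langle a,n\rangle)$ is (a copy of) $g(a,n)$ and $h(b,\langle a,n\rangle)$ is (a copy of) $h(a,n)$, and the rank bound $\omega\cdot\otp(L\restriction a) \le \omega\cdot\otp(L\restriction b)$ holds since $a <_L b$.

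Two bookkeeping points need care. First, uniformity: the whole construction must be produced uniformly in $\L$, so we invoke effective transfinite recursion on $L$ (the theorem in Section 2.1) to obtain a single index, and we must check that the operator defining $g,h$ at stage $b$ from $\langle g(a,\cdot),h(a,\cdot)\rangle_{a<_L b}$ is computable — this is where the labeled structure is essential, since we need to know which case we are in and, at successor steps, the predecessor $a = p(b)$. Second, the rank computations: one verifies by transfinite induction along $L$ (using Lemma \ref{lem:max_min_properties}) that the stated bounds $\rk(g(a,n)) \le \omega\cdot\otp(L\restriction a)$ (resp.\ for $h$) hold whenever $n\in Y_a$ (resp.\ $n\notin Y_a$), and that the complementary tree is ill-founded; the ill-foundedness direction uses parts (2) and (4) of the lemma.

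I expect the main obstacle to be the successor step: getting the logical form of ``$n\in Y'_a$'' expressed as the right alternation of $\max$ and $\min$ over the level-$a$ trees so that the rank increases by exactly $\omega$ (not more), and simultaneously arranging that $h(b,n)$ is ill-founded precisely when $g(b,n)$ is well-founded. This is exactly the delicate combinatorial heart of Shore's Theorem 3.11 / Chen's Corollary 10.2, and the bulk of the proof will consist of writing out that encoding and checking the rank bookkeeping; the base and limit cases and the overall appeal to effective transfinite recursion are comparatively routine.
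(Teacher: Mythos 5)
Your proposal follows the paper's proof essentially verbatim: the same base case (empty node vs.\ a single infinite path according to membership in $\L$), the same relaying at limits via $g(b,\langle a,n\rangle)=g(a,n)$, and the same $\max$-of-$\min$s encoding of the jump at successor steps, with the $+i$ cost of the $\min$ operation from Lemma \ref{lem:max_min_properties}(3) absorbed by the extra $\omega$ in the rank budget $\omega\cdot\otp(L\restriction b)=\omega\cdot\otp(L\restriction a)+\omega$. The only imprecision is calling ``$n\in Y_a'$'' a $\Sigma^0_2(Y_a)$ question — it is $\Sigma^0_1(Y_a)$, an existential over finite positive and negative conditions on $Y_a$ (the paper writes it as $\exists\langle P,Q,n\rangle\in W$ with $P\subseteq Y_a$ and $Q\subseteq Y_a^c$, turning the outer existential into $\max$ and the finite conjunction into $\min$, dually for $g$ versus $h$) — but this does not change the shape of the construction you describe.
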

\begin{proof}
We define $g$ and $h$ by $\L$-effective transfinite recursion on $L$. For the base case (recall $Y_{0_L}	 = \L$), define $g(0_L,n)$ to be an infinite path of $0$'s for all $n \notin \L$, and the empty node for all $n \in \L$. Define $h(0_L,n)$ analogously.

For $b$ limit, define $g(b,\langle a,n\rangle) = g(a,n)$ and $h(b,\langle a,n\rangle) = h(a,n)$ for any $n \in \N$ and $a <_L b$.

For $b = a+1$, fix a Turing functional $W$ which computes $X$ from $X'$ for any $X$. In particular,
\[ n \in Y_b \quad \text{iff} \quad (\exists \langle P,Q,n \rangle \in W)(P \subseteq Y_a \text{ and }Q \subseteq Y^c_a). \]
Then define
\[ h(b,n) = \max(\langle \min(\langle \{h(a,p): p \in P\},\{g(a,q): q \in Q\}\rangle ): \langle P,Q,n \rangle \in W\rangle ). \]

If $n \in Y_b$, then there is some $\langle P,Q,n \rangle \in W$ such that $P \subseteq Y_a$ and $Q \subseteq Y_a^c$. Then every tree in the above minimum for $\langle P,Q,n \rangle$ is ill-founded, so the minimum is itself ill-founded. Hence $h(b,n)$ is ill-founded.

If $n \notin Y_b$, then for all $\langle P,Q,n \rangle \in W$, either $P \not\subseteq Y_a$ or $Q \not\subseteq Y^c_a$. Either way, all of the above minima have rank $< \omega\cdot \otp(L\restriction a)+\omega$. Hence $h(b,n)$ has rank at most $\omega \cdot \otp(L\restriction a)+\omega \leq \omega \cdot \otp(L\restriction b)$.

Similarly, define
\[ g(b,n) = \min(\langle \max(\langle \{g(a,p): p \in P\},\{h(a,q): q \in Q\}\rangle ): \langle P,Q,n \rangle \in W\rangle ). \]

This completes the construction for the successor case.
\end{proof}

Next, we adapt the above construction to obtain well-founded trees. To that end, for each well-ordering $L$, we aim to compute a tree $(T(\omega \cdot L))^\infty$ which is universal for all trees of rank $\leq \omega\cdot \otp(L)$. Shore \cite[Definition 3.12]{shore93} constructs such a tree by effective transfinite recursion. Instead, we use a simpler construction of Greenberg and Montalb\'an \cite{gm08}.

\begin{defn} \label{defn:tree_decreasing_seq}
Given a linear ordering $L$, define $T(L)$ to be the tree of finite $<_L$-decreasing sequences, ordered by extension.	
\end{defn}

It is easy to see that $L$ is well-founded if and only if $T(L)$ is well-founded, and if $L$ is well-founded, then $\rk(T(L)) = \otp(L)$.

\begin{defn}[{\cite[Definition 3.20]{gm08}}] \label{defn:fat_tree}
Given a tree $T$, define a tree
\[ T^\infty = \{\langle (\sigma_0,n_0),\dots,(\sigma_k,n_k)\rangle: \langle \rangle \neq \sigma_0 \subsetneq \dots \subsetneq \sigma_k \in T, n_0,\dots,n_k \in \N\}, \]
ordered by extension.
\end{defn}

\begin{lem}[{\cite[$\S$3.2.2]{gm08}}] \label{lem:fat_tree_properties}
Let $T$ be well-founded. Then
\begin{enumerate}
	\item $T^\infty$ is well-founded and $\rk(T^\infty) = \rk(T)$;
	\item for every $\sigma \in T^\infty$ and $\gamma < \rk_{T^\infty}(\sigma)$, there are infinitely many immediate successors $\tau$ of $\sigma$ in $T^\infty$ such that $\rk_{T^\infty}(\tau) = \gamma$;
	\item $\KB(T)$ embeds into $\KB(T^\infty)$;
	\item $\KB(T^\infty) \equiv \omega^{\rk(T)}+1$, hence $\KB(T^\infty)-\{\emptyset\}$ is indecomposable.
	\item if $S$ is well-founded and $\rk(S) \leq \rk(T)$ ($\rk(S) < \rk(T)$ resp.), then $\KB(S)$ embeds (strictly resp.) into $\KB(T^\infty)$.
\end{enumerate}
\end{lem}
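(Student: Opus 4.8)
The plan is to prove the five items in the order (1), (2), (4), (5), (3), using repeatedly one elementary fact about tree ranks, which I will call $(\star)$: in any well-founded tree, $\rk(\rho)=\sup\{\rk(\rho')+1:\rho\subsetneq\rho'\}$ for every node $\rho$, and moreover every $\gamma<\rk(\rho)$ equals $\rk(\rho')$ for some $\rho'\supsetneq\rho$; both halves follow by induction on $\rk(\rho)$ (for the second, pick an immediate successor $\rho'$ with $\rk(\rho')\geq\gamma$ and descend into it if the inequality is strict). For (1), $T^\infty$ is well-founded since an infinite branch of it would give an infinite $\subsetneq$-chain in $T$, hence a branch of $T$. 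Writing $\sigma^*$ for the $T$-node in the last coordinate of $\sigma\in T^\infty$ (with $\sigma^*=\langle\rangle$ when $\sigma=\langle\rangle$), the immediate successors of $\sigma$ in $T^\infty$ are precisely the $\sigma\concat(\tau,m)$ with $\sigma^*\subsetneq\tau\in T$ and $m\in\N$, and $(\sigma\concat(\tau,m))^*=\tau$; so induction on $\rk_T(\sigma^*)$ using $(\star)$ gives $\rk_{T^\infty}(\sigma)=\rk_T(\sigma^*)$, and $\sigma=\langle\rangle$ yields $\rk(T^\infty)=\rk(T)$. Item (2) is then immediate: $\rk_{T^\infty}(\sigma\concat(\tau,m))=\rk_T(\tau)$ is independent of $m$, and by $(\star)$ each $\gamma<\rk_T(\sigma^*)$ is realized by some $\tau\supsetneq\sigma^*$ in $T$, so $\{\sigma\concat(\tau,m):m\in\N\}$ is an infinite set of immediate successors of rank $\gamma$.

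Item (4) is the heart of the matter. The empty node is the $\KB$-maximum of $T^\infty$ (it is a prefix of everything), so $\KB(T^\infty)$ has the form $(\,\cdot\,)+1$; more generally, for a $T$-node $\tau$ write $U_\tau$ for the ``fat tree above $\tau$'', with nodes $\langle(\rho_1,m_1),\dots,(\rho_j,m_j)\rangle$ where $\tau\subsetneq\rho_1\subsetneq\dots\subsetneq\rho_j\in T$, so $T^\infty=U_{\langle\rangle}$. Deleting the common leading entry shows that the subtree of $U_\tau$ above a child $\langle(\rho,m)\rangle$ has $\KB$-order isomorphic to $\KB(U_\rho)$, hence $\KB(U_\tau)=\bigl(\sum_x\KB(U_{\rho_x})\bigr)+1$ with the sum over the children $\langle x\rangle$ of the root of $U_\tau$ in increasing order ($\rho_x$ the $T$-node in entry $x$). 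I would then prove by induction on $\rk_T(\tau)$ that $\KB(U_\tau)\equiv1$ when $\tau$ is a leaf and $\KB(U_\tau)\equiv\omega^{\rk_T(\tau)}+1$ otherwise. For the upper bound: each summand is $<\omega^{\rk_T(\tau)}$ (since $\rk_T(\rho_x)<\rk_T(\tau)$), the children of a node of a subtree of $\N^{<\N}$ have order type at most $\omega$, and $\omega^{\rk_T(\tau)}$ is additively indecomposable, so the sum is $\leq\omega^{\rk_T(\tau)}$. For the lower bound: by (2) there are, for each $\gamma<\rk_T(\tau)$, infinitely many children with $\rk_T(\rho_x)=\gamma$, hence infinitely many summands $\geq\omega^\gamma$, and ordinal arithmetic ($\sum_{n<\omega}\omega^\gamma=\omega^{\gamma+1}$; $\sup_{\gamma<\lambda}\omega^\gamma=\omega^\lambda$ for $\lambda$ limit; absorbing the $+1$'s) forces the sum to be $\geq\omega^{\rk_T(\tau)}$. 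With $\tau=\langle\rangle$ and $\rk(T)\geq1$ (as in all our applications; if $\rk(T)=0$ then $T^\infty$ is trivial) this gives $\KB(T^\infty)\equiv\omega^{\rk(T)}+1$, and removing its top point leaves order type $\omega^{\rk(T)}$, which is additively indecomposable, hence an indecomposable well-ordering. I expect essentially all the difficulty to be in this last induction: setting up the two-case induction hypothesis so the $+1$'s cancel and the upper and lower bounds meet.

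For (5), I would first record the general bound $\otp(\KB(S))\leq\omega^{\rk(S)}+1$ for every nonempty well-founded $S$, proved by the same induction as the upper-bound half of (4) (each subtree above a child of the root contributes $<\omega^{\rk(S)}$, and there are at most $\omega$ of them). Since $\KB(S)$ and $\KB(T^\infty)$ are well-orderings, if $\rk(S)\leq\rk(T)$ then $\otp(\KB(S))\leq\omega^{\rk(S)}+1\leq\omega^{\rk(T)}+1=\otp(\KB(T^\infty))$, so $\KB(S)$ embeds onto an initial segment of $\KB(T^\infty)$; and if $\rk(S)<\rk(T)$ then $\otp(\KB(S))\leq\omega^{\rk(S)}+1<\omega^{\rk(T)}<\omega^{\rk(T)}+1$, so the initial segment is proper, i.e.\ $\KB(S)$ embeds strictly. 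Finally, (3) is the special case $S=T$ of (5). (Alternatively one can exhibit an explicit $\KB$-embedding $T\to T^\infty$ sending $\sigma$ to the sequence of its nonempty prefixes tagged by the corresponding entries of $\sigma$, but verifying $\KB$-monotonicity then requires choosing the coding of entries compatibly, so deriving (3) from (5) is cleaner.)
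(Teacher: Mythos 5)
Your proof is correct. Note first that the paper itself only proves items (3) and (5), citing Greenberg--Montalb\'an for (1), (2) and (4); your reconstruction of those three cited items (the identity $\rk_{T^\infty}(\sigma)=\rk_T(\sigma^*)$ and the induction giving $\KB(U_\tau)\equiv\omega^{\rk_T(\tau)}+1$ for non-leaf $\tau$) is sound, and you correctly flag the degenerate case $\rk(T)=0$, where (4) as stated fails but which never arises in the paper's applications. Where you genuinely diverge from the paper is in the logical order of (3) and (5). The paper proves (3) directly: fixing a rank function on $T$, it builds an embedding $f:T\to T^\infty$ preserving rank, level and $<_\KB$ by sending each immediate successor of $\sigma$ to a same-rank immediate successor of $f(\sigma)$ (such successors exist by (2), and working left to right preserves $<_\KB$); it then deduces (5) from (3) applied to $S$ together with (4) applied to both $S$ and $T$. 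You instead establish the bound $\otp(\KB(S))\le\omega^{\rk(S)}+1$ directly for every nonempty well-founded $S$ by the same sum-decomposition induction as the upper-bound half of your (4), obtain (5) by comparing order types, and recover (3) as the special case $S=T$. Both routes are valid and of comparable length: the paper's explicit embedding is more informative (rank- and level-preserving, hence potentially reusable), whereas your route makes (3) a free corollary and isolates the reusable content as the clean ordinal inequality on $\otp(\KB(S))$, avoiding any need to construct an embedding by hand.
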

\begin{proof}
(3) and (5) are not stated in \cite{gm08}, so we give a proof. By (1), fix a rank function $r: T \to \rk(T^\infty)+1$. We construct an embedding $f: T \to T^\infty$ which preserves rank (i.e., $r(\sigma) = \rk_{T^\infty}(f(\sigma))$), $<_\KB$, and level. Start by defining $f(\emptyset) = \emptyset$. Note that $r(\emptyset) = \rk(T^\infty) = \rk_{T^\infty}(\emptyset)$.

Suppose we have defined $f$ on $\sigma \in T$. Then, we extend $f$ by mapping each immediate successor $\tau$ of $\sigma$ to an immediate successor $f(\tau)$ of $f(\sigma)$ such that $r(\tau) = \rk_{T^\infty}(f(\tau))$. Such $f(\tau)$ exists by (2). Furthermore, by (2), if we start defining $f$ from the leftmost immediate successor of $\sigma$ and proceed to the right, we can extend $f$ in a way that preserves $<_\KB$. This proves (3).

(5) follows from (3) applied to $S$ and (4) applied to $S$ and $T$.
\end{proof}

Finally, we prove our analog of Chen's theorem (Theorem \ref{thm:original_chen}):

\begin{thm} \label{thm:analogue_chen}
Given a labeled well-ordering $\L$, we can uniformly compute an indecomposable well-ordering $M$ and well-orderings $\langle K(a,n) \rangle_{n \in \N,a \in L}$ such that:
\begin{itemize}
	\item if $n \in Y_a$, then $K(a,n) \equiv M$.
	\item if $n \notin Y_a$, then $K(a,n) < M$.
\end{itemize}
\end{thm}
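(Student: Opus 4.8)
The plan is to combine the trees produced by Theorem~\ref{thm:g_h_trees} with the fat-tree normalization of Lemma~\ref{lem:fat_tree_properties}, glueing the two together by a plain product of trees rather than by the $\max$/$\min$ apparatus. Given $\L$, first invoke Theorem~\ref{thm:g_h_trees} to uniformly compute the trees $\langle h(a,n)\rangle_{a\in L,\,n\in\N}$ (the trees $g(a,n)$ will not be needed). Let $T := T(\omega\cdot L)$ be the tree of finite $<_{\omega\cdot L}$-decreasing sequences (Definition~\ref{defn:tree_decreasing_seq}), so that $\rk(T) = \otp(\omega\cdot L) = \omega\cdot\otp(L) =: \rho$; note $\rho \geq \omega$ since $\L$ has a first element. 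Using the fat-tree operation of Definition~\ref{defn:fat_tree}, set
\[ M := \KB(T^\infty) - \{\emptyset\}, \]
which by parts (1) and (4) of Lemma~\ref{lem:fat_tree_properties} is an indecomposable well-ordering with $M \equiv \omega^\rho$. For trees $A,B$ write $A \otimes B$ for the tree of all pairs $(\sigma,\tau)$ with $\sigma\in A$, $\tau\in B$ and $|\sigma|=|\tau|$, ordered by coordinatewise extension; after an obvious recoding $A\otimes B$ is a subtree of $\N^{<\N}$, uniformly computable from names for $A$ and $B$ in the representation of Section~\ref{subsection:repns}. For each $a\in L$ and $n\in\N$, put $P(a,n) := h(a,n)\otimes T$ and
\[ K(a,n) := \KB(P(a,n)^\infty) - \{\emptyset\}. \]
Removing the $\KB$-largest element $\emptyset$ is computable, so $M$ and $\langle K(a,n)\rangle_{a\in L,\,n\in\N}$ are uniformly computable from $\L$.

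The verification rests on three elementary facts about $A\otimes B$. (i) Each coordinate projection $A\otimes B\to A$ and $A\otimes B\to B$ is a tree homomorphism, hence rank-nonincreasing; so if $B$ is well-founded then $A\otimes B$ is well-founded with $\rk(A\otimes B)\leq\rk(B)$, and symmetrically with the roles of $A$ and $B$ swapped. (ii) If $A$ is ill-founded, fix an infinite branch $p$ through $A$ (which passes through the root of $A$); then $\tau\mapsto(p\restrict|\tau|,\tau)$ is a tree homomorphism of $B$ into $A\otimes B$, so $\rk(A\otimes B)\geq\rk(B)$. (iii) Combining, if $A$ is ill-founded and $B$ well-founded then $\rk(A\otimes B)=\rk(B)$. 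In particular $P(a,n)$ is always well-founded, so $P(a,n)^\infty$ is well-founded and Lemma~\ref{lem:fat_tree_properties}(4) gives $K(a,n)\equiv\omega^{\rk(P(a,n))}$, a well-ordering (the degenerate case $\rk(P(a,n))=0$ can occur only when $n\notin Y_a$ and just makes $K(a,n)$ the empty ordering, which is harmless below). Now suppose $n\in Y_a$: then $h(a,n)$ is ill-founded by Theorem~\ref{thm:g_h_trees}, so $\rk(P(a,n))=\rk(T)=\rho$ by (iii), whence $K(a,n)\equiv\omega^\rho\equiv M$. Suppose instead $n\notin Y_a$: then $h(a,n)$ is well-founded with $\rk(h(a,n))\leq\omega\cdot\otp(L\restrict a)$ by Theorem~\ref{thm:g_h_trees}, so by (i) $\rk(P(a,n))\leq\omega\cdot\otp(L\restrict a)<\omega\cdot\otp(L)=\rho$, the strict inequality holding because $L\restrict a$ is a proper initial segment of $L$ and $\gamma\mapsto\omega\cdot\gamma$ is strictly increasing; hence $\otp(K(a,n))\leq\omega^{\rk(P(a,n))}<\omega^\rho=\otp(M)$, i.e.\ $K(a,n)<M$.

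The bulk of the difficulty has already been absorbed into Theorem~\ref{thm:g_h_trees} (which does use the $\max$/$\min$ operations) and into the Greenberg--Montalb\'an fat-tree lemma, so I do not anticipate a further serious obstacle. The point requiring care is fact (iii): one must be sure that an ill-founded $h(a,n)$ drives $\rk(P(a,n))$ exactly up to $\rk(T)$ --- no more and no less --- and that the fat-tree operation then pins $\KB(P(a,n)^\infty)$ to the exact power $\omega^{\rk(P(a,n))}$, so that ``$K(a,n)\equiv M$'' holds on the nose rather than merely up to some bounded discrepancy. The remaining items (that the recoded $P(a,n)$ is genuinely a subtree of $\N^{<\N}$, and that every step is uniform in the fixed representations) are routine.
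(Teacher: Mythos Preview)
Your argument is correct and is a genuine simplification of the paper's proof. Both proofs take $M = \KB(T(\omega\cdot L)^\infty)-\{\emptyset\}$, but the paper defines $K(a,n) = \KB(\min\{T(\omega\cdot L)^\infty,h(a,n)\})-\{\emptyset\}$, using the staggered common descent tree $\min$ and applying the fat-tree operation only to the first component. Because the fat tree sits \emph{inside} the $\min$, the paper cannot simply read off $\otp(K(a,n))$ from a rank; in the case $n\in Y_a$ it instead exhibits an explicit $\KB$-preserving embedding $\tau\mapsto\langle\langle\tau\restriction i,\sigma_i\rangle\rangle_{i\leq|\tau|}$ of $T(\omega\cdot L)^\infty$ into the $\min$, and in the case $n\notin Y_a$ it appeals to Lemma~\ref{lem:fat_tree_properties}(5). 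Your approach replaces $\min$ by the unstaggered product $\otimes$ (which suffices since only two trees are involved) and, crucially, moves the fat-tree operation \emph{outside}: since $\rk(P(a,n))$ is computed exactly by your facts (i)--(iii), Lemma~\ref{lem:fat_tree_properties}(4) immediately gives $\otp(K(a,n))=\omega^{\rk(P(a,n))}$, and the two cases fall out by comparing exponents. This eliminates the explicit embedding step entirely. The only cosmetic blemish is that in the degenerate case $\rk(P(a,n))=0$ the identity $\otp(K(a,n))=\omega^{\rk(P(a,n))}$ fails (you get $0$ rather than $1$), but you flag this and it does not affect the conclusion $K(a,n)<M$.
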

\begin{proof}
Given $\L$, we may use Theorem \ref{thm:g_h_trees}, Definition \ref{defn:tree_decreasing_seq} and Definition \ref{defn:fat_tree} to uniformly compute
\begin{align*}
M &= \KB(T(\omega \cdot L)^\infty)-\{\emptyset\} \\
K(a,n) &= \KB(\min\{T(\omega \cdot L)^\infty,h(a,n)\})-\{\emptyset\} & \text{for }n \in \N,a \in L.
\end{align*}
By Lemma \ref{lem:fat_tree_properties}(4), $M$ is indecomposable. Also,
\begin{align*}
\rk(T(\omega \cdot L)^\infty) &= \omega\cdot \otp(L) \\
\text{so}\qquad \rk(\min\{T(\omega \cdot L)^\infty,h(a,n)\}) &\leq \omega\cdot \otp(L).
\end{align*}
It then follows from Lemma \ref{lem:fat_tree_properties}(5) that $K(a,n) \leq M$.

If $n \in Y_a$, then $h(a,n)$ is ill-founded. Fix some descending sequence $\langle \sigma_i \rangle_i$ in $h(a,n)$. Then we may embed $T(\omega \cdot L)^\infty$ into \\ $\min\{T(\omega \cdot L)^\infty,h(a,n)\}$ while preserving $<_{\KB}$: map $\tau$ to $\langle\langle \tau\restriction i,\sigma_i \rangle\rangle_{i=0}^{|\tau|}$. Therefore $M \leq K(a,n)$, showing that $K(a,n) \equiv M$ in this case.

If $n \notin Y_a$, then $\rk(h(a,n)) \leq \omega\cdot \otp(L\restriction a)$. Therefore
\[ \rk(\min\{T(\omega\cdot L)^\infty,h(a,n)\}) \leq \omega\cdot\otp(L\restriction a)+1. \]
Since $\omega\cdot\otp(L\restriction a)+1 < \omega\cdot\otp(L)$, by Lemma \ref{lem:fat_tree_properties}(5), $K(a,n) < M$.
\end{proof}

\section{Reducing $\ATR$ to $\WCWO$}

In this section, we apply Theorem \ref{thm:analogue_chen} to show that $\ATR \leq_W \WCWO$ (Theorem \ref{thm:ATR_leq_WCWO}). Together with Proposition \ref{prop:WQO_WO_ATR_CWO_ATR}, that implies that $\WCWO \equiv_W \CWO \equiv_W \ATR$.

First we work towards some sort of modulus for jump hierarchies. The next two results are adapted from Shore \cite[Theorem 2.3]{shore93}. We have added uniformities where we need them.

\begin{prop} \label{prop:Y_a_Pi01_singleton}
Given a labeled well-ordering $\L$ and $a \in L$, we can uniformly compute an index for a $\Pi^{0,\L}_1$-singleton $\{f\}$ which is strictly increasing, and Turing reductions witnessing that $f \equiv_T Y_a$.
\end{prop}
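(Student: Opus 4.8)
The plan is to realize a suitably decorated code $Z$ for the initial segment $\langle Y_b \rangle_{b \leq_L a}$ of the hierarchy as the unique path through an $\L$-computable tree, and then to reshape $Z$ into a strictly increasing function by a routine binary encoding. Since being the unique path of a tree computable in $\L$ is one of the standard equivalent forms of being a $\Pi^{0,\L}_1$-singleton, and since the decorations will be chosen so that $Z \equiv_T Y_a$, this will suffice. The key idea (this is the part of Shore's Theorem 2.3 that matters here) is that, although the bit ``$n \in Y_c'$'' is genuinely $\Sigma^0_1$ in $Y_c$, it becomes $\Pi^0_1$ once we also record, for each $n \in Y_c'$, the \emph{least} stage witnessing the relevant halting computation; and recording the least such stage is exactly what pins $Z$ down uniquely.

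Concretely, first I would fix a pairing and build $Z = \bigoplus_{b \leq_L a} Z^{(b)}$, where $Z^{(0_L)}$ is a fixed copy of the code of $\L$; where for $b$ a limit $Z^{(b)}$ is a (redundant) copy of $\bigoplus_{c <_L b} Z^{(c)}$; and where for $b$ the successor of $c$ in $\L$, $Z^{(b)}(n) = 0$ if $n \notin Y_b = Y_c'$ and $Z^{(b)}(n) = s+1$ for the least $s$ with $\Phi^{Y_c}_{n,s}(n)\conv$ otherwise. Because $\L = (L,0_L,S,p)$ tells us which $b \leq_L a$ are successors and names their predecessors, the following conditions on a finite string $\tau$ are uniformly $\L$-computable, and I would let $T$ be the $\L$-computable tree of strings satisfying all of them: (i) the $0_L$-column of $\tau$ agrees with the code of $\L$; (ii) for $b$ a limit, the $b$-column of $\tau$ agrees with its lower columns; (iii) for $b$ the successor of $c$, whenever $\tau$ has decided $Z^{(b)}(n) = s+1 > 0$ and all of $Z^{(c)}\restrict s$, those $Z^{(c)}$-values make $\Phi^{Y_c}_{n,s}(n)\conv$ and $\Phi^{Y_c}_{n,s'}(n)\dv$ for every $s' < s$; (iv) for $b$ the successor of $c$, $\tau$ does not declare $Z^{(b)}(n) = 0$ while also deciding enough of $Z^{(c)}$ to see $\Phi^{Y_c}_n(n)$ converge. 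An index for $T$ is produced uniformly from (an index for) $\L$ and $a$.

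Next I would verify $[T] = \{Z\}$. That $Z$ itself is a path is immediate from the correctness of the hierarchy in Proposition \ref{prop:ATR_labeled} together with the choice of least stages. For uniqueness, given $g \in [T]$ I would prove by transfinite induction along $L\restrict\{c : c \leq_L a\}$ that $g$ agrees with $Z$ on every column $b \leq_L a$: the base case is (i); the limit case is (ii) together with the induction hypothesis; and for $b$ the successor of $c$, the induction hypothesis gives that $g$'s $c$-column equals $Z^{(c)}$, after which, passing to prefixes of $g$ long enough to decide arbitrarily much of $Z^{(c)}$, condition (iv) forbids $g$ from wrongly declaring $n \notin Y_b$, condition (iii) forbids $g$ from wrongly declaring $n \in Y_b$, and (iii) further forces the recorded stage to be the least one. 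This transfinite induction is the main obstacle; the remaining steps are bookkeeping.

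Finally, I would observe that $Z \equiv_T Y_a$, uniformly in $\L$ and $a$. The direction $Z \geq_T Y_a$ is trivial. For $Z \leq_T Y_a$ one follows the \emph{finite} (hence $\L$-computably findable) chain of predecessors from $a$ down to $0_L$ or to a limit point: $Y_a$ computes $Y_c$ from $Y_c'$ along this chain, at $0_L$ it computes $\L$, at a limit it computes the join of the lower columns, so $Y_a$ computes the entire sequence $\langle Y_b \rangle_{b \leq_L a}$, and the least-witness decorations are then obtained by $Y_c$-search. To pass to a strictly increasing representative, first replace $Z$ by a Turing-equivalent $\{0,1\}$-valued $\Pi^{0,\L}_1$-singleton $\widetilde Z$ by a routine encoding (write each value of $Z$ in unary), and then set $f(n) = 2^n + \sum_{i<n} \widetilde Z(i) 2^i$, so that $f(n)$ in binary is a leading $1$ followed by $\widetilde Z(n-1)\cdots\widetilde Z(0)$; then $f$ is strictly increasing, $f \equiv_T \widetilde Z \equiv_T Y_a$ with all reductions uniform in $\L$ and $a$, and $\{f\}$ is cut out by the $\L$-computable tree of finite strings having the correct binary shape, pairwise consistent, and decoding to an initial segment of the tree for $\widetilde Z$. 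Outputting an index for this last tree together with the two Turing reductions completes the construction.
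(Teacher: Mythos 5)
Your proof is correct and is in substance the same as the paper's: the paper obtains $Y_a$ as a $\Pi^{0,\L}_2$-singleton by effective transfinite recursion (citing Sacks) and then joins it with the lex-minimal Skolem function witnessing that predicate to drop to a $\Pi^{0,\L}_1$-singleton (citing Jockusch--McLaughlin), which is exactly your device of decorating each successor column with the least convergence stage. The only difference is that you carry out these two classical steps explicitly rather than by citation, and that you use a binary re-encoding where the paper uses a cumulative sum to achieve strict monotonicity.
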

\begin{proof}
By $\L$-effective transfinite recursion on $L$, we can compute an index for $Y_a$ as a $\Pi^{0,\L}_2$-singleton (see Sacks \cite[Proposition II.4.1]{sacks}). Define $f$ to be the join of $Y_a$ and the lex-minimal Skolem function which witnesses that $Y_a$ satisfies the $\Pi^{0,\L}_2$ predicate that we computed. Then we can compute an index for $f$ as a $\Pi^{0,\L}_1$-singleton (see Jockusch, McLaughlin \cite[Theorem 3.1]{jm69}). Clearly we can compute Turing reductions witnessing that $Y_a \leq_T f \leq_T \L \oplus Y_a$. Next, we can $\L$-uniformly compute a Turing reduction from $Y_{0_L} = \L$ to $Y_a$, and hence a Turing reduction from $\L \oplus Y_a$ to $Y_a$.

Finally, without loss of generality, we can replace $f: \N \to \N$ with its cumulative sum, which is strictly increasing.
\end{proof}

\begin{lem} \label{lem:uniformly_majorreducible}
There are indices $e_0$, $e_1$, and $e_2$ such that for all labeled well-orderings $\L$ and $a \in L$, there is some strictly increasing $f: \N \to \N$ such that if $Y_a$ is the $a^{\text{th}}$ column of the unique hierarchy on $L$, then:
\begin{enumerate}
	\item $\Phi_{e_0}^{\L \oplus a}$ is an index for a Turing reduction from $f$ to $Y_a$;
	\item for all $g: \N \to \N$, $\Phi^{\L \oplus a \oplus g}_{e_1}(0)\conv$ if and only if $g$ does not majorize $f$;
	\item for all $g$ which majorizes $f$, $\Phi^{\L \oplus a \oplus g}_{e_2}$ is total and defines $Y_a$.
\end{enumerate}
\end{lem}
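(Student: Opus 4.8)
The plan is to extract the three indices directly from the construction in Proposition \ref{prop:Y_a_Pi01_singleton}, observing that every step there is uniform in $\L$ and $a$ (this is exactly what "uniformly compute an index" means in that proposition). The point of the lemma is just to repackage that uniform content into a statement about fixed indices $e_0, e_1, e_2$ together with a majorization modulus $f$.

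First I would fix, via Proposition \ref{prop:Y_a_Pi01_singleton}, the strictly increasing $\Pi^{0,\L}_1$-singleton $\{f\}$ associated to $(\L,a)$, and recall that $f \equiv_T Y_a$ with Turing reductions computable uniformly from $\L \oplus a$. Clause (1) is then immediate: $\Phi^{\L\oplus a}_{e_0}$ is just the uniform procedure producing (an index for) the reduction from $f$ to $Y_a$; since the procedure is the same for all $(\L,a)$, a single index $e_0$ works. For clause (2), the key observation is that the statement "$g$ does not majorize $f$" is $\Sigma^{0,\L\oplus a\oplus g}_1$ \emph{when $f$ is the leftmost path of the $\Pi^{0,\L}_1$ tree}: indeed, since $\{f\}$ is a $\Pi^{0,\L}_1$-singleton, $f$ is the unique (hence leftmost, hence rightmost) path through an $\L$-computable tree $T_{\L,a}$, and there is an $\L$-computable approximation $\langle f_s \rangle$ converging pointwise to $f$ from below, obtained by taking $f_s(n)$ to be the value along the leftmost node of $T_{\L,a}$ of length $s$ (or $0$ if no such node). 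Then $g$ fails to majorize $f$ iff $\exists n\, \exists s\, (g(n) < f_s(n))$, which is a $g$-c.e.\ (uniformly in $\L\oplus a\oplus g$) condition; let $e_1$ be the index of a machine searching for such a pair. For clause (3), if $g$ majorizes $f$ then $g$ majorizes the approximation's eventual value, so one can use $g$ as a bound to speed up the search on $T_{\L,a}$: with oracle $\L \oplus a \oplus g$, compute $f(n)$ by searching for the leftmost length-$(n{+}1)$ node of $T_{\L,a}$ all of whose entries are $\le$ the corresponding values of $g$ — such a node exists (the true $f$ gives one) and any such node must lie on $f$ by uniqueness of the path. This yields $f$, hence $Y_a$ via the reduction from clause (1) (run in reverse, i.e.\ the uniform reduction $Y_a \le_T f$ also available from Proposition \ref{prop:Y_a_Pi01_singleton}); let $e_2$ be the corresponding index.

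The main obstacle — really the only subtlety — is making sure the approximation $\langle f_s\rangle$ and the tree $T_{\L,a}$ are genuinely \emph{uniform in $\L$ and $a$}, so that $e_1$ and $e_2$ do not depend on $(\L,a)$. This is where one leans on the fact that Proposition \ref{prop:Y_a_Pi01_singleton} gives an \emph{index} for the $\Pi^{0,\L}_1$-singleton uniformly, not just the existence of one; composing that index with the fixed procedures described above (which are index-manipulations, not dependent on the semantic content) produces the desired fixed $e_0, e_1, e_2$. One should also double-check the direction of "majorize": since $f$ is strictly increasing and we control its approximation from below, the majorization bound genuinely suffices to cut off the tree search, and conversely any $g$ below $f$ at some coordinate is detected; this matches clause (2) as an exact iff.

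I expect this proof to be short: it is essentially the remark that the construction in the previous proposition relativizes uniformly and that a $\Pi^0_1$-singleton which is strictly increasing can be computed from any majorizing function. No new combinatorics about well-orderings or trees is needed here; the heavy lifting was done in Theorem \ref{thm:analogue_chen} and Proposition \ref{prop:Y_a_Pi01_singleton}, and this lemma is the bridge that will let the reversal $\ATR \le_W \WCWO$ use a majorization modulus as its backward functional.
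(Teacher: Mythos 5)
Your overall strategy (extract $e_0,e_1,e_2$ uniformly from Proposition \ref{prop:Y_a_Pi01_singleton}, then use a majorizing function to recover the $\Pi^{0,\L}_1$-singleton) is the right one and is the paper's, and clause (1) is fine. But your arguments for clauses (2) and (3) both break down because the tree $T$ from Proposition \ref{prop:Y_a_Pi01_singleton} is a subtree of $\N^{<\N}$: it is infinitely branching and cannot be assumed pruned. For (2), the ``leftmost node of $T$ of length $s$'' is not even computable from $\L$ (deciding whether any length-$s$ node extends $\langle 0\rangle$ is an unbounded search); it is not a pointwise lower bound for $f$ (a lexicographically smaller node can have larger entries at individual coordinates, so $\exists n\,\exists s\,(g(n)<f_s(n))$ can hold even when $g$ majorizes $f$, killing the ``only if'' direction); and it need not converge to $f$ at all (an infinite well-founded part of $T$ lying lexicographically to the left of $f$ can supply length-$s$ nodes for every $s$, killing the ``if'' direction). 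For (3), the assertion that ``any such node must lie on $f$ by uniqueness of the path'' is false: uniqueness of the path does not preclude dead branches, so the $g$-bounded node of length $n+1$ found by your search may well be off $f$.

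The paper repairs exactly these points by passing to the $g$-bounded subtree $T_g=\{\sigma\in T:\forall i<|\sigma|\ \sigma(i)\le g(i)\}$, which \emph{is} finitely branching. Then (2) follows from K\"onig's lemma: $g$ fails to majorize $f$ iff $T_g$ has no infinite path iff $T_g$ is finite, and finiteness of a finitely branching computable tree is a c.e.\ event, uniformly in $\L\oplus a\oplus g$. For (3) one does not take the leftmost surviving node but waits for all competitors to die: $\sigma\prec f$ iff for every other $\tau$ with $|\tau|=|\sigma|$ the $g$-bounded subtree of $T$ above $\tau$ is finite. This is the standard ``a unique path majorized by $g$ is computable from $g$ plus the tree'' argument, and it is the step your proposal is missing; the rest of your write-up (uniformity of the indices, feeding $f$ into the reduction to $Y_a$) goes through once it is in place.
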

\begin{proof}
Given $\L$ and $a \in L$, first use Proposition \ref{prop:Y_a_Pi01_singleton} to compute a tree $T$ with a unique path $f$ which is strictly increasing, and Turing reductions witnessing that $f \equiv_T Y_a$. This shows (1).

Given $g: \N \to \N$, we can compute the $g$-bounded subtree $T_g$ of $T$. If $g$ does not majorize $f$, then $T_g$ has no infinite path. In that case, $T_g$ is finite by K\"onig's lemma, hence we can eventually enumerate that fact. This shows (2).

If $g$ majorizes $f$, then we can compute $f$ as follows: $\sigma \prec f$ if and only if for all other $\tau$ with $|\tau| = |\sigma|$, the $g$-bounded subtree of $T$ above $\tau$ is finite. We can then compute $Y_a$ from $f$. This shows (3).
\end{proof}

We now combine Theorem \ref{thm:analogue_chen} with the above lemma to prove that

\begin{thm} \label{thm:ATR_leq_WCWO}
$\ATR \leq_W \WCWO$.
\end{thm}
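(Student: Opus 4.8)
The plan is to use $\WCWO$ as an oracle on a well-chosen pair of well-orderings, where one side encodes the given $\ATR$-instance via Theorem \ref{thm:analogue_chen} and the other is a ``large enough'' indecomposable ordering, and then to extract from any embedding enough majorization information to recover each column $Y_a$ of the hierarchy. Concretely, given a labeled well-ordering $\L$ (using the version of $\ATR$ from Proposition \ref{prop:ATR_labeled}), I would first apply Theorem \ref{thm:analogue_chen} to obtain the indecomposable $M$ and the well-orderings $K(a,n)$ with $K(a,n) \equiv M$ if $n \in Y_a$ and $K(a,n) < M$ if $n \notin Y_a$. The idea is that for a fixed $a$, the ordering $\sum_n K(a,n)$ (or a suitable interleaving of $M$'s and $K(a,n)$'s) compared against an appropriate multiple or sum involving $M$ lets an embedding reveal, via Lemma \ref{lem:indec_embed_finite_sum} and Lemma \ref{lem:indec_absorb}, which indices $n$ lie in $Y_a$ — but since $\WCWO$ only gives a one-directional embedding between two fixed well-orderings, I must package all the $a$'s together into a single comparison.

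The key point is that a $\WCWO$-solution embeds one of the two orderings into the other, and I need the reduction to succeed regardless of which direction the embedding goes. So I would build, for each $a \in L$, an ordering that is ``long'' exactly when $Y_a$ is large in a controlled way, and arrange the left-hand ordering $P = \sum_a P_a$ and right-hand ordering $R$ so that: an embedding $P \hookrightarrow R$ forces, on each block $P_a$, a local embedding into a correspondingly-indexed block of $R$ (using indecomposability to prevent a block from being absorbed or stretched across block boundaries, via the cofinality argument in Lemma \ref{lem:indec_embed_finite_sum}), and symmetrically for $R \hookrightarrow P$. The upshot should be that from the embedding one can compute, for each $a$, a function $g_a$ that majorizes the modulus function $f$ of Lemma \ref{lem:uniformly_majorreducible} for the column $Y_a$ — the rank/length of $K(a,n)$ being near $M$ exactly when $n \in Y_a$ is what converts ``how far $P_a$ embeds'' into majorization data. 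Then by Lemma \ref{lem:uniformly_majorreducible}(3), from $g_a$ one uniformly computes $Y_a$, and hence the whole hierarchy; this is done by $\L$-effective transfinite recursion along $L$, applying the recursion theorem as in the proof of Proposition \ref{prop:WQO_WO_ATR_CWO_ATR}, so that computing $Y_b$ uses only the embedding data on blocks indexed below $b$.

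The main obstacle I expect is the bookkeeping that makes the block structure ``rigid'' under embeddings in \emph{both} directions simultaneously: I need to choose the $P_a$ and $R$ so that no embedding can cheat by mapping one block cofinally into a differently-indexed block or by collapsing several blocks, and this is exactly where the finiteness-of-sums hypothesis in Lemma \ref{lem:indec_embed_finite_sum} is delicate, since $L$ may be infinite and the blocks may themselves be built from copies of $M$. The standard device (following Shore \cite{shore93} and Friedman--Hirst \cite{fh90}) is to make the blocks strictly increasing in a parameter — e.g. multiply the $a$-th block by something growing with $\otp(L\restriction a)$, or tag each block with a distinguishing indecomposable ordertype — so that an embedding is forced to be more or less block-preserving; I would adapt that device here. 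A secondary subtlety is uniformity: Theorem \ref{thm:analogue_chen} and Lemma \ref{lem:uniformly_majorreducible} are stated uniformly, and I must check that the backward functional's use of the recursion theorem along $L$ (to decode $Y_b$ from embedding data on $\{a : a <_L b\}$) stays total, which follows by transfinite induction along the well-ordering $L$ exactly as in the $\CWO \leq_W \ATR$ argument. Combining $\ATR \leq_W \WCWO$ with $\WCWO \leq_W \CWO \leq_W \ATR$ from Proposition \ref{prop:multivalued_functions_embeddings_basic_relationships} and Proposition \ref{prop:WQO_WO_ATR_CWO_ATR} then yields $\WCWO \equiv_W \CWO \equiv_W \ATR \equiv_W \UC_{\N^\N}$, answering Marcone's question.
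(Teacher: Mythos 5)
You have assembled the right ingredients (Theorem \ref{thm:analogue_chen}, Lemma \ref{lem:uniformly_majorreducible}, Lemmas \ref{lem:indec_absorb} and \ref{lem:indec_embed_finite_sum}), but the architecture you propose is not the one that works, and the obstacle you flag at the end is a genuine gap rather than routine bookkeeping. The first missing idea is that the version of $\ATR$ in Proposition \ref{prop:ATR_labeled} asks only for a \emph{single} column $Y_a$ of the hierarchy, not the whole sequence $\langle Y_b\rangle_{b\in L}$. This means there is no need to package ``all the $a$'s together'' into an $L$-indexed block sum $\sum_{a\in L}P_a$, and no effective transfinite recursion is needed in the backward functional. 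Your $L$-indexed sum is in fact fatal to the argument as you sketch it: Lemma \ref{lem:indec_embed_finite_sum} only controls embeddings of $M$ into \emph{finite} sums of $L$'s and $M$'s, so the cofinality argument gives you nothing at limit points of $L$, where the initial segment below a block is an infinite sum. The actual proof uses an $\omega$-indexed sum $\sum_n L_n$ with $L_n=K(a+1,r(n))$ coding $\range(f)$ for the single modulus $f$ of $Y_a$ (via the many-one reduction into the next column $Y_{a+1}$); then every initial segment $\sum_{i\le g(n)}L_i$ is a finite sum and the lemma applies directly, giving $f(n)\le g(n)$ where $g(n)=\pi_0(F(\langle n+1,m_0\rangle))$.

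The second missing idea is how to neutralize the fact that $\WCWO$ chooses the direction of the embedding. You propose a symmetric analysis of both directions, which is exactly the hard and unresolved part of your plan. The paper instead arranges the two sides to be $\sum_n M$ and $\bigl(\sum_n L_n\bigr)+1$: by Lemma \ref{lem:indec_absorb} both have ordertype $\sum_n M$, and since the indecomposable $M$ has no last element, any embedding must go from $\sum_n M$ into $\sum_n L_n$. So the direction is forced, and no two-sided rigidity argument is needed. Without these two moves (single column plus the $+1$ trick), your proposal does not close; with them, the proof is short and your transfinite-recursion machinery in the backward functional becomes unnecessary.
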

\begin{proof}
We reduce the version of $\ATR$ in Proposition \ref{prop:ATR_labeled} to $\WCWO$. Given a labeled well-ordering $\L$ and $a \in L$, by Lemma \ref{lem:uniformly_majorreducible}, there is some strictly increasing $f$ such that if $g$ majorizes $f$, then $\L \oplus a \oplus g$ uniformly computes $Y_a$.

Furthermore, we may compute reductions witnessing $\range(f) \leq_T f \leq_T Y_a$. From that we may compute a many-one reduction $r$ from $\range(f)$ to $Y_{a+1}$ (the $(a+1)^{\text{th}}$ column of the unique hierarchy on $(L\restriction \{b: b \leq_L a\})+1$).

Next, use $\L$ to compute labels for $(L\restriction \{b: b \leq_L a\})+1$. Apply Theorem \ref{thm:analogue_chen} to $(L\restriction \{b: b \leq_L a\})+1$ (and its labels) to compute an indecomposable well-ordering $M$ and for each $n$, a well-ordering $L_n := K(a+1,r(n))$, such that
\begin{align*}
n \in \range(f) \quad &\Leftrightarrow \quad r(n) \in Y_{a+1} \quad \Leftrightarrow \quad L_n \equiv M \\
n \notin \range(f) \quad &\Leftrightarrow \quad r(n) \notin Y_{a+1} \quad \Leftrightarrow \quad L_n < M.
\end{align*}

For the forward functional, consider the following $\WCWO$-instance:
\[ \sum_n M \quad \text{and} \quad \left(\sum_n L_n\right)+1. \]

Observe that by Lemma \ref{lem:indec_absorb}, $\sum_n L_n$ has the same ordertype as $\sum_n M$. Hence any $\WCWO$-solution $F$ must go from left to right. Furthermore, since $M$ is indecomposable, it has no last element, so $F$ must embed $\sum_n M$ into $\sum_n L_n$.

For the backward functional, we start by uniformly computing any element $m_0$ of $M$. Then we use $F$ to compute the following function:
\[ g(n) = \pi_0(F(\langle n+1,m_0 \rangle)). \]

We show that $g$ majorizes $f$. For each $n$, $F$ embeds $M \cdot n$ into $\sum_{i \leq g(n)} L_i$. It follows from Lemma \ref{lem:indec_embed_finite_sum} that at least $n$ of the $L_i$'s ($i \leq g(n)$) must have ordertype $M$. That means that there must be at least $n$ elements in the range of $f$ which lie below $g(n)$, i.e., $f(n) \leq g(n)$.

Since $g$ majorizes $f$, $\L \oplus a \oplus g$ uniformly computes $Y_a$ by Lemma \ref{lem:uniformly_majorreducible}, as desired.
\end{proof}

Using Theorem \ref{thm:ATR_leq_WCWO} and Proposition \ref{prop:WQO_WO_ATR_CWO_ATR}, we conclude that 

\begin{cor} \label{cor:CWO_ATR_WCWO}
$\CWO \equiv_W \ATR \equiv_W \WCWO$.
\end{cor}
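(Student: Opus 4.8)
The plan is simply to close a cycle of Weihrauch reductions among the three problems. I have already established $\ATR \leq_W \WCWO$ in Theorem \ref{thm:ATR_leq_WCWO} (proved for the version of $\ATR$ in Proposition \ref{prop:ATR_labeled}, which is Weihrauch equivalent to $\ATR$, so transitivity still applies). Proposition \ref{prop:multivalued_functions_embeddings_basic_relationships} records $\WCWO \leq_W \CWO$, which holds trivially because any embedding of a well-ordering onto an initial segment of another is in particular an embedding, so the forward and backward functionals may both be taken to be the identity. Finally, Proposition \ref{prop:WQO_WO_ATR_CWO_ATR} supplies $\CWO \leq_W \ATR$. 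Chaining these, $\ATR \leq_W \WCWO \leq_W \CWO \leq_W \ATR$, and since $\leq_W$ is transitive (as noted after Definition \ref{defn:Weihrauch_reducibility}), all three problems lie in one Weihrauch degree, i.e.\ $\CWO \equiv_W \ATR \equiv_W \WCWO$.

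There is essentially no obstacle here: the corollary is a bookkeeping consequence of the preceding results, the only point worth verifying being that the intermediate reductions compose cleanly across the different (but Weihrauch equivalent) presentations of $\ATR$. I would also note in passing that this yields the Weihrauch analog of Friedman and Hirst's theorem \cite{fh90}, and combined with Theorem \ref{thm:marcone_CWO_equiv_ATR} gives $\WCWO \equiv_W \UC_{\N^\N}$, answering Marcone's \cite[Question 5.8]{kmp18}.
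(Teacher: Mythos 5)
Your proposal is correct and is exactly the paper's argument: the corollary follows by chaining $\ATR \leq_W \WCWO$ (Theorem \ref{thm:ATR_leq_WCWO}), $\WCWO \leq_W \CWO$ (Proposition \ref{prop:multivalued_functions_embeddings_basic_relationships}), and $\CWO \leq_W \ATR$ (Proposition \ref{prop:WQO_WO_ATR_CWO_ATR}), using transitivity of $\leq_W$. Your added remarks about the different presentations of $\ATR$ composing cleanly and about the consequence $\WCWO \equiv_W \UC_{\N^\N}$ are accurate.
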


\section{Reducing $\ATR$ to $\NDS_{\WO}$ and $\NIAC_{\WO}$}

Shore \cite[Theorem 3.7]{shore93} showed that in reverse mathematics, $\NDS_{\WO}$ (formulated as a $\Pi^1_2$ sentence) implies $\ATR_0$ over $\RCA_0$. We adapt his proof to show that

\begin{thm} \label{thm:ATR_leq_c_NDS_WO}
$\ATR \leq_W \C_\N \ast \NDS_\WO$. In particular, $\ATR \leq_c \NDS_{\WO}$ and $\ATR \leq_W^{\arith} \NDS_\WO$.
\end{thm}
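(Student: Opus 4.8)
The plan is to mimic Shore's reverse-mathematics argument showing $\NDS_{\WO}$ implies $\ATR_0$, but carefully track uniformities and see where the non-uniform ``search'' component forces a $\C_\N$. I will reduce the version of $\ATR$ from Proposition~\ref{prop:ATR_labeled}: given a labeled well-ordering $\L$ and $c\in L$, I want to produce $Y_c$. The idea is to use Theorem~\ref{thm:analogue_chen} applied (along $L$ up to and past $c$) to convert membership facts ``$n\in Y_a$'' into ordertype comparisons ``$K(a,n)\equiv M$'' versus ``$K(a,n)<M$'', where $M$ is the uniformly computed indecomposable well-ordering. From these building blocks I will construct, for each $a\in L$, a well-ordering $N_a$ (a suitable sum of copies of the $K(a,n)$'s and of $M$, arranged to encode the ``level'' $\otp(L\restriction a)$ plus the information in $Y_a$) such that $a\mapsto N_a$ is decreasing in ordertype precisely because $L$ is well-ordered, \emph{if} the hierarchy data were available --- but since it is not, I instead build a single sequence $\langle N_a\rangle_{a\in L}$ (indexed so that larger $a$ comes first, using that $L$ is a well-ordering so its reverse is an $\omega$-type indexing is \emph{not} available; rather I index by $\N$ via a computable bijection with $L$) that is guaranteed to be strictly $\NDS_{\WO}$-descending, so that $\NDS_{\WO}$ must return some $i<j$ with an embedding $F$ of $N_{a_i}$ into $N_{a_j}$.

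More precisely, following Shore, the construction arranges that a genuine descending-sequence witness is impossible, so any $\NDS_{\WO}$-solution $(i,j,F)$ with $i<j$ gives an embedding $F: N_{a_i}\hookrightarrow N_{a_j}$ where, by Lemma~\ref{lem:indec_embed_finite_sum} (counting copies of the indecomposable $M$ inside a finite sum of $L_n$'s and $M$'s), the existence of $F$ forces $a_j<_L a_i$ \emph{and} yields, from the positions of the images, enough information to read off $Y_{a_i}$ on a bounded initial segment --- or more robustly, to majorize the modulus $f$ of Lemma~\ref{lem:uniformly_majorreducible}, from which $\L\oplus a\oplus g$ uniformly computes $Y_a$. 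The one place where uniformity fails: $\NDS_{\WO}$ hands back \emph{some} pair $(i,j)$, not necessarily one telling us about the specific $c$ we care about; so I will feed $\NDS_{\WO}$ a sequence whose entries are indexed by pairs (level-datum-for-$c$, index $n$) in such a way that \emph{any} valid descending step already pins down the needed comparison at $c$ --- if that cannot be arranged directly, I fall back on using $\C_\N$ to locate, among the finitely-described possibilities, the index that actually governs $c$. This is exactly why the statement is phrased with $\C_\N\ast\NDS_\WO$ rather than $\NDS_\WO$ alone.

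Concretely the steps, in order: (1) reduce to the Proposition~\ref{prop:ATR_labeled} form of $\ATR$, so the target is $Y_c$ from $(\L,c)$; (2) apply Lemma~\ref{lem:uniformly_majorreducible} to get the modulus $f\equiv_T Y_c$ together with the uniform indices $e_0,e_1,e_2$, and a many-one reduction $r$ from $\range(f)$ to $Y_{c+1}$; (3) compute labels for an appropriate initial segment of $L$ ending just past $c$ and apply Theorem~\ref{thm:analogue_chen} to get the indecomposable $M$ and the well-orderings $L_n=K(c+1,r(n))$ with $L_n\equiv M \Leftrightarrow n\in\range(f)$ and $L_n<M$ otherwise; (4) build from these a single $L$-indexed sequence of well-orderings which is provably $\NDS_{\WO}$-descending in the sense needed, exploiting Lemma~\ref{lem:indec_absorb} to absorb the ``small'' $L_n$'s so that ordertype is controlled purely by the ``large'' ones (the ones with $n\in\range(f)$); (5) hand this sequence to $\NDS_{\WO}$, get $(i,j,F)$, and apply Lemma~\ref{lem:indec_embed_finite_sum} to count copies of $M$ in the image and thereby extract a function $g$ majorizing $f$; (6) if the returned pair does not directly concern the column governing $c$, use $\C_\N$ to select the relevant index; (7) conclude by Lemma~\ref{lem:uniformly_majorreducible}(3) that $\L\oplus c\oplus g$ uniformly computes $Y_c$. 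The two derived statements follow immediately: $\C_\N\leq_c\id$ gives $\ATR\leq_c\NDS_\WO$, and $\C_\N\leq_W^{\arith}\id$ gives $\ATR\leq_W^{\arith}\NDS_\WO$, via the Proposition in the ``computable reducibility'' subsection.

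The main obstacle will be step~(4): designing the $L$-indexed sequence of well-orderings so that (a) it is \emph{honestly} strictly descending whenever $L$ is a well-ordering --- so that $\NDS_{\WO}$ cannot legally refuse --- yet (b) the descent is witnessed by embeddings from which the modulus information is recoverable, and (c) the whole sequence is uniformly computable from $\L$ and $c$ without already knowing the hierarchy. Shore's trick is to let the $a$-th term be (roughly) $M^{\otp(L\restriction a)}$ tagged with the $K$-data, so that an embedding of term $i$ into term $j$ with $i<j$ forces the exponents to decrease, i.e.\ $a_j<_L a_i$, which (since $L$ is a well-ordering) can only happen finitely often --- contradiction if it happened at every step, but here we only need \emph{one} such step, and that single step already carries a comparison $K(c+1,r(n))\equiv M$ or $<M$ for the relevant $n$. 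Getting the bookkeeping so that this one step yields a majorant of all of $f$ (not just one value) is the delicate part, and is precisely where Shore's ``staggered'' sums and the indecomposability absorption of Lemma~\ref{lem:indec_absorb} do the work.
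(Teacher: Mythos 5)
There is a genuine gap at the heart of your argument, namely step (4), which you yourself flag as the main obstacle. You propose to build a sequence of well-orderings that is ``honestly strictly descending,'' with the descent driven by the well-foundedness of $L$ (terms roughly of the form $M^{\otp(L\restriction a)}$), so that $\NDS_{\WO}$ ``cannot refuse.'' But an $\NDS_{\WO}$-instance is an $\omega$-indexed sequence $\langle N_j\rangle_{j\in\N}$ together with embeddings $N_{j+1}\hookrightarrow N_j$ supplied as part of the input, and no $\omega$-indexed sequence of well-orderings can be strictly descending in ordertype --- that is exactly the content of the principle. The paper's instance is instead only \emph{weakly} descending, and trivially so: it sets $L_{j,n}=\sum_{j\leq i<n}K(a+1,r(i,n))$ and $N_j=\sum_n L_{j,n}$, where $r$ is a many-one reduction of the \emph{graph} of $f$ (not $\range(f)$, as you use --- the index $i$ is essential for the staggering) into $Y_{a+1}$; then $N_{j+1}$ is literally a sub-sum of $N_j$, all the $N_j$ have the same ordertype $M\cdot\omega$, and the returned embedding $F:N_j\to N_k$ with $j<k$ is analyzed by iterating it: $h_q(0)=q$, $h_q(n+1)=\pi_0(F(\langle h_q(n)+1,m_0\rangle))$, with Lemma \ref{lem:indec_embed_finite_sum} showing $h_q$ majorizes $f$ whenever $q\geq f(k)$. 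Your picture, in which a single ``descending step'' $a_j<_L a_i$ is forced and carries one comparison at $c$, does not produce a majorant of all of $f$ and does not match how $\NDS_{\WO}$-solutions are structured.

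Relatedly, you misidentify the role of $\C_\N$. It is not used to ``select the relevant index'' governing $c$ among the returned pairs --- the backward functional works for whatever pair $(j,k)$ comes back. It is needed because the iteration above only provably majorizes $f$ when the seed $q$ is at least $f(k)$, which is unknown; by Lemma \ref{lem:uniformly_majorreducible}(2) the set of bad seeds is uniformly c.e.\ (and finite), so $\C_\N$ applied to an enumeration of it yields a good $q$, after which Lemma \ref{lem:uniformly_majorreducible}(3) recovers $Y_a$. Your steps (1)--(3), (5), and (7) are in the right spirit and match the paper, but without the correct construction of the instance in step (4) and the correct use of $\C_\N$, the proof does not go through.
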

\begin{proof}
We reduce the version of $\ATR$ in Proposition \ref{prop:ATR_labeled} to $\NDS_\WO$. Given a labeled well-ordering $\L$ and $a \in L$, by Lemma \ref{lem:uniformly_majorreducible}, there is some strictly increasing $f$ such that if $g$ majorizes $f$, then $\L \oplus a \oplus g$ uniformly computes $Y_a$. Furthermore, as in the proof of Theorem \ref{thm:ATR_leq_WCWO}, we may compute a many-one reduction $r$ from $f$ to $Y_{a+1}$.

Next, use $\L$ to compute labels for $(L\restriction \{b: b \leq_L a\})+1$. Apply Theorem \ref{thm:analogue_chen} to $(L\restriction \{b: b \leq_L a\})+1$ to compute an indecomposable well-ordering $M$ and for each $i$ and $n$, a well-ordering $K(a+1,r(i,n))$, such that
\begin{align*}
f(i) = n \quad &\Leftrightarrow \quad r(i,n) \in Y_{a+1} \quad \Leftrightarrow \quad K(a+1,r(i,n)) \equiv M \\
f(i) \neq n \quad &\Leftrightarrow \quad r(i,n) \notin Y_{a+1} \quad \Leftrightarrow \quad K(a+1,r(i,n)) < M.
\end{align*}

For the forward functional, define for each $j$ and $n$:
\begin{align*}
L_{j,n} &= \sum_{j \leq i < n} K(a+1,r(i,n)) \\
N_j &= \sum_n L_{j,n}.
\end{align*}
For each $j$ and $n$, $L_{j+1,n}$ uniformly embeds into $L_{j,n}$. So for each $j$, we can uniformly embed $N_{j+1}$ into $N_j$. Hence $\langle N_j \rangle_j$ (with said embeddings) is an $\NDS_{\WO}$-instance.

Apply $\NDS_{\WO}$ to obtain some embedding $F: N_j \to N_k$, $j<k$. For the backward functional, we aim to compute a sequence $\langle h_q \rangle_q$ of functions, such that $h_q$ majorizes $f$ for all sufficiently large $q$. We start by uniformly computing any element $m_0$ of $M$. Then for each $q$, define
\[ h_q(0) = q \quad \text{and} \quad h_q(n+1) = \pi_0(F(\langle h_q(n)+1,m_0 \rangle)). \]

We show that $h_{f(k)}$ majorizes $f$. (Hence for all $q \geq f(k)$, $h_q$ majorizes $f$.) For this proof, temporarily set $q = f(k)$. We show by induction on $n$ that $h_q(n) \geq f(k+n)$. The base case $n=0$ holds by definition of $q$. 

Suppose $h_q(n) \geq f(k+n)$. For each $j \leq i \leq k+n$, $K(a+1,r(i,f(i)))$ is a summand in $L_{j,f(i)}$ (because $f(i) > i$), which is in turn a summand in $\sum_{m \leq h_q(n)} L_{j,m}$. That implies that $M \cdot (k+n-j+1)$ embeds into $\sum_{m \leq h_q(n)} L_{j,m}$, which lies below $\langle h_q(n)+1,m_0 \rangle$ in $N_j$.

Composing with $F$, we deduce that $M \cdot (k+n-j+1)$ embeds into the initial segment of $N_k$ below $F(\langle h_q(n)+1,m_0\rangle)$, which is contained in $\sum_{m \leq h_q(n+1)} L_{k,m}$. It follows from Lemma \ref{lem:indec_embed_finite_sum} that there are at least $k+n-j+1$ many copies of $M$ in $\sum_{m \leq h_q(n+1)} L_{k,m}$. Therefore, there are at least $k+n-j+1$ many elements in $\{f(i): i \geq k\}$ below $h_q(n+1)$. It follows that
\[ h_q(n+1) \geq f(k+n-j+k) \geq f(k+n+1) \]
as desired. This completes the proof of the inductive step. We have shown that $h_{f(k)}$ majorizes $f$.

Finally, by Lemma \ref{lem:uniformly_majorreducible}(2), given $\L \oplus a \oplus \langle h_q \rangle_q$, we may apply $\C_\N$ (Definition \ref{defn:LPO_and_choice_problems}) to compute some $q$ such that $h_q$ majorizes $f$. Then $\L \oplus a \oplus h_q$ uniformly computes $Y_a$ by Lemma \ref{lem:uniformly_majorreducible}(3), as desired.
\end{proof}

The above proof can be easily modified to show that

\begin{thm}
$\ATR \leq_W \C_\N \ast \NIAC_\WO$. In particular, $\ATR \leq_c \NIAC_{\WO}$ and $\ATR \leq_W^{\arith} \NIAC_\WO$.
\end{thm}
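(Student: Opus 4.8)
The plan is to mimic the proof of Theorem \ref{thm:ATR_leq_c_NDS_WO} essentially verbatim, replacing the $\NDS_\WO$-instance (a descending sequence of well-orderings together with witnessing embeddings) by an $\NIAC_\WO$-instance (just a sequence of well-orderings), and checking that the extra embeddings we discarded were never actually used in the backward functional. Concretely, given a labeled well-ordering $\L$ and $a \in L$, we again invoke Lemma \ref{lem:uniformly_majorreducible} to obtain a strictly increasing $f$ with $\L \oplus a \oplus g$ uniformly computing $Y_a$ whenever $g$ majorizes $f$, compute a many-one reduction $r$ from $f$ to $Y_{a+1}$, and apply Theorem \ref{thm:analogue_chen} to $(L\restriction\{b : b \leq_L a\})+1$ to get an indecomposable well-ordering $M$ and well-orderings $K(a+1,r(i,n))$ with $K(a+1,r(i,n)) \equiv M$ if $f(i) = n$ and $K(a+1,r(i,n)) < M$ otherwise.

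For the forward functional I would form the \emph{same} orderings
\[
L_{j,n} = \sum_{j \leq i < n} K(a+1,r(i,n)), \qquad N_j = \sum_n L_{j,n},
\]
and hand the sequence $\langle N_j \rangle_j$ to $\NIAC_\WO$. The point is that $\NIAC_\WO$ returns a pair $i,j$ (possibly with $i > j$) and an embedding $F \colon N_i \to N_j$, whereas in Theorem \ref{thm:ATR_leq_c_NDS_WO} we had $j < k$ and $F \colon N_j \to N_k$. But the backward functional in that proof never used $j < k$ as an inequality: it only used that $F$ embeds $N_j$ (the source) into $N_k$ (the target), and the induction bounding $h_q$ works for any source index and any target index. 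So here, writing the $\NIAC_\WO$-output as $F \colon N_s \to N_t$, I set $q = f(s)$ and run the same induction showing $h_q(n) \geq f(s+n)$: the inductive step embeds $M \cdot (s+n-s+1) = M\cdot(n+1)$ — or more carefully $M\cdot(s+n-s+1)$ with the summands indexed from $s$ — into the relevant initial segment of $N_t$, applies Lemma \ref{lem:indec_embed_finite_sum} to count copies of $M$, and concludes $h_q(n+1) \geq f(s+n+1)$. Hence $h_{f(s)}$ majorizes $f$, and as before for all $q \geq f(s)$ the function $h_q$ majorizes $f$.

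Finally, the sequence $\langle h_q \rangle_q$ is uniformly computable from $\L \oplus a \oplus F$, and $\C_\N$ picks out some $q$ (e.g. $q = f(s)$, but we do not know $s$) with $h_q$ majorizing $f$ — concretely, $\C_\N$ is applied to the function enumerating those $q$ that \emph{fail} to majorize $f$ (a c.e.-in-$\L\oplus a\oplus F$ set by Lemma \ref{lem:uniformly_majorreducible}(2)), which is co-infinite, so $\C_\N$ returns a good $q$; then Lemma \ref{lem:uniformly_majorreducible}(3) gives $Y_a$ from $\L \oplus a \oplus h_q$. This establishes $\ATR \leq_W \C_\N \ast \NIAC_\WO$, and the ``in particular'' clauses follow from the Proposition relating $R \leq_W Q \ast P$ to $R \leq_c P$ and $R \leq_W^\arith P$, since $\C_\N \leq_c \id$ and $\C_\N \leq_W^\arith \id$. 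The only place requiring genuine care — hence the main obstacle — is bookkeeping the index shift from ``$f(k)$'' to ``$f(s)$'' through the induction when the $\NIAC_\WO$-output may have $s > t$; one must confirm that the chain of embeddings $L_{j+1,n} \hookrightarrow L_{j,n}$ used implicitly when comparing $N_s$ to $N_t$ still supplies enough copies of $M$, which it does because those embeddings exist for \emph{all} pairs of indices, not just consecutive ones, so the argument is insensitive to the direction of the $\NIAC_\WO$-pair.
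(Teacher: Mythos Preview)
Your proposal has a genuine gap: the backward functional \emph{does} use the direction of the embedding, and the argument breaks when $\NIAC_\WO$ returns an embedding $F\colon N_s\to N_t$ with $s>t$.

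Trace the inductive step carefully. From $h_q(n)\ge f(s+n)$ you correctly find $n+1$ copies of $M$ in the initial segment of $N_s$ below $\langle h_q(n)+1,m_0\rangle$ (indices $s\le i\le s+n$). Pushing through $F$ gives $n+1$ copies of $M$ in $\sum_{m\le h_q(n+1)}L_{t,m}$. But copies of $M$ in this sum correspond to indices $i\ge t$ with $f(i)\le h_q(n+1)$, so you only conclude $f(t+n)\le h_q(n+1)$, not $f(s+n+1)\le h_q(n+1)$. When $t<s$ this is too weak and the induction does not close. Concretely: all the $N_j$ have ordertype $M\cdot\omega$, so $\NIAC_\WO$ may return the canonical inclusion $F\colon N_1\hookrightarrow N_0$ sending $\langle m,x\rangle$ to $\langle m,\iota(x)\rangle$; then $h_q(n+1)=h_q(n)+1$, so every $h_q$ is linear and none of them majorize $f$ when $f$ grows fast. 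In that scenario your $\C_\N$-instance is not even in the domain of $\C_\N$.

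The paper's fix is not bookkeeping but a change to the forward instance: feed $\langle N_j+j\rangle_j$ to $\NIAC_\WO$ instead of $\langle N_j\rangle_j$. Since each $N_j$ has ordertype $M\cdot\omega$, any embedding $N_j+j\to N_k+k$ forces $j<k$; moreover, since $M$ has no last element the embedding must send $N_j$ into $N_k$, and then the backward functional of Theorem~\ref{thm:ATR_leq_c_NDS_WO} applies verbatim.
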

\begin{proof}
Given $\L$ and $a \in L$, compute $\langle L_{j,n} \rangle_{j,n}$ and $\langle N_j \rangle_j$ as in the proof of Theorem \ref{thm:ATR_leq_c_NDS_WO}. Then consider the $\NIAC_{\WO}$-instance $\langle N_j+j \rangle_j$.

Given an embedding $F: N_j+j \to N_k+k$, first observe that by Lemma \ref{lem:indec_absorb}, $N_j$ and $N_k$ have the same ordertype, namely that of $M \cdot \omega$. Hence $j < k$. Furthermore, since $M$ is indecomposable, $F$ must embed $N_j$ into $N_k$. The backward functional is then identical to that in Theorem \ref{thm:ATR_leq_c_NDS_WO}.
\end{proof}

We do not know if $\ATR \leq_W \NDS_{\WO}$, $\ATR \leq_W \NIAC_{\WO}$, or even $\ATR \leq_W \WQO_{\WO}$.

\section{Two-sided problems} \label{section:two_sided}

Many of the problems we have considered thus far have domains which are $\Pi^1_1$. For instance, the domain of $\CWO$ is the set of pairs of well-orderings. In that case, being outside the domain is a $\Sigma^1_1$ property. Now, any $\Sigma^1_1$ property can be thought of as a problem whose instances are sets satisfying said property and solutions are sets which witness that said property holds. This suggests that we combine a problem which has a $\Pi^1_1$ domain with the problem corresponding to the complement of its domain.

One obvious way to combine such problems is to take their union. For example, a ``two-sided'' version of $\WCWO$ could map pairs of well-orderings to any embedding between them, and map other pairs of linear orderings to any infinite descending sequence in either linear ordering. We will not consider such problems in this paper, because they are not Weihrauch reducible (or even arithmetically Weihrauch reducible) to $\C_{\N^\N}$. (Any such reduction could be used to give a $\Sigma^1_1$ definition for the set of indices of pairs of well-orderings. See also Brattka, de Brecht, Pauly \cite[Theorem 7.7]{bdp12}.) On the other hand, it is not hard to see that the problems corresponding to Fra\"iss\'e's conjecture ($\WQO_\LO$) and K\"onig's duality theorem (see section \ref{section:KDT}) are Weihrauch reducible to $\C_{\N^\N}$.

However, note that embeddings between linear orderings can still exist even if either linear ordering is ill-founded! This suggests an alternative method of combination, resulting in the following ``two-sided'' extensions of $\CWO$ and $\WCWO$.

\begin{defn}
Define the following problems:
\begin{itemize}
	\item[$\CWO_2$:] Given linear orderings $L$ and $M$, either produce an embedding from one of them onto an initial segment of the other, or an infinite descending sequence in either ordering. In either case we indicate which type of solution we produce.
	\item[$\WCWO_2$:] Given linear orderings $L$ and $M$, either produce an embedding from one of them into the other, or an infinite descending sequence in either ordering. In either case we indicate which type of solution we produce.
\end{itemize}
\end{defn}

It is not hard to see that whether solutions to instances of the above problems come with an indication of their type does not affect the Weihrauch degree of the problems. Hence we include the type for our convenience.

Next, we define a two-sided version of $\ATR$. In section \ref{section:KDT}, we will show that it is closely related to K\"onig's duality theorem (Theorem \ref{thm:ATR_2_leq_c_KDT}).

Recall our definition of a jump hierarchy:

\setcounter{sectiontmp2}{\value{section}}
\setcounter{thmtmp2}{\value{thm}}
\setcounter{section}{\value{sectiontmp1}}
\setcounter{thm}{\value{thmtmp1}}

\begin{defn}
Given a linear ordering $L$ with first element $0_L$ and a set $A \subseteq \N$, a \emph{jump hierarchy on $L$ which begins with $A$} is a set $\langle X_a\rangle_{a \in L}$ such that
\begin{itemize}
	\item $X_{0_L} = A$;
	\item for every $b \in L$, $X_b = \left(\bigoplus_{a <_L b} X_a\right)'$.
\end{itemize}
\end{defn}

\setcounter{section}{\value{sectiontmp2}}
\setcounter{thm}{\value{thmtmp2}}

Jump hierarchies on ill-founded linear orderings were first studied by Harrison \cite{harrison68}, and are often called pseudohierarchies. See, for example, \cite[Section V.4]{sim_book}).

\begin{defn} \label{defn:two_sided_ATR}
We define a two-sided version of $\ATR$ as follows:
\begin{itemize}
	\item[$\ATR_2$:] Given a linear ordering $L$ and a set $A \subseteq \N$, either produce an infinite $<_L$-descending sequence $S$, or a jump hierarchy $\langle X_a\rangle_{a \in L}$ on $L$ which begins with $A$. In either case we indicate which type of solution we produce.\footnote{Just as for $\CWO_2$ and $\WCWO_2$, this does not affect the Weihrauch degree of $\ATR_2$.} 
\end{itemize}
\end{defn}

Just as for $\CWO$ and $\WCWO$, if we require an $\ATR_2$-solution to an ill-founded $L$ to be an infinite $<_L$-descending sequence, then the resulting problem is not Weihrauch reducible to $\C_{\N^\N}$. The same holds if we require an $\ATR_2$-solution to $L$ to be a jump hierarchy whenever $L$ supports a jump hierarchy, because

\begin{thm}[Harrington, personal communication]
The set of indices for linear orderings which support a jump hierarchy is $\Sigma^1_1$-complete.
\end{thm}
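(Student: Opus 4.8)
The plan is to prove the two directions separately: that the set lies in $\Sigma^1_1$, which is routine, and that it is $\Sigma^1_1$-hard, which is where the work is.

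\emph{Membership in $\Sigma^1_1$.} The property ``$L$ supports a jump hierarchy'' unfolds as ``there exist a set $A$ and a sequence $\langle X_a \rangle_{a \in L}$ such that $X_{0_L} = A$ and $X_b = (\bigoplus_{a <_L b} X_a)'$ for every $b \in L$'' (and similarly if one fixes the starting set, e.g.\ to $\emptyset$ or to $L$ itself). Writing $Y_b = \bigoplus_{a <_L b} X_a$, the clause $X_b = Y_b'$ is $\forall e\,(e \in X_b \leftrightarrow \exists s\, \Phi^{Y_b}_{e,s}(e)\!\downarrow)$, which is $\Pi^0_2$ in $\langle X_a \rangle_{a \in L} \oplus L$ uniformly in $b$. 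Hence the matrix is arithmetic and the whole statement is $\Sigma^1_1$, uniformly in a name for $L$; in particular the set of indices for computable linear orderings supporting a jump hierarchy is $\Sigma^1_1$.

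\emph{$\Sigma^1_1$-hardness.} I would reduce the $\Sigma^1_1$-complete set $\{T \subseteq \N^{<\N} : T \text{ is ill-founded}\}$ to the problem, i.e.\ produce, computably and uniformly in (a name for) $T$, a linear ordering $L_T$ with a first element such that $L_T$ supports a jump hierarchy if and only if $[T] \neq \emptyset$. Note that $L_T$ must be ill-founded in all cases, since a well-ordering automatically supports a hierarchy by transfinite recursion, so the descending sequences of $L_T$ have to be ``controlled by'' $T$. The shape I expect is $L_T \cong 1 + (\sum_i B_i)^*$: a first element $0_{L_T}$, then a reverse-$\omega$ ``spine'' $d_0 >_{L_T} d_1 >_{L_T} \cdots$ of elements with well-ordered ``blocks'' $B_i$ inserted between consecutive spine elements, where the $B_i$ are assembled from $T$ so that the jump-hierarchy equations $X_{d_i} = (\bigoplus_{a <_{L_T} d_i} X_a)'$ --- which force information to propagate all the way down the bottomless spine --- can be satisfied simultaneously for all $i$ precisely when one has a coherent infinite chain of nodes of $T$, i.e.\ a path through $T$. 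Given a path through $T$, one interleaves the genuine effective transfinite recursions on the $B_i$ (over the joins of everything below them) with a construction along the spine, in the spirit of Harrison's pseudohierarchies, and caps it off to obtain an actual jump hierarchy on $L_T$; conversely, if $T$ is well-founded, any putative jump hierarchy on $L_T$ would, by reading off the downward-propagating data along the spine, yield a path through $T$, which is impossible, so no hierarchy exists. The same construction applies verbatim with $T$ a real oracle, giving hardness for the set of names for arbitrary linear orderings.

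The main obstacle is the construction of the blocks $B_i$ together with the forward direction: the condition $X_b = (\bigoplus_{a <_L b} X_a)'$ is a $\Pi^0_2$, not a clopen, requirement at each spine element, so neither a compactness argument nor a naive finite-extension construction produces a hierarchy from a path --- one must interleave honest transfinite recursion on the well-ordered blocks with a careful simultaneous construction down the ill-founded spine, and ensure that an \emph{arbitrary} path through $T$ (possibly as simple as a computable one) suffices to drive it. Equivalently, the heart of the matter is to characterize precisely which ill-founded orderings fail to support a jump hierarchy and to engineer $L_T$ so that this failure is equivalent to well-foundedness of $T$.
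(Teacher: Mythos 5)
The paper does not prove this theorem: it is stated as an unpublished result of Harrington (personal communication) and is only invoked as a black box, so there is no proof here to compare yours against, and I can only assess your sketch on its own terms. Your membership argument is correct and complete. The hardness half, however, is a plan rather than a proof: the blocks $B_i$, which you yourself identify as ``the heart of the matter,'' are never constructed, and neither direction of the verification is carried out. What you have correctly isolated is the problem statement (the descending sequences of $L_T$ must be controlled by $T$, since $L_T$ must be ill-founded even when no hierarchy is to exist), not its solution.

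Beyond incompleteness, the specific architecture you commit to looks self-defeating. If $L_T \cong 1 + (\sum_i B_i)^*$ is presented so that the spine $d_0 >_{L_T} d_1 >_{L_T} \cdots$ is computably identifiable --- and your description (blocks ``inserted between consecutive spine elements,'' data ``propagating all the way down the bottomless spine'') presupposes that it is --- then $L_T$ has a computable infinite descending sequence, and this already rules out \emph{every} jump hierarchy on $L_T$, independently of $T$: setting $Z_i = \bigoplus_{a <_{L_T} d_i} X_a$, a putative hierarchy would satisfy $Z_{i+1}' = X_{d_{i+1}} \leq_T Z_i$ uniformly while $Z_0$ computes the entire sequence $\langle Z_i \rangle_i$, contradicting the Friedman--Steel theorem on uniformly descending sequences of degrees. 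So with a visible spine the forward direction fails outright. The genuine content of Harrington's theorem is to hide the descending sequences: they must be exactly as hard to find as paths through $T$ (in particular non-hyperarithmetic when $T$ is ill-founded, so that a Harrison-style pseudohierarchy can exist), while a well-founded $T$ must expose a hyperarithmetic descending sequence that triggers the Friedman--Steel obstruction. Relatedly, your converse direction (``reading off downward-propagating data along the spine yields a path through $T$'') names no mechanism; in a jump hierarchy each level is determined by the join of the levels below it, and the correct route to non-existence runs through descending sequences of degrees, not through decoding the hierarchy. As it stands, the proposal identifies the right difficulty and then stops at it.
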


A Weihrauch reduction from the aforementioned variant of $\ATR_2$ to $\C_{\N^\N}$ would yield a $\Pi^1_1$ definition of the set of indices for linear orderings which support a jump hierarchy, contradicting Harrington's result.

Next, we determine the positions of $\CWO_2$, $\WCWO_2$, and $\ATR_2$ relative to $\UC_{\N^\N}$ and $\C_{\N^\N}$ in the Weihrauch degrees. In addition, even though we are not viewing $\WQO_\LO$ (Fra\"iss\'e's conjecture) as a two-sided problem, most of our arguments and results hold for $\WQO_\LO$ as well.

First observe that each of $\CWO$, $\WCWO$, and $\ATR$ is trivially Weihrauch reducible to its two-sided version. By Corollary \ref{cor:CWO_ATR_WCWO} and the fact that $\ATR \equiv_W \UC_{\N^\N}$ (Kihara, Marcone, Pauly \cite{kmp18}), these two-sided problems lie above $\UC_{\N^\N}$ in the Weihrauch degrees. We do not know if $\WQO_\LO$ lies above $\UC_{\N^\N}$ in the Weihrauch degrees.

Next observe that $\CWO_2$, $\WCWO_2$, $\ATR_2$, and $\WQO_\LO$ are each defined by an arithmetic predicate on an arithmetic domain. It easily follows that they lie below $\C_{\N^\N}$ in the Weihrauch degrees. In fact, they lie strictly below $\C_{\N^\N}$:

\begin{prop} \label{prop:domain_not_Sigma11_no_arith_reduction}
Suppose that $P$ is an arithmetically defined multivalued function such that $\dom(P)$ is not $\Pi^1_1$. If $Q$ is arithmetically defined and $\dom(Q)$ is arithmetic, then $P$ is not arithmetically Weihrauch reducible to $Q$.
\end{prop}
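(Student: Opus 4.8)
The plan is to argue by contradiction: suppose $P \leq_W^{\arith} Q$ via arithmetically defined forward and backward functionals $\Phi$ and $\Psi$. The key observation is that $\dom(Q)$ being arithmetic means that ``$p$ is a name for a $Q$-instance'' is an arithmetic property of $p$, and hence ``$\Phi(p)$ is a name for a $Q$-instance'' is an arithmetic property of $p$ (composing an arithmetic functional with an arithmetic predicate). The reduction guarantees that whenever $p$ is a name for a $P$-instance, $\Phi(p)$ is a name for a $Q$-instance. I would like to conclude the converse-ish statement needed to pin down $\dom(P)$ arithmetically, so the main point is to show that $p \in \dom(P)$ is equivalent to the arithmetic condition ``$\Phi(p)$ names a $Q$-instance, together with whatever arithmetic sanity conditions are built into the represented space of $P$'s domain.'' But that equivalence can fail in the reverse direction, so instead I would look at the complement.

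First I would recall that $\dom(P)$ not being $\Pi^1_1$ is the hypothesis; equivalently, the complement of $\dom(P)$ (within the relevant represented space, or within $\N^\N$ under the naming convention) is not $\Sigma^1_1$. The strategy is to use the reduction to produce a $\Sigma^1_1$ definition of the complement of $\dom(P)$, yielding the contradiction. For this, note: if $p$ does \emph{not} name a $P$-instance, then there is \emph{no} guarantee on $\Phi(p)$; conversely if $p$ names a $P$-instance then $\Phi(p)$ names a $Q$-instance. So $\{p : \Phi(p) \text{ names a } Q\text{-instance}\}$ is an arithmetic set containing (the names of) $\dom(P)$. That alone does not characterize $\dom(P)$. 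The crucial extra ingredient is the backward functional: if $q$ names a $Q$-solution to the instance named by $\Phi(p)$, then $\Psi(p \oplus q)$ names a $P$-solution to $p$; and if $p$ names a $P$-instance, \emph{such} a $q$ exists. The existence of such a $q$ is a $\Sigma^1_1$ (indeed arithmetic-in-a-real, hence $\Sigma^1_1$) condition once we know $\Phi(p)$ names a $Q$-instance, because ``$q$ names a $Q$-solution to the $Q$-instance named by $\Phi(p)$'' is arithmetic in $p \oplus q$ (as $Q$ is arithmetically defined), and ``$\Psi(p\oplus q)$ names a $P$-solution to $p$'' is arithmetic in $p \oplus q$ (as $P$ is arithmetically defined). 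Hence $p \in \dom(P)$ iff $\exists q$ [arithmetic condition on $p \oplus q$], which is $\Sigma^1_1$; but wait — I want $\Pi^1_1$, so I should instead write this as: $p \notin \dom(P)$ iff $p$ fails the relevant structural arithmetic conditions \emph{or} [$\Phi(p)$ names a $Q$-instance and for all $q$ naming a $Q$-solution, $\Psi(p\oplus q)$ fails to name a $P$-solution] — but this universally-quantified clause is $\Pi^1_1$, not $\Sigma^1_1$. So let me reorganize: the clean statement is $p \in \dom(P) \iff \exists q\,[\,q \text{ names a } Q\text{-solution to } \Phi(p) \text{ and } \Psi(p \oplus q) \text{ names a } P\text{-solution to } p\,]$. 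The forward direction uses that $p\in\dom(P) \Rightarrow \Phi(p)\in\dom(Q)$, so a $Q$-solution $q$ exists, and then by the reduction $\Psi(p\oplus q)$ names a $P$-solution; the reverse direction is immediate since the existence of a $P$-solution forces $p\in\dom(P)$. This makes $\dom(P)$ a $\Sigma^1_1$ set (an arithmetic matrix preceded by $\exists q$), hence its complement is $\Pi^1_1$ — wait, that gives $\dom(P)$ is $\Sigma^1_1$, whereas the hypothesis only forbids $\Pi^1_1$.

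I expect the resolution — and the main obstacle — to be getting the \emph{co}-side right: the correct move is that the above shows $\dom(P)$ is $\Sigma^1_1$, and separately the domain of any problem we care about, being the solvability locus, I should instead directly show $\dom(P)$ \emph{is} $\Pi^1_1$ from the reduction by exploiting that $\dom(Q)$ is arithmetic. Here is the fix: $p \in \dom(P) \iff p \text{ satisfies the arithmetic structural constraints of } P\text{'s input space and } \Phi(p) \in \dom(Q)$, provided we can argue the right-to-left direction. That direction does hold \emph{if} the reduction is honest in the following sense: when $p$ meets $P$'s structural constraints but $p\notin\dom(P)$ (no solution), $\Phi(p)$ could still land in $\dom(Q)$, and then $\Psi$ of $p$ together with a $Q$-solution produces \emph{something} claimed to be a $P$-solution — but there is no $P$-solution, contradiction, \emph{unless} the ``structural constraints'' already capture $\dom(P)$. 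So the clean argument is: $p\in\dom(P) \iff [\,p\text{ meets } P\text{'s arithmetic structural constraints}\,] \wedge [\,\Phi(p)\in\dom(Q)\,] \wedge \exists q\,[\Psi(p\oplus q)\text{ names a valid }P\text{-solution}]$, and one checks the first two conjuncts plus the existence of a valid $Q$-solution (automatic from $\Phi(p)\in\dom(Q)$) suffice. I'll spell out, following the parenthetical remarks in the two-sided section of the paper about $\Sigma^1_1$ definitions of index sets, that the net effect is a $\Sigma^1_1$ definition of $\N^\N \setminus \dom(P)$ via ``$p$ violates a structural constraint, or $\Phi(p)\notin\dom(Q)$'' — no, $\Phi(p)\notin\dom(Q)$ is the complement of an arithmetic set, still arithmetic. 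In short: combining ``$\Phi(p)\in\dom(Q)$ is arithmetic'' with ``$p\in\dom(P) \Rightarrow \Phi(p)\in\dom(Q)$'' and ``$\Phi(p)\in\dom(Q) \Rightarrow$ (via $\Psi$ and any $Q$-solution) $p\in\dom(P)$, since otherwise $\Psi$ would output a fake $P$-solution'' yields $p\in\dom(P) \iff \Phi(p)\in\dom(Q)$ on the locus where $p$ meets $P$'s structural constraints, making $\dom(P)$ arithmetic, hence $\Pi^1_1$, contradicting the hypothesis. The hard part is pinning down exactly which ``structural sanity conditions'' on names are arithmetic versus part of the $\Pi^1_1$ domain, and arguing the second implication rigorously — i.e., that an arithmetic reduction cannot produce a spurious $P$-solution — which is exactly where the honesty of Weihrauch reductions (backward functional always succeeds on genuine solutions) is used.
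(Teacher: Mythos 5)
Your proposal goes off the rails at the final step. The claim that ``$\Phi(p)\in\dom(Q)$ implies $p\in\dom(P)$, since otherwise $\Psi$ would output a fake $P$-solution'' is not a valid inference: the definition of a Weihrauch reduction places no constraints on $\Phi$ and $\Psi$ outside $\dom(P)$. If $p\notin\dom(P)$, then $\Phi(p)$ may perfectly well land in $\dom(Q)$ and $\Psi$ may output garbage; nothing is contradicted, because the backward functional is only required to succeed on names of genuine $P$-instances. So your conclusion that $\dom(P)$ is arithmetic on the structurally-sane locus is unjustified (and false in general), and the proof as written does not go through.

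The fix is close to things you wrote and then discarded. The paper's proof uses the $\Pi^1_1$ characterization
\[ p\in\dom(P) \iff \Phi(p)\in\dom(Q) \;\land\; \forall q\,\bigl[\,q\in Q(\Phi(p)) \to \Psi(p\oplus q)\in P(p)\,\bigr]. \]
The left-to-right direction is just the definition of the reduction. For right-to-left, the first conjunct guarantees that \emph{some} $Q$-solution $q$ to $\Phi(p)$ exists, and the universal clause then yields $\Psi(p\oplus q)\in P(p)$, so $P(p)\neq\emptyset$. The first conjunct is arithmetic precisely because $\dom(Q)$ is arithmetic and $\Phi$ is arithmetically defined --- this is the only place that hypothesis is used --- and the second conjunct is $\Pi^1_1$ since $P$ and $Q$ are arithmetically defined; the conjunction is $\Pi^1_1$, contradicting the hypothesis on $\dom(P)$. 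Dually, the complement of $\dom(P)$ is $\Sigma^1_1$: $p\notin\dom(P)$ iff $\Phi(p)\notin\dom(Q)$ or $\exists q\,[q\in Q(\Phi(p)) \land \Psi(p\oplus q)\notin P(p)]$. You wrote down essentially this clause with a universal quantifier and abandoned it as ``$\Pi^1_1$, not $\Sigma^1_1$''; the point you missed is that when $P(p)=\emptyset$ the failure of $\Psi$ is automatic for \emph{every} $q$, while nonemptiness of $Q(\Phi(p))$ supplies a witness, so the universal and existential forms agree and the existential one is the $\Sigma^1_1$ definition you were looking for.
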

\begin{proof}
If $P$ is arithmetically Weihrauch reducible to $Q$ via arithmetically defined functionals $\Phi$ and $\Psi$, then we could give a $\Pi^1_1$ definition for $\dom(P)$ as follows: $X \in \dom(P)$ if and only if
\[ \Phi(X) \in \dom(Q) \land \forall Y[Y \in Q(\Phi(X)) \to \Psi(X \oplus Y) \in P(X)]. \]
Contradiction.
\end{proof}

\begin{cor} \label{cor:C_N_N_not_below_two_sided_problems}
$\C_{\N^\N}$ is not arithmetically Weihrauch reducible to any of $\CWO_2$, $\WCWO_2$, $\ATR_2$, or $\WQO_\LO$. 
\end{cor}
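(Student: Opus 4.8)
The plan is to apply Proposition \ref{prop:domain_not_Sigma11_no_arith_reduction} with $P = \C_{\N^\N}$. For this we need two things: that $\C_{\N^\N}$ is arithmetically defined (i.e.\ has an arithmetic graph), and that $\dom(\C_{\N^\N})$ is not $\Pi^1_1$. For the first point, the graph of $\C_{\N^\N}$ is the relation ``$p$ is a path through the tree coded by $q$'', which is $\Pi^0_1$ in $p \oplus q$. For the second point, $\dom(\C_{\N^\N})$ is the set of reals coding ill-founded subtrees of $\N^{<\N}$; this set is $\Sigma^1_1$-complete (a classical theorem; see Sacks \cite{sacks}), so were it also $\Pi^1_1$ we would obtain $\Sigma^1_1 = \Pi^1_1$, a contradiction. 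In particular $\dom(\C_{\N^\N})$ is not $\Pi^1_1$.

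Next I would check that each of $\CWO_2$, $\WCWO_2$, $\ATR_2$, and $\WQO_\LO$ has an arithmetic domain and an arithmetic graph, so that it may play the role of $Q$ in Proposition \ref{prop:domain_not_Sigma11_no_arith_reduction}. For $\CWO_2$ and $\WCWO_2$ the domain is the set of pairs of reals each coding a linear ordering, which is arithmetic since ``this real codes a linear ordering'' is an arithmetic condition (reflexivity, transitivity, antisymmetry, and totality on the coded field are each $\Pi^0_2$ or simpler); the graph is arithmetic because ``$F$ is an embedding of $L$ onto an initial segment of $M$'', ``$F$ is an embedding of $L$ into $M$'', and ``$S$ is an infinite $<_L$-descending sequence'' are all arithmetic in their parameters, and forming a disjunction together with a bit indicating the type of solution preserves this. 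For $\WQO_\LO$ the domain is the set of sequences of linear orderings, again arithmetic, and ``$i < j$ and $F$ embeds $L_i$ into $L_j$'' is arithmetic. For $\ATR_2$ the domain is the set of pairs $(L,A)$ with $L$ a linear ordering, which is arithmetic; the graph is arithmetic because ``$\langle X_a\rangle_{a \in L}$ is a jump hierarchy on $L$ which begins with $A$'' unwinds to $X_{0_L} = A$ together with $\forall b \in L\,\forall n\,\bigl(n \in X_b \leftrightarrow \Phi^{\bigoplus_{a <_L b} X_a}_n(n)\conv\bigr)$, where the halting predicate is $\Sigma^0_1$ relative to the arithmetic parameter $\bigoplus_{a <_L b} X_a$, and ``$S$ is an infinite $<_L$-descending sequence'' is arithmetic as before.

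Granting these verifications, Proposition \ref{prop:domain_not_Sigma11_no_arith_reduction} applied with $P = \C_{\N^\N}$ and $Q$ ranging over the four problems immediately gives $\C_{\N^\N} \not\leq_W^{\arith} Q$ in each case, which is the claim. The only step requiring any real thought is confirming that $\dom(\C_{\N^\N})$ fails to be $\Pi^1_1$ — this is exactly the standard $\Sigma^1_1$-completeness of ill-foundedness — and, to a lesser extent, observing that the jump-hierarchy condition defining the graph of $\ATR_2$ genuinely is arithmetic (because the Turing jump operator is arithmetically definable and ``$b \in L$'' is a number quantifier); everything else is routine syntactic bookkeeping about the complexity of the relevant predicates.
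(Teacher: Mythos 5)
Your proposal is correct and follows exactly the paper's argument: the paper likewise deduces the corollary by applying Proposition \ref{prop:domain_not_Sigma11_no_arith_reduction}, noting that each of the four target problems is arithmetically defined with arithmetic domain while $\C_{\N^\N}$ is arithmetically defined with $\Sigma^1_1$-complete (hence non-$\Pi^1_1$) domain. You simply spell out the routine complexity verifications that the paper leaves implicit.
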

\begin{proof}
Each of $\CWO_2$, $\WCWO_2$, $\ATR_2$, and $\WQO_\LO$ are arithmetically defined with arithmetic domain. $\C_{\N^\N}$ is also arithmetically defined, but its domain is $\Sigma^1_1$-complete. Apply Proposition \ref{prop:domain_not_Sigma11_no_arith_reduction}.
\end{proof}

Next we show that $\CWO_2$, $\WCWO_2$, $\ATR_2$, and $\WQO_{\LO}$ are not Weihrauch reducible (or even computably reducible) to $\UC_{\N^\N}$. First we have a boundedness argument:

\begin{lem} \label{lem:hyp_solution_bound}
Suppose $P(X,Y)$ is a $\Pi^1_1$ predicate and $D$ is a $\Sigma^1_1$ set of reals. If for every $X \in D$, there is some hyperarithmetic $Y$ such that $P(X,Y)$ holds, then there is some $b \in \O$ such that for every $X \in D$, there is some $Y \leq_T H_b$ such that $P(X,Y)$.
\end{lem}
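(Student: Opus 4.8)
The plan is to prove this by a standard $\Sigma^1_1$-boundedness argument, the key tool being that any $\Sigma^1_1$ set of ordinals that are ``coded'' by reals with a certain property must be bounded below $\omega_1^{CK}$ (relative to an appropriate oracle), which in turn gives a single $b \in \O$ whose hyperarithmetic set $H_b$ uniformly dominates the needed witnesses. Concretely, fix a $\Sigma^1_1$ index for $D$ and a $\Pi^1_1$ index for $P$. For each $X \in D$, the hypothesis says there is a hyperarithmetic $Y$ with $P(X,Y)$; since $Y$ hyperarithmetic means $Y \leq_T H_c$ for some $c \in \O$, we want to find one $c$ (equivalently one $b$) that works for all $X \in D$ simultaneously.

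First I would set up the relevant $\Sigma^1_1$ relation. Consider the relation $R(X)$ asserting ``$X \in D$''; this is $\Sigma^1_1$. For each such $X$, define $o(X)$ to be the least ordinal $|c|$ (for $c \in \O$) such that some $Y \leq_T H_c$ satisfies $P(X,Y)$. The crucial point is that the relation ``$X \in D$ and $\exists c \in \O\,(|c| \leq \alpha$ and $\exists Y \leq_T H_c\, P(X,Y))$'' — or rather its negation appropriately arranged — interacts with the $\Pi^1_1$-ness of $P$ and the $\Sigma^1_1$-ness of $D$ so that the set $\{o(X) : X \in D\}$ of ordinals is the image of a $\Sigma^1_1$ set under a $\Sigma^1_1$-definable map into the ordinals. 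By the $\Sigma^1_1$-bounding principle (boundedness for $\O$; see Sacks \cite[II]{sacks}), this set of ordinals is bounded strictly below $\omega_1^{CK}$ — more precisely, there is a single $b \in \O$ with $o(X) < |b|$, hence $o(X) \leq |b|$, for every $X \in D$. Then for each $X \in D$ there is $Y \leq_T H_{b'}$ (for some $b'$ with $|b'| = o(X) \leq |b|$, so $H_{b'} \leq_T H_b$ by the standard comparison of $H$-sets along $\O$, after replacing $b$ by a fixed successor or limit notation if needed to absorb the finitely-many-jumps slack) with $P(X,Y)$. This $b$ is the desired bound.

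The main obstacle — and the step deserving the most care — is verifying that the ordinal-valued map $X \mapsto o(X)$ (or a suitable proxy for it) is genuinely $\Sigma^1_1$ in the appropriate sense so that boundedness applies. The subtlety is that $P$ is $\Pi^1_1$, not arithmetic, so the statement ``$\exists Y \leq_T H_c\, P(X,Y)$'' is a priori $\Pi^1_1$ rather than $\Sigma^1_1$; to get a $\Sigma^1_1$ predicate of the pair $(X,c)$ one should instead use the normal form for $\Pi^1_1$ predicates (as $X$-recursive well-foundedness of a tree) together with the fact that ``$Y$ is hyperarithmetic in $X$ with ordinal rank below $|c|$'' can be expressed by quantifying over the $H$-jump hierarchy on the notation $c$, which is itself arithmetically definable from $c$. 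After this unwinding, ``$X \in D$ $\wedge$ there is NO $c$ with $|c| \le \alpha$ and $\exists Y \le_T H_c\, P(X,Y)$'' becomes the kind of statement to which one applies boundedness; equivalently one directly shows $\{(X,c) : X \in D, c \in \O, \exists Y \le_T H_c\, P(X,Y)\}$ contains, for each $X\in D$, a pair, and the projection to $\O$ of a $\Sigma^1_1$-selected subset is $\Sigma^1_1$, hence bounded. Once the definability bookkeeping is done, the conclusion follows immediately from $\Sigma^1_1$-boundedness for $\O$.

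I expect the remaining steps — absorbing finitely many Turing jumps into a single notation, and comparing $H_{b'}$ with $H_b$ when $|b'| \leq |b|$ — to be routine facts from hyperarithmetic theory (Sacks \cite[I--III]{sacks}) and not to require separate argument.
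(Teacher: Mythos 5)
You have the right two tools in mind ($\Sigma^1_1$-boundedness for $\O$, plus some form of selection), and you correctly identify that the whole difficulty is making the relevant set of notations $\Sigma^1_1$. But that step is exactly where your sketch does not go through. The relation you propose to work with, $\{(X,c): X \in D,\ c \in \O,\ \exists Y \leq_T H_c\, P(X,Y)\}$, is neither $\Sigma^1_1$ nor $\Pi^1_1$: the conjunct ``$X \in D$'' is $\Sigma^1_1$, while ``$c \in \O$'' and the clause involving the $\Pi^1_1$ predicate $P$ are $\Pi^1_1$. So you can neither project it directly (to get a $\Sigma^1_1$ subset of $\O$) nor apply $\Pi^1_1$ number-uniformization to it as written. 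Your proposed repair is also flawed: $H_c$ is \emph{not} arithmetically definable from $c$ (only the predicate ``$Z$ is a jump hierarchy on the ordering coded by $c$'' is arithmetic in $Z \oplus c$), and if you existentially quantify over candidate hierarchies $Z$ in front of the $\Pi^1_1$ matrix $P(X,\Phi^Z_e)$ you land in $\Sigma^1_2$, not $\Sigma^1_1$. Even granting a uniformization step, a $\Pi^1_1$-uniformizing subrelation is a priori only $\Pi^1_1$, and its projection along the real variable $X$ would be $\Sigma^1_2$; you never supply the mechanism that turns it into something $\Sigma^1_1$.

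The missing maneuver (which is how the paper proceeds) is to work with the $\Pi^1_1$ predicate $R(X,c)$ given by ``$X \notin D$, \emph{or} ($c \in \O$ and $\exists e\,(\Phi^{H_c}_e$ is total and $P(X,\Phi^{H_c}_e))$)''. Putting ``$X \notin D$'' as a disjunct (rather than ``$X \in D$'' as a conjunct) keeps everything $\Pi^1_1$, and the hypothesis of the lemma guarantees $R$ has \emph{total} domain: every $X$ admits some $c$ with $R(X,c)$. Now apply $\Pi^1_1$ number-uniformization to get a single-valued $\Pi^1_1$ relation $Q \subseteq R$; totality plus single-valuedness give $Q(X,c) \leftrightarrow (\forall c' \neq c)\,\neg Q(X,c')$, which is a $\Sigma^1_1$ definition of $Q$. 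Hence $\{c: (\exists X \in D)\, Q(X,c)\}$ is $\Sigma^1_1$, and it is contained in $\O$ because for $X \in D$ the first disjunct of $R$ fails. Boundedness then yields the desired $b$. The remaining steps you defer to (comparing $H$-sets along $\O$, absorbing finitely many jumps) are indeed routine, but without the disjunctive trick and the uniqueness-flip your argument does not produce a $\Sigma^1_1$ subset of $\O$, so boundedness never gets off the ground.
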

\begin{proof}
Consider the following $\Pi^1_1$ predicate of $X$ and $a$:
\[ X \notin D \lor (a \in \O \land (\exists e)(\Phi^{H_a}_e \text{ is total and }P(X,\Phi^{H_a}_e))). \]
By $\Pi^1_1$-uniformization, there is a $\Pi^1_1$ predicate $Q(X,a)$ uniformizing it. Then the set
\[ \{a: (\exists X \in D)Q(X,a)\} = \{a: (\exists X \in D)(\forall b \neq a)\neg Q(X,b)\} \]
is $\Sigma^1_1$ and contained in $\O$. Therefore it is bounded by some $b \in \O$, proving the desired statement.
\end{proof}

\begin{cor} \label{cor:WCWO_CWO_ATR2_WQO_LO_rec_instance_no_hyp_solution}
Each of $\WCWO_2$, $\CWO_2$, $\ATR_2$, and $\WQO_{\LO}$ have a computable instance with no hyperarithmetic solution. 
\end{cor}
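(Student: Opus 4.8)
The plan is to treat $\ATR_2$ directly and reduce the other three problems to it via the encoding machinery of the earlier sections, with Lemma \ref{lem:hyp_solution_bound} used as a bookkeeping device. Throughout, let $H$ be a Harrison linear ordering: a computable linear ordering with no hyperarithmetic infinite descending sequence, whose well-founded initial segment $W$ has order type $\omega_1^{\mathrm{CK}}$.

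\emph{The case of $\ATR_2$.} The instance $(H,\emptyset)$ is computable, and it has no hyperarithmetic solution. An $\ATR_2$-solution to it is either an infinite $<_H$-descending sequence — and $H$ has no hyperarithmetic one by choice of $H$ — or a jump hierarchy $\langle X_a\rangle_{a\in H}$ beginning with $\emptyset$. The restriction of such a hierarchy to $W$ is the unique jump hierarchy on $W$, so $\bigoplus_{a\in H}X_a$ computes $\emptyset^{(\beta)}$ for every $\beta<\omega_1^{\mathrm{CK}}$; since every hyperarithmetic set is Turing below $\emptyset^{(\gamma)}$ for a single $\gamma<\omega_1^{\mathrm{CK}}$, this hierarchy is not hyperarithmetic.

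\emph{The cases of $\CWO_2$, $\WCWO_2$, $\WQO_\LO$.} Let $P$ be one of these. Its domain is arithmetic and the predicate ``$Y$ is a $P$-solution of $X$'' is arithmetic, hence $\Pi^1_1$. Suppose toward a contradiction that every computable $P$-instance has a hyperarithmetic solution. Applying Lemma \ref{lem:hyp_solution_bound} with $D$ the ($\Sigma^0_3$, hence $\Sigma^1_1$) set of computable $P$-instances yields a single $b\in\O$ such that every computable $P$-instance has a solution $\le_T H_b$. It then suffices to build a computable $P$-instance all of whose solutions compute a set not $\le_T H_b$ (in fact not hyperarithmetic, so this use of Lemma \ref{lem:hyp_solution_bound} is only a convenience). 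To do so, feed $H$ — labelled via $\omega\cdot(1+H)$, which has computable labels even though $H$ is ill-founded — into the forward functional of the reduction $\ATR\le_W\WCWO$ (Theorem \ref{thm:ATR_leq_WCWO}, resting on the Chen analog Theorem \ref{thm:analogue_chen}, built from Theorem \ref{thm:g_h_trees}) in place of a well-ordering; the effective transfinite recursions still run and output a pair of computable linear orderings. For $\WQO_\LO$ one instead uses the constructions in the proofs of Theorem \ref{thm:ATR_leq_c_NDS_WO} and its $\NIAC$ variant. In each case a solution of the resulting instance is either an infinite descending sequence, which unwinds to an infinite $<_H$-descending sequence (not hyperarithmetic), or an embedding, from which — running the arguments of those proofs along the well-founded part $W$, using Lemmas \ref{lem:indec_absorb} and \ref{lem:indec_embed_finite_sum} — one extracts a function majorizing a modulus for the jump hierarchy on $W$, and hence computes a non-hyperarithmetic set.

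\emph{The main obstacle} is precisely this last adaptation: Theorems \ref{thm:g_h_trees}, \ref{thm:analogue_chen} and \ref{thm:ATR_leq_WCWO} are stated for well-orderings, where tree ranks are defined and effective transfinite recursion converges everywhere, and one must check that nothing breaks when the input is the ill-founded $H$. Concretely, on $W$ the rank bounds and the modulus argument must go through unchanged, so that an embedding genuinely decodes the (non-hyperarithmetic) hierarchy on $W$; while any ill-foundedness or divergence introduced by the ill-founded part of $H$ can surface in a solution only as a genuine infinite descending sequence projecting into the ill-founded part of $H$, where the choice of $H$ applies. Ruling out a ``third kind'' of solution that is simultaneously hyperarithmetic and legitimate is the crux of the argument.
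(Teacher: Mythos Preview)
Your Harrison-ordering argument for $\ATR_2$ is correct and gives an explicit witness; the paper instead treats $\ATR_2$ uniformly with the other three via the trivial reduction $\ATR \leq_W \ATR_2$, so on this one case your argument is in fact more direct.

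For $\WCWO_2$, $\CWO_2$, and $\WQO_\LO$ there is a genuine gap. You invoke Lemma~\ref{lem:hyp_solution_bound} to reduce to beating a fixed $H_b$, but then set that aside and attempt instead to build one instance defeating all hyperarithmetic sets, by feeding the Harrison ordering into the forward functionals of Theorems~\ref{thm:ATR_leq_WCWO} and~\ref{thm:ATR_leq_c_NDS_WO}. You name the obstacle but do not resolve it. Concretely: the forward functional of Theorem~\ref{thm:ATR_leq_WCWO} takes a pair $(\L,a)$, and you never say which $a$; the $\Pi^{0,\L}_1$-singleton of Proposition~\ref{prop:Y_a_Pi01_singleton} underlying Lemma~\ref{lem:uniformly_majorreducible} need not be a singleton when $a$ sits in the ill-founded part; the ordering $M$ of Theorem~\ref{thm:analogue_chen} is ill-founded when the input is, so Lemma~\ref{lem:indec_embed_finite_sum} no longer applies to it; and the backward functional recovers a single $Y_a$, not ``the jump hierarchy on $W$''. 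A proposal that ends at ``this is the crux'' has not crossed the crux.

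The paper's argument avoids all of this and is exactly what your own use of Lemma~\ref{lem:hyp_solution_bound} already sets up. Once reduced to beating each $H_b$ separately, simply observe that for every $b \in \O$ there is a computable $\ATR$-instance (any computable well-ordering longer than $|b|$) with no $H_b$-computable solution. The already-established reductions
\[
\ATR \leq_W \WCWO \leq_W \WCWO_2 \leq_W \CWO_2,\qquad \ATR \leq_W \ATR_2,\qquad \ATR \leq_c \WQO_\LO
\]
(the last via Theorem~\ref{thm:ATR_leq_c_NDS_WO} and $\NDS_\WO \leq_W \WQO_\LO$) transfer this immediately: the forward functional on a genuine well-ordering gives a computable instance of the target, and any $H_b$-computable solution would, via the backward functional, yield an $H_b$-computable $\ATR$-solution. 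No construction is ever asked to digest an ill-founded input.
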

\begin{proof}
By the contrapositive of Lemma \ref{lem:hyp_solution_bound}, it suffices to show that for all $b \in \O$, there is a computable instance of each problem with no $H_b$-computable solution.

Observe that for all $b \in \O$, there is a computable instance of $\ATR$ such that none of its solutions are computable in $H_b$.\footnote{Note that the domain of $\ATR$ is not $\Sigma^1_1$, so we cannot apply Lemma \ref{lem:hyp_solution_bound} to show that there is a computable instance of $\ATR$ with no hyperarithmetic solution. (The latter statement is clearly false.)} The following reductions imply that the same holds for $\WCWO_2$, $\CWO_2$, $\ATR_2$, and $\WQO_\LO$:
\begin{align*}
\ATR &\leq_W \WCWO \leq_W \WCWO_2 \leq_W \CWO_2 & \text{Theorem \ref{thm:ATR_leq_WCWO}} \\
\ATR &\leq_W \ATR_2 \\
\ATR &\leq_c \WQO_\LO &\text{Theorem \ref{thm:ATR_leq_c_NDS_WO}}
\end{align*}
This completes the proof.
\end{proof}

Corollary \ref{cor:WCWO_CWO_ATR2_WQO_LO_rec_instance_no_hyp_solution} implies that

\begin{cor} \label{cor:two_sided_not_below_UC}
$\WCWO_2$, $\CWO_2$, $\ATR_2$, and $\WQO_{\LO}$ are not computably reducible or arithmetically Weihrauch reducible to $\UC_{\N^\N}$.
\end{cor}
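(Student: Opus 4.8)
The plan is to combine the boundedness result just established (Corollary~\ref{cor:WCWO_CWO_ATR2_WQO_LO_rec_instance_no_hyp_solution}) with the standard fact that $\UC_{\N^\N}$ has hyperarithmetic solutions for hyperarithmetic instances. More precisely, recall that $\UC_{\N^\N}$ takes a tree with a unique path and outputs that path; if the tree is $X$-computable, then its unique path is $\Delta^1_1$ in $X$ (indeed, it is hyperarithmetic in $X$ by the Gandy basis theorem, or more simply because a unique path on a computable tree is $\Pi^1_1$-singleton and hence hyperarithmetic). Thus any computable $\UC_{\N^\N}$-instance has a hyperarithmetic solution.

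First I would spell out the non-uniform reduction argument. Suppose toward a contradiction that, say, $\WCWO_2 \leq_c \UC_{\N^\N}$. Take the computable instance $p$ of $\WCWO_2$ provided by Corollary~\ref{cor:WCWO_CWO_ATR2_WQO_LO_rec_instance_no_hyp_solution} which has no hyperarithmetic solution. By the definition of $\leq_c$, from $p$ we can compute a $\UC_{\N^\N}$-instance $p'$ (a tree with a unique path), and from $p \oplus q$, where $q$ is that unique path, we can compute a $\WCWO_2$-solution to $p$. Since $p$ is computable, $p'$ is computable, so its unique path $q$ is hyperarithmetic. But then $p \oplus q$ is hyperarithmetic, hence the computed $\WCWO_2$-solution to $p$ is hyperarithmetic --- contradicting the choice of $p$. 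The same argument works verbatim for $\CWO_2$, $\ATR_2$, and $\WQO_\LO$. For arithmetic Weihrauch reducibility the argument is identical: if $\WCWO_2 \leq_W^{\arith} \UC_{\N^\N}$ via functionals using a fixed Turing jump of their inputs, then from the computable $p$ the forward functional produces an arithmetic (hence hyperarithmetic) $\UC_{\N^\N}$-instance, whose unique path is again hyperarithmetic, and the backward functional then yields a hyperarithmetic solution to $p$, a contradiction.

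The only step requiring a word of justification is the claim that a computable $\UC_{\N^\N}$-instance has a hyperarithmetic solution, and the slight subtlety that under $\leq_W^{\arith}$ the forward functional may first take a jump of the input; but a jump of a computable real is arithmetic, so the produced tree is still arithmetic and its unique path still hyperarithmetic. I expect this to be the only potential obstacle, and it is a well-known fact rather than a genuine difficulty; the rest is a routine unwinding of the definitions of $\leq_c$ and $\leq_W^{\arith}$ together with closure of the hyperarithmetic sets under join and the relevant functionals.
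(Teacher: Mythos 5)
Your argument is correct and is exactly the paper's: the published proof derives this corollary immediately from Corollary \ref{cor:WCWO_CWO_ATR2_WQO_LO_rec_instance_no_hyp_solution} via precisely the unwinding you give (a computable instance with no hyperarithmetic solution, the fact that a computable $\UC_{\N^\N}$-instance has a hyperarithmetic solution, and closure of the hyperarithmetic sets under the computable or arithmetic functionals and join). One caveat on your parenthetical justifications: the right reason the unique path of a computable tree is hyperarithmetic is the $\Delta^1_1$ argument you state first --- neither the Gandy basis theorem nor ``a $\Pi^1_1$-singleton is hyperarithmetic'' is a valid shortcut, since Kleene's $\O$ is a $\Pi^1_1$-singleton that is not hyperarithmetic.
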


\subsection{$\ATR_2$ and variants thereof}

In this subsection, we prove some results regarding $\ATR_2$ and its variants. First we have several results showing that $\ATR_2$ is fairly robust. Next we show that $\CWO_2 \leq_W \ATR_2$ (Theorem \ref{thm:CWO_2_leq_W_ATR_2}), in analogy with $\CWO \leq_W \ATR$ (Proposition \ref{prop:WQO_WO_ATR_CWO_ATR}).

We start with the following analog of Proposition \ref{prop:ATR_arith_formula}:

\begin{prop} \label{prop:ATR2_arith_formula}
$\ATR_2$ is Weihrauch equivalent to the following problem. Instances are triples $(L,A,\Theta)$ where $L$ is a linear ordering, $A \subseteq \N$, and $\Theta(n,Y,A)$ is an arithmetical formula whose only free variables are $n$, $Y$ and $A$. Solutions are either infinite $<_L$-descending sequences, or hierarchies $\langle Y_a \rangle_{a \in L}$ such that for all $b \in L$, $Y_b = \{n: \Theta(n,\bigoplus_{a <_L b} Y_a,A)\}$. (As usual, solutions come with an indication of their type.)
\end{prop}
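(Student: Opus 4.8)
The plan is to mirror the proof of Proposition \ref{prop:ATR_arith_formula}, adapting each direction to account for the two-sided nature of the problems. The two reductions there were essentially the ``iterate the jump'' encoding (for $\ATR \leq_W$ the arithmetical-operator version) and a padding argument using $1 + k \cdot L$ (for the converse). Both constructions are uniform and make no use of well-foundedness of $L$ in their \emph{forward} functionals; the only place well-foundedness was used was implicitly, in guaranteeing that the output was the desired jump hierarchy rather than something pathological. Since $\ATR_2$ already allows a descending-sequence solution, we simply need to make sure that descending sequences are passed back and forth correctly between the two problems.

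First I would prove $\ATR_2 \leq_W$ (the arithmetical-operator version). For the forward functional, given $(L,A)$, output $(L,A,\Theta)$ where $\Theta(n,Y,A)$ holds iff ($Y = \emptyset$ and $n \in A$) or $n \in Y'$, exactly as before. The backward functional is essentially the identity: given a solution to $(L,A,\Theta)$, if it is an infinite $<_L$-descending sequence, pass it back unchanged (with its type label); if it is a hierarchy $\langle Y_a\rangle_{a\in L}$, then by the choice of $\Theta$ it is literally a jump hierarchy on $L$ beginning with $A$, so pass it back as an $\ATR_2$-solution of hierarchy type. Since $L$ may be ill-founded, $\Theta$ only needs to be an arithmetical formula (it is), and no well-foundedness is invoked.

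For the converse, $(L,A,\Theta) \leq_W \ATR_2$, I would replay the padding argument: let $k$ be one more than the number of quantifier alternations in $\Theta$, apply $\ATR_2$ to $(1 + k\cdot L, L \oplus A)$, and handle the two possible types of solution. If $\ATR_2$ returns an infinite descending sequence in $1 + k\cdot L$, note that from such a sequence one can uniformly compute an infinite descending sequence in $L$ (project to the $L$-coordinate; a descending sequence in $1 + k\cdot L$ that does not eventually leave a single $L$-fiber must have its $L$-coordinates non-increasing and infinitely often strictly decreasing, and one that stays in one fiber is impossible since each fiber $\cong k$ is finite), so output that with descending-sequence type. If instead $\ATR_2$ returns a jump hierarchy $\langle X_\alpha\rangle_{\alpha \in 1+k\cdot L}$, run exactly the $\langle X_{(a,k-1)}\rangle_{a\in L}$-effective-transfinite-recursion construction from the proof of Proposition \ref{prop:ATR_arith_formula} to define $f\colon L \to \N$ and set $Y_b = \Phi^{X_{(b,k-1)}}_{f(b)}$; then output $\langle Y_b\rangle_{b\in L}$ with hierarchy type. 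The main obstacle is the verification in this last case: in Proposition \ref{prop:ATR_arith_formula} one argued ``by transfinite induction along $L$'' that $f$ is total and computes the right hierarchy, but now $L$ may be ill-founded, so transfinite induction is unavailable. The fix is to use the ``furthermore'' clause of the effective transfinite recursion theorem: the construction produces an index $e$ with $\Phi^{X}_e \simeq \Phi^X_{F(e)}$, and either $f = \Phi^X_e$ is total on all of $L$ (in which case the same local computation as before shows $\langle Y_b\rangle$ satisfies $Y_b = \{n : \Theta(n, \bigoplus_{a<_L b} Y_a, A)\}$, so we have a valid hierarchy-type solution), or the set $\{b : f(b)\dv\}$ contains an infinite $<_L$-descending sequence, which we can then output as a descending-sequence-type solution instead. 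So the backward functional branches on whether $f$ is total, and in the divergent case it extracts a descending sequence — this case analysis, absent in the one-sided proof, is the one genuinely new ingredient, and it is precisely what the second bullet of the effective transfinite recursion theorem was set up to provide.
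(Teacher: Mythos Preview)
Your overall strategy matches the paper's, including the key insight that the failure of totality of $f$ yields a descending sequence. However, there is a genuine gap in the converse direction: you apply $\ATR_2$ to $(1+k\cdot L,\, L\oplus A)$, and then propose that ``the backward functional branches on whether $f$ is total.'' But deciding whether $f$ is total is a $\Pi^0_2$ question relative to $\langle X_{(a,k-1)}\rangle_{a\in L}$, and in the non-total case you must also compute the $\Pi^0_1$ set $\{b : f(b)\dv\}$ in order to extract a descending sequence from it. Neither of these is computable from the jump hierarchy $\langle X_\alpha\rangle_{\alpha\in 1+k\cdot L}$ together with the original instance: the columns $X_{(a,k-1)}$ are cofinal in the hierarchy, so the hierarchy does not compute even a single jump of $\langle X_{(a,k-1)}\rangle_{a\in L}$, let alone its double jump. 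The ``furthermore'' clause of the effective transfinite recursion theorem guarantees the \emph{existence} of a descending sequence in the non-total case, but does not by itself give you the computational access needed to find one.

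The paper's fix is exactly the missing ingredient: apply $\ATR_2$ to $(1+k\cdot L+2,\, L\oplus A)$ rather than $(1+k\cdot L,\, L\oplus A)$. The two extra columns at the top of the resulting jump hierarchy furnish $(\langle X_{(a,k-1)}\rangle_{a\in L})''$, which suffices both to decide totality of $f$ and, in the non-total case, to compute the complement of its domain and hence a descending sequence. (An infinite descending sequence in $1+k\cdot L+2$ still projects to one in $L$, just as in your argument, since the extra $+2$ is finite.) With this one-line change your proposal becomes correct and essentially identical to the paper's proof.
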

\begin{proof}
Roughly speaking, we extend the reductions defined in Proposition \ref{prop:ATR_arith_formula}. First, $\ATR_2$ is Weihrauch reducible to the above problem: for the forward reduction, given $(L,A)$, consider $(L,A,\Theta)$ where $\Theta(n,Y,A)$ holds if either $Y = \emptyset$ and $n \in A$, or $n \in Y'$. The backward reduction is the identity.

Conversely, given $(L,A,\Theta)$, let $k$ be one greater than the number of quantifier alternations in $\Theta$. Apply $\ATR_2$ to $(1 + k \cdot L+2,L \oplus A)$. If we obtain an infinite descending sequence in $1+k\cdot L+2$, we can uniformly compute an infinite descending sequence in $L$ and output that.

Otherwise, we obtain a jump hierarchy $\langle X_\alpha \rangle_{\alpha \in 1+k\cdot L+2}$. We want to use it to either compute a hierarchy on $L$, or an infinite $<_L$-descending sequence.

We start by using the recursion theorem to compute a $\langle X_{(a,k-1)} \rangle_{a \in L}$-partial recursive function $f: L \to \N$, as described in the proof of Proposition \ref{prop:ATR_arith_formula}. Note that $f$ may not be total.

Next, we compute $(\langle X_{(a,k-1)} \rangle_{a \in L})''$ and use that to decide whether $f$ is total. If so, following the proof of Proposition \ref{prop:ATR_arith_formula}, we may compute a hierarchy on $L$ with the desired properties.

If not, we use $(\langle X_{(a,k-1)} \rangle_{a \in L})''$ to compute the complement of the domain of $f$ in $L$. This set has no $<_L$-least element, by construction of $f$. Therefore, we can uniformly compute an infinite $<_L$-descending sequence within it.
\end{proof}

Just as we defined labeled well-orderings, we may also define labeled linear orderings if said linear orderings have first elements. Then we have the following analog of Proposition \ref{prop:ATR_equiv_ATR_labels}:

\begin{prop} \label{prop:ATR_2_equiv_ATR_2_labels}
$\ATR_2$ is Weihrauch equivalent to the following problem: an instance is a labeled linear ordering $\L$ and a set $A \subseteq \N$, and a solution is an $\ATR_2$-solution to $(L,A)$.
\end{prop}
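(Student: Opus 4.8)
The plan is to extend the two reductions used in the proof of Proposition \ref{prop:ATR_equiv_ATR_labels}, in essentially the same way that the proof of Proposition \ref{prop:ATR2_arith_formula} extends that of Proposition \ref{prop:ATR_arith_formula}. The direction ``labeled problem $\leq_W \ATR_2$'' should be immediate: the forward functional would just discard the labels from a given instance $(\L,A)$ to obtain $(L,A)$, and since every $\ATR_2$-solution to $(L,A)$ already counts as a solution to $(\L,A)$, the backward functional would be the identity.

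For ``$\ATR_2 \leq_W$ labeled problem'', given an instance $(L,A)$ I would set $M = \omega\cdot(1+L)+2$. As in Proposition \ref{prop:ATR_equiv_ATR_labels}, one can uniformly compute $M$ together with labels from $L$ alone (the successors of $M$ are exactly the elements of the form $(n+1,\alpha)$ together with the last element, with obvious predecessors); call this labeled ordering $\M$. Then apply the labeled problem to $(\M,L\oplus A)$. If it returns an infinite $<_M$-descending sequence, then since every $\omega$-block of $M$ is well-founded the sequence must pass through infinitely many blocks, so I would uniformly extract from it an infinite $<_L$-descending sequence and output that. If instead it returns a jump hierarchy $\langle X_c\rangle_{c\in M}$ on $M$ beginning with $L\oplus A$, then — exactly as in Proposition \ref{prop:ATR_equiv_ATR_labels} — each column $X_{(0,b)}$ with $b\in L$ is a limit column and so uniformly computes $X_{0_M}=L\oplus A$ and its jump; I would run $\langle X_{(0,a)}\rangle_{a\in L}$-effective transfinite recursion along $L$ to obtain a partial $\langle X_{(0,a)}\rangle_{a\in L}$-recursive $f\colon L\to\N$ such that, whenever $\Phi^{X_{(0,a)}}_{f(a)}$ is total for all $a\leq_L b$, the value $f(b)$ is an index that defines $A$ (from $X_{(0,b)}$) when $b=0_L$, and defines $(\bigoplus_{a<_L b}\Phi^{X_{(0,a)}}_{f(a)})'$ otherwise, the case distinction being decidable from $X_{(0,b)}$, which computes $(L\oplus A)'$.

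The step I expect to be the main obstacle — and the only real difference from Proposition \ref{prop:ATR_equiv_ATR_labels} — is deciding whether this $f$ is total, which is needed to know which kind of $\ATR_2$-solution to return. Here I would exploit the ``$+2$'' at the top of $M$: the last two columns of $\langle X_c\rangle_{c\in M}$ together compute $(\bigoplus_{a\in L}X_{(0,a)})''$, and this is enough both to decide whether $f$ is total and, when it is not, to compute the set $D$ of $b\in L$ on which $f$ is undefined. If $f$ is total I would output $\langle\Phi^{X_{(0,a)}}_{f(a)}\rangle_{a\in L}$, which is then the jump hierarchy on $L$ beginning with $A$; if not, the effective transfinite recursion theorem guarantees $D$ is nonempty with no $<_L$-least element, so I would uniformly extract an infinite $<_L$-descending sequence from $D$ and output that. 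This padding-and-double-jump trick is precisely the one already used in the proof of Proposition \ref{prop:ATR2_arith_formula}, and all remaining verifications (e.g.\ that a total $f$ yields the jump hierarchy on $L$, and that $D$ has no $<_L$-least element) are as in Propositions \ref{prop:ATR_equiv_ATR_labels} and \ref{prop:ATR2_arith_formula}.
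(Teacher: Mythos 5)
Your overall architecture (pad $M$ at the top so that the resulting hierarchy can decide the case division, and extract a $<_L$-descending sequence when the recursion fails) is sound, and both the easy direction and your treatment of a descending sequence in $M$ are fine. The gap is in the criterion you use for the case division. You decide which type of solution to output by asking whether the index function $f$ is total, and you claim that if $f$ is total then $\langle\Phi^{X_{(0,a)}}_{f(a)}\rangle_{a\in L}$ is the jump hierarchy on $L$ starting with $A$. But your own description of $f$ only guarantees that $f(b)$ is a correct index \emph{conditionally} on all earlier stages having produced total, correct sets, and on an ill-founded $L$ there is no transfinite induction available to discharge that hypothesis. Concretely, with the natural implementation $f(b)$ is just an index produced by the s-m-n theorem and converges unconditionally, so $f$ is total and $D=\emptyset$ even when $L$ is ill-founded and $M$ carries a pseudohierarchy; meanwhile the set of $b\in L$ for which $\Phi^{X_{(0,b)}}_{f(b)}$ fails to be total, or fails to equal $\left(\bigoplus_{a<_L b}\Phi^{X_{(0,a)}}_{f(a)}\right)'$, can be a nonempty set with no $<_L$-least element. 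In that situation your reduction declares ``jump hierarchy'' and outputs garbage. Totality of the index map is not the same thing as correctness of the computed hierarchy, and the latter is what the two-sided problem forces you to certify.

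The repair is what the paper actually does: after running the recursion, compute the \emph{semantic} set $S=\{b\in L:\langle\Phi^{X_{(0,a)}}_{f(a)}\rangle_{a<_L b}\text{ defines a jump hierarchy}\}$ (membership of $b$ in $S$ is arithmetic of fixed complexity in $\bigoplus_{a<_L b}X_{(0,a)}\oplus L$, so your padding columns have more than enough power to compute $S$ and decide whether $S=L$); output $\langle\Phi^{X_{(0,a)}}_{f(a)}\rangle_{a\in L}$ if $S=L$, and otherwise extract a descending sequence from $L\setminus S$, which has no $<_L$-least element precisely because of the conditional guarantee on $f$. (Alternatively, you could build the verification into $f$ itself, so that $f(b)$ diverges unless every earlier stage is checked to be total and correct; then, and only then, would $D=L\setminus\dom(f)$ carry the information you want --- but that has to be said explicitly.) With that change the rest of your argument goes through.
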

\begin{proof}
It suffices to reduce $\ATR_2$ to the given problem. Given $(L,A)$, we start by computing $\omega \cdot (1+L)$ and labels for it. Then we apply the given problem to $\omega \cdot (1+L)$ (and its labels) and the set $L \oplus A$.

If we obtain an infinite descending sequence in $\omega\cdot(1+L)$, we can uniformly compute an infinite descending sequence in $L$ and output that.

Otherwise, we obtain a jump hierarchy $\langle X_{(n,\alpha)} \rangle_{n \in \omega,\alpha \in 1+L}$ which starts with $L \oplus A$. First use this hierarchy to compute $L''$, which tells us whether $L$ has a first element. If not, we can uniformly compute an infinite descending sequence in $L$ and output that.

Otherwise, we use the recursion theorem to compute a partial $\langle X_{(0,b)} \rangle_{b \in L}$-recursive function $f: L \to \N$, as described in the proof of Proposition \ref{prop:ATR_equiv_ATR_labels}. Then we compute
\[ S = \left\{b \in L: \langle \Phi^{X_{(0,a)}}_{f(a)} \rangle_{a <_L b} \text{ defines a jump hierarchy}\right\} \]
and consider two cases.

\underline{Case 1.} If $S$ is all of $L$, then we output $\langle \Phi^{X_{(0,a)}}_{f(a)} \rangle_{a \in L}$, which is a jump hierarchy on $L$ which starts with $A$.

\underline{Case 2.} Otherwise, observe that by construction of $f$, $L\backslash S$ has no $<_L$-least element. Then we can compute an infinite $<_L$-descending sequence in $L\backslash S$ and output that.

Finally, note that $\langle X_{(n,\alpha)} \rangle_{n \in \omega,\alpha \in 1+L}$ can compute the above case division and the output in each case.
\end{proof}

Proposition \ref{prop:ATR_2_equiv_ATR_2_labels} will be useful in section \ref{section:KDT}. Using similar ideas, we can show that

\begin{prop} \label{prop:ATR2_nonuniform_jump_hierarchy}
$\ATR_2$ is arithmetically Weihrauch equivalent to the following problem: an instance is a linear ordering $L$ and a set $A \subseteq \N$, and a solution is an infinite $<_L$-descending sequence, or some $\langle X_a \rangle_{a \in L}$ such that $X_{0_L} = A$ and $X'_a \leq_T X_b$ for all $0_L \leq_L a <_L b$.
\end{prop}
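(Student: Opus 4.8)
The plan is to reduce $\ATR_2$ (say in the labeled form of Proposition \ref{prop:ATR_2_equiv_ATR_2_labels}) to the ``weak hierarchy'' problem $P$ in the statement. The reverse direction $P\leq_W\ATR_2$ is immediate: every $\ATR_2$-solution to $(L,A)$ is already a $P$-solution, since a genuine jump hierarchy $\langle X_a\rangle_{a\in L}$ has $X_{0_L}=A$ and $X'_a\leq_T X_b$ for $a<_L b$ (as $X_a$ is a summand of $\bigoplus_{c<_L b}X_c$), and an infinite $<_L$-descending sequence is a solution to both problems. For the forward functional, given $\L$ and $A\subseteq\N$, I would feed $P$ the instance $(L^{*},L\oplus A)$, where $L^{*}$ is a padded copy of $L$ of the shape $\omega\cdot(1+L)$: thus $L^{*}$ is well-founded iff $L$ is, its first element carries $L\oplus A$, and below the block of $L^{*}$ indexed by any $b\in L$ there is ``room'' for the auxiliary columns used at stage $b$ of the construction below. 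As in Propositions \ref{prop:ATR2_arith_formula} and \ref{prop:ATR_2_equiv_ATR_2_labels}, an infinite $<_{L^{*}}$-descending sequence returned by $P$ can be converted uniformly into an infinite $<_L$-descending sequence, so it suffices to handle the case where $P$ returns a weak hierarchy.

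So suppose $P$ returns a weak hierarchy $\langle X_c\rangle_{c\in L^{*}}$, i.e.\ $X_{0_{L^{*}}}=L\oplus A$ and $X'_c\leq_T X_d$ whenever $c<_{L^{*}}d$. Let $B$ be a fixed finite Turing jump of $\L\oplus A\oplus\bigoplus_c X_c$; this is exactly the extra power an arithmetic Weihrauch reduction is allowed. Although the hypothesis ``$X'_c\leq_T X_d$'' only asserts that \emph{some} reduction exists, the predicate ``$\Phi^{X_d}_e$ is total and equals $X'_c$'' is arithmetic in $X_c\oplus X_d$, so from $B$ we can uniformly compute, for every $c<_{L^{*}}d$, an index witnessing $X'_c\leq_T X_d$. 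In effect $B$ turns the non-uniform weak hierarchy into a uniform one, and in particular $B$ can compute $L$, its labels, and all the bookkeeping needed for a case division into first, successor, and limit stages.

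Next I would run, using $B$, an effective transfinite recursion along $L$ producing a partial $f\colon L\to\N$ so that, setting $Y_b:=\Phi^{X_{\sigma(b)}}_{f(b)}$ for the appropriate auxiliary position $\sigma(b)\in L^{*}$, one has (whenever the earlier values are well behaved) $Y_{0_L}=A$ and $Y_b=\bigl(\bigoplus_{a<_L b}Y_a\bigr)'$; here the reduction realizing stage $b$ is built from the indices $f(a)$ $(a<_L b)$ together with the witnesses supplied by $B$, the padding ensuring that a column of the weak hierarchy sitting below $\sigma(b)$ but above everything used so far is high enough in the Turing degrees to carry out the computation. Then, as in the proof of Proposition \ref{prop:ATR_2_equiv_ATR_2_labels}, using a further fixed jump (still available to $B$) I would compute the set $T$ of $b\in L$ such that $f(a)$ is defined and $\Phi^{X_{\sigma(a)}}_{f(a)}$ is total for all $a\leq_L b$. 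If $T=L$, then $\langle Y_b\rangle_{b\in L}$ is a jump hierarchy on $L$ beginning with $A$ (a pseudohierarchy when $L$ is ill-founded), which we output; by transfinite induction along $L$ this case always occurs when $L$ is well-founded --- precisely the ``only difference'' with the one-sided statement at the end of Section \ref{section:ATR}. Otherwise $L\setminus T$ is nonempty and, by the construction of $f$, has no $<_L$-least element, so we may compute an infinite $<_L$-descending sequence inside it and output that.

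The main obstacle is the clause in the previous paragraph that \emph{a single column of the weak hierarchy is high enough to compute $\bigl(\bigoplus_{a<_L b}Y_a\bigr)'$}. A weak hierarchy need not be uniform even after the repair by $B$ --- notably a column at a limit position need not compute the \emph{join} of the earlier columns --- so one must choose $\sigma$ and the padding carefully enough that at each stage $b$ the set $\bigoplus_{a<_L b}Y_a$ is uniformly reducible, via data computable from $B$, to some column $X_\gamma$ with $\gamma<_{L^{*}}\sigma(b)$, whence $Y_b=\bigl(\bigoplus_{a<_L b}Y_a\bigr)'\leq_T X'_\gamma\leq_T X_{\sigma(b)}$. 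Getting this bookkeeping right, and then verifying by transfinite induction that the recursion is everywhere defined exactly when $L$ is well-founded, is where the work lies; since the instance map and the repair-by-$B$ step are arithmetic, what results is an arithmetic Weihrauch reduction, and together with $P\leq_W\ATR_2$ this yields the claimed arithmetic Weihrauch equivalence.
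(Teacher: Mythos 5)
Your overall architecture is the same as the paper's: the forward functional hands the weak-hierarchy problem an instance whose starting set is $L\oplus A$; a descending sequence is passed back essentially unchanged; the non-uniformity in the hypothesis $X'_a\leq_T X_b$ is repaired by an arithmetically computed table of reduction indices; and the backward functional runs an effective transfinite recursion whose failure set, if nonempty, has no $<_L$-least element and hence yields a descending sequence. The paper does all of this on $L$ itself, with no padding: the forward functional is simply $(L,A)\mapsto(L,L\oplus A)$, and at each $b>_L 0_L$ it uses $(\bigoplus_{a\leq_L b}X_a)'''$ to find indices witnessing $X'_a\leq_T X_b$ for every $a<_L b$ and then computes $(\bigoplus_{a<_L b}\Phi^{X_a}_{f(a)})'$ from $X_b$ together with those indices.

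The genuine gap is exactly the one you flag and then defer. At a limit point $b$ of $L$ your plan needs $\bigoplus_{a<_L b}Y_a$ to be reducible to a single column $X_\gamma$ with $\gamma<_{L^*}\sigma(b)$, and the padding $\omega\cdot(1+L)$ cannot supply such a $\gamma$: the $\omega$-block attached to each $a\in L$ only buys finitely many extra jumps inside that block, and when $b$ is a limit of $L$ there is no element of $\omega\cdot(1+L)$ lying above every block indexed by some $a<_L b$ yet below the $b$-block, so there is no candidate $\gamma$ at all. Thus the one clause on which the whole backward functional rests is not established, and the padding --- the only place your construction differs from the paper's --- does nothing toward establishing it. You have correctly isolated the delicate point (pointwise domination $X'_a\leq_T X_b$ does not by itself give control of the jump of the infinite join $\bigoplus_{a<_L b}Y_a$ at limit stages); but a correct proof must actually discharge it, either by justifying the paper's claim that $X_b$ together with the arithmetically found indices computes $(\bigoplus_{a<_L b}\Phi^{X_a}_{f(a)})'$, or by replacing the invariant ``$Y_a$ is computed by the column at position $\sigma(a)$'' with one that survives limits (for instance, tracking reductions of $Y_a$ to all strictly later columns, with the uniformity data carried along explicitly), rather than appealing to unspecified bookkeeping.
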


\begin{proof}
It suffices to construct an arithmetic Weihrauch reduction from $\ATR_2$ to the given problem. Given $(L,A)$, the forward functional outputs $(L,L \oplus A)$. To define the backward functional: if the above problem gives us some infinite $<_L$-descending sequence then we output that. Otherwise, suppose we are given $\langle X_a \rangle_{a \in L}$ such that $X_{0_L} = A$ and $X'_a \leq_T X_b$ for all $0_L \leq_L a <_L b$.

We start by attempting to use $(\langle X_a \rangle_{a \in L})'''$-effective transfinite recursion along $L$ to define a partial $(\langle X_a \rangle_{a \in L})'''$-recursive function $f: L \to \N$ such that $\langle \Phi^{X_a}_{f(a)} \rangle_{a \in L}$ is a jump hierarchy on $L$ which starts with $A$.

For the base case, we use $X_{0_L} = L \oplus A$ to uniformly compute $A$. For $b >_L 0_L$, first use $(\bigoplus_{a \leq_L b} X_a)'''$ to find Turing reductions (for each $a <_L b$) witnessing that $X'_a \leq_T X_b$. Then we can use $X_b$ to compute $(\bigoplus_{a <_L b} \Phi^{X_a}_{f(a)})'$. This completes the definition of $f$.

Next, compute
\[ S = \left\{b \in L: \langle \Phi^{X_a}_{f(a)} \rangle_{a <_L b} \text{ defines a jump hierarchy}\right\} \]
and consider two cases.

\underline{Case 1.} If $S$ is all of $L$, then we output $\langle \Phi^{X_a}_{f(a)} \rangle_{a \in L}$, which is a jump hierarchy on $L$ which starts with $A$.

\underline{Case 2.} Otherwise, observe that by construction of $f$, $L\backslash S$ has no $<_L$-least element. Then we can compute an infinite $<_L$-descending sequence in $L\backslash S$ and output that.

Finally, note that by choosing $n$ sufficiently large, $(\langle X_a \rangle_{a \in L})^{(n)}$ can compute the above case division and the output in each case.
\end{proof}

Next, in analogy with $\CWO \leq_W \ATR$ (Proposition \ref{prop:WQO_WO_ATR_CWO_ATR}), we have that

\begin{thm} \label{thm:CWO_2_leq_W_ATR_2}
$\CWO_2 \leq_W \ATR_2$.
\end{thm}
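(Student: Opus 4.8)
The plan is to adapt the proof of $\CWO \leq_W \ATR$ from Proposition \ref{prop:WQO_WO_ATR_CWO_ATR} to the two-sided setting, replacing $\ATR$ by $\ATR_2$ and carefully tracking the ways in which the construction can break down when $L$ or $M$ fails to be a well-ordering. Given linear orderings $L$ and $M$, I would define $N$ by adjoining a first element $0_N$ and a last element $m_N$ to $L$, and apply the arithmetic-formula version of $\ATR_2$ (Proposition \ref{prop:ATR2_arith_formula}) to $(N, L \oplus M, \Theta)$, where $\Theta$ expresses that the $0_N$-th column equals $L \oplus M$ while every later column is the triple jump of the join of the earlier columns. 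If this returns an infinite $<_N$-descending sequence, then, since $N$ has exactly one element below $L$ and a tail above $L$ containing no infinite descending sequence, we compute from it an infinite $<_L$-descending sequence and output that (with the appropriate type indication). So assume instead that we obtain a hierarchy $\langle X_a \rangle_{a \in N}$; then for each $b \in L$ the column $X_b$ computes $L \oplus M$ together with $(\bigoplus_{a <_L b} X_a)'''$, and $X_{m_N}$ computes $(\bigoplus_{a \in L} X_a)'''$.

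Exactly as in Proposition \ref{prop:WQO_WO_ATR_CWO_ATR}, I would then use $\langle X_a \rangle_{a \in L}$-effective transfinite recursion along $L$ to produce a total index function $f \colon L \to \N$ such that, at stage $b$, the computation $\Phi^{X_b}_{f(b)}(0)$ uses the triple jump available in $X_b$ to form the set $V_b = \{\Phi^{X_a}_{f(a)}(0) : a <_L b,\ \Phi^{X_a}_{f(a)}(0)\conv\}$ of the ``already used'' points of $M$, and then outputs the $<_M$-least element of $M \setminus V_b$ if one exists, diverging otherwise (that is, if $M \setminus V_b$ is empty, or is nonempty with no $<_M$-least element). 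Effective transfinite recursion makes the index function $f$ total regardless of whether any of the computations $\Phi^{X_b}_{f(b)}(0)$ converge, so $X_{m_N}$ can compute $f$ and hence the set
\[ S = \{b \in L : \Phi^{X_a}_{f(a)}(0)\conv \text{ for all } a \leq_L b\}, \]
which is an initial segment of $L$.

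The backward functional now proceeds by cases, all decidable from $X_{m_N}$. If $S = L$, every image converges, and since each image is chosen $<_M$-least outside the images of its $<_L$-predecessors, the map $a \mapsto \Phi^{X_a}_{f(a)}(0)$ is automatically an embedding of $L$ onto an initial segment of $M$, and we output it. If $S \neq L$ and $L \setminus S$ has no $<_L$-least element, then $L \setminus S$ contains an infinite $<_L$-descending sequence, computable from $X_{m_N}$, and we output it. Finally, if $S \neq L$ and $L \setminus S$ has a $<_L$-least element $b$, then every image of an $a <_L b$ converges while $\Phi^{X_b}_{f(b)}(0)$ diverges; by construction this happens for one of two reasons. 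Either $V_b = M$, in which case $a \mapsto \Phi^{X_a}_{f(a)}(0)$ is an isomorphism of the initial segment $\{a : a <_L b\}$ of $L$ onto $M$ and we output its inverse, an embedding of $M$ onto an initial segment of $L$; or $M \setminus V_b$ is nonempty with no $<_M$-least element, and since $V_b$ is $X_{m_N}$-computable we compute an infinite $<_M$-descending sequence inside it and output that. As the forward functional only builds $N$, $L \oplus M$, and the fixed formula $\Theta$, and the backward functional is computable from the returned $\ATR_2$-solution together with $L$ and $M$, this is a Weihrauch reduction.

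The main obstacle is the two-sided bookkeeping needed to guarantee that some solution is always produced. One must verify that $S$ is downward closed, so that the failure of ``$S = L$'' is explained either by an infinite $<_L$-descending sequence (when $L \setminus S$ has no $<_L$-least element) or by a single stage $b$ at which $\Phi^{X_b}_{f(b)}(0)$ diverges, and that this divergence is in turn explained either by $M$ being exhausted (giving an embedding of $M$ onto an initial segment of $L$) or by $M$ being ill-founded above the used points (giving an infinite $<_M$-descending sequence). One also needs the elementary but essential observation that choosing each image $<_M$-least outside the previous images forces the resulting map to be order-preserving with range an initial segment of $M$, even when $L$ itself is ill-founded; this is precisely what makes the ``$S = L$'' case and the ``$V_b = M$'' subcase go through with no well-foundedness hypothesis on $L$.
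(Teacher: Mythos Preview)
Your proposal is correct and follows essentially the same approach as the paper's proof: build $N$ by adjoining endpoints to $L$, invoke the arithmetic-formula variant of $\ATR_2$ on $(N, L\oplus M)$ with triple-jump columns, run effective transfinite recursion along $L$ to pick successive $<_M$-least unused elements, and then use $X_{m_N}$ to split into the same four outcomes (embedding $L\hookrightarrow M$, $L$-descending sequence, embedding $M\hookrightarrow L$, $M$-descending sequence). The only cosmetic difference is that the paper builds the convergence checks (your condition ``all $a<_L b$ converge'' and ``image is an initial segment'') directly into the definition of $\Phi^{X_b}_{f(b)}(0)$, whereas you omit those checks and instead recover downward closure by defining $S$ explicitly; your observation that picking the $<_M$-least unused element automatically forces the partial map to be order-preserving with initial-segment range makes the paper's second check redundant, so your version is mildly cleaner.
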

\begin{proof}
Given linear orderings $(L,M)$, define $N$ by adding a first element $0_N$ and a last element $m_N$ to $L$. Apply $\ATR_2$ to the linear ordering $N$ and the set $L \oplus M$. If we obtain an infinite descending sequence in $N$, we can use that to uniformly compute an infinite descending sequence in $L$.

Otherwise, using Proposition \ref{prop:ATR2_arith_formula}, we may assume that we obtain a hierarchy $\langle X_a\rangle_{a \in N}$ such that:
\begin{itemize}
	\item $X_{0_N} = L \oplus M$;
	\item for all $b >_N 0_N$, $X_b = \left(\bigoplus_{a <_N b} X_a\right)'''$. 
\end{itemize}

We start by attempting to use $\langle X_a\rangle_{a \in L}$-effective transfinite recursion along $L$ to define a partial $\langle X_a\rangle_{a \in L}$-recursive function $f: L \to \N$ such that $\{(a,\Phi^{X_a}_{f(a)}(0)) \in L \times M: \Phi^{X_a}_{f(a)}(0)\conv\}$ is an embedding of an initial segment of $L$ into an initial segment of $M$.

To define $f$, if we are given any $b \in L$ and $f\restriction \{a: a <_L b\}$, we need to define $f(b)$, specifically $\Phi^{X_b}_{f(b)}(0)$. First use $X_b = (\bigoplus_{a <_L b} X_a)'''$ to compute whether all of the following hold:
\begin{enumerate}
	\item for all $a <_L b$, $\Phi^{X_a}_{f(a)}(0)$ converges and outputs some element of $M$;
	\item $\{\Phi^{X_a}_{f(a)}(0): a <_L b\}$ is an initial segment of $M$;
	\item there is an $M$-least element above $\{\Phi^{X_a}_{f(a)}(0): a <_L b\}$.
\end{enumerate}
If so, we output said $M$-least element; otherwise diverge. This completes the definition of $\Phi^{X_b}_{f(b)}(0)$.

Apply the recursion theorem to the definition above to obtain a partial $\langle X_a\rangle_{a \in L}$-recursive function $f: L \to \N$. Now, to complete the definition of the backward reduction we consider the following cases.

\underline{Case 1.} $f$ is total. Then following the proof of Proposition \ref{prop:WQO_WO_ATR_CWO_ATR}, we output $\{(a,\Phi^{X_a}_{f(a)}(0)): a \in L\}$, which is an embedding from $L$ onto an initial segment of $M$.

\underline{Case 2.} There is no $L$-least element above $\{a \in L: \Phi^{X_a}_{f(a)}(0)\conv\}$. Then we can output an infinite $L$-descending sequence above $\{a \in L: \Phi^{X_a}_{f(a)}(0)\conv\}$.

\underline{Case 3.} $\{\Phi^{X_a}_{f(a)}(0): a \in L, \Phi^{X_a}_{f(a)}(0)\conv\} = M$. Then following the proof of Proposition \ref{prop:WQO_WO_ATR_CWO_ATR}, we output $\{(\Phi^{X_a}_{f(a)}(0),a): a \in L, \Phi^{X_a}_{f(a)}(0)\conv\}$, which is an embedding from $M$ onto an initial segment of $L$.

\underline{Case 4.} There is no $M$-least element above $\{\Phi^{X_a}_{f(a)}(0): a \in L, \Phi^{X_a}_{f(a)}(0)\conv~\!\}$. Then we can output an infinite $M$-descending sequence above $\{\Phi^{X_a}_{f(a)}(0): a \in L, \Phi^{X_a}_{f(a)}(0)\conv\}$.

Finally, note that the last column $X_{m_N}$ of $\langle X_a \rangle_{a \in N}$ can compute the above case division and the appropriate output for each case.
\end{proof}

\section{K\"onig's duality theorem} \label{section:KDT}

In this section, we study K\"onig's duality theorem from the point of view of computable reducibilities.

First we state some definitions from graph theory. A graph $G$ is \emph{bipartite} if its vertex set can be partitioned into two sets such that all edges in $G$ go from one of the sets to the other. It is not hard to see that $G$ is bipartite if and only if it has no odd cycle. (Hence the property of being bipartite is $\Pi^0_1$.) A \emph{matching} in a graph is a set of edges which are vertex-disjoint. A \emph{(vertex) cover} in a graph is a set of vertices which contains at least one endpoint from every edge. K\"onig's duality theorem states that:

\begin{thm}
For any bipartite graph $G$, there is a matching $M$ and a cover $C$ which are \emph{dual}, i.e., $C$ is obtained by choosing exactly one vertex from each edge in $M$. Such a pair $(C,M)$ is said to be a \emph{K\"onig cover}.
\end{thm}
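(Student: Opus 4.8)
The plan is to run the classical augmenting-path proof of K\"onig's theorem, the real work being the passage from finite to countable graphs. Write $G = (A \sqcup B, E)$ with the bipartition fixed, and call a matching $M$ \emph{unaugmentable} if $G$ contains no finite $M$-alternating path (alternating between non-$M$ and $M$ edges) whose two endpoints are both $M$-unsaturated. For finite $G$ a maximum matching is unaugmentable; in general the first step is to show that \emph{every} countable bipartite graph has an unaugmentable matching.

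For that step, the key observation is that augmenting a matching along a finite augmenting path only enlarges the set of saturated vertices (the two endpoints become saturated, interior vertices stay saturated). Hence it would suffice to find a matching $M$ whose saturation set $S(M)$ is $\subseteq$-maximal among all matchings of $G$: such an $M$ admits no finite augmenting path. The subtlety is that the family $\{S(M) : M \text{ a matching of } G\}$ need not be closed under unions --- a short odd path already gives two matchings whose saturation sets have no common upper bound --- so Zorn's Lemma does not apply off the shelf. I would instead build an unaugmentable matching by a careful recursion along an enumeration of $A \cup B$, handling one vertex at a time and amalgamating matchings along their symmetric differences; this is essentially the Podewski--Steffens theorem, and I expect it to be the main obstacle. (In the language of the present paper this is also the step that injects the logical strength, i.e.\ the link to $\ATR$ and $\ATR_2$.)

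Granting an unaugmentable matching $M$, define $A_1 \subseteq A$ to be the set of $M$-unsaturated vertices of $A$ together with all vertices of $A$ reachable from an $M$-unsaturated $A$-vertex by a finite $M$-alternating path, and let $B_1 \subseteq B$ consist of the vertices of $B$ reachable in the same way. Using only finite alternating paths one checks: (i) every $b \in B_1$ is $M$-saturated (else the path to $b$ would be a finite augmenting path); (ii) $M$ maps $B_1$ injectively into $A_1$ (extend the alternating path to $b$ by the matching edge at $b$); (iii) every edge of $G$ meeting $A_1$ has its other endpoint in $B_1$ (extend the alternating path to the $A_1$-endpoint by the given non-matching edge); (iv) every $a \in A \setminus A_1$ is $M$-saturated, with its $M$-partner in $B \setminus B_1$ (by (ii) that partner cannot lie in $B_1$).

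Finally, set $C := (A \setminus A_1) \cup B_1$ and let $N \subseteq M$ be the set of matching edges meeting $A \setminus A_1$ together with the matching edges meeting $B_1$. By (iii), $C$ meets every edge of $G$, so $C$ is a cover. By (ii) the two families defining $N$ are disjoint, so $N$ is a matching; and by (i) and (iv) every vertex of $C$ lies in exactly one edge of $N$, with $C$ selecting the $(A \setminus A_1)$-endpoint of each edge of the first kind and the $B_1$-endpoint of each edge of the second kind. Thus $C$ is obtained by choosing exactly one vertex from each edge of $N$, so $(C, N)$ is a K\"onig cover and the theorem follows.
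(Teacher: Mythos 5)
First, note that the paper does not prove this theorem at all: it is quoted as a known result of Podewski and Steffens \cite{ps76} (for countable graphs) and Aharoni \cite{a84} (in general), so there is no in-paper proof to compare against. Judged on its own terms, the second half of your argument is correct and standard: given an \emph{unaugmentable} matching $M$, the alternating-reachability sets $A_1, B_1$ do satisfy your (i)--(iv), and $\bigl((A\setminus A_1)\cup B_1,\, N\bigr)$ is a K\"onig cover. The problem is the first half. The existence of an unaugmentable matching is not a stepping stone toward the theorem; it is \emph{equivalent} to it. Indeed, if $(C,N)$ is a K\"onig cover and $P$ is a putative finite $N$-augmenting path, then walking along $P$ from an unsaturated (hence uncovered) endpoint, duality forces the vertices of $P$ alternately out of and into $C$, and the last (non-matching) edge of $P$ ends up with both endpoints outside $C$, contradicting that $C$ is a cover. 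So your proposal reduces the theorem to an unproved statement of exactly the same strength, and the entire content of the Podewski--Steffens theorem sits in the step you defer.

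Moreover, the strategy you sketch for that step --- a greedy recursion along an enumeration of $A\cup B$, amalgamating matchings along symmetric differences --- cannot succeed if each stage of the recursion is arithmetically (or even hyperarithmetically) definable from the graph and the previous stages, since such an $\omega$-recursion would produce a hyperarithmetic K\"onig cover of every computable bipartite graph. But by Theorem \ref{thm:ATR_2_leq_c_KDT} together with Corollary \ref{cor:WCWO_CWO_ATR2_WQO_LO_rec_instance_no_hyp_solution}, $\KDT$ has a computable instance with no hyperarithmetic solution. This is precisely why the known proofs (Podewski--Steffens, Aharoni--Magidor--Shore \cite{ams92}, Simpson \cite{sim94}) invoke machinery at the level of $\ATR_0$ --- transfinite recursion, pseudohierarchies, or countable coded $\omega$-models of $\Sigma^1_1$-$\AC$ --- rather than a one-vertex-at-a-time construction with locally simple decisions. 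To complete your proof you would need to either import the Podewski--Steffens argument wholesale or rebuild it, and the decisions made at each vertex (e.g.\ whether a vertex is ``matchable into'' the rest of the graph in a suitably robust sense) are genuinely $\Sigma^1_1$-type questions, not finitary ones.
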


K\"onig proved the above theorem for finite graphs, where it is commonly stated as ``the maximum size of a matching is equal to the minimum size of a cover''. For infinite graphs, this latter form would have little value. Instead of merely asserting the existence of a bijection, we want such a bijection to respect the structure of the graph.  Hence the notion of a K\"onig cover. Podewski and Steffens \cite{ps76} proved K\"onig's duality theorem for countable graphs. Finally, Aharoni \cite{a84} proved it for graphs of arbitrary cardinality. In this paper, we will study the theorem for countable graphs.

\begin{defn}
$\KDT$ is the following problem: given a (countable) bipartite graph $G$, produce a K\"onig cover $(C,M)$.
\end{defn}

Aharoni, Magidor, Shore \cite{ams92} studied K\"onig's duality theorem for countable graphs from the point of view of reverse mathematics. They showed that $\ATR_0$ is provable from K\"onig's duality theorem. They also showed that K\"onig's duality theorem is provable in the system $\Pi^1_1$-$\CA_0$, which is strictly stronger than $\ATR_0$. Simpson \cite{sim94} then closed the gap by showing that K\"onig's duality theorem is provable in (hence equivalent to) $\ATR_0$.

The proof of $\ATR_0$ from K\"onig's duality theorem in \cite{ams92} easily translates into a Weihrauch reduction from $\ATR$ to $\KDT$. We adapt their proof to show that $\ATR_2$ is Weihrauch reducible to $\LPO \ast \KDT$ (Theorem \ref{thm:ATR_2_leq_c_KDT}). Next, we adapt \cite{sim94}'s proof of K\"onig's duality theorem from $\ATR_0$ to show that $\KDT$ is arithmetically Weihrauch reducible to $\ATR_2$ (Theorem \ref{thm:KDT_arith_W_ATR_2}). It follows that $\ATR_2$ and $\KDT$ are arithmetically Weihrauch equivalent. Since both $\ATR_2$ and $\KDT$ have computational difficulty far above the arithmetic (see, for example, Corollary \ref{cor:WCWO_CWO_ATR2_WQO_LO_rec_instance_no_hyp_solution}), this shows that $\ATR_2$ and $\KDT$ have roughly the same computational difficulty.

Before constructing the above reductions, we make some easy observations about $\KDT$.

\begin{prop}
$\KDT \leq_W \C_{\N^\N}$, but $\C_{\N^\N}$ is not even arithmetically Weihrauch reducible to $\KDT$.
\end{prop}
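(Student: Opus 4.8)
The plan is to treat the two halves separately; both are short.

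For the positive direction $\KDT \leq_W \C_{\N^\N}$, the idea is to express the search for a K\"onig cover as the search for a path through an ill-founded tree computable from the graph. The property ``$(C,M)$ is a K\"onig cover of $G$'' is arithmetical but not $\Pi^0_1$: the clause asserting that every vertex of $C$ lies on an edge of $M$ has a $\Sigma^0_1$ matrix. I would remove this quantifier by Skolemizing --- a solution code will be a triple $(C,M,\phi)$ where $\phi$ assigns to each $v \in C$ an edge $\phi(v) \in M$ incident to $v$ (such a $\phi$ is in fact unique, since $M$ is a matching). With this extra data the property of coding a K\"onig cover of $G$ is $\Pi^0_1$ in $G$, so from a name for $G$ one uniformly computes the tree $T_G$ of finite sequences not yet violating the conditions (matching, cover, exactly-one-endpoint-per-matched-edge, and the $\phi$-clauses), whose paths are exactly the codes of K\"onig covers of $G$. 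By Podewski--Steffens \cite{ps76} such a cover exists, hence $T_G$ is ill-founded. The forward functional sends $G$ to $T_G$ and the backward functional reads $(C,M)$ off any path returned by $\C_{\N^\N}$.

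For the negative direction $\C_{\N^\N} \not\leq_W^{\arith} \KDT$, I would simply invoke Proposition \ref{prop:domain_not_Sigma11_no_arith_reduction}, exactly as in the proof of Corollary \ref{cor:C_N_N_not_below_two_sided_problems}. Here $\KDT$ is arithmetically defined --- being a K\"onig cover is an arithmetical relation between $G$, $C$, and $M$ --- and $\dom(\KDT)$, the set of (indices for) bipartite graphs, is $\Pi^0_1$ and hence arithmetic. By contrast $\C_{\N^\N}$ is arithmetically defined (being a path through a tree is $\Pi^0_1$) but its domain, the set of ill-founded subtrees of $\N^{<\N}$, is $\Sigma^1_1$-complete and in particular not $\Pi^1_1$. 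Proposition \ref{prop:domain_not_Sigma11_no_arith_reduction} then forbids an arithmetic Weihrauch reduction, and a fortiori a Weihrauch reduction.

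I do not anticipate a genuine obstacle. The one place to be careful is the complexity bookkeeping in the first part: verifying that after Skolemization the relevant predicate really is $\Pi^0_1$ in $G$, so that $T_G$ is computable (not merely arithmetic) in $G$ and the reduction is a bona fide Weihrauch reduction rather than only an arithmetic one. In the second part, the only thing to record carefully is why the set of ill-founded trees is not $\Pi^1_1$, which is the $\Sigma^1_1$-completeness already used in Corollary \ref{cor:C_N_N_not_below_two_sided_problems}.
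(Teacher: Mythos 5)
Your proposal is correct and matches the paper's argument: the paper disposes of the first half by citing the general fact that an arithmetically defined problem with arithmetic domain reduces to $\C_{\N^\N}$ (your Skolemization of the one $\Sigma^0_1$ clause and the tree $T_G$ is exactly the standard verification of that fact for $\KDT$, with ill-foundedness supplied by Podewski--Steffens), and the second half is word-for-word the paper's appeal to Proposition \ref{prop:domain_not_Sigma11_no_arith_reduction}.
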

\begin{proof}
The first statement holds because $\KDT$ is defined by an arithmetic predicate on an arithmetic domain. The second statement follows from Proposition \ref{prop:domain_not_Sigma11_no_arith_reduction}.
\end{proof}

\begin{prop} \label{prop:KDT_parallelizable}
$\KDT$ is parallelizable, i.e., $\widehat{\KDT} \leq_W \KDT$.
\end{prop}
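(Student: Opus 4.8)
The plan is to realize $\widehat{\KDT}$ by amalgamating a given sequence of bipartite graphs into a single bipartite graph --- their disjoint union --- and then to read off König covers of the individual graphs by restricting a König cover of the disjoint union to each component. (Only the direction $\widehat{\KDT} \leq_W \KDT$ needs proof; the reverse reduction is trivial.)

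First I would define the forward functional. Given a $\widehat{\KDT}$-instance $\langle G_n \rangle_n$, form the disjoint union $G = \bigsqcup_n G_n$, whose vertex set is $\{(n,v) : v \in V(G_n)\}$ and whose edges are inherited componentwise; if one works with the representation that carries a witnessing bipartition $(A_n,B_n)$ of each $G_n$, take $(\bigsqcup_n A_n, \bigsqcup_n B_n)$ as the bipartition of $G$, and otherwise simply note that a disjoint union of graphs with no odd cycle has no odd cycle. Either way, $G$ is uniformly computable from $\langle G_n \rangle_n$ and is a bipartite graph, hence a legitimate $\KDT$-instance.

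For the backward functional, apply $\KDT$ to $G$ to obtain a König cover $(C,M)$: a matching $M$, a cover $C$, and a choice of exactly one endpoint from each edge of $M$ whose range is $C$. For each $n$ set $M_n = \{e \in M : e \subseteq V(G_n)\}$ and $C_n = C \cap V(G_n)$, both uniformly computable from $(C,M)$, and output $\langle (C_n, M_n) \rangle_n$. To see this is correct, observe that in a disjoint union every edge lies within a single component, so $M_n$ is exactly the set of edges of $M$ that belong to $G_n$, hence a matching in $G_n$; and every edge of $G_n$ is an edge of $G$, so it has an endpoint in $C$, and that endpoint lies in $V(G_n)$, so $C_n$ covers $G_n$. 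Finally, the given choice function restricts to a choice of exactly one endpoint from each edge of $M_n$ with range $C_n$: its restricted range is contained in $C_n$ because both endpoints of an edge of $M_n$ lie in $V(G_n)$, and it exhausts $C_n$ because any $v \in C_n$ is the chosen endpoint of some $e \in M$ with $v \in e$, which forces $e \subseteq V(G_n)$, i.e. $e \in M_n$. Thus $(C_n, M_n)$ is a König cover of $G_n$.

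I do not expect any real obstacle; the one point that merits care is that duality of a cover and a matching survives restriction to a component, and this works precisely because a matching consists of pairwise vertex-disjoint edges and a disjoint union has no edges between distinct components.
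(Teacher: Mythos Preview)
Your proposal is correct and takes essentially the same approach as the paper: form the disjoint union of the given bipartite graphs, apply $\KDT$ once, and restrict the resulting K\"onig cover to each component. The paper's proof is a one-line sketch of exactly this argument, and your write-up simply fills in the routine verification that the restriction of a K\"onig cover to a component is again a K\"onig cover.
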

\begin{proof}
This holds because the disjoint union of bipartite graphs is bipartite, and any K\"onig cover of a disjoint union of graphs restricts to a K\"onig cover on each graph.
\end{proof}

We do not know if $\ATR_2$ is parallelizable; a negative answer would separate $\ATR_2$ and $\KDT$ up to Weihrauch reducibility.

Since being a bipartite graph is a $\Pi^0_1$ property (in particular $\Pi^1_1$), we could define \emph{two-sided $\KDT$} ($\KDT_2$): given a graph, produce an odd cycle (witnessing that the given graph is not bipartite) or a K\"onig cover. This produces a problem which is Weihrauch equivalent to $\KDT$, however:

\begin{prop}
$\KDT_2 \leq_W \LPO \times \KDT$, hence $\KDT \equiv_W \KDT_2$.
\end{prop}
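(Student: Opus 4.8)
The plan is to establish $\KDT_2 \leq_W \LPO \times \KDT$ and then to collapse the right-hand side to $\KDT$, using that $\KDT$ is parallelizable (Proposition~\ref{prop:KDT_parallelizable}) and that $\LPO \leq_W \KDT$. Since the reverse reduction $\KDT \leq_W \KDT_2$ is immediate (a bipartite graph has no odd cycle, so any $\KDT_2$-solution to it is automatically a K\"onig cover), this yields $\KDT \equiv_W \KDT_2$.

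\textbf{The reduction $\KDT_2 \leq_W \LPO \times \KDT$.} Given a graph $G$, the forward functional outputs a pair $(p,G')$. Here $p \in \N^\N$ codes the (effective) search for a finite odd cycle in $G$ — enumerate all potential odd cycles among the vertices of $G$ and let $p(k)=0$ iff the $k$th one is a genuine odd cycle of $G$ — so that $\LPO(p)=1$ exactly when $G$ is not bipartite. And $G'$ is built greedily and monotonically: enumerate the edges of $G$ and add an edge to $G'$ precisely when the result is still bipartite, a decision that can be made effectively at each stage since bipartiteness of a finite graph is decidable. Then $G'$ is always bipartite, since an odd cycle of $G'$ would be a finite object already present at some stage, contradicting the construction. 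Moreover, if $G$ is itself bipartite then every subgraph of $G$ is bipartite, so no edge is ever rejected and $G'=G$. Apply $\LPO \times \KDT$ to obtain $\LPO(p)$ together with a K\"onig cover $(C,M)$ of $G'$. The backward functional branches on $\LPO(p)$: if $\LPO(p)=0$ then $G$ is bipartite, hence $G'=G$, so $(C,M)$ is a K\"onig cover of $G$, which it outputs; if $\LPO(p)=1$ then $G$ is not bipartite, so it searches $G$ for a finite odd cycle (one must exist) and outputs it.

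\textbf{Collapsing to $\KDT$.} Since $\KDT$ is parallelizable, $\LPO \times \KDT \leq_W \KDT \times \KDT \leq_W \widehat{\KDT} \equiv_W \KDT$ as soon as we know $\LPO \leq_W \KDT$. For the latter one can use a finite gadget: given $p$, build the graph on vertices $a,u,u',v,v'$ that always contains the edges $\{a,u\}$ and $\{u,u'\}$, and that additionally contains $\{a,v\}$ and $\{a,v'\}$ once $p$ attains the value $0$. If $\LPO(p)=0$ this is the path $a - u - u'$ (with $v,v'$ isolated), whose only K\"onig cover takes $C=\{u\}$, so $a\notin C$; if $\LPO(p)=1$ it is a spider in which every maximum matching consists of $\{u,u'\}$ together with one of $\{a,v\},\{a,v'\}$, which forces $a\in C$ in every K\"onig cover. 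Hence the backward functional recovers $\LPO(p)$ by testing whether $a\in C$. (Alternatively, $\LPO \leq_W \C_\N \leq_W \ATR \leq_W \KDT$, the last reduction being the translation of the argument of \cite{ams92} noted earlier.) Chaining the reductions gives $\KDT_2 \leq_W \KDT$, and together with $\KDT \leq_W \KDT_2$ we get $\KDT \equiv_W \KDT_2$.

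\textbf{Main obstacle.} The only delicate point is the construction of $G'$: one must verify both invariants, namely that the greedy rule keeps $G'$ bipartite at every stage and that $G'=G$ whenever $G$ is bipartite. Both follow from the fact that an odd cycle is a finite object. Everything else — coding the odd-cycle search into an $\LPO$-instance, and the elementary K\"onig-cover computations in the gadget — is routine.
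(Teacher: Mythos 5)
Your proof is correct and follows essentially the same route as the paper: greedily build a bipartite graph $H$ from $G$ by discarding edges that would close an odd cycle, feed $(p,H)$ to $\LPO\times\KDT$, branch on the $\LPO$ answer, and then collapse via parallelizability of $\KDT$ and $\LPO\leq_W\KDT$. The only difference is cosmetic: you verify $\LPO\leq_W\KDT$ with an explicit five-vertex gadget (which checks out), whereas the paper just cites $\LPO\leq_W\ATR\leq_W\KDT$.
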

\begin{proof}
Given a $\KDT_2$-instance $G$ (i.e., a graph), we can uniformly compute a graph $H$ which is always bipartite and is equal to $G$ if $G$ is bipartite: $H$ has the same vertices as $G$, but as we enumerate edges of $G$ into $H$, we omit any edges that would result in an odd cycle in the graph we have enumerated thus far.

For the reduction, we apply $\LPO \times \KDT$ to $(G,H)$. If $\LPO$ (Definition \ref{defn:LPO_and_choice_problems}) tells us that $G$ is bipartite, we output a $\KDT$-solution to $H = G$. Otherwise, we can uniformly compute and output an odd cycle in $G$.

Finally, to conclude that $\KDT \equiv_W \KDT_2$, we use Proposition \ref{prop:KDT_parallelizable} and the fact that $\LPO \leq_W \KDT$, which trivially follows from Theorem \ref{thm:ATR_leq_W_KDT} later.
\end{proof}

\subsection{Reducing $\ATR_2$ to $\KDT$}

For both of our forward reductions (from $\ATR$ or $\ATR_2$ to $\KDT$), the bipartite graphs we construct are sequences of subtrees of $\N^{<\N}$. In subsection \ref{subsection:repns}, we defined these objects and described how we represent them. In this section, we will use ``tree'' as a shorthand for ``rooted subtree of $\N^{<\N}$''.

Before we describe the forward reductions in more detail, we describe our backward reduction for $\ATR \leq_W \KDT$. It only uses the cover in a K\"onig cover and not the matching. First we define a coding mechanism:

\begin{defn}
Given a tree $T$ (with root $r$) and a K\"onig cover $(C,M)$ of $T$, we can decode the bit $b$, which is the Boolean value of $r \in C$. We say that $(C,M)$ \emph{codes} $b$.

More generally, given any sequence of trees $\langle T_n: n \in X \rangle$ (with roots $r_n$) and a K\"onig cover $(C_n,M_n)$ for each $T_n$, we can uniformly decode the following set from the set $\langle (C_n,M_n) \rangle$:
\[ A = \{n \in X: r_n \in C_n\}. \]
We say that $\langle (C_n,M_n) \rangle$ \emph{codes} $A$.
\end{defn}

A priori, different K\"onig covers of the same tree or sequence of trees can code different bits or sets respectively. A tree or sequence of trees is \emph{good} if that cannot happen:

\begin{defn} \label{defn:tree_good}
A tree $T$ is \emph{good} if its root $r$ lies in $C$ for every K\"onig cover $(C,M)$ of $T$, or lies outside $C$ for every K\"onig cover $(C,M)$ of $T$. A sequence of trees $\langle T_n \rangle$ is \emph{good} if every $T_n$ is good. In other words, $\langle T_n \rangle$ is good if all of its K\"onig covers code the same set.

If $\langle T_n \rangle$ is good and every (equivalently, some) K\"onig cover of $\langle T_n \rangle$ codes $A$, we say that \emph{$\langle T_n \rangle$ codes $A$}. 

\end{defn}

We will use this coding mechanism to define the backward reduction in $\ATR \leq_W \KDT$. Here we make a trivial but important observation: for any $s \in \N^{<\N}$ and any tree $T$, the K\"onig covers of $T$ and the K\"onig covers of $s\concat T$ are in obvious correspondence, which respects whichever bit is coded. Hence $T$ is good if and only if $s\concat T$ is good.

Next, we set up the machinery for our forward reductions. Aharoni, Magidor, and Shore's \cite{ams92} proof of $\ATR_0$ from $\KDT$ uses effective transfinite recursion along the given well-ordering to construct good trees which code complicated sets. The base case is as follows:

\begin{lem} \label{lem:seq_tree_base_case}
Given any $A \subseteq \N$, we can uniformly compute a sequence of trees $\langle T_n \rangle$ which codes $A$.
\end{lem}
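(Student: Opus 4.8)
The plan is to handle the two possible bit values separately: I will exhibit one good tree $P$ whose every K\"onig cover excludes the root (so $P$ codes $0$) and one good tree $Q$ whose every K\"onig cover includes the root (so $Q$ codes $1$), and then, given $A \subseteq \N$, output the sequence $\langle T_n \rangle$ where $T_n$ is the copy $\langle n \rangle \concat Q$ if $n \in A$ and the copy $\langle n \rangle \concat P$ if $n \notin A$. This sequence is uniformly computable from $A$, since $A$ decides each membership question $n \in A$, and it codes $A$ by the choice of $P$ and $Q$ together with the fact already noted in this section that goodness and the coded bit are preserved when passing from a tree $T$ to a copy $s \concat T$ of it.

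For $P$ I will take the one-node tree $\{\langle \rangle\}$. It has no edges, so its only matching is $\emptyset$ and hence its only K\"onig cover is $(\emptyset, \emptyset)$; in particular the root of $P$ lies outside the cover of every K\"onig cover, so $P$ is good and codes $0$. For $Q$ I will take the infinite star: a root $r$ together with infinitely many leaves $r \concat \langle i \rangle$, $i \in \N$. The key observation — and essentially the only content of the lemma — is that every edge of $Q$ is incident to $r$, so any matching in $Q$ consists of at most one edge; thus for any K\"onig cover $(C,M)$ we have $|C| = |M| \le 1$, and since $C$ must contain an endpoint of every edge $\{r, r\concat\langle i\rangle\}$, the only possibility is $C = \{r\}$. (A K\"onig cover does exist: match $r$ to one leaf and take the cover $\{r\}$; in any case existence is guaranteed by K\"onig's duality theorem.) Hence $Q$ is good and codes $1$.

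I do not anticipate a genuine obstacle here: once the star has been identified, the verification is a two-line argument, the one-node tree is immediate, and the uniformity of the construction is clear from the representation of sequences of trees described in subsection \ref{subsection:repns} (for each $n$ one computably produces, depending only on whether $n \in A$, the finitely many axioms defining $T_n$).
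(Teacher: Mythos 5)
Your proposal is correct and follows essentially the same route as the paper: the paper also uses the one-node tree $\{\langle\rangle\}$ to code $0$ and a star centered at the root to code $1$ (the paper's star has just two leaves, $\{\langle\rangle,\langle 0\rangle,\langle 1\rangle\}$, but the argument---any matching has at most one edge, the dual cover is a single vertex, and only the root covers all edges---is identical). The wrapping of each tree as $\langle n\rangle\concat P$ or $\langle n\rangle\concat Q$ and the appeal to invariance of the coded bit under copies are fine and consistent with the representation of sequences of trees fixed in subsection \ref{subsection:repns}.
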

\begin{proof}
The tree $\{\langle \rangle\}$ codes the bit $0$. This is because any matching must be empty, hence any dual cover must be empty.

The tree $\{\langle \rangle, \langle 0 \rangle, \langle 1 \rangle\}$ codes the bit $1$. This is because any matching must contain exactly one of the two edges. Hence any cover dual to that must consist of a single node. But the root node is the only node which would cover both edges.

By defining each $T_n$ to be either of the above trees as appropriate, we obtain a sequence $\langle T_n \rangle$ which codes $A$.
\end{proof}

We may use this as the base case for our construction as well. As for the successor case, however, we want to extract extra information from the construction in \cite{ams92}. The issue is that when reducing $\ATR_2$ to $\KDT$, ``effective transfinite recursion'' on ill-founded linear orderings may produce garbage. (Of particular concern is that the resulting trees may not be good.) Nevertheless, we may attempt it anyway. If we detect inconsistencies in the resulting trees and K\"onig covers (using the extra information we have extracted), then we may use them to compute an infinite descending sequence in the given linear ordering. Otherwise, we may decode the resulting K\"onig covers	 to produce a jump hierarchy.

In order to describe our construction in detail, we need to examine the construction in \cite{ams92} closely. First we state a sufficient condition on a K\"onig cover of a tree and a node in said tree which ensures that the given K\"onig cover, when restricted to the subtree above the given node, remains a K\"onig cover. The set of all nodes satisfying the former condition form a subtree:

\begin{defn} \label{defn:T_ast}
For any tree $T$ (with root $r$) and any K\"onig cover $(C,M)$ of $T$, define the subtree $T^\ast$ (with root $r$):
\[ T^\ast = \{t \in T: \forall s(r \prec s \preceq t \rightarrow (s \notin C \lor (s\restriction (|s|-1),s) \notin M))\}. \]
\end{defn}

The motivation behind the definition of $T^\ast$ is as follows. Suppose $(C,M)$ is a K\"onig cover of $T$. If $s \in C$ and $(s\restriction (|s|-1),s) \in M$, then $C$ restricted to the subtree of $T$ above $s$ would contain $s$, but $M$ restricted to said subtree would not contain any edge with endpoint $s$. This means that the restriction of $(C,M)$ to said subtree is not a K\"onig cover. Hence we define $T^\ast$ to avoid this situation. According to \cite[Lemma 4.5]{ams92}, this is the only situation we need to avoid.

When we use the notation $T^\ast$, the cover $(C,M)$ will always be clear from context. Observe that $T^\ast$ is uniformly computable from $T$ and $(C,M)$.

\begin{lem} \label{lem:konig_cover_subtree}
For any $T$ and any K\"onig cover $(C,M)$ of $T$, define $T^\ast$ as above. Then for any $t \in T^\ast$, $(C,M)$ restricts to a K\"onig cover of the subtree of $T$ (not $T^\ast$!) above $t$.
\end{lem}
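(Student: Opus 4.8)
The plan is to invoke the characterization of K\"onig covers from \cite[Lemma 4.5]{ams92} (to which the remark after Definition \ref{defn:T_ast} alludes). First I would fix $T$, a K\"onig cover $(C,M)$ of $T$, and $t \in T^\ast$, and let $T_t$ denote the subtree of $T$ above $t$ (with root $t$). Write $(C_t, M_t)$ for the restrictions of $C$ and $M$ to $T_t$; I need to check that $M_t$ is a matching (immediate, since $M$ is), that $C_t$ is a cover of $T_t$ (immediate, since $C$ covers every edge of $T$, in particular every edge of $T_t$), and — the real content — that $(C_t, M_t)$ is \emph{dual}, i.e.\ $C_t$ is obtained by choosing exactly one endpoint from each edge of $M_t$.

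For duality I would argue in two directions. Every edge $e$ of $M_t$ is an edge of $T$ lying in $M$, so exactly one of its endpoints lies in $C$; since both endpoints of $e$ lie in $T_t$, exactly one lies in $C_t$. Conversely, I must show that every vertex $s \in C_t$ is matched by some edge of $M_t$. Since $s \in C$ and $(C,M)$ is dual, $s$ is the chosen endpoint of some edge $e = (s\restriction(|s|-1), s)$ or $e = (s, s')$ of $M$. The first possibility is exactly the situation excluded by membership of $s$ in $T^\ast$: because $t \preceq s$ and $s \in T^\ast$ (as $T^\ast$ is a subtree and $t \in T^\ast$ forces all nodes between $t$ and $s$... wait — I actually need $s \in T^\ast$, which need not follow from $t \in T^\ast$). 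Here is the correct point: $s \in T_t$ means $t \preceq s$, and I should observe that the defining condition of $T^\ast$ at the node $s$ — namely that no node strictly between $r$ and $s$, when it lies in $C$, is the lower endpoint of an $M$-edge — need not be inherited. So instead I would use membership of $t$ in $T^\ast$ differently: the key fact is that $(C,M)$ restricted to $T_t$ is a K\"onig cover \emph{precisely when} $t$ itself is not a vertex causing the obstruction, and then duality below $t$ propagates. Concretely: by \cite[Lemma 4.5]{ams92}, for a K\"onig cover $(C,M)$ of $T$ and any $t \in T$, the restriction to $T_t$ fails to be a K\"onig cover if and only if $t \in C$ and $(t\restriction(|t|-1), t) \in M$; membership $t \in T^\ast$ rules this out (taking $s = t$ in the definition), so the restriction is a K\"onig cover.

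The main obstacle — and the step I would spend the most care on — is making sure I am citing \cite[Lemma 4.5]{ams92} in exactly the form that gives ``restriction to $T_t$ is a K\"onig cover $\iff$ $t$ is not the chosen endpoint of the edge to its parent''. If that lemma is instead stated only for the \emph{cover} (not the matching), I would need to reconstruct the matching on $T_t$ by hand: take $M_t$ to be the edges of $M$ both of whose endpoints lie in $T_t$, and verify that no vertex of $C \cap T_t$ is left unmatched, using an inductive or direct argument that the only way a chosen vertex loses its matched edge upon restriction is by losing the edge going \emph{up} out of $t$, which only affects $t$ itself, which is excluded. Either way, the proof is short: it is essentially a pointer to the structural lemma of \cite{ams92} together with the observation that the definition of $T^\ast$ was tailored to exclude exactly the bad node.
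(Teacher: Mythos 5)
Your final argument is correct, but it reaches the conclusion by a different route than the paper. The paper's proof is an induction on the level of $t$: it applies the one-step lemma of Aharoni--Magidor--Shore (which passes from a K\"onig cover of a rooted tree to its restriction above an \emph{immediate successor} of the root, provided that successor is not in $C$ with its parent edge in $M$) repeatedly along the path from $r$ to $t$, and it is precisely the universally quantified condition in the definition of $T^\ast$ that licenses each successive step. Your proposal instead verifies duality of $(C_t,M_t)$ directly in one shot: the matching and cover properties are inherited trivially, each edge of $M_t$ still has exactly one endpoint in $C_t$, and the only vertex of $C_t$ that can lose its unique matched $M$-edge upon restriction is $t$ itself (every other $s \in C_t$ keeps both its parent and all its children inside $T_t$), and that loss is exactly what is ruled out by taking $s=t$ in the definition of $T^\ast$. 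This is a perfectly good proof, and it in fact yields slightly more than the lemma: the restriction above $t$ is a K\"onig cover whenever the defining condition of $T^\ast$ holds at $t$ alone, irrespective of the intermediate nodes. Two caveats. The detour in the middle of your writeup --- worrying about whether $s \in T^\ast$ for arbitrary $s \in C_t$ --- is a red herring that you correctly abandon; the clean argument is the one in your fallback paragraph, and you should lead with it. Also, your ``concretely'' sentence attributes to \cite[Lemma 4.5]{ams92} an if-and-only-if for \emph{arbitrary} $t \in T$; the fact that the paper itself proceeds by induction indicates that the cited lemma handles only one level at a time, so you should either run that induction or rest the proof on your direct verification rather than on that citation.
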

\begin{proof}
Proceed by induction on the level of $t$ using \cite[Lemma 4.5]{ams92}.
\end{proof}

Using Definition \ref{defn:T_ast} and Lemma \ref{lem:konig_cover_subtree}, we may easily show that:

\begin{prop} \label{prop:t_concat_s_in_T_ast}
Let $(C,M)$ be a K\"onig cover of $T$. Suppose that $t \in T^\ast$. Let $S$ denote the subtree of $T$ above $t$. Then $S^\ast$ is contained in $T^\ast$, where $S^\ast$ is calculated using the restriction of $(C,M)$ to $S$.
\end{prop}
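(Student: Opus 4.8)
The plan is to unwind both definitions of the $\ast$-operation and check the containment node by node. Let $r$ be the root of $T$, so that $t$ is a node of $T^\ast$ with $r \preceq t$, and let $S$ denote the subtree of $T$ above $t$; note $t$ is the root of $S$. Fix an arbitrary $u \in S^\ast$, where $S^\ast$ is computed with respect to the restriction $(C \restriction S, M \restriction S)$ of $(C,M)$ to $S$ (which is a K\"onig cover of $S$ by Lemma~\ref{lem:konig_cover_subtree}, since $t \in T^\ast$). I want to show $u \in T^\ast$. By definition of $S^\ast$, we have $t \preceq u$ and for every $s$ with $t \prec s \preceq u$, either $s \notin C$ or $(s\restriction(|s|-1),s) \notin M$.

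To conclude $u \in T^\ast$ I must verify the analogous condition along the whole segment from $r$ to $u$, i.e.\ that for every $s$ with $r \prec s \preceq u$, either $s \notin C$ or $(s\restriction(|s|-1),s)\notin M$. Split this segment at $t$. For nodes $s$ with $t \prec s \preceq u$, the required condition is exactly what $u \in S^\ast$ gives us. For nodes $s$ with $r \prec s \preceq t$, the condition holds because $t \in T^\ast$ by hypothesis. The only remaining node is $s = t$ itself (when $r \prec t$): here the condition $t \notin C$ or $(t\restriction(|t|-1),t)\notin M$ is again part of the statement $t \in T^\ast$, using that the defining clause of $T^\ast$ ranges over all $s$ with $r \prec s \preceq t$, which includes $s = t$. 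Since every node on the segment $r \prec s \preceq u$ satisfies the clause, $u \in T^\ast$, and as $u \in S^\ast$ was arbitrary, $S^\ast \subseteq T^\ast$.

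There is essentially no obstacle here; the only thing to be careful about is the bookkeeping at the splice point $s = t$, and the fact that membership in $T^\ast$ and in $S^\ast$ is governed by quantifiers over initial segments that overlap precisely in the expected way. One should also note at the outset that Lemma~\ref{lem:konig_cover_subtree} is what makes $(C,M)\restriction S$ a genuine K\"onig cover of $S$, so that $S^\ast$ is well-defined in the sense intended by Definition~\ref{defn:T_ast}; this is why the hypothesis $t \in T^\ast$ (rather than merely $t \in T$) is needed.
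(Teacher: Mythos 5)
Your proof is correct and is exactly the routine unwinding of Definition~\ref{defn:T_ast} that the paper intends (the paper omits the argument entirely, saying only that it follows easily from Definition~\ref{defn:T_ast} and Lemma~\ref{lem:konig_cover_subtree}). Your splitting of the segment $r \prec s \preceq u$ at $t$, together with the observation that Lemma~\ref{lem:konig_cover_subtree} is what makes $S^\ast$ well-defined, is precisely the right bookkeeping.
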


Next, we define a computable operation on trees which forms the basis of the proofs of \cite[Lemmas 4.9, 4.10]{ams92}.

\begin{defn}
Given a (possibly finite) sequence of trees $\langle T_i \rangle$, each with the empty node as root, we may \emph{combine} it to form a single tree $S$, by adjoining two copies of each $T_i$ to a root node $r$. Formally,
\[ S = \{r\} \cup \{r\concat (i,0) \concat\sigma: \sigma \in T_i\} \cup \{(i,1) \concat\sigma: \sigma \in T_i\}. \]
\end{defn}

Logically, the combine operation can be thought of as $\neg\forall$:

\begin{lem} \label{lem:combine}
Suppose $\langle T_i: i \in X \rangle$ combine to form $S$. Let $r$ denote the root of $S$, and for each $i \in X$, let $r_{i,0}$ and $r_{i,1}$ denote the roots of the two copies of $T_i$ in $S$ (i.e., $r_{i,0} = r\concat (i,0)$ and $r_{i,1} = r\concat (i,1)$). Given any K\"onig cover $(C,M)$ of $S$, for each $i \in X$, we can uniformly computably choose one of $r_{i,0}$ or $r_{i,1}$ (call our choice $r_i$) such that:
\begin{itemize}
	\item $r_i \in S^\ast$;
	\item $r \notin C$ if and only if for all $i \in X$, $r_i \in C$.
\end{itemize}
Therefore if $\langle T_n: n \in X \rangle$ codes the set $A \subseteq X$, then $S$ codes the bit $0$ if and only if $A = X$.
\end{lem}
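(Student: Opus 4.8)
The plan is to define the choice $i \mapsto r_i$ by a direct, local case analysis on the given K\"onig cover $(C,M)$, using only the three defining features of a K\"onig cover: that $M$ is a matching, that $C$ is a vertex cover, and that $C$ consists of exactly one endpoint from each edge of $M$. Morally, $S$ should code the bit $0$ exactly when it is \emph{not} the case that every $T_i$ codes $1$ --- this is the ``$\neg\forall$'' slogan --- so the point is to locate, for every $i$, the root $r_i$ of a copy of $T_i$ sitting inside $S^\ast$, in such a way that $r_i \in C$ tracks whether $T_i$ codes $1$, and that the Boolean value of $r \in C$ comes out as the conjunction over $i$.

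For the choice itself: for each $i \in X$, test whether $(r,r_{i,0}) \in M$ or $(r,r_{i,1}) \in M$; since $M$ is a matching, at most one such $b$ exists, and this test is decidable from $(C,M)$. If such a $b$ exists, put $r_i = r_{i,b}$ in case $r \in C$, and $r_i = r_{i,1-b}$ in case $r \notin C$; if no such $b$ exists, put $r_i = r_{i,0}$. The only edges of $S$ incident to $r$ are the $\{r,r_{i,b}\}$, and whether $r$ or a given $r_{i,b}$ lies in $C$, and whether a given edge $\{r,r_{i,b}\}$ lies in $M$, are all decidable from $(C,M)$; hence $i \mapsto r_i$ is uniformly computable.

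Next I would verify the two required properties. For membership in $S^\ast$: since $r_i$ is an immediate successor of $r$, being in $S^\ast$ amounts to the single clause ``$r_i \notin C$ or $(r,r_i) \notin M$''. In the subcases $r_i = r_{i,1-b}$ and $r_i = r_{i,0}$ (when no edge $\{r,r_{i,b}\}$ lies in $M$), the edge $(r,r_i)$ is not in $M$ by the matching property, so $r_i \in S^\ast$; in the remaining subcase $r \in C$ and $\{r,r_{i,b}\} \in M$, the vertex $r$ must be the chosen endpoint of its unique incident matching edge, forcing $r_{i,b} \notin C$, so again $r_i \in S^\ast$. For the biconditional $r \notin C \iff \forall i\, r_i \in C$: if $r \notin C$, then each edge $\{r,r_{i,b}\}$ is covered by its other endpoint $r_{i,b}$, so the chosen $r_i$ lies in $C$ for every $i$; conversely, if $r \in C$, then $r$ is the chosen endpoint of a unique matching edge $\{r,r_{i_0,b_0}\}$, so by construction $r_{i_0} = r_{i_0,b_0}$, which --- being the non-chosen endpoint of its only matching edge --- lies outside $C$.

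Finally, for the last sentence: assume $\langle T_n : n \in X\rangle$ codes $A$, so each $T_n$ is good, and fix any K\"onig cover $(C,M)$ of $S$. For each $i$, since $r_i \in S^\ast$, Lemma \ref{lem:konig_cover_subtree} says $(C,M)$ restricts to a K\"onig cover of the subtree of $S$ above $r_i$, which is the copy $r_i\concat T_i$ of $T_i$. As $T_i$ is good and goodness is preserved under prefixing, this restricted cover codes the same bit as $T_i$ --- namely $1$ exactly when $i \in A$ --- and the bit it codes is precisely the Boolean value of $r_i \in C$; hence $r_i \in C \iff i \in A$. Therefore $\forall i\, r_i \in C \iff A = X$, which combined with the biconditional above yields $r \notin C \iff A = X$ for every K\"onig cover of $S$; in particular $S$ is good, and it codes the bit $0$ iff $A = X$. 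The only mild obstacle here is the bookkeeping --- arranging one uniformly computable rule that simultaneously delivers $r_i \in S^\ast$ and the conjunction statement --- and the observation that dissolves it is that ``$(r,r_{i,b}) \in M$'' is a decidable property of $i$, so no unbounded search for $r$'s matching edge is needed.
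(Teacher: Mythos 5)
Your proposal is correct and follows essentially the same construction as the paper: the identical case analysis for choosing $r_i$ (the matched child when $r \in C$, the unmatched child otherwise, and $r_{i,0}$ when no incident edge of $r$ toward the $i$-th copies is in $M$), with the same duality arguments for $S^\ast$-membership and the biconditional. Your spelled-out justification of the final ``Therefore'' sentence via Lemma \ref{lem:konig_cover_subtree} is exactly the intended argument that the paper leaves implicit.
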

\begin{proof}
Given a K\"onig cover $(C,M)$ of $S$ and some $i \in X$, we choose $r_i$ as follows. If neither $(r,r_{i,0})$ nor $(r,r_{i,1})$ lie in $M$, then define $r_i = r_{i,0} \in S^\ast$.

Otherwise, since $M$ is a matching, exactly one of $(r,r_{i,0})$ and $(r,r_{i,1})$ lie in $M$, say $(r,r_{i,j})$. If $r \notin C$, we choose $r_i = r_{i,1-j} \in S^\ast$. If $r \in C$, note that since $(r,r_{i,j}) \in M$, we have (by duality) that $r_{i,j} \notin C$. Then we choose $r_i = r_{i,j} \in S^\ast$. This completes the definition of $r_i$.

If $r \notin C$, then for all $i \in X$ and $j<2$, $r_{i,j} \in C$ because $(r,r_{i,j})$ must be covered by $C$. In particular, $r_i \in C$ for all $i \in X$.

If $r \in C$, then (by duality) there is a unique $i \in X$ and $j<2$ such that $(r,r_{i,j}) \in M$. In that case, we chose $r_i = r_{i,j} \notin C$.
\end{proof}

In the above lemma, it is important to note that our choice of each $r_i$ depends on the K\"onig cover $(C,M)$; in fact it depends on both $C$ and $M$.

We can now use the combine operation to implement $\neg$.

\begin{defn}
The \emph{complement} of $T$, denoted $\overline{T}$, is defined by combining the single-element sequence $\langle T \rangle$.
\end{defn}

By Lemma \ref{lem:combine}, if $T$ codes the bit $i$, then $\overline{T}$ codes the bit $1-i$.

Next, we work towards iterating the combine operation to implement the jump, with the eventual goal of proving a generalization of  \cite[Lemma 4.7]{ams92}. In order to reason about trees which are formed by iterating the combine operation, we generalize Lemma \ref{lem:combine} slightly:

\begin{lem} \label{lem:uniformly_pick_r_i}
Suppose $\langle T_i: i \in X \rangle$ combine to form the subtree of $S$ above some $r \in S$. For each $i \in X$, let $r_{i,0}$ and $r_{i,1}$ denote the roots of the two copies of $T_i$ in $S$ above $r$. Given any K\"onig cover $(C,M)$ of $S$ such that $r \in S^\ast$, for each $i$, we can uniformly computably choose one of $r_{i,0}$ or $r_{i,1}$ (call our choice $r_i$) such that
\begin{itemize}
	\item $r_i \in S^\ast$;
	\item $r \notin C$ if and only if for all $i \in X$, $r_i \in C$.
\end{itemize}
\end{lem}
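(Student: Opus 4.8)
The plan is to derive this directly from Lemma \ref{lem:combine} by passing to a subtree. Write $S'$ for the subtree of $S$ above $r$; by hypothesis $S'$ is exactly the tree obtained by combining $\langle T_i : i \in X \rangle$, with root $r$, and $r_{i,0}, r_{i,1}$ are the roots of its two copies of $T_i$.

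First I would observe that, since $r \in S^\ast$, Lemma \ref{lem:konig_cover_subtree} guarantees that $(C,M)$ restricts to a K\"onig cover of $S'$. Now apply Lemma \ref{lem:combine} to $S'$ equipped with this restricted cover: for each $i \in X$ we obtain a uniformly computable choice $r_i \in \{r_{i,0}, r_{i,1}\}$ with $r_i \in (S')^\ast$ (where the star is computed using the restriction of $(C,M)$ to $S'$) and with $r \notin C$ if and only if $r_i \in C$ for every $i \in X$. Since membership in $C$ and in $M$ is unchanged when we restrict to $S'$, the second bullet is already exactly what we want; it only remains to upgrade $r_i \in (S')^\ast$ to $r_i \in S^\ast$.

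That last step is precisely Proposition \ref{prop:t_concat_s_in_T_ast}, applied with $T = S$ and $t = r$: because $r \in S^\ast$, we have $(S')^\ast \subseteq S^\ast$, so each $r_i$ lies in $S^\ast$. Uniformity of the whole procedure is immediate, since forming $S'$, restricting the cover to it, and invoking the (uniform) choice provided by Lemma \ref{lem:combine} are all uniformly computable from $S$, $r$, and $(C,M)$.

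There is essentially no real obstacle here: the statement is a routine relativization of Lemma \ref{lem:combine} to the subtree above $r$, and the one bookkeeping point — that the ``star'' operation behaves well under passing to subtrees — has already been isolated as Proposition \ref{prop:t_concat_s_in_T_ast}.
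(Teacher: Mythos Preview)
Your proposal is correct and follows exactly the same route as the paper: restrict the K\"onig cover to the subtree above $r$ via Lemma~\ref{lem:konig_cover_subtree}, apply Lemma~\ref{lem:combine} there, and then use Proposition~\ref{prop:t_concat_s_in_T_ast} to lift $(S')^\ast \subseteq S^\ast$. The paper's proof is a one-line compression of precisely this argument.
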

\begin{proof}
By Lemma \ref{lem:konig_cover_subtree}, $(C,M)$ restricts to a K\"onig cover of the subtree of $S$ above $r$. Apply Lemma \ref{lem:combine} to the subtree of $S$ above $r$, then use Proposition \ref{prop:t_concat_s_in_T_ast}.
\end{proof}

We may now present a more general and more informative version of \cite[Lemma 4.7]{ams92}.

\begin{lem} \label{lem:seq_tree_jump}
Given a sequence of trees $\langle T_i: i \in \N \rangle$ (each with the empty node as root), we can uniformly compute a sequence of trees $\langle S_e: e \in \N \rangle$ (each with the empty node as root) such that given a K\"onig cover $(C_e,M_e)$ of $S_e$, we can uniformly compute a sequence of sets of nodes $\langle R_{e,i} \rangle_i$ in $S^\ast_e$ such that
\begin{enumerate}
	\item each $r \in R_{e,i}$ has length two or three;
	\item for each $i$ and each $r \in R_{e,i}$, the subtree of $S_e$ above $r$ is $r\concat T_i$;
	\item if the set $A \subseteq \N$ is such that
\begin{align*}
i \in A \quad &\Rightarrow \quad R_{e,i} \subseteq C_e \\
i \notin A \quad &\Rightarrow \quad R_{e,i} \subseteq \overline{C_e},
\end{align*}
then $e \in A'$ if and only if the root of $S_e$ lies in $C_e$.
\end{enumerate}
Therefore, if $\langle T_i\rangle$ codes a set $A$, then $\langle S_e \rangle$ codes $A'$.
\end{lem}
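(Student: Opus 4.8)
The plan is to adapt the construction behind \cite[Lemma 4.7]{ams92}, while keeping explicit track of where the copies of the $T_i$'s sit inside $S_e$ and of how a given K\"onig cover restricts to each of them. The guiding idea is that ``$e \in A'$'' is $\Sigma^{0}_1(A)$ and so can be written as a short nested application of the combine operation, which by Lemma \ref{lem:combine} behaves like $\neg\forall$ (and which, applied to a one-term sequence, is negation).

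Fix $e$ and recall that $e \in A'$ holds iff there is a finite binary string $\tau$ with $\Phi^{\tau}_e(e)\conv$ (running with $\tau$ as a finite oracle, so that only numbers $<|\tau|$ are queried) and $\tau(n)=A(n)$ for all $n<|\tau|$. Enumerate, lazily and possibly finitely often or never, the strings $\tau$ with $\Phi^{\tau}_e(e)\conv$. For each such $\tau$ and each $n<|\tau|$, set $W^\tau_n=T_n$ if $\tau(n)=1$ and $W^\tau_n=\overline{T_n}$ if $\tau(n)=0$; let $V_\tau$ be the combine of $\langle W^\tau_n : n<|\tau|\rangle$; and let $S_e$ be the combine of the enumerated sequence $\langle V_\tau\rangle$. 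As $\overline{T_n}$ is itself the combine of $\langle T_n\rangle$, the tree $S_e$ is obtained by iterating combine over the $T_i$'s with nesting depth two in general and depth three where a $T_n$ gets complemented; this is uniform in $\langle T_i\rangle$. Counting levels: the root of $S_e$ has length $0$, each chosen root of a copy of $V_\tau$ has length $1$, and each chosen root of a copy of $W^\tau_n$ has length $2$, so a copy of $T_n$ inside $S_e$ either sits directly there when $\tau(n)=1$ (root of length $2$) or one further combine-level down when $\tau(n)=0$ (root of length $3$). This gives clause (1); clause (2) is immediate from the definition of combine, since the subtree of $S_e$ above such a root is literally $r\concat T_n$.

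Given a K\"onig cover $(C_e,M_e)$ of $S_e$, extract the $R_{e,i}$ by applying Lemma \ref{lem:uniformly_pick_r_i} successively, beginning at the root of $S_e$ (which lies in $S^\ast_e$ automatically): first choose, for each enumerated $\tau$, a root $r_\tau\in S^\ast_e$ of one of the two copies of $V_\tau$; then, for each $n<|\tau|$, choose a root $r_{\tau,n}\in S^\ast_e$ of one of the two copies of $W^\tau_n$; and, when $W^\tau_n=\overline{T_n}$, choose inside it a root $r'_{\tau,n}\in S^\ast_e$ of one of the two copies of $T_n$. Proposition \ref{prop:t_concat_s_in_T_ast} keeps all these nodes inside $S^\ast_e$. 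Let $R_{e,i}$ collect the chosen roots of copies of $T_i$, namely $r_{\tau,i}$ when $\tau(i)=1$ and $r'_{\tau,i}$ when $i<|\tau|$ and $\tau(i)=0$. For clause (3), assume $A$ satisfies $i\in A\Rightarrow R_{e,i}\subseteq C_e$ and $i\notin A\Rightarrow R_{e,i}\cap C_e=\emptyset$. Unwinding the equivalences provided by Lemma \ref{lem:uniformly_pick_r_i} (together with the one-term combine case) shows that, for each enumerated $\tau$ and each $n<|\tau|$, the chosen node governing $n$ lies in $C_e$ exactly when $\tau(n)=A(n)$; hence $r_\tau\in C_e$ exactly when $\tau$ disagrees with $A$ somewhere below $|\tau|$; hence the root of $S_e$ lies in $C_e$ exactly when some enumerated $\tau$ agrees with $A$ throughout, i.e.\ exactly when $e\in A'$. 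The closing ``Therefore'' follows: if $\langle T_i\rangle$ codes $A$, then for any K\"onig cover of $S_e$, Lemma \ref{lem:konig_cover_subtree} restricts it to a K\"onig cover of each copy of $T_i$ (whose root lies in $S^\ast_e$); since copies preserve goodness and the coded bit, each $R_{e,i}$ lies entirely inside or entirely outside $C_e$ according to whether $i\in A$, and clause (3) then forces every K\"onig cover of $S_e$ to code the bit $[e\in A']$, i.e., $\langle S_e\rangle$ codes $A'$.

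The main difficulty I anticipate is bookkeeping rather than anything conceptual: one must arrange the nested combine so that the level counts come out to exactly ``two or three'', and chain the $\neg\forall$-equivalences of Lemma \ref{lem:uniformly_pick_r_i} through the lazy (possibly non-terminating) enumeration of the $\tau$'s so that the Boolean translation of ``$\Phi^{\tau}_e(e)\conv$ and $\tau$ agrees with $A$'' matches membership of the chosen nodes in $C_e$. All structural ingredients — the behaviour of combine and of $T^\ast$, and the fact that copies preserve goodness — are already in place, so beyond this organizational care nothing further is needed.
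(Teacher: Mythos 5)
Your proposal is correct and follows essentially the same route as the paper: the same nested combine construction driven by the $\Sigma^0_1(A)$ definition of $e\in A'$ (with a complement inserted when $\tau(n)=0$), the same extraction of $R_{e,i}$ by iterating Lemma \ref{lem:uniformly_pick_r_i} down through the levels with Proposition \ref{prop:t_concat_s_in_T_ast} keeping the chosen nodes in $S^\ast_e$, and the same chain of equivalences for clause (3). The only cosmetic difference is that the paper indexes the outer combine by pairs $(\sigma,s)$ with $\Phi^\sigma_{e,s}(e)\conv$ (a decidable index set) rather than by a lazy enumeration of halting strings, which sidesteps any worry about the tree being uniformly computable.
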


Iterating the combine operation (as we will do in the following proof) introduces a complication, which necessitates the assumption in (3). For each $e$ and $i$, instead of choosing a single node $r_i$ as in Lemma \ref{lem:uniformly_pick_r_i}, we now have to choose a set of nodes $R_{e,i}$. This is because we might want to copy the tree $T_i$ more than twice, at multiple levels of the tree $S_e$. If $T_i$ is not good (Definition \ref{defn:tree_good}), these copies could code different bits (according to appropriate restrictions of $(C_e,M_e)$), so we could have $R_{e,i} \not\subseteq C_e$ and $R_{e,i} \not\subseteq \overline{C_e}$. In that case, we have little control over whether the root of $S_e$ lies in $C_e$.

Also, in the assumption of (3), we write $\Rightarrow$ instead of $\Leftrightarrow$ because writing $\Leftrightarrow$ would require us to specify separately that we do not restrict whether $i \in A$ in the case that $R_{e,i}$ is empty. (In the following proof, $R_{e,i}$ could be empty if the construction of $S_e$ does not involve $T_i$ at all.)

\begin{proof}[Proof of Lemma \ref{lem:seq_tree_jump}]
We start by constructing $S_e$. Observe that $e \in A'$ if and only if
\begin{align*}
\neg\forall (\sigma,s) \in \{(\sigma,s): \Phi^\sigma_{e,s}(e)\conv\} \neg\forall i \in \dom(\sigma)&[(\sigma(i) = 1 \land i \in A) \\
&\lor (\sigma(i) = 0 \land \neg(i \in A))].
\end{align*}
Each occurrence of $\neg\forall$ or $\neg$ corresponds to one application of the combine operation in our construction of $S_e$.

Formally, for each finite partial $\sigma: \N \to 2$ and $i \in \dom(\sigma)$, define $T^\sigma_i = T_i$ if $\sigma(i) = 1$, otherwise define $T^\sigma_i = \overline{T_i}$. Now, for each $\sigma$ and $s$ such that $\Phi^\sigma_{e,s}(e)\conv$, define $T_{\sigma,s}$ by combining $\langle T^\sigma_i: i \in \dom(\sigma)\rangle$. Finally, combine $\langle T_{\sigma,s}: \Phi^\sigma_{e,s}(e)\conv\rangle$ to form $S_e$.

Next, given a K\"onig cover $(C_e,M_e)$ of $S_e$, we construct $\langle R_{e,i} \rangle_i$ as follows. First apply Lemma \ref{lem:uniformly_pick_r_i} to $\langle T_{\sigma,s}: \Phi^\sigma_{e,s}(e)\conv\rangle$ and $(C_e,M_e)$ to choose $\langle r_{\sigma,s}: \Phi^\sigma_{e,s}(e)\conv\rangle \subseteq S^\ast_e$ such that
\begin{itemize}
	\item the subtree of $S_e$ above each $r_{\sigma,s}$ is $r_{\sigma,s}\concat T_{\sigma,s}$;
	\item the root of $S_e$ lies in $C_e$ if and only if there is some $\sigma$ and $s$ such that $\Phi^\sigma_{e,s}(e)\conv$ and $r_{\sigma,s} \notin C_e$.
\end{itemize}

Next, for each $\sigma$ and $s$ such that $\Phi^\sigma_{e,s}(e)\conv$, apply Lemma \ref{lem:uniformly_pick_r_i} to $\langle T^\sigma_i: i \in \dom(\sigma)\rangle$ and the K\"onig cover $(C_e,M_e)$ restricted to the subtree of $S_e$ above $r_{\sigma,s}$. This produces $\langle r^{\sigma,s}_i: i \in \dom(\sigma)\rangle \subseteq S^\ast_e$ (all extending $r_{\sigma,s}$) such that
\begin{itemize}
	\item the subtree of $S_e$ above each $r^{\sigma,s}_i$ is $r^{\sigma,s}_i\concat T^\sigma_i$;
	\item $r_{\sigma,s} \notin C_e$ if and only if $r^{\sigma,s}_i \in C_e$ for all $i \in \dom(\sigma)$.
\end{itemize}
Finally, for each $\sigma$ and $s$ such that $\Phi^\sigma_{e,s}(e)\conv$ and each $i$ such that $\sigma(i) = 0$, apply Lemma \ref{lem:uniformly_pick_r_i} to the single-element sequence $\langle T_i \rangle$ and $(C_e,S_e)$ restricted to the subtree of $S_e$ above $r^{\sigma,s}_i$ to obtain $\overline{r}^{\sigma,s}_i \in S^\ast_e$ extending $r^{\sigma,s}_i$ such that
\begin{itemize}
	\item the subtree of $S_e$ above $\overline{r}^{\sigma,s}_i$ is $\overline{r}^{\sigma,s}_i\concat T_i$;
	\item $r^{\sigma,s}_i \in C_e$ if and only if $\overline{r}^{\sigma,s}_i \notin C_e$.
\end{itemize}
Define
\[ R_{e,i} = \{r^{\sigma,s}_i: \Phi^\sigma_{e,s}(e)\conv,\sigma(i)=1\} \cup \{\overline{r}^{\sigma,s}_i: \Phi^\sigma_{e,s}(e)\conv,\sigma(i)=0\}. \]
First observe that each $r^{\sigma,s}_i$ has length two and each $\overline{r}^{\sigma,s}_i$ has length three. Hence (1) holds. Next, since $T^\sigma_i = T_i$ if $\sigma(i) = 1$, the subtree of $S_e$ above each $r \in R_{e,i}$ is $r\concat T_i$, i.e., (2) holds.

We prove that (3) holds. Suppose that $A \subseteq \N$ is such that
\begin{align*}
i \in A \quad &\Rightarrow \quad R_{e,i} \subseteq C_e \\
i \notin A \quad &\Rightarrow \quad R_{e,i} \subseteq \overline{C_e}.
\end{align*}
Now, $e \in A'$ if and only if there is some $\sigma \prec A$ and $s$ such that $\Phi^\sigma_{e,s}(e)\conv$. By our assumption on $A$ and the definition of $R_{e,i}$, that holds if and only if there is some $\sigma$ and $s$ such that $\Phi^\sigma_{e,s}(e)\conv$ and for all $i \in \dom(\sigma)$:
\begin{align*}
\sigma(i) = 1 \quad &\Leftrightarrow \quad r^{\sigma,s}_i \in C_e \\
\sigma(i) = 0\quad &\Leftrightarrow \quad \overline{r}^{\sigma,s}_i \notin C_e .
\end{align*}
Chasing through the above definitions, we see that the above holds if and only if the root of $S_e$ lies in $C_e$, as desired.

Finally, suppose that $\langle T_i \rangle$ codes the set $A$. We show that $\langle S_e \rangle$ codes $A'$. Fix a K\"onig cover $\langle (C_e,M_e) \rangle$ of $\langle S_e \rangle$. First we show that the assumption in (3) holds for $A$. Fix $e,i \in \N$. If $R_{e,i}$ is empty, the desired statement holds. Otherwise, fix $r \in R_{e,i}$. Since $r$ lies in $S^\ast_e$, Lemma \ref{lem:konig_cover_subtree} says that $(C_e,M_e)$ restricts to a K\"onig cover of the subtree of $S_e$ above $r$. By (2), the subtree of $S_e$ above $r$ is $r\concat T_i$. Since $T_i$ codes $A(i)$, so does $r\concat T_i$. We conclude that
\[ r \in C_e \quad \Leftrightarrow \quad \text{ the root of }T_i \in C_i \quad \Leftrightarrow \quad i \in A. \]
It follows that the assumption in (3) holds for $A$. Now by (3), $e \in A'$ if and only if the root of $S_e$ lies in $C_e$.

Since this holds for every K\"onig cover $\langle (C_e,M_e) \rangle$ of $\langle S_e \rangle$, $\langle S_e \rangle$ codes $A'$ as desired.
\end{proof}

\begin{rmk}
In the proof of Lemma \ref{lem:seq_tree_jump}, we could just as well have defined $R_{e,i}$ to be the set of all nodes in $S^\ast_e$ which are roots of copies of $T_i$. (Formally, for each $T_{\sigma,s}$ such that $\Phi^\sigma_{e,s}(e)\conv$, we could include the roots of the component $T^\sigma_i$'s if $\sigma(i) = 1$, and the roots of the component $T_i$'s in the $T^\sigma_i$'s if $\sigma(i) = 0$, as long as they lie in $S^\ast_e$.)	
\end{rmk}

Next, we make two small tweaks to Lemma \ref{lem:seq_tree_jump}. First, we adjust conclusion (3) to fit our definition of jump hierarchy (Definition \ref{defn:jump_hierarchy}). Second, we broaden the scope of our conclusions to include K\"onig covers of copies of $S_n$, not just K\"onig covers of $S_n$ itself. Lemma \ref{lem:seq_tree_join_jump} is the central lemma behind our reductions from $\ATR$ and $\ATR_2$ to $\KDT$.

\begin{lem} \label{lem:seq_tree_join_jump}
Given a sequence of sequences of trees $\langle \langle T^a_n \rangle_n\rangle_a$ (each with the empty node as root), we can uniformly compute a sequence of trees $\langle S_n \rangle_n$ (each with the empty node as root) such that for any $s_n \in \N^{<\N}$ and any K\"onig cover $(C_n,M_n)$ of $s_n\concat S_n$, we can uniformly compute a sequence of sets of nodes $\langle R^a_{n,i} \rangle_{a,i}$ in $(s_n\concat S_n)^\ast$ such that
\begin{enumerate}
	\item each $r \in R^a_{n,i}$ has length two or three (plus the length of $s_n$);
	\item for each $a$, $i$, and each $r \in R^a_{n,i}$, the subtree of $s_n\concat S_n$ above $r$ is $r\concat T^a_i$;
	\item
suppose that for each $a$, the set $Y_a \subseteq \N$ is such that
\begin{align*}
i \in Y_a \quad &\Rightarrow \quad R^a_{n,i} \subseteq C_n \\
i \notin Y_a \quad &\Rightarrow \quad R^a_{n,i} \subseteq \overline{C_n},
\end{align*}
then $n \in \left(\bigoplus_a Y_a\right)'$ if and only if $s_n$ lies in $C_n$.
\end{enumerate}
Therefore, if for each $a$, $\langle T^a_n \rangle_n$ codes a set $Y_a$, then $\langle S_n \rangle_n$ codes $\left(\bigoplus_a Y_a\right)'$.
\end{lem}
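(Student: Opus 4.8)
The plan is to combine the two applications of the combine operation already packaged in Lemma \ref{lem:seq_tree_jump} with a preliminary ``join'' step, and then to propagate the ``goodness'' bookkeeping through copies (the $s_n\concat S_n$ business) exactly as in Lemma \ref{lem:seq_tree_jump}, using Proposition \ref{prop:t_concat_s_in_T_ast} at each nested level. Concretely, given $\langle\langle T^a_n\rangle_n\rangle_a$, I first form, for each $n$, the sequence $\langle U^n_i\rangle_i$ obtained by re-indexing all the $T^a_j$ via a pairing $i=\langle a,j\rangle$, so that $U^n_{\langle a,j\rangle}=T^a_j$. If each $\langle T^a_n\rangle_n$ codes $Y_a$, then $\langle U^n_i\rangle_i$ codes $\bigoplus_a Y_a$ (coding is preserved under re-indexing, and $\langle a,j\rangle\mapsto$ the $j$-th tree in row $a$ just shuffles which tree sits at which index). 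Then I apply the construction of Lemma \ref{lem:seq_tree_jump} to $\langle U^n_i\rangle_i$ to obtain $S_n$ together with the sets $R_{n,\langle a,j\rangle}$; I set $R^a_{n,j}:=R_{n,\langle a,j\rangle}$. Conclusions (1) and (2) are then immediate from the corresponding conclusions of Lemma \ref{lem:seq_tree_jump}, since the subtree above each $r\in R^a_{n,j}=R_{n,\langle a,j\rangle}$ is $r\concat U^n_{\langle a,j\rangle}=r\concat T^a_j$.

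For conclusion (3), I would note that $n\in(\bigoplus_a Y_a)'$ is literally $n\in A'$ where $A=\bigoplus_a Y_a=\{\langle a,j\rangle : j\in Y_a\}$; and the hypothesis ``$j\in Y_a\Rightarrow R^a_{n,j}\subseteq C_n$ and $j\notin Y_a\Rightarrow R^a_{n,j}\subseteq\overline{C_n}$'' is exactly the hypothesis of Lemma \ref{lem:seq_tree_jump}(3) for this $A$ and the re-indexed family $R_{n,i}$. So Lemma \ref{lem:seq_tree_jump}(3) gives $n\in A'$ iff the root of $S_n$ (equivalently, $s_n$, after prefixing) lies in $C_n$. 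The final sentence (if each $\langle T^a_n\rangle_n$ codes $Y_a$ then $\langle S_n\rangle_n$ codes $(\bigoplus_a Y_a)'$) follows from the ``Therefore'' clause of Lemma \ref{lem:seq_tree_jump} applied to $\langle U^n_i\rangle_i$ coding $\bigoplus_a Y_a$.

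The one genuinely new ingredient is the ``second tweak'': strengthening the conclusion so that it applies not to $S_n$ itself but to an arbitrary copy $s_n\concat S_n$. Here I would invoke the trivial-but-important observation already recorded in the text: for any $s\in\N^{<\N}$, the K\"onig covers of $T$ and of $s\concat T$ are in an obvious bijection respecting coded bits, so $T$ is good iff $s\concat T$ is good, and $(s\concat T)^\ast = s\concat(T^\ast)$ under the corresponding covers. Thus, given a K\"onig cover $(C_n,M_n)$ of $s_n\concat S_n$, I transport it to a K\"onig cover of $S_n$, run the Lemma \ref{lem:seq_tree_jump} machinery there to get node-sets $R_{n,i}\subseteq S_n^\ast$, and then prefix $s_n$ back on: $R^a_{n,j}:=s_n\concat R_{n,\langle a,j\rangle}\subseteq (s_n\concat S_n)^\ast$. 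Every length bound in (1) picks up ``plus the length of $s_n$'', every ``subtree above $r$'' statement in (2) is unchanged, and the membership $r\in C_n$ is preserved under the transport, so (3) goes through verbatim. All of this is uniform since the bijection between covers of $T$ and of $s\concat T$, and the operation $T\mapsto T^\ast$, are uniformly computable.

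The step I expect to require the most care is \emph{not} any single computation but making sure the re-indexing and the prefixing commute cleanly with the nested applications of Lemma \ref{lem:uniformly_pick_r_i} buried inside Lemma \ref{lem:seq_tree_jump} — in particular, verifying that the sets $R^a_{n,j}$ so produced really do land inside $(s_n\concat S_n)^\ast$ and not merely inside the copy of $S_n^\ast$. This is exactly what Proposition \ref{prop:t_concat_s_in_T_ast} is for: at each level of the nested combine, the node chosen lies in the $S^\ast$ of the relevant subtree, which by Proposition \ref{prop:t_concat_s_in_T_ast} is contained in the ambient $(s_n\concat S_n)^\ast$. Beyond that, everything is a bookkeeping exercise, and I would present it as such rather than reproving Lemma \ref{lem:seq_tree_jump}.
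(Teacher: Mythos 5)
Your proposal is correct and follows essentially the same route as the paper: apply Lemma \ref{lem:seq_tree_jump} to the flattened family $\langle T^a_i\rangle_{a,i}$ (so that the single set $A$ there becomes $\bigoplus_a Y_a$), transport a K\"onig cover of $s_n\concat S_n$ to one of $S_n$ via the observation after Definition \ref{defn:tree_good}, and prefix the resulting node sets by $s_n$. The paper compresses the verification of (1)--(3) into ``straightforward to check,'' whereas you spell out the role of Proposition \ref{prop:t_concat_s_in_T_ast} in keeping the chosen nodes inside $(s_n\concat S_n)^\ast$; the content is the same.
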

\begin{proof}
Apply Lemma \ref{lem:seq_tree_jump} to $\langle T^a_n \rangle_{a,n}$. Given a K\"onig cover $(C_n,M_n)$ of $s_n\concat S_n$, we may compute the corresponding K\"onig cover of $S_n$ (as we observed after Definition \ref{defn:tree_good}). Then apply Lemma \ref{lem:seq_tree_jump} to obtain $\langle R^a_{n,i} \rangle_{n,i}$ in $S_n^\ast$. It is straightforward to check that $\langle s_n\concat R^a_{n,i} \rangle_{n,i}$ satisfies conclusions (1)--(3).
\end{proof}

As a warmup for our reduction from $\ATR_2$ to $\KDT$, we use Lemma \ref{lem:seq_tree_join_jump} to prove that $\ATR \leq_W \KDT$. Our proof is essentially the same as that of \cite[Theorem 4.11]{ams92}. Note that we do not use the sets $R^a_{n,i}$ in the following proof, only the final conclusion of Lemma \ref{lem:seq_tree_join_jump}. (The sets $R^a_{n,i}$ will be used in our reduction from $\ATR_2$ to $\KDT$.)

\begin{thm} \label{thm:ATR_leq_W_KDT}
$\ATR \leq_W \KDT$.
\end{thm}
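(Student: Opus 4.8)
The plan is to follow the proof of \cite[Theorem 4.11]{ams92}, assembling Lemma \ref{lem:seq_tree_base_case} and Lemma \ref{lem:seq_tree_join_jump} by effective transfinite recursion. By Proposition \ref{prop:ATR_equiv_ATR_labels} we may assume we are handed a labeled well-ordering $\L = (L, 0_L, S, p)$ and a set $A$, and must produce the jump hierarchy $\langle X_a \rangle_{a \in L}$ on $L$ starting with $A$ (Definition \ref{defn:jump_hierarchy}); since here $X_b = \left(\bigoplus_{a <_L b} X_a\right)'$ uniformly for every $b >_L 0_L$, only the label $0_L$ is actually used. For the forward functional I would use $(\L \oplus A)$-effective transfinite recursion along $L$ to compute, uniformly in $a \in L$, an index for a sequence of trees $\langle T^a_n \rangle_n$ (each rooted at $\langle\rangle$): at $a = 0_L$ take the sequence coding $A$ given by Lemma \ref{lem:seq_tree_base_case}, and at $b >_L 0_L$ apply the uniform procedure of Lemma \ref{lem:seq_tree_join_jump} to (an index for) the sequence of sequences $\langle \langle T^a_n \rangle_n : a <_L b \rangle$ to obtain $\langle T^b_n \rangle_n$. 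The forward functional then outputs the bipartite graph which is the disjoint union of all the trees $T^a_n$ (each being bipartite via the parity of node lengths); this is a genuine $\KDT$-instance regardless of the construction's internal state.

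For the backward functional I would apply $\KDT$ to this graph, use Proposition \ref{prop:KDT_parallelizable} to restrict the resulting K\"onig cover to a K\"onig cover $(C^a_n, M^a_n)$ of each $T^a_n$, decode $A_a := \{ n : \text{the root of } T^a_n \in C^a_n \}$, and output $\langle A_a \rangle_{a \in L}$. Correctness follows from the claim that every $\langle T^a_n \rangle_n$ is good and codes $X_a$, proved by transfinite induction along $L$: the base case is Lemma \ref{lem:seq_tree_base_case}, and for $b >_L 0_L$, if $\langle T^a_n \rangle_n$ codes $X_a$ for all $a <_L b$, then Lemma \ref{lem:seq_tree_join_jump} yields that $\langle T^b_n \rangle_n$ codes $\left(\bigoplus_{a <_L b} X_a\right)' = X_b$. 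Hence $A_a = X_a$ for every $a$, so $\langle A_a \rangle_{a \in L}$ is the unique $\ATR$-solution, and this is a Weihrauch reduction.

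The substance is entirely contained in the already-proven Lemma \ref{lem:seq_tree_join_jump}; two points require care. First, the recursion must be arranged so that effective transfinite recursion (as recalled at the start of the paper) applies: I would defer the evaluation of the predecessor sequences $\langle T^a_n \rangle_n$ ($a <_L b$) into the index fed to Lemma \ref{lem:seq_tree_join_jump}, so that the step function $F$ satisfies $\Phi^{\L \oplus A}_{F(e)}(b)\conv$ for all $e$ and $b$; then, since $L$ is well-founded, the fixed-point index $e$ satisfies $\Phi^{\L \oplus A}_e(a)\conv$ for every $a \in L$. Second, the transfinite induction above is the only place where well-foundedness of $L$ is genuinely needed, and it is precisely this step that will have to be repaired when we later reduce $\ATR_2$ to $\KDT$ (where ``effective transfinite recursion'' along an ill-founded ordering may produce trees that are not good). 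Finally, observe that the present argument uses only the final ``codes'' conclusion of Lemma \ref{lem:seq_tree_join_jump}; the finer data $\langle R^a_{n,i} \rangle$ are needed only for the two-sided reduction.
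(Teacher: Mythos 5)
Your proposal is correct and follows essentially the same route as the paper's proof: effective transfinite recursion along $L$ building the tree sequences via Lemmas \ref{lem:seq_tree_base_case} and \ref{lem:seq_tree_join_jump}, the disjoint union as the $\KDT$-instance, decoding the roots for the backward functional, and transfinite induction for correctness. Your two points of care (arranging the recursion so the step function is total by deferring evaluation into the tree indices, and noting that only the final ``codes'' conclusion of Lemma \ref{lem:seq_tree_join_jump} is needed here) are exactly the observations the paper makes.
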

\begin{proof}
We reduce the version of $\ATR$ in Proposition \ref{prop:ATR_equiv_ATR_labels} to $\KDT$. Given a labeled well-ordering $\L$ and a set $A$, we will use $(\L \oplus A)$-effective transfinite recursion on $L$ to define an $(\L \oplus A)$-recursive function $f: L \to \omega$ such that for each $b \in L$, $\Phi^{\L \oplus A}_{f(b)}$ is interpreted as a sequence of trees $\langle T^b_n \rangle_n$ (each with the empty node as root). We will show that $\langle T^b_n \rangle_n$ codes the $b^{\text{th}}$ column of the jump hierarchy on $L$ which starts with $A$.

For the base case, we use Lemma \ref{lem:seq_tree_base_case} to compute a sequence of trees $\langle T^{0_L}_n \rangle_n$ which codes $A$. Otherwise, for $b >_L 0_L$, we use Lemma \ref{lem:seq_tree_join_jump} to compute a sequence of trees $\langle T^b_n \rangle_n$ such that if for each $a <_L b$, $\Phi^{\L \oplus A}_{f(a)}$ is (interpreted as) a sequence of trees $\langle T^a_n \rangle_n$ which codes $Y_a$, then $\langle T^b_n \rangle_n$ codes $\left(\bigoplus_{a <_L b} Y_a\right)'$.

Note that $f$ is total: for any $b$, we can interpret $\langle \Phi^{\L \oplus A}_{f(a)} \rangle_{a <_L b}$ as a sequence of sequences of trees and apply Lemma \ref{lem:seq_tree_join_jump} to obtain $\langle T^b_n \rangle_n$. This also means that every $\langle T^b_n \rangle_n$ (for $b >_L 0_L$) was obtained using Lemma \ref{lem:seq_tree_join_jump}.

We may view the disjoint union of $\langle \langle T^b_n \rangle_n \rangle_{b \in L}$ as a $\KDT$-instance. This defines the forward reduction from $\ATR$ to $\KDT$.

For the backward reduction, let $\langle \langle (C^b_n,M^b_n) \rangle_n \rangle_{b \in L}$ be a solution to the above $\KDT$-instance. We may uniformly decode said solution to obtain a sequence of sets $\langle Y_b \rangle_{b \in L}$.

By transfinite induction along $L$ using Lemmas \ref{lem:seq_tree_base_case} and \ref{lem:seq_tree_join_jump}, $\langle T^b_n \rangle_n$ is good for all $b \in L$, and $\langle Y_b \rangle_{b \in L}$ is the 	jump hierarchy on $L$ which starts with $A$.
\end{proof}

What if we want to use the forward reduction from $\ATR$ to $\KDT$ in our reduction from $\ATR_2$ to $\KDT$? If the given $\ATR_2$-instance $\L$ is ill-founded, things could go wrong in the ``effective transfinite recursion''. Specifically, there may be some $a \in L$ and $i \in \N$ such that $T^a_i$ is not good, i.e., there may be some $r,s \in \N^{<\N}$ and some K\"onig covers of $r\concat T^a_i$ and $s\concat T^a_i$ which code different bits. In order to salvage the situation, we will modify the backward reduction to check for such inconsistencies. If they are present, we use them to compute an infinite $<_L$-descending sequence.

In order to detect inconsistencies, for each $b \in L$ and $n \in \N$, we need to keep track of the internal structure of $(C^b_n,M^b_n)$ in the $\KDT$-solution. According to Lemma \ref{lem:seq_tree_join_jump} and our construction of $T^b_n$, for each $a <_L b$ and $i \in \N$, there is a set of nodes $R^a_{n,i}$ in $(T^b_n)^\ast$ such that:
\begin{itemize}
	\item for each $r \in R^a_{n,i}$, the subtree of $T^b_n$ above $r$ is $r\concat T^a_i$;
	\item if for each $i$, either $R^a_{n,i} \subseteq C^b_n$ or $R^a_{n,i} \subseteq \overline{C^b_n}$, then $(C^b_n,M^b_n)$ codes the $n^{\text{th}}$ bit of $(\bigoplus_a Y_a)'$, where each $Y_a$ satisfies the assumption in Lemma \ref{lem:seq_tree_join_jump}(3).
\end{itemize}

The ``consistent'' case is if for each $a <_L b$ and $i \in \N$, $(C^a_i,T^a_i)$ codes the same bit as the restriction of $(C^b_n,M^b_n)$ to the subtree above each $r$ in $R^a_{n,i}$. (This must happen if each $T^a_i$ is good, but it could also happen ``by chance''.) We will show that this ensures that for each $a$ and $i$, either $R^a_{n,i} \subseteq C^b_n$ or $R^a_{n,i} \subseteq \overline{C^b_n}$. Furthermore, for each $a$, the $Y_a$ coded by $\langle T^a_i \rangle_i$ must satisfy the assumptions in Lemma \ref{lem:seq_tree_join_jump}(3), so we correctly calculate the next column of our jump hierarchy.

On the other hand, what if there are some $a <_L b$, $i \in \N$, and $r_0 \in R^a_{n,i}$ such that $(C^a_i,M^a_i)$ codes a different bit from the restriction of $(C^b_n,M^b_n)$ to the subtree above $r_0$? Then consider $T^a_i$ and the subtree of $T^b_n$ above $r_0$. The latter tree is a copy of $T^a_i$ (specifically, it is $r_0\concat T^a_i$), yet its K\"onig cover codes a different bit from that of $T^a_i$, so we can use Lemma \ref{lem:seq_tree_join_jump} to find a subtree of $T^a_i$ and a subtree of $T^b_n$ above $r_0$ (both subtrees are copies of $T^{a_0}_{i_0}$ for some $a_0 <_L a$, $i_0 \in \N$) on which appropriate restrictions of $(C^a_i,M^a_i)$ and $(C^b_n,M^b_n)$ code different bits. By repeating this process, we can obtain an infinite $<_L$-descending sequence.

In order to formalize the above arguments, we organize the above recursive process using the sets $R^{b,a}_{n,i}$, defined as follows:

\begin{defn} \label{defn:R_ba_ni}
Fix a labeled linear ordering $\L$ and use the forward reduction in Theorem \ref{thm:ATR_leq_W_KDT} to compute $\langle \langle T^b_n \rangle_n \rangle_{b \in L}$. For each $n$ and $b$, fix a K\"onig cover $(C^b_n,M^b_n)$ of $T^b_n$. For each $a <_L b$ and each $i,n \in \N$, we define a set of nodes $R^{b,a}_{n,i}$ in $T^b_n$ as follows: $R^{b,a}_{n,i}$ is the set of all $r$ for which there exist $j \geq 1$ and
\begin{center}
\begin{tabular}{r c c c c c l l}
$\langle \rangle = r_0$ & $\prec$ & $r_1$ & $\prec$ &$\cdots$ & $\prec$ & $r_j = r$ & in $T^b_n$ \\
$b = c_0$ & $>_L$ & $c_1$ & $>_L$ &$\cdots$ & $>_L$ & $c_j = a$ & in $L$ \\
$n = i_0$ & , & $i_1$ & , &$\cdots$ & , & $i_j = i$ & in $\N$
\end{tabular}
\end{center}
such that for all $0 < l \leq j$, $r_l$ lies in $R^{c_l}_{i_{l-1},i_l}$ as calculated by applying Lemma \ref{lem:seq_tree_join_jump} to $(C^b_n,M^b_n)$ restricted to the subtree of $T^b_n$ above $r_{l-1}$.
\end{defn}

We make two easy observations about $R^{b,a}_{n,i}$:
\begin{enumerate}
	\item By induction on $l$, $r_l$ lies in $(T^b_n)^\ast$ and the subtree of $T^b_n$ above $r_l$ is $r_l\concat T^{c_l}_{i_l}$. In particular, for each $r \in R^{b,a}_{n,i}$, $r \in (T^b_n)^\ast$ and the subtree of $T^b_n$ above $r$ is $r\concat T^a_i$.
	\item $R^{b,a}_{n,i}$ is uniformly c.e.\ in $\L \oplus (C^b_n,M^b_n)$. (A detailed analysis shows that $R^{b,a}_{n,i}$ is uniformly computable in $\L \oplus (C^b_n,M^b_n)$, but we do not need that.)
\end{enumerate}

With the $R^{b,a}_{n,i}$'s in hand, we can make precise what we mean by consistency:

\begin{defn}
In the same context as the previous definition, we say that $a \in L$ is \emph{consistent} if for all $i \in \N$:
\begin{align*}
\text{the root of }T^a_i \in C^a_i \quad &\Rightarrow \quad R^{b,a}_{n,i} \subseteq C^b_n \text{ for all }b >_L a,n \in \N \\
\text{the root of }T^a_i \notin C^a_i \quad &	\Rightarrow \quad R^{b,a}_{n,i} \subseteq \overline{C^b_n} \text{ for all }b >_L a,n \in \N.
\end{align*}
\end{defn}

Observe that if $T^a_i$ is good for all $i$, then observation (1) above implies that $a$ is consistent, regardless of what $\langle (C^b_n,M^b_n) \rangle_{b,n}$ may be. However, unless $L$ is well-founded, we cannot be certain that $T^a_i$ is good. Consistency is a weaker condition which suffices to ensure that we can still obtain a jump hierarchy on $L$, as we show in Corollary \ref{cor:consistent_elt_cut}. We will also show that inconsistency cannot come from nowhere, i.e., if $b_0$ is inconsistent, then there is some $b_1 <_L b_0$ which is inconsistent, and so on, yielding an infinite $<_L$-descending sequence of inconsistent elements.

Furthermore, consistency is easy to check: by observation (2) above, whether $a$ is consistent is $\Pi^0_1$ (in $\L \oplus \langle (C^b_n,M^b_n)\rangle_{b,n}$).

We prove two lemmas that will yield the desired result when combined:

\begin{lem} \label{lem:Y_a_cohesive_code_Y_b}
Fix K\"onig covers $\langle (C^b_n,M^b_n) \rangle_{b,n}$ for $\langle T^b_n \rangle_{b,n}$. Now fix $n$ and $b$. Suppose that for each $a <_L b$, the set $Y_a \subseteq \N$ is such that
\begin{align*}
i \in Y_a \quad &\Rightarrow \quad R^{b,a}_{n,i} \subseteq C^b_n \\
i \notin Y_a \quad &\Rightarrow \quad R^{b,a}_{n,i} \subseteq \overline{C^b_n}.
\end{align*}
Then $n \in \left(\bigoplus_{a <_L b} Y_a\right)'$ if and only if the root of $T^b_n$ lies in $C^b_n$.
\end{lem}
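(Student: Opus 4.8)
The plan is to read the statement off directly from conclusion~(3) of Lemma~\ref{lem:seq_tree_join_jump}; the only work is to match the sets $R^{b,a}_{n,i}$ of Definition~\ref{defn:R_ba_ni} with the sets named in that conclusion. Since $\langle T^b_n \rangle_n$ is produced, in the construction of Theorem~\ref{thm:ATR_leq_W_KDT}, by applying Lemma~\ref{lem:seq_tree_join_jump} to the sequence of sequences $\langle \langle T^a_m \rangle_m \rangle_{a <_L b}$ (so we are in the case $b >_L 0_L$), applying the conclusion of that lemma with prefix $s_n = \langle \rangle$ to the K\"onig cover $(C^b_n, M^b_n)$ of $T^b_n$ yields, for each $a <_L b$ and $i \in \N$, a set of nodes $R^a_{n,i}$ in $(T^b_n)^\ast$ enjoying properties (1)--(3) of Lemma~\ref{lem:seq_tree_join_jump}.

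First I would observe that these $R^a_{n,i}$ are exactly the elements of $R^{b,a}_{n,i}$ that are witnessed by a length-one chain in Definition~\ref{defn:R_ba_ni}, i.e.\ the case $j = 1$, $c_1 = a$, $r_1 = r$, whose defining clause is precisely ``$r \in R^a_{n,i}$ as computed by applying Lemma~\ref{lem:seq_tree_join_jump} to $(C^b_n, M^b_n)$ restricted to the subtree of $T^b_n$ above $r_0 = \langle \rangle$'', that subtree being all of $T^b_n$. In particular $R^a_{n,i} \subseteq R^{b,a}_{n,i}$ for every $a <_L b$ and every $i$. Consequently the hypothesis of the present lemma---that for each $a <_L b$, $i \in Y_a$ implies $R^{b,a}_{n,i} \subseteq C^b_n$ and $i \notin Y_a$ implies $R^{b,a}_{n,i} \subseteq \overline{C^b_n}$---forces the same two implications with $R^a_{n,i}$ in place of $R^{b,a}_{n,i}$. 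But that is exactly the hypothesis of Lemma~\ref{lem:seq_tree_join_jump}(3) for the family $\langle Y_a \rangle_{a <_L b}$ and the K\"onig cover $(C^b_n, M^b_n)$, so Lemma~\ref{lem:seq_tree_join_jump}(3) yields that $n \in \left( \bigoplus_{a <_L b} Y_a \right)'$ if and only if $s_n = \langle \rangle$, the root of $T^b_n$, lies in $C^b_n$---which is the desired conclusion.

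The argument is pure bookkeeping and I foresee no genuine obstacle. The one point to keep straight is that only the inclusion $R^a_{n,i} \subseteq R^{b,a}_{n,i}$ is needed here, not the full recursively defined content of $R^{b,a}_{n,i}$; the larger sets exist for the companion argument (not part of this lemma) that mines an inconsistent element of $L$ for an infinite $<_L$-descending sequence.
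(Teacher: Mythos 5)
Your proof is correct and is essentially identical to the paper's own argument: both reduce the statement to Lemma \ref{lem:seq_tree_join_jump}(3) via the single observation that $R^a_{n,i} \subseteq R^{b,a}_{n,i}$ (the $j=1$ case of Definition \ref{defn:R_ba_ni}), so the hypothesis on the larger sets transfers to the smaller ones. Your closing remark that only this inclusion is needed here, with the full recursive content of $R^{b,a}_{n,i}$ reserved for the descending-sequence argument, matches the paper's intent exactly.
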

\begin{proof}
Recall that $\langle T^b_n \rangle_{n \in \N}$ is computed by applying Lemma \ref{lem:seq_tree_join_jump} to $\langle \langle T^a_n \rangle_{n \in \N} \rangle_{a <_L b}$. By definition of $R^{b,a}_{n,i}$, $R^a_{n,i}$ (as obtained from Lemma \ref{lem:seq_tree_join_jump}) is a subset of $R^{b,a}_{n,i}$ (this is the case $j = 1$). So for all $a <_L b$,
\begin{align*}
i \in Y_a \quad &\Rightarrow \quad R^a_{n,i} \subseteq R^{b,a}_{n,i} \subseteq C^b_n \\
i \notin Y_a \quad &\Rightarrow \quad R^a_{n,i} \subseteq R^{b,a}_{n,i} \subseteq \overline{C^b_n}.
\end{align*}
The desired result follows from Lemma \ref{lem:seq_tree_join_jump}(3).
\end{proof}

\begin{lem} \label{lem:cohesive_induction}
Fix K\"onig covers $\langle (C^c_m,M^c_m) \rangle_{c,m}$ for $\langle T^c_m \rangle_{c,m}$. Now fix $m$ and $b <_L c$. Suppose that for each $a <_L b$, the set $Y_a \subseteq \N$ is such that
\begin{align*}
i \in Y_a \quad &\Rightarrow \quad R^{c,a}_{m,i} \subseteq C^c_m \\
i \notin Y_a \quad &\Rightarrow \quad R^{c,a}_{m,i} \subseteq \overline{C^c_m}.
\end{align*}
Then for all $n \in \N$,	
\begin{align*}
n \in \left(\bigoplus_{a <_L b} Y_a\right)' \quad &\Rightarrow \quad R^{c,b}_{m,n} \subseteq C^c_m \\
n \notin \left(\bigoplus_{a <_L b} Y_a\right)' \quad &\Rightarrow \quad R^{c,b}_{m,n} \subseteq \overline{C^c_m}.
\end{align*}
\end{lem}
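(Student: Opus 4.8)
The plan is a direct unwinding of Definition \ref{defn:R_ba_ni}, using Lemma \ref{lem:seq_tree_join_jump} (applied a second time, inside the relevant subtree) as a black box; there is no fresh induction. We may assume $b >_L 0_L$ --- this is the case in which $b$'s column of the hierarchy is a jump, so that $T^b_n$ is the tree $S_n$ obtained by applying Lemma \ref{lem:seq_tree_join_jump} to $\langle \langle T^a_p \rangle_p \rangle_{a <_L b}$ --- and if $R^{c,b}_{m,n} = \emptyset$ there is nothing to prove. It then suffices to show, for every $r \in R^{c,b}_{m,n}$, that $r \in C^c_m$ if and only if $n \in \left(\bigoplus_{a <_L b} Y_a\right)'$; both displayed implications follow immediately.

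First I would fix $r \in R^{c,b}_{m,n}$ together with a witnessing chain $\langle\rangle = r_0 \prec \dots \prec r_j = r$, $c = c_0 >_L \dots >_L c_j = b$, $m = i_0,\dots,i_j = n$. By observation (1) following Definition \ref{defn:R_ba_ni}, $r$ lies in $(T^c_m)^\ast$ and the subtree of $T^c_m$ above $r$ is $r\concat T^b_n = r\concat S_n$. By Lemma \ref{lem:konig_cover_subtree}, $(C^c_m,M^c_m)$ restricts to a König cover of $r\concat S_n$. Now I would re-apply Lemma \ref{lem:seq_tree_join_jump} to $\langle \langle T^a_p \rangle_p \rangle_{a <_L b}$ with parameter $s_n := r$ and this restricted König cover, obtaining for each $a <_L b$ and $i \in \N$ a set of nodes $\widetilde R^a_{n,i} \subseteq (r\concat S_n)^\ast$ meeting conclusions (1)--(3) of that lemma. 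Note that $\widetilde R^a_{n,i}$ is precisely the set referred to as $R^{c_{j+1}}_{i_j,i_{j+1}}$ in the chain condition of Definition \ref{defn:R_ba_ni} when one tries to extend the chain above by one more step located at $r$.

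The crucial observation is that $\widetilde R^a_{n,i} \subseteq R^{c,a}_{m,i}$ for every $a <_L b$ and $i$. Indeed, for $r' \in \widetilde R^a_{n,i}$ one may append the triple $(r',a,i)$ to the fixed chain: $a <_L b = c_j$; $r' \succ r = r_j$ because by conclusion (1) the length of $r'$ exceeds $|r|$; and $r'$ lies in exactly the set obtained by applying Lemma \ref{lem:seq_tree_join_jump} to $(C^c_m,M^c_m)$ restricted to the subtree of $T^c_m$ above $r_j = r$, namely $\widetilde R^a_{n,i}$. So the extended sequence witnesses $r' \in R^{c,a}_{m,i}$. Combining this inclusion with the hypothesis on the $Y_a$'s yields $i \in Y_a \Rightarrow \widetilde R^a_{n,i} \subseteq C^c_m$ and $i \notin Y_a \Rightarrow \widetilde R^a_{n,i} \subseteq \overline{C^c_m}$; since $\widetilde R^a_{n,i} \subseteq r\concat S_n$, a node of $\widetilde R^a_{n,i}$ lies in the restricted König cover if and only if it lies in $C^c_m$, so the hypothesis of Lemma \ref{lem:seq_tree_join_jump}(3) is met for the family $\langle Y_a \rangle_{a <_L b}$. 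Its conclusion then gives that $n \in \left(\bigoplus_{a <_L b} Y_a\right)'$ if and only if $s_n = r$ lies in the restricted cover, i.e., if and only if $r \in C^c_m$, which is what we wanted.

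The only real difficulty I anticipate is bookkeeping: one must keep straight the layered notation --- in particular, that the set ``$R^{c_l}_{i_{l-1},i_l}$'' appearing inside Definition \ref{defn:R_ba_ni} is literally the output of Lemma \ref{lem:seq_tree_join_jump} applied inside the subtree above $r_{l-1}$, so that extending a chain and re-running the lemma produce the same sets --- and that restricting a König cover to a subtree leaves unchanged which nodes of the subtree belong to the cover. Everything else is a mechanical application of Lemmas \ref{lem:konig_cover_subtree} and \ref{lem:seq_tree_join_jump}.
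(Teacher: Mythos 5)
Your argument is correct and takes essentially the same route as the paper's proof: fix $r \in R^{c,b}_{m,n}$, restrict the K\"onig cover to the subtree $r\concat T^b_n$, reapply Lemma \ref{lem:seq_tree_join_jump} there, and show that the resulting sets are contained in the corresponding $R^{c,a}_{m,i}$ by appending one column to the witnessing chain, so that Lemma \ref{lem:seq_tree_join_jump}(3) yields $r \in C^c_m$ iff $n \in \bigl(\bigoplus_{a <_L b} Y_a\bigr)'$. Your explicit restriction to $b >_L 0_L$ is also implicit in the paper's proof, which likewise uses that $T^b_n$ was constructed via Lemma \ref{lem:seq_tree_join_jump}, so there is no discrepancy.
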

\begin{proof}
If $R^{c,b}_{m,n}$ is empty, then the desired result is vacuously true. Otherwise, consider $r \in R^{c,b}_{m,n}$. As we observed right after Definition \ref{defn:R_ba_ni}, $r \in (T^c_m)^\ast$ and the subtree of $T^c_m$ above $r$ is $r\concat T^b_n$. $T^b_n$ was constructed by applying Lemma \ref{lem:seq_tree_join_jump} to $\langle \langle T^a_n \rangle_{n \in \N} \rangle_{a <_L b}$, so we can use the restriction of $(C^c_m, M^c_m)$ to $r\concat T^b_n$ to compute sets $\langle R^a_{n,i} \rangle_{a <_L b,i \in \N}$ of nodes in $(r\concat T^b_n)^\ast$ satisfying the conclusions of Lemma \ref{lem:seq_tree_join_jump}.

We claim that for all $a <_L b$, $R^a_{n,i} \subseteq R^{c,a}_{m,i}$.

\begin{proof}[Proof of claim]
Consider $s\in R^a_{n,i}$. We know that $s$ extends $r$ and $r \in R^{c,b}_{m,n}$. Fix $j \geq 1$ and
\begin{center}
\begin{tabular}{r c c c c c l l}
$\langle \rangle = r_0$ & $\prec$ & $r_1$ & $\prec$ &$\cdots$ & $\prec$ & $r_j = r$ & in $T^c_m$ \\
$c = c_0$ & $>_L$ & $c_1$ & $>_L$ &$\cdots$ & $>_L$ & $c_j = b$ & in $L$ \\
$m = i_0$ & , & $i_1$ & , &$\cdots$ & , & $i_j = n$ & in $\N$
\end{tabular}
\end{center}
which witness that $r \in R^{c,b}_{m,n}$. Then we can append one column:
\begin{center}
\begin{tabular}{r c c c c c l l l l}
$\langle \rangle = r_0$ & $\prec$ & $r_1$ & $\prec$ &$\cdots$ & $\prec$ & $r_j = r$ & $\prec$ & $r_{j+1} = s$ & in $T^c_m$ \\
$c = c_0$ & $>_L$ & $c_1$ & $>_L$ &$\cdots$ & $>_L$ & $c_j = b$ & $>_L$ & $c_{j+1} = a$ & in $L$ \\
$m = i_0$ & , & $i_1$ & , &$\cdots$ & , & $i_j = n$ & , & $i_{j+1} = i$ & in $\N$
\end{tabular}
\end{center}
Since $s \in R^a_{n,i}$, this witnesses that $s \in R^{c,a}_{m,i}$.
\end{proof}

By our claim, we have that
\begin{align*}
i \in Y_a \quad &\Rightarrow \quad R^a_{n,i} \subseteq R^{c,a}_{m,i} \subseteq C^c_m \\
i \notin Y_a \quad &\Rightarrow \quad R^a_{n,i} \subseteq R^{c,a}_{m,i} \subseteq \overline{C^c_m}.
\end{align*}
By Lemma \ref{lem:seq_tree_join_jump}(3), $n \in \left(\bigoplus_{a <_L b} Y_a\right)'$ if and only if $r \in C^c_m$. This concludes the proof.
\end{proof}

Putting the previous two lemmas together, we obtain

\begin{cor} \label{cor:consistent_elt_cut}
Fix K\"onig covers $\langle (C^b_n,M^b_n) \rangle_{b,n}$ for $\langle T^b_n \rangle_{b,n}$. For each $b \in L$, define $Y_b$ by decoding $\langle (C^b_n,M^b_n) \rangle_n$, i.e.,
\[ Y_b = \{n \in \N: \text{the root of }T^b_n \text{ lies in }C^b_n\}. \]
If all $a <_L b$ are consistent, then $b$ is consistent and $Y_b = \left(\bigoplus_{a <_L b} Y_a\right)'$.
\end{cor}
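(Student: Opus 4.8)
The plan is to obtain the corollary as a direct bookkeeping assembly of Lemmas \ref{lem:Y_a_cohesive_code_Y_b} and \ref{lem:cohesive_induction}, the key observation being that ``$a$ is consistent'' is, by the very definition of $Y_a$, exactly the statement that the family $\langle R^{b,a}_{n,i} \rangle_{b,n,i}$ relates to $Y_a$ in the way required by the hypotheses of both lemmas. Concretely, since $Y_a = \{i : \text{the root of } T^a_i \in C^a_i\}$, consistency of $a$ unwinds to: for every $i$, if $i \in Y_a$ then $R^{b,a}_{n,i} \subseteq C^b_n$ for all $b >_L a$ and all $n$, and if $i \notin Y_a$ then $R^{b,a}_{n,i} \subseteq \overline{C^b_n}$ for all $b >_L a$ and all $n$. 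I would open the proof by recording this reformulation.

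First I would prove the equation $Y_b = \left(\bigoplus_{a <_L b} Y_a\right)'$. Assuming every $a <_L b$ is consistent, specializing the reformulation above to the fixed $b$ gives, for each $a <_L b$ and each $n$, that $i \in Y_a \Rightarrow R^{b,a}_{n,i} \subseteq C^b_n$ and $i \notin Y_a \Rightarrow R^{b,a}_{n,i} \subseteq \overline{C^b_n}$. This is precisely the hypothesis of Lemma \ref{lem:Y_a_cohesive_code_Y_b}, whose conclusion says that for every $n$, $n \in \left(\bigoplus_{a <_L b} Y_a\right)'$ if and only if the root of $T^b_n$ lies in $C^b_n$, i.e.\ if and only if $n \in Y_b$. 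Hence $Y_b = \left(\bigoplus_{a <_L b} Y_a\right)'$.

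Next I would verify that $b$ is consistent. By the previous paragraph, for each $n$ the condition ``the root of $T^b_n$ lies in $C^b_n$'' is equivalent to ``$n \in \left(\bigoplus_{a <_L b} Y_a\right)'$''; so consistency of $b$ amounts to showing that for all $c >_L b$ and all $m \in \N$, $n \in \left(\bigoplus_{a <_L b} Y_a\right)' \Rightarrow R^{c,b}_{m,n} \subseteq C^c_m$ and $n \notin \left(\bigoplus_{a <_L b} Y_a\right)' \Rightarrow R^{c,b}_{m,n} \subseteq \overline{C^c_m}$. Fix such $c$ and $m$. Since $a <_L b <_L c$, applying the reformulation of consistency of each $a <_L b$ with $c$ in place of $b$ gives $i \in Y_a \Rightarrow R^{c,a}_{m,i} \subseteq C^c_m$ and $i \notin Y_a \Rightarrow R^{c,a}_{m,i} \subseteq \overline{C^c_m}$. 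This is exactly the hypothesis of Lemma \ref{lem:cohesive_induction} (with $c$, $m$, $b$ in the roles of $c$, $m$, $b$ there), whose conclusion is the pair of implications just required. Thus $b$ is consistent.

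I do not expect a genuine obstacle: the corollary is an exercise in matching the two lemmas against the right instances. The only point requiring care is index management — keeping the fixed $b$ distinct from the quantified $c >_L b$ and checking that each occurrence of $R^{\,\cdot,\cdot}_{\cdot,\cdot}$ is fed to the correct lemma. The bridge between the ``one-step'' sets produced by Lemma \ref{lem:seq_tree_join_jump} and the transitive-closure sets $R^{b,a}_{n,i}$ of Definition \ref{defn:R_ba_ni} (the case $j=1$ inclusion $R^a_{n,i} \subseteq R^{b,a}_{n,i}$) is already built into the statements and proofs of Lemmas \ref{lem:Y_a_cohesive_code_Y_b} and \ref{lem:cohesive_induction}, so no extra work is needed at the level of this corollary.
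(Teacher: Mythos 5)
Your proposal is correct for $b >_L 0_L$ and follows exactly the paper's route: unwind consistency of each $a <_L b$ into the hypothesis of Lemma \ref{lem:Y_a_cohesive_code_Y_b} to get $Y_b = \bigl(\bigoplus_{a <_L b} Y_a\bigr)'$, then feed the same hypothesis (with $c >_L b$ in place of $b$) into Lemma \ref{lem:cohesive_induction} to get consistency of $b$. The one thing you omit is the base case $b = 0_L$. There the hypothesis ``all $a <_L b$ are consistent'' is vacuous, but neither lemma applies, because both of their proofs rely on $T^b_n$ having been built by Lemma \ref{lem:seq_tree_join_jump} from the trees at earlier stages, whereas $T^{0_L}_n$ comes from the base-case construction of Lemma \ref{lem:seq_tree_base_case} (and the asserted equation $Y_{0_L} = \emptyset'$ is not even what one wants at that stage --- the hierarchy is supposed to start with $A$). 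The paper disposes of this by observing that every $T^{0_L}_n$ is \emph{good}, which makes $0_L$ unconditionally consistent; this matters downstream, since Theorem \ref{thm:ATR_2_leq_c_KDT} needs inconsistency never to originate at $0_L$. Adding that one sentence would complete your argument.
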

\begin{proof}
$0_L$ is consistent because every $T^{0_L}_n$ is good (Lemma \ref{lem:seq_tree_base_case}). Consider now any $b >_L 0_L$. Every $a <_L b$ is consistent, so for all $a <_L b$:
\begin{align*}
i \in Y_a \quad &\Rightarrow \quad R^{c,a}_{m,i} \subseteq C^c_m \text{ for all }c >_L a,m \in \N \\
i \notin Y_a \quad &	\Rightarrow \quad R^{c,a}_{m,i} \subseteq \overline{C^c_m} \text{ for all }c >_L a,m \in \N.
\end{align*}
By Lemma \ref{lem:Y_a_cohesive_code_Y_b}, $Y_b = \left(\bigoplus_{a <_L b} Y_a\right)'$.

Also, by Lemma \ref{lem:cohesive_induction}, for all $n \in \N$:
\begin{align*}
n \in \left(\bigoplus_{a <_L b} Y_a\right)' \quad &\Rightarrow \quad R^{c,b}_{m,n} \subseteq C^c_m \text{ for all }c >_L b, m \in \N \\
n \notin \left(\bigoplus_{a <_L b} Y_a\right)' \quad &\Rightarrow \quad R^{c,b}_{m,n} \subseteq \overline{C^c_m} \text{ for all }c >_L b, m \in \N.
\end{align*}
It follows that $b$ is consistent.
\end{proof}

We are finally ready to construct a reduction from $\ATR_2$ to $\KDT$.

\begin{thm} \label{thm:ATR_2_leq_c_KDT}
$\ATR_2 \leq_W \LPO \ast \KDT$. In particular, $\ATR_2 \leq_c \KDT$ and $\ATR_2 \leq_W^{\arith} \KDT$.
\end{thm}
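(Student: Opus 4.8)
The plan is to combine the forward reduction of Theorem~\ref{thm:ATR_leq_W_KDT} with the consistency machinery developed in Definition~\ref{defn:R_ba_ni}, Lemmas~\ref{lem:Y_a_cohesive_code_Y_b} and \ref{lem:cohesive_induction}, and Corollary~\ref{cor:consistent_elt_cut}. We reduce the version of $\ATR_2$ in Proposition~\ref{prop:ATR_2_equiv_ATR_2_labels}. Given a labeled linear ordering $\L$ and a set $A$, we run \emph{exactly} the effective transfinite recursion from the proof of Theorem~\ref{thm:ATR_leq_W_KDT}: the base case uses Lemma~\ref{lem:seq_tree_base_case}, and every later stage uses Lemma~\ref{lem:seq_tree_join_jump}, producing $\langle \langle T^b_n \rangle_n \rangle_{b \in L}$. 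The key point is that the resulting recursive function $f$ is total even when $L$ is ill-founded, since the combine and join operations always return trees and no well-foundedness of $L$ is used. Thus the forward functional of the $\KDT$-phase is literally the one from Theorem~\ref{thm:ATR_leq_W_KDT} (feed the disjoint union of the $T^b_n$'s to $\KDT$), returning a K\"onig cover $\langle \langle (C^b_n,M^b_n) \rangle_n \rangle_{b \in L}$.

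For the second phase, decode $Y_b = \{n : \text{the root of } T^b_n \text{ lies in } C^b_n\}$ and recall the notion of consistency from the paragraphs following Definition~\ref{defn:R_ba_ni}. The crucial dichotomy is whether some element of $L$ is inconsistent. Consistency of a fixed $a$ is $\Pi^0_1$ in the oracle $\L \oplus \langle (C^b_n,M^b_n) \rangle_{b,n}$, so ``some $a \in L$ is inconsistent'' is $\Sigma^0_1$ in that oracle and is decided by a single call to $\LPO$ (Definition~\ref{defn:LPO_and_choice_problems}) on the sequence $p$ with $p(k)=0$ iff a witness to the inconsistency of some element of $L$ has appeared by stage $k$. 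If $\LPO$ reports no inconsistency, then every $b \in L$ is consistent, so Corollary~\ref{cor:consistent_elt_cut} gives $Y_{0_L} = A$ and $Y_b = \left(\bigoplus_{a <_L b} Y_a\right)'$ for all $b >_L 0_L$; hence $\langle Y_b \rangle_{b \in L}$ is a jump hierarchy on $L$ beginning with $A$ (a genuine hierarchy when $L$ is well-founded, a pseudohierarchy otherwise), which we output as one type of $\ATR_2$-solution.

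If instead $\LPO$ reports an inconsistency, we first search $L$ for some inconsistent $b_0$; this search halts because inconsistency is $\Sigma^0_1$ in the oracle and such a $b_0$ exists. By the contrapositive of Corollary~\ref{cor:consistent_elt_cut}, every inconsistent element has an inconsistent $<_L$-predecessor, and $0_L$ is always consistent; iterating the search therefore yields, computably in the oracle, an infinite $<_L$-descending sequence $b_0 >_L b_1 >_L \cdots$ of inconsistent elements, which we output as the other type of $\ATR_2$-solution. The three stages (forward to a $\KDT$-instance; from the K\"onig cover to an $\LPO$-instance; from the $\LPO$-answer together with the cover and the original input to an $\ATR_2$-solution) are each computable, so this is a Weihrauch reduction $\ATR_2 \leq_W \LPO \ast \KDT$. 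Since $\LPO \leq_c \id$ and $\LPO \leq_W^{\arith} \id$, the proposition that $R \leq_W Q \ast P$ with $Q \leq_c \id$ implies $R \leq_c P$ (and likewise for $\leq_W^{\arith}$) now gives $\ATR_2 \leq_c \KDT$ and $\ATR_2 \leq_W^{\arith} \KDT$.

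The main obstacle is checking that consistency is exactly the right notion, and this is where observations (1) and (2) following Definition~\ref{defn:R_ba_ni} together with Lemmas~\ref{lem:Y_a_cohesive_code_Y_b}--\ref{lem:cohesive_induction} do the work: one must verify that inconsistency strictly descends, and that when all $<_L$-predecessors of $b$ are consistent the decoded $Y_a$'s satisfy the hypothesis of Lemma~\ref{lem:seq_tree_join_jump}(3), forcing $Y_b = \left(\bigoplus_{a <_L b} Y_a\right)'$. But Corollary~\ref{cor:consistent_elt_cut} already packages precisely these two facts, so the residual work is routine: confirming the attempted recursion only ever produces genuine trees, and that the decoding and both output procedures are uniformly computable from $\L \oplus \langle (C^b_n,M^b_n) \rangle_{b,n}$.
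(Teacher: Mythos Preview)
Your proposal is correct and follows essentially the same argument as the paper's proof: apply the forward reduction from Theorem~\ref{thm:ATR_leq_W_KDT}, use $\LPO$ to decide whether any element is inconsistent (this being $\Sigma^0_1$ in the oracle), and then invoke Corollary~\ref{cor:consistent_elt_cut} either to conclude that the decoded $\langle Y_b\rangle$ is a jump hierarchy or to compute an infinite $<_L$-descending sequence of inconsistent elements. Your additional remarks on totality of the recursion and the derivation of the $\leq_c$ and $\leq_W^{\arith}$ consequences from the unnumbered proposition at the end of Section~2 are also in line with the paper.
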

\begin{proof}
Given a labeled linear ordering $\L$ (we may assume that $L$ is labeled by Proposition \ref{prop:ATR_2_equiv_ATR_2_labels}) and a set $A$, we apply the forward reduction in Theorem \ref{thm:ATR_leq_W_KDT} to produce some $\KDT$-instance $\langle T^b_n \rangle_{b,n}$. For the backward reduction, given a $\KDT$-solution $\langle \langle (C^b_n,M^b_n) \rangle_n \rangle_{b \in L}$, we start by uniformly decoding it to obtain a sequence of sets $\langle Y_b \rangle_{b \in L}$.

Next, since $R^{b,a}_{n,i}$ is uniformly c.e.\ in $\L \oplus (C^b_n,M^b_n)$, whether some $a \in L$ is inconsistent is uniformly c.e.\ in $\L \oplus \langle (C^b_n,M^b_n) \rangle_{b,n}$. Therefore we can use $\LPO$ (Definition \ref{defn:LPO_and_choice_problems}) to determine whether every $a \in L$ is consistent.

If so, by Corollary \ref{cor:consistent_elt_cut}, $\langle Y_b \rangle_{b \in L}$ is a jump hierarchy on $L$ which starts with $A$.

If not, by Corollary \ref{cor:consistent_elt_cut}, every inconsistent element is preceded by some other inconsistent element. Since whether some $a \in L$ is inconsistent is uniformly c.e.\ in $\L \oplus \langle (C^b_n,M^b_n) \rangle_{b,n}$, we can use it to compute an infinite $<_L$-descending sequence of inconsistent elements.
\end{proof}

\subsection{Reducing $\KDT$ to $\ATR_2$}

This section presumes an understanding of the proofs in Simpson \cite{sim94}. First, he proved in $\ATR_0$ that for any set $G$, there is a countable coded $\omega$-model of $\Sigma^1_1$-$\AC$ which contains $G$. His proof \cite[Lemma 1]{sim94} also shows that

\begin{lem} \label{lem:jump_hierarchy_proper_cut_SigmaAC}
If $\langle X_a \rangle_{a \in L}$ is a jump hierarchy on $L$ and $I$ is a proper cut of $L$ which is not computable in $\langle X_a \rangle_{a \in L}$, then the countable coded $\omega$-model $\M = \{A: \exists a \in I(A \leq_T X_a)\}$ satisfies $\Sigma^1_1$-$\AC$.
\end{lem}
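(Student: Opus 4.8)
\emph{Proof plan.} The plan is to follow Simpson's proof of \cite[Lemma 1]{sim94} and read off the statement above, so I will sketch the steps and point out where the two hypotheses on $I$ are used. First I would record the closure properties of $\M$: since $I$ has no greatest element, every $a \in I$ has some $b \in I$ with $a <_L b$, and whenever $a <_L b$ in $L$ we have $X_b = \bigl(\bigoplus_{c <_L b} X_c\bigr)' \geq_T X_a'$ as well as $X_b \geq_T X_a$. Hence $\M$ is a Turing ideal closed under the Turing jump, so it is a countable coded $\omega$-model of $\mathsf{ACA}_0$ (a code is computable from $\langle X_a \rangle_{a \in L} \oplus I$). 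It then remains to verify the $\Sigma^1_1$ choice scheme in $\M$.

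So suppose $\varphi(n,X)$ is $\Sigma^1_1$ with parameter $P \in \M$, say $P \leq_T X_{a_0}$ with $a_0 \in I$, and $\M \models \forall n\, \exists X\, \varphi(n,X)$. Writing $\varphi \equiv \exists Y\, \psi(n,X,Y)$ with $\psi$ arithmetic of fixed complexity in $P$, and using that $\M$ is an $\omega$-model, for each $n$ there is $b \in I$ and a witness $X \oplus Y \leq_T X_b$, so that
\[
R(n,b) \ :\equiv\ \exists e_1, e_2\ \bigl( \Phi^{X_b}_{e_1}, \Phi^{X_b}_{e_2}\ \text{total}\ \wedge\ \psi(n, \Phi^{X_b}_{e_1}, \Phi^{X_b}_{e_2}) \bigr)
\]
holds; note $R(n,b)$ is $\Sigma^0_k(X_b)$ for a fixed $k$ and $R$ is monotone in $b$ along $<_L$. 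The crux is the reflection claim that some single $b^\ast \in I$ satisfies $R(n,b^\ast)$ for all $n$. Granting it, I would take $e(n)$ to be the least witness pair for $R(n,b^\ast)$, so that $\langle e(n) \rangle_n \leq_T X_{b^\ast}^{(k)}$ and $\langle \Phi^{X_{b^\ast}}_{(e(n))_0} \rangle_n \leq_T X_{b^\ast}^{(k)}$ is a sequence of $\varphi$-solutions; it lies in $\M$ because $X_{b^\ast}^{(k)} \leq_T X_c$ for some $c \in I$, again using that $I$ has no greatest element.

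To prove the reflection claim I would argue by contradiction. Supposing it fails, let $F(a)$ be the least $n$ with $\neg R(n,a)$, undefined if there is none. Monotonicity of $R$ makes $\dom(F)$ downward closed in $L$ and $F$ nondecreasing; the failure of the claim gives $\dom(F) \supseteq I$; and $F$ is unbounded on $I$, since for each $n$ some $X_b$ with $b \in I$ carries a witness and a finite subset of $I$ has an upper bound in $I$. If some $a \in \dom(F) \setminus I$ existed it would bound $I$ above, so $F$ would be bounded on $I$ by $F(a)$, a contradiction; hence $\dom(F) = I$. Finally I would show $I = \dom(F)$ is computable from $\langle X_a \rangle_{a \in L}$: fix finitely much data about $L$ near the top of $I$ (e.g.\ an element $a^\ast \in L \setminus I$, which exists as $I$ is proper, together with the finitely many $a <_L a^\ast$ having fewer than $k+1$ elements of $L$ strictly between them and $a^\ast$, all of which lie outside $I$); for every other $a <_L a^\ast$ the jump-hierarchy structure gives $X_a^{(k+1)} \leq_T X_{a^\ast}$ uniformly (thread a length-$(k{+}1)$ increasing chain between $a$ and $a^\ast$ through the hierarchy), which suffices to decide membership in $\dom(F)$, while $a \geq_L a^\ast$ is recognized from $L$ alone. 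As this finite extra data does not affect Turing reducibility, we get $I \leq_T \langle X_a \rangle_{a \in L}$, contradicting the hypothesis that $I$ is not computable in $\langle X_a \rangle_{a \in L}$.

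I expect the main obstacle to be precisely this last reflection step: carrying out the overspill so that $\dom(F)$ collapses exactly onto $I$, and organizing the computation of $I$ using only a column $X_{a^\ast}$ lying past the cut (rather than external Turing jumps). This is where both "$I$ is a proper cut" and "$I$ is not computable in $\langle X_a \rangle_{a \in L}$" are needed; everything else is routine bookkeeping about the arithmetic complexity of the predicates involved and about Turing ideals closed under the jump.
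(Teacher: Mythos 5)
Your proof is correct, and it shares the paper's overall skeleton --- show that the witnesses for the instance of $\Sigma^1_1$-$\AC$ must be bounded in the cut $I$, since otherwise $I$ would be computable from the hierarchy, contradicting the hypothesis --- but it implements the crucial boundedness step by a genuinely different mechanism. The paper defines $a_n$ to be the $<_L$-least element of $I$ whose column computes a solution to $\phi(n,\cdot)$, and then invokes the pseudohierarchy-theoretic fact that for $a \in I$ and $b \in L\setminus I$ the column $X_b$ computes every $X_a$-hyperarithmetic set; hence any column above the cut computes the sequence $(a_n)_n$, and cofinality of $(a_n)_n$ in $I$ would make $I$ computable. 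You instead fix the arithmetic complexity $k$ of the witnessing predicate $R(n,b)$ once and for all, and run an overspill argument: if no single $b^\ast \in I$ reflects all $n$, then $I$ coincides with $\dom(F)$, which is uniformly $\Sigma^0_{k+1}$ in the columns and is therefore decidable from a single column $X_{a^\ast}$ with $a^\ast \in L \setminus I$ by threading chains of length $k+1$ through the hierarchy (the finitely many exceptional $a$ near $a^\ast$ being absorbed as non-uniform advice). This buys a self-contained argument using only the jump-hierarchy structure and finitely many jumps, whereas the paper's version is shorter because it leans on the heavier hyperarithmetic fact underlying Simpson's Lemma 1. Two bookkeeping points you already flag but should carry through: $R(n,b)$ should be read relative to $X_b \oplus P$ where $P \in \M$ is the parameter of the instance (harmless, since $P \leq_T X_{a_0}$ for some $a_0 \in I$ and one may restrict attention to $b \geq_L a_0$), and your standing assumption that $I$ has no greatest element is legitimate because a cut with a greatest element is computable from $L$ and hence from the hierarchy, contrary to hypothesis.
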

\begin{proof}[Sketch of proof]
Given an instance $\phi(n,Y)$ of $\Sigma^1_1$-$\AC$, for each $n$, let $a_n \in I$ be $<_L$-least such that $X_{a_n}$ computes a solution to $\phi(n,\cdot)$. Since $I$ is a proper cut, for any $a \in I$ and $b \in L\backslash I$, $X_b$ computes every $X_a$-hyperarithmetic set. Therefore if $b \in L\backslash I$, then $X_b$ computes $(a_n)_{n \in \omega}$.

Hence $(a_n)_{n \in \omega}$ is not cofinal in $I$, otherwise $I$ would be computable in $\langle X_a \rangle_{a \in L}$. Fix $b \in I$ which bounds $(a_n)_{n \in \omega}$. Then there is a $\Sigma^1_1$-$\AC$-solution to $\phi$ which is arithmetic in $X_b$ (and hence lies in $\M$), as desired.
\end{proof}

We now adapt \cite{sim94}'s proof of K\"onig's duality theorem in $\ATR_0$ to show that

\begin{thm} \label{thm:KDT_arith_W_ATR_2}
$\KDT$ is arithmetically Weihrauch reducible to $\ATR_2$.
\end{thm}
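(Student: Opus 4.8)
The strategy is to follow Simpson's \cite{sim94} proof of K\"onig's duality theorem in $\ATR_0$, but to carry it out via an application of $\ATR_2$ and then make a case distinction based on which type of solution $\ATR_2$ returns. Given a bipartite graph $G$ as a $\KDT$-instance, I would first compute a linear ordering $L$ and a set $A$ encoding $G$ (together with enough bookkeeping data) so that the jump hierarchy $\langle X_a \rangle_{a\in L}$ on $L$ starting with $A$ would, if it existed, give rise via Lemma \ref{lem:jump_hierarchy_proper_cut_SigmaAC} to a countable coded $\omega$-model $\M$ of $\Sigma^1_1$-$\AC$ containing $G$. Concretely one wants $L$ to be (a labeled copy of) a sufficiently long well-ordering — say $\omega^\omega$ or some fixed recursive ordinal large enough that Simpson's argument goes through — padded so that there is room for a proper cut $I$. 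Then apply $\ATR_2$ to $(L,A)$ (using the labeled version, Proposition \ref{prop:ATR_2_equiv_ATR_2_labels}, and the arithmetical-formula version, Proposition \ref{prop:ATR2_arith_formula}, so that the hierarchy iterates the relevant arithmetical operators rather than just the jump).

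**The two cases.** If $\ATR_2$ returns an infinite $<_L$-descending sequence $S$, then $L$ is ill-founded; but I will have arranged $L$ to have a well-founded initial segment of every recursive length, so any descending sequence must enter the "padding" region, and from $S$ together with $\langle X_a\rangle$ one extracts a proper cut $I$ of $L$ that is not computable in the partial hierarchy produced so far — or, more simply, one arranges that $L$ literally contains a fixed recursive ill-founded tail and the relevant behaviour is forced. The cleaner route (and the one I expect to use) is: $\ATR_2$ either gives a genuine jump hierarchy $\langle X_a\rangle_{a\in L}$, in which case we are in Simpson's situation with $L$ a well-ordering, or it gives a descending sequence, in which case $L$ is ill-founded and we instead run Simpson's argument on a \emph{pseudohierarchy}. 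In both cases the key point is the existence of a proper cut $I$ of $L$ not computable in $\langle X_a\rangle_{a\in L}$: when $L$ is a well-ordering and long enough this is standard (take a non-hyperarithmetic cut relative to the hierarchy, which exists by a cardinality/boundedness argument since there are only countably many hyperarithmetic-in-$\langle X_a\rangle$ sets but uncountably many cuts once $L$ has limit points of uncountable cofinality type — actually one uses that a hierarchy on a recursive well-ordering is hyperarithmetic, so essentially any "sufficiently generic" cut works), and when $L$ is ill-founded, any infinite descending sequence determines such a cut more-or-less by construction. Then $\M = \{B : \exists a\in I\,(B\le_T X_a)\}$ is a countable coded $\omega$-model of $\Sigma^1_1$-$\AC$ by Lemma \ref{lem:jump_hierarchy_proper_cut_SigmaAC}, and $G\in\M$.

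**Extracting the K\"onig cover from $\M$.** Now I invoke Simpson's theorem that $\Sigma^1_1$-$\AC_0$ proves K\"onig's duality theorem for countable graphs (this is the main content of \cite{sim94}): inside the $\omega$-model $\M$, which contains $G$, there is a K\"onig cover $(C,M)$ of $G$. Since $\M$ is a countable coded $\omega$-model, a code for it — hence $(C,M)$ itself — is computable from $\langle X_a\rangle_{a\in L}$ (in the well-founded case) or from the $\ATR_2$-output together with the descending sequence (in the ill-founded case), uniformly and arithmetically. The reduction therefore is: forward functional sends $G$ to $(L,A)$; backward functional, given an $\ATR_2$-solution, in either case arithmetically locates a suitable cut $I$, forms (a code for) $\M$, reads off a K\"onig cover of $G$ inside $\M$, and outputs it. All the extra steps — finding the cut, verifying $\M\models\Sigma^1_1$-$\AC$, searching $\M$ for a K\"onig cover — are arithmetic in the solution (finding a non-computable-in-$\langle X_a\rangle$ cut is the only subtle point and is where a few Turing jumps are spent), so the whole reduction is an \emph{arithmetic} Weihrauch reduction, as claimed.

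**Main obstacle.** The delicate part is handling the ill-founded case uniformly: when $\ATR_2$ hands back a descending sequence rather than a hierarchy, one only has a \emph{pseudo}hierarchy $\langle X_a\rangle_{a\in L}$ (or worse, a partial object), and one must check that Simpson's Lemma \ref{lem:jump_hierarchy_proper_cut_SigmaAC} and the construction of $\M$ still go through with $I$ taken below the point where the descending sequence lives — in particular that $I$ really is a proper cut not computable in the hierarchy, and that $\M$ still contains $G$ and still models $\Sigma^1_1$-$\AC$. This forces some care in the design of $L$ and $A$ in the forward reduction: $L$ should be built as (labeled) $\omega^\omega \cdot (1 + (\text{something}) ) + (\text{tail})$ so that $G$ and all its arithmetical "closure data" sit cofinally below a definable proper cut, leaving the possibly-ill-founded behaviour strictly above. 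Once $L$ is set up so that "below the descending sequence" always contains a legitimate proper cut carrying $G$, the two cases merge into one argument and the reduction is arithmetic. I expect the write-up to spend most of its length on exactly this bookkeeping, deferring the graph-theoretic heart to the cited results of Simpson and Podewski--Steffens.
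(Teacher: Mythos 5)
Your plan breaks down at the very first step, and the failure is not a matter of bookkeeping. Lemma \ref{lem:jump_hierarchy_proper_cut_SigmaAC} requires a proper cut $I$ of $L$ that is \emph{not computable} in the jump hierarchy $\langle X_a\rangle_{a\in L}$. If $L$ is a well-ordering (your ``$\omega^\omega$ or some fixed recursive ordinal large enough''), then every proper cut of $L$ has the form $\{a: a<_L b\}$ for some $b\in L$, hence is uniformly computable from $L$ itself, and $L$ is computable from the hierarchy. So for a well-ordering there is \emph{no} admissible cut at all, and the lemma is vacuous. Your cardinality argument (``uncountably many cuts once $L$ has limit points of uncountable cofinality type'') is false for countable well-orderings: a countable well-ordering has only countably many cuts, one per element plus $L$ itself, and a countable linear ordering has no points of uncountable cofinality. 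Non-computable cuts exist only when $L$ is ill-founded (the cut determined by an infinite descending sequence), which is exactly why pseudohierarchies are unavoidable here.

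The missing idea is the recursion-theorem construction of the instance. The paper does not feed $\ATR_2$ any fixed ordering; it defines a predicate $P(G,e,X)$ saying ``$X$ is a jump hierarchy on $L^G_e$ starting with $G$ that computes no proper cut of $L^G_e$,'' forms the tree $H^G_e$ of solutions, and takes $L^G_e$ to be the Kleene--Brouwer ordering of $H^G_e$; the recursion theorem supplies a fixed point $e$. A short contradiction argument then shows $L^G_e$ is ill-founded but has no $G$-hyperarithmetic descending sequence (a Harrison-type ordering). This buys exactly what your proposal lacks: every descending sequence determines a proper cut; the KB correspondence converts a descending sequence into a path of $H^G_e$, i.e., into a pseudohierarchy that provably computes no proper cut; and conversely a pseudohierarchy handed back by $\ATR_2$ either computes a cut (yielding a descending sequence, reducing to the first case) or is itself a $P$-solution. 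Only then does Lemma \ref{lem:jump_hierarchy_proper_cut_SigmaAC} apply to produce the $\omega$-model $\M\models\Sigma^1_1\text{-}\AC$ containing $G$, after which the extraction of the K\"onig cover follows Simpson as you describe. Without tying the ordering to the predicate ``computes no proper cut'' via the recursion theorem, neither of your two cases can be completed.
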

\begin{proof}
Given a bipartite graph $G$, we would like to use $\ATR_2$ to produce a countable coded $\omega$-model of $\Sigma^1_1$-$\AC$ which contains $G$. In order to do that, we define a $G$-computable linear ordering (i.e., an instance of $\ATR_2$) using the recursion theorem, as follows.

First define a predicate $P(G,e,X)$ to hold if $X$ is a jump hierarchy on $L^G_e$ which starts with $G$ and does not compute any proper cut in $L^G_e$. Notice that $P(G,e,X)$ is arithmetic.

The total $G$-computable function to which we apply the recursion theorem is as follows. Given any $G$-computable linear ordering $L^G_e$, consider the $G$-computable tree $H^G_e$ whose paths (if any) are solutions to $P(G,e,\cdot)$ (with Skolem functions). Then output an index for the Kleene-Brouwer ordering of $H^G_e$.

By the recursion theorem, we can $G$-uniformly compute a fixed point $e$ for the above computable transformation. Observe that the following are (consecutively) equivalent:
\begin{enumerate}
	\item $L^G_e$ has an infinite $G$-hyperarithmetic descending sequence;
	\item $H^G_e$ has a $G$-hyperarithmetic path;
	\item $P(G,e,\cdot)$ has a $G$-hyperarithmetic solution, i.e., there is a $G$-hyperarithmetic jump hierarchy on $L^G_e$ which starts with $G$ and does not compute any proper cut in $L^G_e$;
	\item $L^G_e$ is well-founded.
\end{enumerate}
(The only nontrivial implication is (3) $\Rightarrow$ (4), which holds because no jump hierarchy on a $G$-computable ill-founded linear ordering can be $G$-hyperarithmetic.) But (1) and (4) contradict each other, so (1)--(4) are all false. Hence $L^G_e$ must be ill-founded and cannot have any infinite $G$-hyperarithmetic descending sequence. It follows that every infinite $L^G_e$-descending sequence defines a proper cut in $L^G_e$.

Next, we show that given an $\ATR_2$-solution to $L^G_e$, we can arithmetically uniformly compute some proper cut $I$ in $L^G_e$ and a solution to $P(G,e,\cdot)$, i.e., a jump hierarchy $\langle X_a \rangle_{a \in L^G_e}$ which does not compute any proper cut in $L^G_e$. Then by Lemma \ref{lem:jump_hierarchy_proper_cut_SigmaAC}, the countable coded $\omega$-model of all sets which are computable in some $X_a$, $a \in I$, satisfies $\Sigma^1_1$-$\AC$ as desired.

If $\ATR_2$ gives us an infinite $L^G_e$-descending sequence $S$, then we can use $S$ to arithmetically uniformly compute a proper cut in $L^G_e$. Since $L^G_e$ is the Kleene-Brouwer ordering of $H^G_e$, we can also use $S$ to arithmetically uniformly compute a path on $H^G_e$. From said path, we can uniformly compute a solution to $P(G,e,\cdot)$.

If $\ATR_2$ gives us a jump hierarchy $X$ on $L^G_e$, we show how to arithmetically uniformly compute an infinite $L^G_e$-descending sequence. We may then proceed as in the previous case.

First arithmetically uniformly check whether $X$ computes any proper cut in $L^G_e$. If so, we can arithmetically uniformly find an index for such a computation, and produce a proper cut in $L^G_e$. From that, we may uniformly compute an infinite $L^G_e$-descending sequence. If not, then $X$ is a solution to $P(G,e,\cdot)$, so we can arithmetically uniformly compute a path on $H^G_e$, and hence an infinite $L^G_e$-descending sequence.

We have produced a countable coded $\omega$-model of $\Sigma^1_1$-$\AC$ which contains the given graph $G$. Call it $\M$.

With $\M$ in hand, we follow the rest of Simpson's proof in order to obtain a $\KDT$-solution to $G$. His idea is to ``relativize'' Aharoni, Magidor, Shore's \cite{ams92} proof of $\KDT$ in $\Pi^1_1$-$\CA_0$ to $\M$. In the following, we will often write $\M$ instead of ``the code of $\M$''.

Let $G = (X,Y,E)$. (If we are not given a partition $(X,Y)$ of the vertex set of $G$ witnessing that $G$ is bipartite, we can arithmetically uniformly compute such a partition.) Recall a definition from \cite{ams92}: if $A \subseteq X$, then the \emph{demand set} is defined by
\[ D_G(A) = \{y \in Y: xEy \rightarrow x \in A\}. \]
Note that if $A \in \M$, then $D_G(A)$ is uniformly arithmetic in $\M$ and the code of $A$.

Next, consider the set of pairs
\[ S = \{(A,F) \in \M: A \subseteq X \text{ and }F: A \to D_G(A) \text{ is a matching}\}. \]
(Note that $A$ and $F$ may be infinite.) $S$ (specifically the set of codes of $(A,F) \in S$) is arithmetic over $\M$. So is the set $\bigcup\{A: (A,F) \in S\} \subseteq X$, which we denote by $A^\ast$.

Next, for each $x \in A^\ast$, we define $F^\ast(x)$ to be $F(x)$, where $(A,F)$ is the least (with respect to the enumeration of $\M$) pair in $S$ such that $x \in A$. Then $F^\ast: A^\ast \to D_G(A^\ast)$ is a matching (\cite[Lemma 2]{sim94}). Note that $F^\ast$ is arithmetic over $\M$.

Next, define $X^\ast = X-A^\ast$ and $Y^\ast = Y-D_G(A^\ast)$. Both sets are arithmetic over $\M$. Simpson then constructs (by recursion along $\omega$) a matching $H$ from $Y^\ast$ to $X^\ast$ which is arithmetic in $G \oplus \M$, as follows. Each step of the recursion proceeds by searching for a pair of adjacent vertices (one in $X^\ast$, one in $Y^\ast$) whose removal does not destroy \emph{goodness}: a cofinite induced subgraph $G'$ (with vertices partitioned into $X' \subseteq X$ and $Y' \subseteq Y$) of $G$ is \emph{good} if for any $A \subseteq X'$ in $\M$ and any matching $F: A \to D_{G'}(A)$ in $\M$, $D_{G'}(A)-\range(F)$ and $Y^\ast$ are disjoint. (This definition is not related to Definition \ref{defn:tree_good}.) This recursion eventually matches every vertex in $Y^\ast$ to some vertex in $X^\ast$ (\cite[Lemmas 3, 5]{sim94}).

The property of goodness (where each $G'$ is encoded by the finite set of vertices in $G\backslash G'$) is arithmetic over $\M$. Hence the resulting matching $H$ is arithmetic over $\M$.

Finally, we arrive at a $\KDT$-solution to $G$: $F^\ast \cup H$ is a matching in $G$, with corresponding dual cover $A^\ast \cup Y^\ast$. $(F^\ast \cup H,A^\ast \cup Y^\ast)$ can be arithmetically uniformly computed from $\M$.
\end{proof}

Using Theorems \ref{thm:ATR_2_leq_c_KDT} and \ref{thm:KDT_arith_W_ATR_2}, we conclude that

\begin{cor}
$\ATR_2$ and $\KDT$ are arithmetically Weihrauch equivalent.
\end{cor}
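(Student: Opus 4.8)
The plan is to simply combine the two main theorems of this subsection, since arithmetic Weihrauch equivalence is by definition mutual arithmetic Weihrauch reducibility. First I would invoke Theorem \ref{thm:ATR_2_leq_c_KDT}, which gives $\ATR_2 \leq_W \LPO \ast \KDT$. To extract an arithmetic Weihrauch reduction from this, I would apply the proposition stating that if $R \leq_W Q \ast P$ and $Q \leq_W^{\arith} \id$, then $R \leq_W^{\arith} P$, together with the observation (recorded just after the definition of computable reducibility) that $\LPO \leq_W^{\arith} \id$. This yields $\ATR_2 \leq_W^{\arith} \KDT$. (In fact Theorem \ref{thm:ATR_2_leq_c_KDT} already states this conclusion explicitly, so this step is immediate either way.)

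Next I would invoke Theorem \ref{thm:KDT_arith_W_ATR_2}, which gives the reverse direction $\KDT \leq_W^{\arith} \ATR_2$ directly. Having both $\ATR_2 \leq_W^{\arith} \KDT$ and $\KDT \leq_W^{\arith} \ATR_2$, I would conclude that $\ATR_2$ and $\KDT$ are arithmetically Weihrauch equivalent, using only reflexivity and transitivity of $\leq_W^{\arith}$ (which follow from the corresponding properties of $\leq_W$ and closure of the arithmetically defined functionals under composition). There is no real obstacle here: all the genuine content lives in Theorems \ref{thm:ATR_2_leq_c_KDT} and \ref{thm:KDT_arith_W_ATR_2}, and this corollary is a one-line assembly of those two results.
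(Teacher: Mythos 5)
Your proposal is correct and matches the paper exactly: the corollary is obtained by combining Theorem \ref{thm:ATR_2_leq_c_KDT} (which already records $\ATR_2 \leq_W^{\arith} \KDT$ as a consequence of $\ATR_2 \leq_W \LPO \ast \KDT$ and $\LPO \leq_W^{\arith} \id$) with Theorem \ref{thm:KDT_arith_W_ATR_2}. Nothing further is needed.
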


\bibliographystyle{plain}
\bibliography{atr_weihrauch}

\end{document}